\title[Nuclear dimension for topological flows]{The nuclear dimension of $C^*$-algebras associated to topological flows and orientable line foliations}
\author{Ilan Hirshberg}
\address{Department of Mathematics, Ben Gurion University of the Negev, \phantom{----------------}\linebreak\text{}\hspace{3.5mm}
P.O.B. 653, Be'er Sheva 84105, Israel}
\email{ilan@math.bgu.ac.il}
\author{Jianchao Wu}
\author{Jianchao Wu}
\address{Department of Mathematics, Texas A\&M University, \phantom{-------------------------------------}\linebreak \text{}\hspace{3.5mm} Mailstop 3368, College Station, TX 77843, USA}
\email{jwu@tamu.edu}
\thanks{This research was supported by GIF grant 1137/2011, Israel Science Foundation grant no.~ 476/16, NSF DMS-1564401, ERC Advanced Grant ToDyRiC 267079, and SFB 878 \emph{Groups, Geometry and Actions}.}
\theoremstyle{plain}
\newtheorem{Thm}{Theorem}[section]
\newtheorem{Cor}[Thm]{Corollary}
\newtheorem{Lemma}[Thm]{Lemma}
\theoremstyle{definition}
\newtheorem{Notation}[Thm]{Notation}
\theoremstyle{plain}
\newtheorem{thm}[Thm]{Theorem}
\newtheorem{lem}[Thm]{Lemma}
\newtheorem{cor}[Thm]{Corollary}
\newtheorem{prop}[Thm]{Proposition}
\theoremstyle{definition}
\newtheorem{defn}[Thm]{Definition}
\newtheorem{eg}[Thm]{Example}
\newtheorem{rmk}[Thm]{Remark}
\newtheorem{notn}[Thm]{Notation}
\newtheorem{cnstrct}[Thm]{Construction}
\newcommand{\R}{{\mathbb R}}
\newcommand{\Z}{{\mathbb Z}}
\newcommand{\aut}{\mathrm{Aut}}
\newcommand{\supp}{\mathrm{supp}}
\newcommand{\eps}{\varepsilon}
\numberwithin{equation}{section}
\newcommand{\dr}{\mathrm{dr}}
\newcommand{\id}{\mathrm{id}}
\newcommand{\dimnuc}{\operatorname{dim}_{\mathrm{nuc}}}
\newcommand{\dimnucone}[0]{\dimnuc^{\!+1}}
\newcommand{\calpha}{{\widehat{\alpha}}}
\newcommand{\hol}{\operatorname{hol}}
\newcommand{\middlebar}{\;\middle\vert\;}
\theoremstyle{definition}
\newtheorem{nota}[Thm]{Notation}
\numberwithin{equation}{Thm}
\newtheorem{clminproof}{Claim}[section]
\begin{document}
\begin{abstract}
We show that for any locally compact Hausdorff space $Y$ with finite 
covering dimension and for any continuous flow $\R \curvearrowright Y$, the 
resulting crossed product $C^*$-algebra $C_0(Y) \rtimes \R$ has finite nuclear 
dimension. This generalizes previous results for free flows, where this was 
proved using Rokhlin dimension techniques. As an application, we obtain bounds 
for the nuclear dimension of $C^*$-algebras associated to one-dimensional 
orientable foliations. This result is analogous to the one we obtained earlier 
for non-free actions of $\Z$. Some novel techniques in our proof include the 
use of a conditional expectation constructed from the inclusion of a clopen 
subgroupoid, as well as the introduction of what we call fiberwise groupoid 
coverings that help us build a link between foliation $C^*$-algebras and 
crossed products. 
\end{abstract}
\maketitle
\tableofcontents

\section{Introduction}

Nuclear dimension for $C^*$-algebras was introduced by Winter and Zacharias in 
\cite{winter-zacharias}, as a noncommutative generalization of covering 
dimension. Since then, it has come to play a crucial role in structure and 
classification of $C^*$-algebras: indeed, it is now known that simple unital 
separable $C^*$-algebras with finite nuclear dimension and which satisfy the 
UCT are classified via the Elliott invariant (\cite{TWW,EGLN}). It was shown in 
\cite{winter-zacharias} that if $Y$ is a locally compact second countable 
Hausdorff 
space, then $\dimnuc(C_0(Y))$ coincides with the covering dimension of $Y$, and 
the property of having finite nuclear dimension is preserved under various 
constructions: forming direct sums and tensor products, passing to quotients 
and hereditary subalgebras, and forming extensions. There has been considerable 
interest in seeing to what extent finite nuclear dimension is preserved under 
forming crossed products, which in particular led to the development of the 
notion of Rokhlin dimension for various group actions; see 
\cite{HWZ,hirshberg-phillips,szabo,SWZ, gardella} for actions of finite groups, 
$\Z$, $\Z^m$, and compact group actions.

The case of flows, that is, actions of $\R$, was addressed in a joint paper 
\cite{HSWW16} by the authors and Szab{\'o} and Winter. We developed a theory of 
Rokhlin dimension for flows which to a great extent parallels the theory for 
actions of $\Z$ (though the technicalities worked out to be rather different). 
This generalized Kishimoto's Rokhlin property for flows (\cite{kishimoto96}). 
In particular, we showed that if $\alpha$ is a flow on a $C^*$-algebra $A$ with 
finite Rokhlin dimension, and $A$ has finite nuclear dimension, then the 
crossed product $A \rtimes_{\alpha} \R$ has finite nuclear dimension. We 
furthermore showed that  if $Y$ is a locally compact second countable Hausdorff 
space 
with finite covering dimension and $\calpha$ is a \emph{free} flow on $Y$ then 
the induced flow $\alpha$ on $C_0(X)$ has finite Rokhlin dimension. Thus, 
$C_0(Y) \rtimes_{\alpha} \R$ has finite nuclear dimension.

This leaves the case of non-free flows. Those cannot have finite Rokhlin dimension. 
We addressed the parallel case of non-free $\Z$-actions on topological spaces 
in \cite{Hirshberg-Wu16}, where we showed that if $Y$ is a locally compact 
second countable Hausdorff space with finite covering dimension and $\alpha$ is 
\emph{any} automorphism  of $C_0(Y)$ then $C_0(Y) \rtimes_{\alpha} \Z$ has 
finite nuclear dimension. As in the case of actions of $\Z$, non-free actions 
of $\R$ are quite prevalent. Indeed, if $Y$ is a compact smooth manifold then 
any vector field on $Y$ gives rise to a flow, but typically such flows may have 
fixed points or periodic orbits. 

The purpose of this paper is to provide a parallel to \cite{Hirshberg-Wu16} for possibly non-free flows. We show in Theorem \ref{thm-main} that 
if $Y$ is a locally compact, second countable Hausdorff with finite covering 
dimension, 
and $\alpha$ is a flow on $C_0(Y)$, then
\[
\operatorname{dim}_\mathrm{nuc}({C}_{0}(Y) \rtimes {\mathbb{R}^{}} ) \leq 5 \big( \dim(Y) \big) ^2 + 12 \dim(Y) + 6 \; .
\]

In fact, this estimate for flows implies a similar estimate for $\Z$-actions, by applying the mapping torus construction to obtain a flow from a $\Z$-action (see Corollary~\ref{cor:mapping-torus}). Thus we also recover the main theorem of \cite{Hirshberg-Wu16}, albeit with a less sharp bound. We point out that despite the similarity between these results, there is some significant difference in the technical tools used in them (see the comments after Corollary~\ref{cor:mapping-torus}).

As an application, we provide a bound for the nuclear dimension of 
$C^*$-algebras associated to orientable line foliations. It was shown by 
Whitney (\cite{Whitney}) that any orientable line foliation on a locally 
compact second countable Hausdorff space arises from a flow. A crucial object 
in the 
study of index theorems for foliations (\cite{Connes-Skandalis}) is the 
construction of the holonomy groupoid $G_{\mathcal{F}}$ of a foliation 
$\mathcal{F}$ and the $C^*$-algebra $C^*(G_{\mathcal{F}})$ thereof (see also 
\cite{moore-schochet} for a discussion of $C^*$-algebras associated to 
foliations). However, for an orientable line foliation $\mathcal{F}$, this 
foliation $C^*$-algebra $C^*(G_{\mathcal{F}})$ typically differs from the 
crossed product by the flow that gives rise to the foliation. 

To make a link between these two important $C^*$-algebraic constructions, we develop a general theory for a kind of groupoid homomorphisms between topological groupoids that we call \emph{fiberwise groupoid covering maps} (see Definition~\ref{def:fiberwise-groupoid-covering}). They induce quotient maps between the corresponding maximal groupoid $C^*$-algebras (see Theorem~\ref{thm:fiberwise-groupoid-covering-locally-compact} and~\ref{thm:fiberwise-groupoid-covering-quotient}). A canonical quotient map from the transformation groupoid of a flow to the holonomy groupoid $G_{\mathcal{F}}$ of the induced foliation is shown to be a fiberwise groupoid covering map (see Proposition~\ref{prop:foliation-covering}).
As a consequence, $C^*(G_{\mathcal{F}})$ can be obtained as a quotient of the crossed product associated to the flow (see Corollary~\ref{cor:foliation-covering}). Thus we have
\[
\operatorname{dim}_\mathrm{nuc}(C^*(G_{\mathcal{F}}) ) \leq 5 \big( \dim(Y) \big) ^2 + 12 \dim(Y) + 6 \; .
\]
where $Y$ is the underlying space of the foliation (see Corollary~\ref{cor:foliation-algebra-dimnuc}).

We now briefly sketch the idea of our proof. The basic idea is similar to the case of integer actions, though the techniques are different. The case in which the flow has uniformly compact orbits, that is, when all points are fixed or periodic with period bounded by some constant $R$, was already done in \cite[Section 3]{Hirshberg-Wu16} (indeed, we included it in that generality for use in the present paper), except for an estimate of the covering dimension of the quotient space $Y / \R$, which we complete in Section~\ref{section: bounded periods}. We showed there that one can find in such a case a bound on the nuclear dimension of $C_0(Y) \rtimes_{\alpha} \R$, which, crucially, does not depend on the maximal period length $R$.

Now, if we fix a some $R>0$, we can consider the set of points which are periodic with orbit length $\leq R$, which we denote by $Y_{\leq R}$, and we let $Y_{>R} = Y \smallsetminus Y_{\leq R}$. Then one shows that $Y_{\leq R}$ is a closed subset, and we have an equivariant extension 
$$
0 \to C_0(Y_{> R}) \to C_0(Y) \to C_0(Y_{\leq R}) \to 0
.
$$
Although the restriction of $\alpha$ to $C_0(Y_{>R})$ does not necessarily have finite Rokhlin dimension, we can use a quantitative version of the techniques we used in \cite{HSWW16}, based on \cite{BarLRei081465306017991882} and \cite{Kasprowski-Rueping}, to construct dimensionally controlled long and thin covers of $C_0(Y_{>R})$, provided ``long'' means ``not too long compared to $R$''. This technique, which was used in \cite{HSWW16} to construct Rokhlin elements, is then used to construct decomposable approximations for $C_0(Y_{>R}) \rtimes_{\alpha} \R$, for certain finite subsets and a level of precision which improves as $R$ increases. (One could also use those covers to construct Rokhlin-type elements and then use them to construct decomposable approximations; however in the abelian setting, constructing suitable flow-wise Lipschitz partitions of unity, which in the setting of \cite{HSWW16} is a step towards constructing Rokhlin elements, is enough to obtain the required decomposable approximations, with an improved bound, so we do not need those Rokhlin elements for our result.) Having constructed those decomposable approximations for both $C_0(Y_{>R}) \rtimes_{\alpha} \R$ and for $C_0(Y_{\leq R}) \rtimes_{\alpha} \R$, we patch them together to get decomposable approximations for $C_0(Y) \rtimes_{\alpha} \R$ in a manner similar to the one done in \cite{Hirshberg-Wu16} for the case of $\Z$-actions, where the required precision of the final approximation predetermines how large $R$ has to be at the beginning of this analysis.

We note that in previous proofs of finite nuclear dimension for uniform Roe algebras and, more generally, groupoid $C^*$-algebras, such as those in \cite{winter-zacharias} and \cite{GWY}, Arveson's extension theorem is used in the construction of the downward completely positive map in the nuclear approximation. In our case, we give instead an explicit description of this map as a sum of compositions of compressions by a partition of unity of the unit space and conditional expectations associated to the clopen inclusion of relatively compact subgroupoids into larger ones. In the course of doing this, we prove a general result that any clopen inclusion of locally compact groupoids induces a conditional expectation (see Theorem~\ref{thm:clopen-subgroupoid}). Aside from simplifying the proof somewhat, the fact that it keeps good track of the Cartan subalgebras could make it useful for future work. 

\paragraph{\textbf{Acknowledgements:}} The authors would like to thank George Elliott, Alexander Kumjian, and Claude Schochet for some helpful discussions. Part of the research underlying this paper was carried out during the authors' visits to the Mittag-Leffler Institute, University of M\"{u}nster, the Penn State University, Centre de Recerca Matem\`{a}tica in Barcelona, the Fields Institute, and the Banff International Research Station.

\section{Preliminaries}
\label{section:prelim}

Throughout the paper, we use the following conventions. To simplify formulas, we may use the notations $\dimnucone(A) = \dimnuc(A)+1$, $\dim^{+1}(Y) = \dim(Y)+1$ and $\dr^{+1}(A) = \dr(A)+1$. If $A$ is a $C^*$-algebra, we denote by $A_+$ the positive part, and by $A_{+, \leq 1}$ the set of positive elements of norm at most $1$. If $G$ is a locally compact Hausdorff group and $A$ is a $C^*$-algebra, we denote by $\alpha \colon G \curvearrowright A$ an action, that is, a continuous homomorphism $\alpha \colon G \to \aut(A)$, where $\aut(A)$ is topologized by pointwise convergence.  If $X$ is a metric space, $S \subseteq X$ and $r>0$, we denote $N_r(S) = \left\{x \in X \middlebar \operatorname{dist}(x,S)<r \right\}$.

We are interested here in the case in which $A$ is commutative, that is, $A 
\cong C_0(Y)$ for some locally compact Hausdorff space $Y$ (namely the spectrum 
$\widehat{A}$). By Gel'fand's theorem, an action $\alpha \colon G 
\curvearrowright C_0(Y)$ is completely determined by an action $\calpha \colon 
G  \curvearrowright Y$ on the spectrum (which is continuous in the sense that 
the map $G \times Y \to Y$ given by $(g,y) \mapsto \calpha_g(y)$ is continuous) 
and vice versa. They are related by the identity $\alpha_g (f) = f \circ 
\calpha_{g^{-1}}$ for any $f \in C_0(Y)$ and any $g \in G$. Thus taking a 
$C^*$-algebraic point of view, we will denote by $\calpha \colon G  
\curvearrowright Y$ an action on a locally compact Hausdorff space by 
homeomorphisms, and save the notation $\alpha$ for the corresponding action on 
$C_0(Y)$. 

When $G = \R$, the additive group of the real numbers, we often call a topological dynamical system $(Y, \R, \calpha)$ a \emph{flow}. We also make the following definitions. For any $y \in Y$, we let $\mathrm{per}_{\calpha}(y) = \inf\left\{t > 0 \middlebar \calpha_t(y) = y \right\}$ be the minimal period of the flow at $y$, or in other words, the length of the orbit of $y$ (with the convention $\mathrm{per}_{\calpha}(y) = \infty$ if $\calpha_t(y) \neq y$ for any $t \neq 0$). Thus this number is constant within each orbit. For a positive real number $R$, we decompose $Y$ as $Y_{\leq R} \sqcup Y_{> R}$, where
 \begin{align*}
  Y_{\leq R} & = \left\{ y \in Y \middlebar \mathrm{per}_{\calpha}(y) \leq R \right\} = \left\{ y \in Y \middlebar \calpha_{\R}(y) = \calpha_{[0,R]} (y) \right\} \; , \\
  Y_{> R} & =  Y \setminus Y_{\leq R} = \left\{ y \in Y \middlebar \mathrm{per}_{\calpha}(y) > R \right\} \; .
 \end{align*}
This decomposition is then invariant under the flow. Intuitively, $Y_{\leq R}$ and $Y_{> R}$ are the short-period part and the long-period part of the flow, respectively.

We also recall some basic facts about (possibly non-Hausdorff) locally compact groupoids and their $C^*$-algebras. A \emph{topological groupoid} is a small category whose morphisms are all invertible and whose set of morphisms is equipped with a topology such that the multiplication map $G^2 = \left\{(x,y) \in G \times G \middlebar  d(x) = r(y) \right\} \to G$ given by $(x,y) \to x \cdot y$ and the inversion map $G \to G$ given by $x \to x ^{-1}$ are both continuous, where $d, r \colon G \to G^0$ (the subset of $G$ consisting of all units, called the \emph{unit space}) are the domain and range maps, and $G^2$ (called the set of composable pairs) is given the subset topology inherited from the product topology on $G \times G$. We also write $G^u = r^{-1}(\left\{u\right\})$ and $G_u = d^{-1}(\left\{u\right\})$ for $u \in G^0$. 

\begin{defn}
	Let $Y$ be a topological space. We say that $Y$ is \emph{locally Hausdorff} if any point has a closed neighborhood which is compact Hausdorff. 
\end{defn}

\begin{notn}\label{nota:C_c_0}
	Let $Y$ be a topological space. We denote by $C_c(Y)_0$ the set of complex-valued functions $f$ for which there exists a Hausdorff open set $U$ so that $f$ vanishes outside of $U$, and $f|_U$ is continuous and compactly supported. Note that such functions may not be continuous when $Y$ is not Hausdorff. We then define $C_c(Y)$ to be the linear span of $C_c(Y)_0$ in the linear space of all complex-valued functions on $Y$.
\end{notn}

Observe that if $X$ is an open subset of $Y$, then we have canonical embeddings $C_c(X)_0 \hookrightarrow C_c(Y)_0$ and $C_c(X) \hookrightarrow C_c(Y)$. Also, if $Y$ is locally compact and Hausdorff, then both $C_c(Y)_0$ and $C_c(Y)$ are simply the set of compactly-supported continuous functions on $Y$. 

\begin{defn}(\cite[Definition~2.2.1 
and~2.2.2]{paterson})\label{def:locally-compact-groupoids}
	A \emph{locally compact locally Hausdorff groupoid} is a topological groupoid that satisfies the following axioms: 
	\begin{enumerate}
		\item\label{def:locally-compact-groupoids:unit-space} $G^0$ is locally compact Hausdorff in the relative topology inherited from $G$;
		\item\label{def:locally-compact-groupoids:locally-Hausdorff} there is a 
		countable family $\mathcal{C}$ of compact Hausdorff subsets of $G$ such 
		that the family $\left\{C^o \middlebar  C \in \mathcal{C} \right\}$ of 
		interiors of members of $\mathcal{C}$ is a basis for the topology of 
		$G$. (In particular, $G$ is locally Hausdorff;)
		\item\label{def:locally-compact-groupoids:sections} for any $u \in G^0$, $G^u$ is locally compact Hausdorff in the relative topology inherited from $G$;
		\item\label{def:locally-compact-groupoids:Haar-system} $G$ admits a \emph{(left) Haar system} $\left\{\lambda^u \right\}_{u \in G^0}$, in the sense that each $\lambda^u$ is a positive regular Borel measure on the locally compact Hausdorff space $G^u$, such that 
		\begin{enumerate}
			\item\label{def:locally-compact-groupoids:Haar-system:full-support} the support of each $\lambda^u$ is the whole of $G^u$, 
			\item\label{def:locally-compact-groupoids:Haar-system:continuous} for any $g \in C_c(G)$, the function $g^0$ given by the integral $\int_{G^u} g \, d \lambda^u$, belongs to $C_c(G^0)$,
			\item\label{def:locally-compact-groupoids:Haar-system:invariance} for any $x \in G$ and $f \in C_c(G)$, we have 
			\[
				\int_{G^{d(x)}} f(xz) \, d \lambda^{d(x)} (z) = \int_{G^{r(x)}} f(y) \, d \lambda^{r(x)} (y) \; .
			\]
		\end{enumerate} 
	\end{enumerate}
\end{defn}


Note that most of the technicality in this definition is due to our need to 
deal with the non-Hausdorffness of certain groupoids arising from holonomies of 
line foliations. If a locally compact locally Hausdorff groupoid is indeed 
Hausdorff, then 
the existence of a Haar system is known to be a consequence of the first three 
axioms; nevertheless, uniqueness may still fail (a counterexample is given by 
the pair groupoid construction below); hence we always fix a left Haar measure 
as part of the data of a locally compact locally Hausdorff groupoid.

\begin{eg}\label{example:groupoids}
	Basic examples of locally compact Hausdorff groupoids include: 
	\begin{enumerate}
		\item \label{item:example:groupoids:pair} the \emph{pair groupoid}\footnote{It is also known as the 
		\emph{trivial groupoid} in \cite{paterson}.} $X \times X$ for a locally 
		compact second countable Hausdorff space $X$, where $\left( X \times X 
		\right)^0$ 
		is the diagonal, $d(x,y) = (y, y)$, $r(x,y) = (x , x)$ and $(x,y) \cdot 
		(y,z) = (x, z)$, for any $x, y, z \in X$, and the positive regular 
		Borel measures on $X$ with full support are in one-to-one 
		correspondence with the left Haar measures of $X \times X$;
		\item \label{item:example:groupoids:transformation} the \emph{transformation groupoid} $X \rtimes_\calpha G$ for an 
		action $\calpha$ of a locally compact group $G$ on a locally compact 
		second countable Hausdorff space $X$, which is defined to be $X \times 
		G$ with 
		$\left( X \rtimes G \right)^0 = X \times \{1\}$, $d(x,g) = 
		\left(\calpha^{-1}_g (x), 1 \right)$, $r(x,g) = \left( x, 1 \right)$ 
		and $(x, g) \cdot (\calpha^{-1}_g (x), h) = (x, gh)$, where the left 
		Haar measure of $G$ induces a left Haar system for $X \rtimes_\calpha 
		G$; 
		\item \label{item:example:groupoids:reduction} if $G$ is a locally compact locally Hausdorff groupoid and $U$ is 
		an open 
		subset of $G^0$, then $G_U$, the \emph{reduction} of $G$ to $U$, as 
		defined by $\{ x \in G \mid d(x) \in U, \, r(x) \in U \}$, is an open 
		subset of $G$ that inherits the structure of a locally compact locally
		Hausdorff groupoid from $G$, where the left Haar system is induced by 
		the inclusion $C_c(G_U) \subset C_c(G)$. If $G$ is Hausdorff then so is 
		the reduction
	\end{enumerate}
\end{eg}

The main motivation for considering Haar systems lies in the representation 
theory for a groupoid $G$, which is often better studied through 
representations of the convolution $*$-algebra $C_c(G)$, where the convolution 
product is defined by the following formula (as in \cite[(2.20) and 
(2.21)]{paterson})
\begin{equation}\label{eq:groupoid-multiplication-r}
	f \ast g (x) = \int_{G^{r(x)}} f(y) g(y^{-1}x) \, d \lambda^{r(x)} (y)
\end{equation}
or equivalently
\begin{equation}\label{eq:groupoid-multiplication-d}
	f \ast g (x) = \int_{G^{d(x)}} f(xt) g(t^{-1}) \, d \lambda^{d(x)} (t)
\end{equation}
for any $f, g \in C_c(G)$ and any $x \in G$, and the $*$-operation is given by
\begin{equation}\label{eq:groupoid-star-operation}
	f^* (x) = \overline{f(x^{-1})}
\end{equation}
for any $f \in C_c(G)$ and any $x \in G$; (see \cite[(2.22)]{paterson}).

We now review the construction of the reduced $C^*$-algebra of a locally 
compact locally Hausdorff groupoid $G$. To this end, we write 
$\left\{\lambda_u\right\}_{u \in G^0}$ for the \emph{right Haar measure} 
corresponding to the left Haar measure $\lambda = \left\{\lambda^u\right\}_{u 
\in G^0}$, where $\lambda_v(E) = \lambda^v(E^{-1})$ for any Borel set $E 
\subset G_v$. Now any $v \in G^0$ determines a $*$-representation 
$\operatorname{Ind}v$, the \emph{left regular representation at $v$}, of 
$C_c(G)$ on the Hilbert space $L^2(G_v, \lambda_v)$, defined by the follwing 
formula 
(as in \cite[(3.41)]{paterson})
\[
	\left( \operatorname{Ind}v (f) \cdot \xi \right) (x) = \int_{t \in G_v} f(xt^{-1}) \xi(t) \, d\lambda_v(t)
\]
for any $f \in C_c(G)$, any $\xi \in L^2(G_v, \lambda_v)$ and any $x \in G_v$; thus in terms of the inner product, we have
\[
	\left\langle \operatorname{Ind}v (f) \cdot \xi, \eta \right\rangle = \int_{x \in G_v} \int_{t \in G_v} f(xt^{-1}) \xi(t) \overline{\eta(x)} \, d\lambda_v(t) \, d\lambda_v(x)
\]  
for any $\eta \in L^2(G_v, \lambda_v)$. The $C^*$-seminorm induced by 
$\operatorname{Ind}v$ is bounded by the \emph{$I$-norm}; (see 
\cite[(2.25)]{paterson}.) Thus, we have $\sup_{v \in G^0} \| \operatorname{Ind} 
v (f) \| < \infty$. In fact, this estimate works for any $*$-representation (see 
\cite[Theorem~2.2.1]{paterson}). 

\begin{defn}\label{def:reduced-groupoid-algebra}
	Let $G$ be a locally compact locally Hausdorff groupoid with a left Haar 
	system $\lambda$. The \emph{maximal groupoid $C^*$-algebra} $C^*(G, 
	\lambda)$ of $G$ is the $C^*$-envelope of $C_c(G)$. The \emph{reduced 
	groupoid $C^*$-algebra} $C^*_{r}(G, \lambda)$ of $G$ is the 
	$C^*$-completion of $C_c(G)$ under the norm $\| f \|_{\operatorname{red}} = 
	\sup_{v \in G^0} \| \operatorname{Ind} v (f) \|$. Equivalently, $C^*_{r}(G, 
	\lambda)$ is the completion of the image of the $*$-representation 
	$\bigoplus_{v \in G^0} \operatorname{Ind}v$ of $C_c(G)$ on $\bigoplus_{v 
	\in G^0} L^2(G_v, \lambda_v)$.
\end{defn}

We may write $C^*(G)$ and $C^*_{r}(G)$ instead of $C^*(G, \lambda)$ and $C^*_{r}(G, \lambda)$ if the Haar system $\lambda$ is clear from the context. There is a canonical embedding 
\begin{equation} \label{eq:canonical-embedding-unit-space}
C_0(G^0) \hookrightarrow M(C^*_r(G)) \; ,
\end{equation}
where $M(C^*_r(G))$ is the multiplier algebra of $C^*_r(G)$, so that for any $ f \in C_0(G^0) $ and for any $ g $ in the dense subalgebra $ C_c(G) $ in $ C^*_r(G) $, the convolution product of $ g $ by $ f $ from the left and right are also in $C_c(G)$ and are given by
\[
(f \ast g) \ (x) = f(r(x)) \cdot g (x) \ \text{and}\  (g \ast f ) \ (x) =  g(x) \cdot f \left(d (y)\right)
\]
for any $x \in G$.

\begin{eg}\label{example:reduced-groupoid-algebra}
	For the groupoids in Example~\ref{example:groupoids}, we have:
	\begin{enumerate}
		\item for any locally compact Hausdorff space $X$ and a positive 
		regular Borel measure $\mu$ on $X$ with full support, the reduced 
		$C^*$-algebra of its pair groupoid $X \times X$ together with the left 
		Haar measure associated to $\mu$ is isomorphic to the algebra of 
		compact operators on $L^2(X, \mu)$. (see, for example, 
		\cite[Theorem~3.1.2]{paterson};) 
		\item for any action $\calpha$ of a locally compact group $G$ on a 
		locally compact second countable Hausdorff space $X$, the reduced 
		$C^*$-algebra 
		of its transformation groupoid is isomorphic to the reduced crossed 
		product $C_0(X) \rtimes_r G$. In fact, when $G$ is unimodular with a 
		Haar measure $m$, the canonical identification between $C_c(X \rtimes 
		G)$ and $C_c(G, C_c(X))$, where a function on $X \times G$ is 
		identified with an iterated function on $G$ and $X$, commutes with the 
		convolution products and the $*$-operation. Here the convolution 
		product and the $*$-operation on $C_c(G, C_c(X))$ are defined by 
		\[
			(f \ast f') (g)(x) = \int_{G} f(h)(x) \cdot f'(h^{-1}g)(\calpha_{h^{-1}}(x)) \, d m (h) 
		\]
		and
		\[
			f^* (g)(x) = \overline{f(g^{-1})( \calpha_{g^{-1}}(x) )}
		\]
		for any $g \in G$ and $x \in X$, so that it forms a dense subalgebra of $C_0(X) \rtimes_r G$;
		\item if $G_U$ is the reduction of a locally compact locally Hausdorff 
		groupoid $G$ to an open subset $U$ of the unit space $G^0$, then the 
		inclusion $C_c(G_U) \subset C_c(G)$ induces an embedding $C^*_r(G_U) 
		\hookrightarrow C^*_r(G)$, whose image coincides with the hereditary 
		subalgebra $C_0(U) \cdot C^*_r(G) \cdot C_0(U)$, where we used the 
		canonical embeddings $C_0(U) \subset C_0(G^0) \hookrightarrow 
		M(C^*_r(G))$. 
	\end{enumerate}
\end{eg}

Next we review some facts about order zero maps and nuclear dimension. If $A = A_0 \oplus A_1 \oplus \ldots \oplus A_d$ is a $C^*$-algebra, and $\varphi^{(k)} \colon A_k \to B$ are order zero contractions into some $C^*$-algebra $B$ for $k=0,1,\ldots,d$, we say that the map $\varphi = \sum_{k=0}^d \varphi^{(k)}$ is a \emph{piecewise contractive} $(d+1)$-decomposable completely positive map.

The following fact concerning order zero maps is standard and used often in the literature. It follows immediately from the fact that cones over finite dimensional $C^*$-algebras are projective. See \cite[Proposition 1.2.4]{winter-covering-II} and the proof of \cite[Proposition 2.9]{winter-zacharias}. We record it here for further reference.

\begin{Lemma}
\label{Lemma:lifting-decoposable-maps}
Let $A$ be a finite dimensional $C^*$-algebra, let $B$ be a $C^*$-algebra and let $I \lhd B$ be an ideal. Then any piecewise contractive $(d+1)$-decomposable completely positive map $\varphi \colon A \to B/I$ lifts to a piecewise contractive $(d+1)$-decomposable completely positive map $\widetilde{\varphi} \colon A \to B$. \qed
\end{Lemma}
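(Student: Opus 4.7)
The plan is to exploit the standard correspondence between c.p.c.\ order zero maps and $*$-homomorphisms out of cones. Recall that for any $C^*$-algebra $A$, writing $CA := C_0((0,1]) \otimes A$ for its cone, there is a natural bijection between c.p.c.\ order zero maps $A \to C$ and $*$-homomorphisms $CA \to C$, given by sending $\varphi \colon A \to C$ order zero to the $*$-homomorphism determined by $\iota \otimes a \mapsto \varphi(a)$, where $\iota(t) = t$. This correspondence is natural in $C$, so in particular a c.p.c.\ order zero map into a quotient $B/I$ lifts to a c.p.c.\ order zero map into $B$ if and only if the corresponding $*$-homomorphism $CA \to B/I$ lifts to a $*$-homomorphism $CA \to B$.

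Next, write the decomposition $A = A_0 \oplus A_1 \oplus \cdots \oplus A_d$ as a direct sum of finite-dimensional $C^*$-algebras, and write $\varphi = \sum_{k=0}^{d} \varphi^{(k)}$ with each $\varphi^{(k)} \colon A_k \to B/I$ a c.p.c.\ order zero map. By the above, each $\varphi^{(k)}$ corresponds to a $*$-homomorphism $\Phi^{(k)} \colon CA_k \to B/I$. Since $A_k$ is finite-dimensional, $CA_k$ is a finite-dimensional cone, and such cones are projective in the category of $C^*$-algebras (this is the classical projectivity result for cones over finite-dimensional $C^*$-algebras, see Loring's book and the references cited in the statement). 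Hence there exists a lift to a $*$-homomorphism $\widetilde{\Phi}^{(k)} \colon CA_k \to B$ with $\pi \circ \widetilde{\Phi}^{(k)} = \Phi^{(k)}$, where $\pi \colon B \to B/I$ is the quotient map.

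Translating back via the order zero correspondence, $\widetilde{\Phi}^{(k)}$ determines a c.p.c.\ order zero map $\widetilde{\varphi}^{(k)} \colon A_k \to B$ satisfying $\pi \circ \widetilde{\varphi}^{(k)} = \varphi^{(k)}$. Setting
\[
\widetilde{\varphi} := \sum_{k=0}^{d} \widetilde{\varphi}^{(k)} \colon A \to B,
\]
one obtains a piecewise contractive $(d+1)$-decomposable completely positive map that lifts $\varphi$, as required.

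The only real ingredient is the projectivity of $CA_k$ for finite-dimensional $A_k$; the rest is a routine packaging via the order zero/cone correspondence, and I do not anticipate a serious obstacle. (One can also obtain the statement without explicitly invoking cones, by lifting each matrix unit of $A_k$ and then using the functional calculus description of order zero maps; but the cone argument is the cleanest.)
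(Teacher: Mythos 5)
Your argument is correct and is exactly the one the paper has in mind: the text preceding the lemma states that it "follows immediately from the fact that cones over finite dimensional $C^*$-algebras are projective," which is precisely your reduction via the order zero/cone correspondence applied summand by summand. No further comment is needed.
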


We record for the reader's convenience a few lemmas from \cite{Hirshberg-Wu16} which we will re-use in this paper.
\begin{Lemma}[{\cite[Lemma 1.2]{Hirshberg-Wu16}}] 
 \label{Lemma:finite-dimnuc}
 Let $B$ be a separable and nuclear $C^*$-algebra and $B_0$ a dense subset of the unit ball of $B$. Then $\dimnuc(B) \leq d$ if and only if for any finite subset $F \subseteq B_0$ and for any $\eps>0$ there exists a $C^*$-algebra $A_{\eps} = A_{\eps}^{(0)} \oplus \cdots \oplus A_{\eps}^{(m)}$ and completely positive maps 
 \[
  \xymatrix{
   B \ar[dr]_{\psi = \bigoplus_{l=0}^m \psi^{(l)} \quad } \ar@{.>}[rr]^{\id} &  & B \\
   & A_{\eps} = \bigoplus_{l=0}^m A_{\eps}^{(l)} \ar[ur]_{\quad\varphi = \sum_{l=0}^m \varphi^{(l)}} &
  }
 \]
 so that
 \begin{enumerate}
  \item $\psi$ is contractive,
  \item each $\varphi^{(l)}$ is a sum $\varphi^{(l)} = \sum_{k=0}^{ d^{(l)} } \varphi^{(l,k)}$ of $(d^{(l)} + 1)$-many order zero contractions,
  \item $\|\varphi(\psi(x)) - x\| < \eps$ for all $x \in F$, and
  \item $\displaystyle \sum_{l=0}^m ( \dimnuc(A_{\eps}^{(l)}) + 1) (d^{(l)}+1) \leq d+1$.
 \end{enumerate}
 \qed
\end{Lemma}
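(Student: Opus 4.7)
The forward direction is immediate from the standard definition of nuclear dimension: given $\dimnuc(B) \leq d$, one takes $A_{\eps} = A_{\eps}^{(0)} \oplus \cdots \oplus A_{\eps}^{(d)}$ to be the usual finite-dimensional approximating algebra with each $\varphi^{(l)}$ already order zero contractive, so $m = d$, $d^{(l)} = 0$, and $\dimnuc(A_{\eps}^{(l)}) = 0$; condition (4) then becomes $\sum_{l=0}^d (0+1)(0+1) = d+1$, and the approximation on all finite subsets of $B$ specializes to those in $B_0$.

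For the converse, given a finite $F \subset B$ and $\eps > 0$, we may assume $F \subset B_0$ after rescaling into the unit ball and using density of $B_0$ (absorbing an $\eps/3$ approximation error into the final budget). Apply the hypothesis to $F$ to obtain $\psi = \bigoplus_{l} \psi^{(l)} \colon B \to \bigoplus_l A_{\eps}^{(l)}$ and $\varphi = \sum_{l,k} \varphi^{(l,k)}$ with each $\varphi^{(l,k)} \colon A_{\eps}^{(l)} \to B$ order zero contractive, so that $\|\varphi(\psi(b)) - b\| < \eps/3$ for $b \in F$. Set $e^{(l)} := \dimnuc(A_{\eps}^{(l)})$. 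For each $l$, apply the standard definition of nuclear dimension to $A_{\eps}^{(l)}$ at the finite set $\psi^{(l)}(F)$ with sufficiently small precision, producing a finite-dimensional $\bigoplus_{j=0}^{e^{(l)}} F^{(l,j)}$, a cpc map $\bigoplus_j \psi_0^{(l,j)} \colon A_{\eps}^{(l)} \to \bigoplus_j F^{(l,j)}$, and order zero contractions $\varphi_0^{(l,j)} \colon F^{(l,j)} \to A_{\eps}^{(l)}$ for which $\sum_j \varphi_0^{(l,j)} \circ \psi_0^{(l,j)}$ approximates $\id_{A_{\eps}^{(l)}}$ on $\psi^{(l)}(F)$ to the required accuracy.

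Assemble the final factorization through $\tilde F_{\eps} := \bigoplus_{(l,j,k)} F^{(l,j)}$, indexed by all triples with $l \in \{0, \ldots, m\}$, $j \in \{0, \ldots, e^{(l)}\}$, $k \in \{0, \ldots, d^{(l)}\}$, with cpc downward map $\tilde\psi$ whose $(l,j,k)$-component is $\psi_0^{(l,j)} \circ \psi^{(l)}$, and upward map $\tilde\varphi$ whose $(l,j,k)$-component is $\varphi^{(l,k)} \circ \varphi_0^{(l,j)}$. The key structural point, which is the only conceptual obstacle and is nevertheless immediate from orthogonality preservation, is that the composition of two order zero contractions is again order zero contractive: for positive $a, b \in F^{(l,j)}$ with $ab = 0$, order zero of $\varphi_0^{(l,j)}$ gives orthogonal positives in $A_{\eps}^{(l)}$, which order zero of $\varphi^{(l,k)}$ sends to orthogonal elements of $B$. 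Since the total number of triples is at most $\sum_l (e^{(l)}+1)(d^{(l)}+1) \leq d+1$, one can (padding with zero summands if needed) split $\tilde F_{\eps}$ into exactly $d+1$ direct summands on each of which $\tilde\varphi$ is a single order zero contraction, matching the standard definition of $\dimnuc(B) \leq d$. The final accuracy $\|\tilde\varphi(\tilde\psi(b)) - b\| < \eps$ on $F$ then follows by combining the density error $\eps/3$, the outer approximation error $\eps/3$, and the inner $A_{\eps}^{(l)}$-level errors propagated through the cpc maps $\varphi^{(l,k)}$ (controlled by the inner precision choice).
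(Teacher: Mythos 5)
Your argument is correct and is essentially the standard proof of this lemma (the paper itself only cites \cite[Lemma~1.2]{Hirshberg-Wu16}, where the same two-stage composition is carried out): factor each $A_\eps^{(l)}$ through a finite-dimensional algebra using $\dimnuc(A_\eps^{(l)})=e^{(l)}$, use that compositions of order zero contractions are order zero contractions, and count $\sum_l (e^{(l)}+1)(d^{(l)}+1)\leq d+1$ summands. The only detail worth making explicit in the density reduction is that the error amplification factor is uniformly bounded by $\|\varphi\circ\psi\|+1\leq d+2$, since $\varphi$ is a sum of at most $d+1$ order zero contractions, which is exactly what lets you absorb the $B_0$-approximation error.
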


\begin{lem}[{\cite[Lemma 1.3]{Hirshberg-Wu16}}]\label{lem:separable-dimnuc} 
 Let $G$ be a locally compact Hausdorff and second countable group, and let $A$ 
 be a $G$-$C^*$-algebra. Then any countable subset $S \subset A$ is contained 
 in a $G$-invariant separable $C^*$-subalgebra $B \subset A$ with $\dimnuc(B) 
 \leq \dimnuc(A)$. In particular, $A$ can be written as a direct limit of 
 separable $G$-$C^*$-algebras with nuclear dimension no more than $\dimnuc(A)$.
 \qed
\end{lem}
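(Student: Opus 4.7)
The plan is to build $B$ as the closure of an increasing chain of separable $C^*$-subalgebras obtained through an inductive construction that simultaneously closes up under the action of a countable dense subgroup of $G$ and under witnesses to the nuclear dimension for a countable family of finite subsets and tolerances. Since $G$ is second countable locally compact Hausdorff, it is separable, so fix a countable dense subgroup $G_0 \subseteq G$ (take a countable dense subset and generate the subgroup, which remains countable). Let $d = \dimnuc(A)$, and work with the equivalent formulation from Lemma~\ref{Lemma:finite-dimnuc} that witnesses nuclear dimension by piecewise contractive $(d+1)$-decomposable approximations over finite subsets.

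Construct separable $C^*$-subalgebras $B_0 \subseteq B_1 \subseteq \cdots$ of $A$ as follows. Let $B_0$ be the (separable) $C^*$-subalgebra generated by the countable set $\{\alpha_g(s) : g \in G_0, s \in S\}$. Assuming $B_n$ has been chosen, fix a countable dense subset $D_n \subseteq B_n$. For each finite subset $F \subseteq D_n$ and each $k \in \mathbb{N}$, invoke Lemma~\ref{Lemma:finite-dimnuc} to choose a finite-dimensional $C^*$-algebra $E_{F,k}$ and completely positive maps $\psi_{F,k} \colon A \to E_{F,k}$ and $\varphi_{F,k} \colon E_{F,k} \to A$ satisfying the $(d+1)$-decomposition condition with $\|\varphi_{F,k}\, \psi_{F,k}(x) - x\| < 1/k$ for all $x \in F$. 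Let $B_{n+1}$ be the separable $C^*$-subalgebra of $A$ generated by $B_n$, the countably many finite-dimensional images $\varphi_{F,k}(E_{F,k})$, and the countable set $\{\alpha_g(b) : g \in G_0,\ b \in D_n\}$.

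Set $B = \overline{\bigcup_n B_n}$. Then $B$ is separable and contains $S \subseteq B_0$. By construction, $\alpha_g(D_n) \subseteq B_{n+1}$ for every $g \in G_0$, so by continuity of each $\alpha_g$ and density of $D_n$ in $B_n$, we get $\alpha_g(B_n) \subseteq B_{n+1}$; thus $B$ is $G_0$-invariant. Since $B$ is norm-closed and $G_0$ is dense in $G$, continuity of the orbit map $g \mapsto \alpha_g(b)$ for each fixed $b \in B$ upgrades this to $G$-invariance. To bound the nuclear dimension, fix a finite $F \subseteq B$ and $\epsilon > 0$: choose $n$ and $F' \subseteq D_n$ so that every element of $F$ is within $\epsilon/3$ of some element of $F'$, and pick $k > 3/\epsilon$. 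The restriction $\psi_{F',k}|_B \colon B \to E_{F',k}$ remains completely positive and contractive, $\varphi_{F',k}$ takes values in $B_{n+1} \subseteq B$ and still admits the same $(d+1)$-decomposition, and together they furnish the required approximation on $F$. Applying Lemma~\ref{Lemma:finite-dimnuc} in the other direction gives $\dimnuc(B) \leq d$. The ``in particular'' clause then follows by noting that given any two such subalgebras $B_{S_1}, B_{S_2}$, choosing countable dense subsets $D_i \subseteq B_{S_i}$ and applying the construction to $S = D_1 \cup D_2$ produces a $B$ that contains both, making the family of such subalgebras directed with union equal to $A$. The only real obstacle is the simultaneous bookkeeping of the two closure operations while preserving separability, which succeeds because each stage adds only countably many generators.
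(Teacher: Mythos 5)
Your argument is correct and is essentially the same separabilization argument as the one in \cite{Hirshberg-Wu16} that this lemma cites: build an increasing chain of separable subalgebras, alternately closing up under a countable dense subgroup of $G$ and under countably many finite-dimensional witnesses to the nuclear dimension, then pass to the closed union. The only (harmless) slip is in the final $\epsilon/3$ bookkeeping: since $\varphi_{F',k}$ is a sum of $d+1$ contractions rather than a contraction, the error picks up a factor of $d+1$, so the tolerances should be chosen as $\epsilon/(3(d+1))$ or similar.
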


\begin{lem}[{\cite[Lemma 1.4]{Hirshberg-Wu16}}] \label{lem:quasicentral-approximate-unit}
 Let $X$ be a locally compact Hausdorff space, let $G$ be a locally compact Hausdorff group, and let $\calpha: G \curvearrowright X$ be a continuous action. Suppose $U$ is a $G$-invariant open subset of $X$. Then there is a quasicentral approximate unit for $C_0(U) + C_0(U) \rtimes_\alpha G \subset M(C_0(X) \rtimes_\alpha G)$ which is contained in $C_c(U)_{+, \leq 1}$.
 \qed
\end{lem}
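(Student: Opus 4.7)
The plan is to reduce quasicentrality in the multiplier algebra to a familiar asymptotic $\alpha$-invariance condition on the approximate unit, and then deploy an Arveson-style convex hull argument, taking advantage of the fact that $C_c(U)_{+,\leq 1}$ is closed under convex combinations (positivity, bound by $1$, and compactness of support in $U$ are all preserved).

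For any $e \in C_c(U)_{+,\leq 1}$, commutativity of $C_0(X)$ forces $[e, a] = 0$ for every $a \in C_0(U)$, so quasicentrality relative to $C_0(U) + C_0(U) \rtimes_\alpha G$ reduces to asymptotic commutation with elements of $C_0(U) \rtimes_\alpha G$. Working on the dense subalgebra $C_c(G, C_0(U))$ and using the formulas for the left/right multiplier action of $C_0(U)$ on the crossed product, I would compute
\[
[e, f](t) = e \cdot f(t) - f(t) \cdot \alpha_t(e) = \bigl(e - \alpha_t(e)\bigr) f(t),
\]
where the second equality again uses commutativity of $C_0(U)$. Bounding the $C^*$-norm by the $L^1$-norm then gives $\|[e, f]\| \leq \|f\|_{L^1} \cdot \sup_{t \in \mathrm{supp}(f)} \|e - \alpha_t(e)\|$, so it suffices to produce an approximate unit of $C_0(U)$ inside $C_c(U)_{+,\leq 1}$ that is asymptotically $\alpha$-invariant, uniformly on compact subsets of $G$.

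Start with any approximate unit $(e_\lambda) \subseteq C_c(U)_{+,\leq 1}$ of $C_0(U)$, obtained via Urysohn and indexed by compact subsets of $U$. A routine compactness argument (using that each $f \in C_c(G, C_0(U))$ has compact image in $C_0(U)$) upgrades it to an approximate unit for $C_0(U) \rtimes_\alpha G$ in the multiplier algebra, and hence for $C_0(U) + C_0(U) \rtimes_\alpha G$. Given finite sets $F \subseteq C_0(U) \rtimes_\alpha G$ and $F' \subseteq C_0(U) + C_0(U) \rtimes_\alpha G$ and $\epsilon > 0$, fix $\lambda_0$ so large that $\|e_\lambda a - a\| < \epsilon$ for every $\lambda \geq \lambda_0$ and $a \in F'$. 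For each $f \in F$ the commutators $[e_\lambda, f]$ converge to $0$ in the strong operator topology of $M(C_0(X) \rtimes_\alpha G)$, since for $a \in C_0(X) \rtimes_\alpha G$ we have $[e_\lambda, f] a = e_\lambda (f a) - f (e_\lambda a) \to f a - f a = 0$. By Hahn–Banach (norm and weak closures of convex subsets of a Banach space coincide), $0$ lies in the norm closure of the convex set
\[
\bigl\{ ([e, f])_{f \in F} \,\big|\, e \in \mathrm{conv}(e_\lambda : \lambda \geq \lambda_0) \bigr\} \subseteq (C_0(X) \rtimes_\alpha G)^{|F|},
\]
so there exists a convex combination $e_{F, F', \epsilon} \in C_c(U)_{+,\leq 1}$ with $\|[e_{F, F', \epsilon}, f]\| < \epsilon$ for every $f \in F$ and (by convexity) $\|e_{F, F', \epsilon} a - a\| < \epsilon$ for every $a \in F'$. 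Ordering triples $(F, F', \epsilon)$ by refinement then yields the desired net.

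The technical core, and the only step that I would expect to require care, is the Hahn–Banach passage from strong-operator convergence of commutators to norm convergence on a single convex combination; this is the standard engine behind the Arveson–Akemann–Pedersen theory of quasicentral approximate units, applied here with the added constraint — automatically satisfied by convex-combination stability — that the unit stay inside $C_c(U)_{+,\leq 1}$.
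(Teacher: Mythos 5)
The paper does not prove this lemma (it is quoted from \cite{Hirshberg-Wu16} without proof), so I can only judge your argument on its own merits. It has the right outer shell --- an Arveson-style convex-combination argument together with the observation that $C_c(U)_{+,\leq 1}$ is stable under convex combinations --- but it establishes too weak a statement. You only prove asymptotic commutation with finite sets $F\subseteq C_0(U)\rtimes_\alpha G$. For such $f$ the conclusion is automatic and needs no Hahn--Banach: $C_0(U)\rtimes_\alpha G$ is an ideal of $C_0(X)\rtimes_\alpha G$, and a self-adjoint approximate unit of an ideal is two-sided, so $\|[e_\lambda,f]\|\leq\|e_\lambda f-f\|+\|f-fe_\lambda\|\to 0$ already in norm. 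The actual content of the lemma --- and what the paper uses in the proof of Theorem~\ref{thm-main}, where $e$ must almost commute with the lifts $\widetilde{\varphi}_{\leq R}^{(l)}(a)\in C_0(Y)\rtimes\R$, which are \emph{not} in the ideal $C_0(Y_{>R})\rtimes\R$ --- is asymptotic commutation with arbitrary elements of $C_0(X)\rtimes_\alpha G$. For those elements your justification collapses: in $[e_\lambda,f]a=e_\lambda(fa)-f(e_\lambda a)$, neither $e_\lambda(fa)\to fa$ nor $e_\lambda a\to a$ holds once $fa$ and $a$ leave the ideal, so the claimed strong-operator convergence of the commutators is false in general.

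The repair is the standard Arveson mechanism: because $C_0(U)\rtimes_\alpha G$ is an ideal of $B=C_0(X)\rtimes_\alpha G$, any approximate unit $(e_\lambda)$ of it converges $\sigma$-weakly in $B^{**}$ to the corresponding open projection, which is \emph{central}; hence $[e_\lambda,b]\to 0$ weakly in the Banach space $B$ for every $b\in B$, and Hahn--Banach applied to the convex set you wrote down --- now with $F$ an arbitrary finite subset of $B$ --- produces the desired element of $\mathrm{conv}\{e_\lambda\}\subseteq C_c(U)_{+,\leq 1}$. (Your opening reduction to a uniformly almost-$\alpha$-invariant approximate unit of $C_0(U)$ is also a dead end for general locally compact $G$: sup-norm almost invariance along orbits is not achievable, e.g.\ for translation actions of nonamenable discrete groups. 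Since you abandon that route in favor of the convexity argument, this is only a distraction, but it should not be advertised as the sufficient condition.)
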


We record the following two results from classical dimension theory, which are used later in the paper.
Those two results apply to the case of second countable Hausdorff spaces, since 
any 
second countable Hausdorff space is paracompact, Hausdorff and totally normal. 
For a 
discussion of different variants of paracompactness and normality, we refer the 
reader to \cite[Chapter 1, section 4]{Pears75}.

\begin{thm}[{\cite[Chapter 3, Theorem 6.4]{Pears75}}]\label{thm:Pears75}
 If $M$ is a subspace of a totally normal space $X$, then $\dim (M) \leq \dim (X)$.
 \qed
\end{thm}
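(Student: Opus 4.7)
The plan is to establish the dimension inequality by showing that every finite open cover of $M$ admits an open refinement of order at most $\dim(X)+1$. Let $\mathcal{U} = \{U_1, \ldots, U_k\}$ be such a cover. Since $M$ carries the subspace topology, each $U_i$ is of the form $V_i \cap M$ for some open $V_i \subseteq X$. Set $W = \bigcup_{i=1}^k V_i$: this is an open subset of $X$ containing $M$, and $\{V_1, \ldots, V_k\}$ is a finite open cover of $W$. So the problem reduces to (a) proving $\dim(W) \leq \dim(X)$, and (b) refining the cover inside $W$ and restricting to $M$.

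The main step is (a). Invoking total normality of $X$, I would write $W = \bigcup_{\lambda} S_\lambda$ as a locally-finite-in-$W$ union of open $F_\sigma$-subsets $S_\lambda$ of $X$. Writing each $S_\lambda = \bigcup_n F_{\lambda,n}$ as a countable union of sets closed in $X$, the countable sum theorem for covering dimension in normal spaces (closed subsets preserve dimension, and countable unions of closed subsets of dimension $\leq n$ have dimension $\leq n$) gives $\dim(S_\lambda) \leq \dim(X)$. A locally finite sum theorem, applied to the family $\{S_\lambda\}$ inside $W$, then yields $\dim(W) \leq \dim(X)$.

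Once (a) is in hand, step (b) is routine: the cover $\{V_1, \ldots, V_k\}$ of $W$ admits an open refinement $\{\widetilde{W}_j\}$ in $W$ of order at most $\dim(X)+1$, and $\{\widetilde{W}_j \cap M\}$ is then an open refinement of $\mathcal{U}$ in $M$ of the same order. Hence $\dim(M) \leq \dim(X)$. The main obstacle is in step (a): one must carefully invoke the countable and locally finite sum theorems, whose proofs require inductive shrinking of open covers and patching of refinements across the $F_\sigma$-decomposition. These are standard but nontrivial results in classical dimension theory, worked out in detail in \cite[Chapter 3]{Pears75}.
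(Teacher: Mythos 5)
This result is quoted in the paper directly from \cite[Chapter 3, Theorem 6.4]{Pears75} with no proof of its own, and your sketch reconstructs essentially the standard (Dowker) argument given there: reduce an arbitrary subspace $M$ to an open one by extending a finite open cover of $M$ to open sets of $X$ and taking their union $W$, then handle the open subspace via the closed-subset, countable-sum, and locally finite sum theorems applied to the $F_\sigma$-decomposition supplied by total normality. The only point needing a little care is that the locally finite sum theorem you invoke must be the version for locally finite families of \emph{open} $F_\sigma$-sets (or else one first passes to a closed shrinking of $\{S_\lambda\}$ inside $W$, using that $W$ is normal because totally normal spaces are hereditarily normal); with that reading the argument is correct.
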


\begin{prop}[{\cite[Chapter 9, Proposition 2.16]{Pears75}}] \label{prop:Pears75}
 If $X$ and $Y$ are weakly paracompact $T_4$-spaces and $f\colon X \to Y$ is a continuous open surjection such that $f^{-1}(y)$ is finite for each point of $Y$, then $\dim(X) = \dim (Y) $. \qed
\end{prop}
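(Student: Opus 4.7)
The plan is to prove the two inequalities $\dim(Y) \leq \dim(X)$ and $\dim(X) \leq \dim(Y)$ separately, using standard dimension-theoretic tools; this is a classical result from Pears's textbook, so in practice one cites it, but the argument sketched below is the standard route.

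For $\dim(Y) \leq \dim(X)$, I would start with any open cover $\mathcal{V}$ of $Y$ and form its pull-back $f^{-1}(\mathcal{V}) = \{f^{-1}(V) : V \in \mathcal{V}\}$, which is an open cover of $X$ by continuity of $f$. Since $f$ is surjective and each $x \in f^{-1}(y)$ lies in $f^{-1}(V)$ if and only if $y \in V$, the multiplicities match: $\mathrm{ord}(f^{-1}(\mathcal{V})) = \mathrm{ord}(\mathcal{V})$. An open refinement $\mathcal{W}$ of $f^{-1}(\mathcal{V})$ of order $\leq \dim(X)+1$ can then be pushed forward by $f$ (using openness) and coalesced across the finite fibers of $f$ to produce an open refinement of $\mathcal{V}$ of order $\leq \dim(X)+1$. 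Weak paracompactness of $Y$ ensures that the requisite locally finite refinements exist, so that this order estimate indeed witnesses $\dim(Y) \leq \dim(X)$.

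For the reverse inequality $\dim(X) \leq \dim(Y)$, the key fact is that finite (hence zero-dimensional) fibers contribute no extra dimension — a special case of the Hurewicz-type bound $\dim(X) \leq \dim(Y) + \sup_{y \in Y} \dim f^{-1}(y)$. Concretely, given an open cover $\mathcal{U}$ of $X$, for each $y \in Y$ enumerate $f^{-1}(y) = \{x_1, \ldots, x_m\}$ and choose pairwise disjoint open neighborhoods $U_i \ni x_i$ each contained in some element of $\mathcal{U}$; this uses Hausdorffness within $X$ together with the $T_4$ hypothesis. Then exploit openness of $f$ together with the finiteness of $f^{-1}(y)$ to produce a neighborhood $V_y$ of $y$ with $f^{-1}(V_y) \subseteq U_1 \sqcup \cdots \sqcup U_m$. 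Refine the open cover $\{V_y\}_{y \in Y}$ of $Y$ to order $\leq \dim(Y)+1$, pull back to $X$, and intersect with the $U_i$'s; the resulting family is an open refinement of $\mathcal{U}$ whose order is still $\leq \dim(Y)+1$, because within each $f^{-1}(V_y)$ the cells $U_i$ are disjoint.

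The main obstacle I expect is the careful bookkeeping required to preserve the multiplicity of covers in both directions: openness of $f$ is essential for pushing covers forward without inflating the order, while the disjointification inside each finite fiber is what prevents any loss of dimension when descending from $X$ to $Y$. The weak-paracompactness and $T_4$ hypotheses are used exactly to guarantee the locally finite refinements and the Hausdorff separations of finitely many points inside $X$ that these constructions need.
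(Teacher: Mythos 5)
The paper does not prove this proposition; it is quoted verbatim from Pears's book, so the only meaningful comparison is with the standard textbook argument. Measured against that, your sketch has genuine gaps in \emph{both} directions. For $\dim(Y)\leq\dim(X)$: if $\mathcal{W}$ refines $f^{-1}(\mathcal{V})$ with order $\leq n+1$, then a point $y$ with $|f^{-1}(y)|=m$ can lie in up to $m(n+1)$ of the sets $f(W)$, and since the fibers are finite but not uniformly bounded this order can even be infinite. The proposed ``coalescing across the finite fibers'' is not a well-defined operation: which sets $f(W)$ ought to be merged varies from point to point, and there is no consistent global choice. Indeed, if such a naive push-forward argument worked it would not use finiteness of the fibers in any essential way, and it would then contradict the classical Kolmogorov--Keldysh examples of open maps with countable fibers that raise dimension. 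For $\dim(X)\leq\dim(Y)$: the neighborhood $V_y$ with $f^{-1}(V_y)\subseteq U_1\sqcup\cdots\sqcup U_m$ need not exist for a map that is merely open; that step secretly requires $f$ to be closed (proper). Take $X=\bigl([0,1]\times\{0\}\bigr)\cup\bigl((0,1]\times\{1\}\bigr)$ with $f$ the first-coordinate projection onto $Y=[0,1]$: this is a continuous open finite-to-one surjection of metrizable spaces, yet at $y=0$ every neighborhood $V$ of $0$ has $f^{-1}(V)$ containing points $(x,1)$ arbitrarily far (in $X$) from the single preimage $(0,0)$.

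The standard proof (and, as far as I can tell, the one in Pears) uses the finite-fiber hypothesis structurally rather than pointwise. One shows that $Y_{\geq k}=\{y:|f^{-1}(y)|\geq k\}$ is open: separate $k$ distinct preimages of $y$ by disjoint open sets $U_1,\dots,U_k$ (Hausdorffness of $X$) and observe that every point of the open set $\bigcap_i f(U_i)$ has at least one preimage in each $U_i$. Hence $Y$ is a countable union of the locally closed strata $Y_{=k}$, and over a suitable open (in $Y_{=k}$) neighborhood $V$ of any point of $Y_{=k}$ the map $f$ restricts to a homeomorphism from each $U_i\cap f^{-1}(V)$ onto $V$, i.e.\ $f$ is locally a trivial $k$-sheeted covering over each stratum. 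Both inequalities then follow from the subset theorem together with the countable and locally finite sum theorems \textemdash\ which is exactly where the weak paracompactness and normality hypotheses are consumed. I would encourage you to rework the argument along these lines; as written, neither half of your sketch closes.
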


\section{Subtubular covers for flows without short periods}
\label{Section:Tube}

In this section, we study the case where there is a nontrivial lower bound on the periods, i.e.\ $Y = Y_{> R}$ for some $R>0$.  
Our aim is to show that when this lower bound $R$ is large enough, we can obtain covers of the space by long enough tubes (or subsets of tubes) with controlled dimensions, and these in turn yield partitions of unity made up of almost invariant functions. The content of this section parallels and extends \cite[Section 6]{HSWW16}, where the case of free flows is treated. Thus unsurprisingly, this section also makes uses of technical results from \cite{Kasprowski-Rueping} (which is a generalization of results from \cite{BarLRei081465306017991882}).

Before we introduce the necessary terminology borrowed from \cite{BarLRei081465306017991882} and \cite{Kasprowski-Rueping}, we point out that we are not using the full power of the constructions and results in those papers. Instead we are looking at a somewhat simplified situation: while in those papers, they need to consider a proper action by a discrete group $G$ which commutes with the flow, this is of no particular interest to us in the present paper, and thus we consider only the flow itself. In other words, we take $G$ to be the trivial group.

\begin{defn}[cf.~{\cite[Definition 2.2]{BarLRei081465306017991882}}]
\label{defn:tubes}
 Let $\calpha \colon  G  \curvearrowright Y$ be a flow.
 A \emph{tube} (or a \emph{box}) is a compact subset $B \subset Y$ such that 
 there exists a real number $ l = l_B $ with the property that for every $y \in 
 B$, there exist real numbers $ a_-(y) \le 0 \le a_+(y)$ and $\varepsilon(y) > 
 0 $ satisfying 
 \begin{align*}
  l = &\  a_+(y) - a_-(y) ;\\
  \calpha_t(y) \in &\ B \ \text{ for\ } t \in [a_-(y), a_+(y)  ]  ;\\
  \calpha_t(y) \not\in &\ B \ \text{ for\ } t \in ( a_-(y) - \varepsilon(y), a_-(y) ) \cup (a_+(y), a_+(y) +  \varepsilon(y) ).
 \end{align*}
 Moreover, for any tube $B$, the following data are associated to it:
 \begin{enumerate}
  \item The \emph{length} $ l_B $;
  \item The topological interior $B^o$ is called an \emph{open tube};
  \item The subset $ S_B = \left\{ y \in B \middlebar a_-(y) + a_+(y) =0 \right\}$ is called the \emph{central slice} of $B$;
  \item The subsets $ \partial_+ B $ and $ \partial_- B $, respectively called the \emph{top} and the \emph{bottom} of $B$, are defined by 
   $$ \partial_\pm B = \left\{ y \in B \middlebar a_\pm (y) =0 \right\} = \left\{ \calpha_{a_\pm(y)} (y) \middlebar y \in S_B \right\} ; $$
  \item Similarly, the \emph{open top} $ \partial_+ B ^o $ and the \emph{open bottom} $ \partial_- B ^o $ are defined by
   $$ \partial_\pm B^o = \left\{ \calpha_{a_\pm(y)} (y) \middlebar y \in S_B \cap B^o \right\}  \; , $$
   and the \emph{open central slice} is defined by 
   \[
	   S_{B^o} = S_B \cap B^o \; .
   \]
 \end{enumerate}
\end{defn}

Intuitively speaking, what a tube is to a dynamical system of ${\mathbb{R}^{}}$ is what a Rokhlin tower is to a dynamical system of ${\mathbb{Z}^{}}$, in that they function as a local trivialization of the action. 

\begin{lem}[cf.~{\cite[Lemma 2.6]{BarLRei081465306017991882}}]
\label{lem:tubes-basics}
 Let $B \subset Y$ be a tube of length $l = l_B$. Then
 \begin{enumerate}
  \item The maps 
   $$ a_\pm: B \to {\mathbb{R}^{}}, \ y \mapsto a_\pm(y) $$
  are continuous;
  \item There exists $\varepsilon_B > 0 $ depending only on $B$ such that the numbers $\varepsilon(y)$ appearing in the definition of a tube can be chosen so that $ \varepsilon(y) \ge \varepsilon_B$ holds for all $y \in B$ (in fact, it is clear that we can then simply take $\varepsilon(y) = \varepsilon_B$);
  \item The map 
   $$ S_B \times \left[ - \frac{l}{2}, \frac{l}{2} \right] \to B \; , \quad  (y, t) \mapsto \calpha_t(y) $$
  is a homeomorphism, whose inverse is given by the map
  \[
	  B \to S_B \times \left[ - \frac{l}{2}, \frac{l}{2} \right] \; , \quad  y 
	  \to \left( \calpha_{\frac{a_-(y) + a_+(y)}{2}} (y), \frac{l}{2} - a_+(y) 
	  \right) \; .
  \]
 \end{enumerate}
\end{lem}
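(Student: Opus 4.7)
The plan is to prove (1), (2), and (3) in that order, since (2) and (3) both depend on (1). For (1), the key is that $[a_-(y), a_+(y)]$ is exactly the connected component of $\{t \in \R : \calpha_t(y) \in B\}$ containing $0$, thanks to the gap guaranteed by $\varepsilon(y)$. I would show upper semicontinuity of $a_+$ by taking $y_n \to y$ and a convergent subsequence $a_+(y_{n_k}) \to a_+^*$. For any $s \in [0, a_+^*)$ one has $s < a_+(y_{n_k})$ eventually, so $\calpha_s(y_{n_k}) \in B$; passing to the limit with $B$ closed yields $\calpha_{[0, a_+^*]}(y) \subseteq B$. Since this interval is connected and contains $0$, it must sit inside $[a_-(y), a_+(y)]$, hence $a_+^* \leq a_+(y)$. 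A symmetric argument shows $\liminf a_-(y_n) \geq a_-(y)$, and the identity $a_+ - a_- \equiv l_B$ then promotes these semicontinuity statements into continuity of both $a_\pm$.

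The main obstacle will be (2), where one must extract a uniform positive lower bound out of a compactness argument. I would work with $\varepsilon(y)$ taken to be maximal, i.e.\ $\varepsilon(y) = \min(\tau_-(y), \tau_+(y))$, where $\tau_+(y) := \inf\{t > a_+(y) : \calpha_t(y) \in B\} - a_+(y) \in (0, \infty]$ and $\tau_-(y)$ is the corresponding distance on the left. If the infimum of $\varepsilon$ over $B$ vanished, by compactness of $B$ and symmetry I could pass to a subsequence $y_n \to y \in B$ with $\tau_+(y_n) \in (0, \infty)$ and $\tau_+(y_n) \to 0$. Setting $y_n' := \calpha_{a_+(y_n) + \tau_+(y_n)}(y_n)$ as the first re-entry points, the definition of $\tau_+$ forces $a_-(y_n') = 0$. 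But continuity of the flow combined with (1) gives $y_n' \to z := \calpha_{a_+(y)}(y)$, whose backward orbit traces the entire length-$l_B$ segment of $y$ in $B$, forcing $a_-(z) = -l_B$; this contradicts continuity of $a_-$ applied to $y_n' \to z$.

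For (3), the map $\Phi(y_0, t) = \calpha_t(y_0)$ is continuous from continuity of the flow. For any $y \in B$ I would set $y_0 := \calpha_{(a_-(y)+a_+(y))/2}(y)$; a direct calculation confirms $a_\pm(y_0) = \pm l_B/2$, so $y_0 \in S_B$, and $\Phi(y_0, l_B/2 - a_+(y)) = y$, which simultaneously verifies surjectivity of $\Phi$ and the claimed inverse formula. Injectivity follows because any two points of $S_B$ on a common orbit and separated by a time shift in $[-l_B, l_B]$ must coincide, since both their $[-l_B/2, l_B/2]$-orbit segments equal the same full orbit segment in $B$. By (1), $S_B = \{y \in B : a_+(y) = l_B/2\}$ is closed in the compact set $B$, hence compact; since $B$ is Hausdorff as a subspace of the (metrizable) ambient space, the continuous bijection $\Phi$ from the compact space $S_B \times [-l_B/2, l_B/2]$ onto $B$ is automatically a homeomorphism.
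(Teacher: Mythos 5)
The paper does not actually prove this lemma --- it is imported verbatim from Bartels--L\"uck--Reich \cite[Lemma 2.6]{BarLRei081465306017991882} --- so there is no in-paper argument to compare against; judged on its own, your proof is correct and complete, and it follows the standard route one would expect (and that BLR essentially take): the observation that $[a_-(y),a_+(y)]$ is the connected component of $0$ in the closed set $\{t:\calpha_t(y)\in B\}$, giving well-definedness and one-sided semicontinuity of $a_\pm$, upgraded to continuity by the identity $a_+-a_-\equiv l_B$; a compactness contradiction for the uniform $\varepsilon_B$; and the compact-to-Hausdorff continuous-bijection argument for (3). Your injectivity argument in (3), phrased via the two orbit segments, is correct once unwound: if $y_0'=\calpha_s(y_0)$ with $y_0,y_0'\in S_B$ and $|s|\le l_B$, the union $[-l_B/2,l_B/2]\cup[s-l_B/2,s+l_B/2]$ is a connected subset of $\{t:\calpha_t(y_0)\in B\}$ containing $0$, hence lies in the component $[-l_B/2,l_B/2]$, forcing $s=0$.

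One small point worth flagging: the final contradiction in your proof of (2) ($a_-(y_n')=0$ for all $n$ while $a_-(z)=-l_B$) genuinely requires $l_B>0$. Definition~\ref{defn:tubes} as written only gives $l_B\ge 0$, and for $l_B=0$ statement (2) can actually fail (e.g.\ $B=\{(0,0)\}\cup\{(1/n,1/n)\}\cup\{(2/n,1/n)\}$ in $\R^2$ under horizontal translation), so this is a defect of the degenerate case rather than of your argument; the BLR definition requires $l_B>0$, and every tube constructed in this paper has positive length, so nothing is lost. It would be worth stating $l_B>0$ explicitly when invoking the lemma.
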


\begin{rmk}\label{rmk:tube-basics}
 The last statement in the previous lemma can be turned into an alternative definition for tubes: a tube is a pair $(S, l)$, where $S \subset Y$ is compact, $l >0$, and the map 
  \[
	  S \times \left[ - \frac{l}{2}, \frac{l}{2} \right] \to Y , \ (y, t) \mapsto \calpha_t(y) 
  \] 
 is an embedding. To relate to Definition~\ref{defn:tubes}, we can establish, for any $y \in S$ and $t \in \left[ - \frac{l}{2}, \frac{l}{2} \right]$, the identities 
 \[	 
	 a_- \left( \calpha_t(y) \right) = - t - \frac{l}{2} \text{ and } a_+ \left( \calpha_t(y) \right) = - t + \frac{l}{2} \; . 
 \]
\end{rmk}

Recall that $\mathrm{per}_{\calpha}(y) = \inf\left\{t > 0 \middlebar \calpha_t(y) = y \right\}$ is the minimal period of the orbit of $y$. Observe that the existence of a tube $B$ around a point $y \in Y$ requires the necessary condition $l_B < \mathrm{per}_{\calpha}(y)$. In fact, this is also sufficient: 

\begin{lem}[{\cite[Lemma 2.11 and Lemma 2.16]{BarLRei081465306017991882}}]\label{lem:existence-tubes}
 For any $y \in Y$ not fixed by $\calpha$, and for any $l \in (0, \mathrm{per}_{\calpha}(y))$, there exists a tube $B$ with $l_B = l$ and $y \in S_B \cap B^o$. 
\end{lem}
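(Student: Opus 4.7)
My plan is to reduce, via Remark~\ref{rmk:tube-basics}, the construction of the desired tube to that of a compact \emph{local transversal}. Concretely, it suffices to produce a compact $S \subseteq Y$ with $y \in S$ such that the map $\Psi \colon S \times [-l/2, l/2] \to Y$, $(s, t) \mapsto \calpha_t(s)$, is a topological embedding whose image has $y$ as an interior point. Granting this, $B := \Psi(S \times [-l/2, l/2])$ is a tube of length $l$ with $S_B = \Psi(S \times \{0\}) = S$ and $y \in S_B \cap B^o$.

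For the local cross-section at $y$: since $y$ is not fixed by $\calpha$ and $Y$ is locally compact metrizable, I would appeal to the Beboutov--Kakutani cross-section theorem for continuous flows, which furnishes a compact set $\Sigma \ni y$ and some $\delta_0 \in (0, l/2)$ such that $\Psi|_{\Sigma \times [-\delta_0, \delta_0]}$ is a topological embedding whose image is a compact neighborhood of $y$, and such that $y$ lies in the image of $\Sigma^o \times (-\delta_0, \delta_0)$, where $\Sigma^o$ denotes the relative interior of $\Sigma$. This local step is the only place where genuine continuous-flow theory enters.

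To upgrade to length $l$, set $K := [-l, -\delta_0/2] \cup [\delta_0/2, l]$. Since $l < \mathrm{per}_{\calpha}(y)$, we have $\calpha_u(y) \neq y$ for every $u \in K$; as $\calpha_K(y)$ is compact and avoids $y$, after shrinking $\Sigma$ to a smaller compact neighborhood of $y$ we may assume $\Sigma \cap \calpha_K(y) = \emptyset$. The set
\[
A := \{ (s, u) \in \Sigma \times K \mid \calpha_u(s) \in \Sigma \}
\]
is then closed in the compact space $\Sigma \times K$, so its projection $\pi_\Sigma(A) \subseteq \Sigma$ is compact and misses $y$. Pick a compact neighborhood $S$ of $y$ inside $\Sigma \setminus \pi_\Sigma(A)$, with $y$ in the relative interior of $S$; this forces $\calpha_u(s) \notin S$ for all $s \in S$ and $u \in K$. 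To verify that $\Psi|_{S \times [-l/2, l/2]}$ is injective, suppose $\calpha_t(s) = \calpha_{t'}(s')$ with $s, s' \in S$ and $t, t' \in [-l/2, l/2]$; then $\calpha_{t-t'}(s) = s' \in S$ and $|t - t'| \leq l$. If $|t - t'| < \delta_0/2$, the local embedding from the previous paragraph forces $s = s'$ and $t = t'$; if $|t - t'| \geq \delta_0/2$, then $t - t' \in K$, contradicting $\calpha_{t-t'}(s) \in S$. Compactness of $S \times [-l/2, l/2]$ and Hausdorffness of $Y$ then promote injectivity of $\Psi$ to an embedding, and the local homeomorphism property persists on $S \subseteq \Sigma$, ensuring $y \in B^o$.

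The main obstacle is the existence of the local transversal $\Sigma$ at a non-fixed point of a purely topological flow: in a smooth context this is the elementary flow-box theorem via the implicit function theorem, but in the continuous category it requires the Beboutov--Kakutani cross-section theorem (or an equivalent construction of a continuous time-function along the orbit extended transversely). Once that is available, the rest is a compactness-and-continuity exercise controlled by the hypothesis $l < \mathrm{per}_{\calpha}(y)$.
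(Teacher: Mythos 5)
Your argument is correct, but note that the paper offers no proof of this lemma at all: it is imported verbatim from Bartels--L\"{u}ck--Reich \cite{BarLRei081465306017991882} (their Lemmas 2.11 and 2.16), so the real comparison is with that source. Your two-step scheme --- first a short local cross-section $\Sigma$ at the non-fixed point $y$, then lengthening to $l$ by shrinking the transversal until the returns $\calpha_u(s)$, $u\in[-l,-\delta_0/2]\cup[\delta_0/2,l]$, miss it --- is essentially their argument; the difference is that you black-box the local step by citing a classical cross-section theorem, whereas they construct the local section by hand (via a continuous time function built from a Urysohn-type function), which is why no smoothness is needed anywhere. Two remarks. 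First, the result you want is the Whitney--Bebutov \emph{local cross-section} theorem for continuous flows on locally compact metric spaces at non-rest points; the name ``Bebutov--Kakutani'' usually refers to the embedding of flows into the translation flow on $C(\R)$, which is a different statement. Second, your reduction through Remark~\ref{rmk:tube-basics} tacitly uses that the embedding property of $\Psi|_{S\times[-l/2,l/2]}$ already implies the ``immediate exit'' clause (the $\varepsilon(y)$ condition) in Definition~\ref{defn:tubes}; this is true --- if $\calpha_{l/2+1/n}(s)=\Psi(s_n,t_n)$ for all $n$, compactness plus injectivity force $s_n=\calpha_{u_n}(s)\in S$ with $u_n\to 0^{+}$, contradicting injectivity of $\Psi$ on $S\times[-l/2,l/2]$ --- but it deserves a line, since the paper asserts the equivalence in Remark~\ref{rmk:tube-basics} without proof. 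The remaining bookkeeping (closedness of the return set $A$, the relative-interior choices that guarantee $y\in B^{o}$, and the case split $|t-t'|<\delta_0/2$ versus $|t-t'|\ge\delta_0/2$ in the injectivity check) is all in order.
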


\begin{cor}\label{cor:long-period-open}
 The subset $Y_{> R}$ is open and $\calpha$-invariant, while $Y_{\leq R}$ is closed and $\calpha$-invariant.
\end{cor}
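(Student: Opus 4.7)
The $\calpha$-invariance of $Y_{\leq R}$ and $Y_{>R}$ is immediate from the observation already made in the preliminaries that $\mathrm{per}_{\calpha}$ is constant on each orbit. Hence it suffices to show that $Y_{>R}$ is open (equivalently, that $Y_{\leq R}$ is closed).

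Fix $y \in Y_{>R}$. Since $R > 0$ and $\mathrm{per}_\calpha(y) > R$, the point $y$ is not fixed by $\calpha$ (a fixed point would give $\mathrm{per}_\calpha(y) = 0$). Choose any $l$ with $R < l < \mathrm{per}_\calpha(y)$. By Lemma~\ref{lem:existence-tubes} there is a tube $B$ with length $l_B = l$ and $y \in S_B \cap B^o$. The set $B^o$ is an open neighborhood of $y$; I claim that $B^o \subseteq Y_{>R}$, which finishes the proof.

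To verify the claim, let $z \in B^o$. By Lemma~\ref{lem:tubes-basics}(3), the map $\Phi \colon S_B \times [-l/2,l/2] \to B$, $(y', t) \mapsto \calpha_t(y')$, is a homeomorphism; write $z = \Phi(y_0, t_0)$. For any real $s$ with $t_0 + s \in [-l/2, l/2]$ we have $\calpha_s(z) = \Phi(y_0, t_0 + s) \in B$, and by Remark~\ref{rmk:tube-basics} this shows $a_-(z) = -l/2 - t_0$ and $a_+(z) = l/2 - t_0$, so in particular $a_+(z) - a_-(z) = l$. Now suppose for contradiction that $\mathrm{per}_\calpha(z) = T \leq R < l$. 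Since the interval $[a_-(z), a_+(z)]$ has length $l > T$, there exists a nonzero integer $k$ with $kT \in [a_-(z), a_+(z)]$. Then $\calpha_{kT}(z) = z$ on one hand, and $\calpha_{kT}(z) = \Phi(y_0, t_0 + kT)$ on the other; injectivity of $\Phi$ forces $kT = 0$, contradicting $k \neq 0$ and $T > 0$. Therefore $\mathrm{per}_\calpha(z) > R$, i.e.\ $z \in Y_{>R}$.

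There isn't really a main obstacle here, since all of the work has already been done in Lemmas~\ref{lem:tubes-basics} and~\ref{lem:existence-tubes}; the only mild care needed is the period-versus-tube-length pigeonhole argument above (making sure the homeomorphism $\Phi$ is used correctly to rule out small periods), together with noting that fixed points have $\mathrm{per}_\calpha = 0$ under the stated convention and therefore automatically sit in $Y_{\leq R}$ rather than $Y_{>R}$.
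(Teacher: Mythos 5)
Your overall strategy is exactly the paper's: given $y \in Y_{>R}$, use Lemma~\ref{lem:existence-tubes} to produce a tube $B$ with $y \in S_B \cap B^o$ and length larger than $R$, and argue that every point of the open neighborhood $B^o$ has period greater than $R$. The paper simply invokes the observation (stated just before Lemma~\ref{lem:existence-tubes}) that a point lying in a tube must have period exceeding the tube's length; you spell that step out, and the spelled-out version contains a false step. Having assumed $\mathrm{per}_{\calpha}(z) = T \le R < l$, you claim that because $[a_-(z), a_+(z)]$ has length $l > T$ it must contain a \emph{nonzero} multiple $kT$. An interval of length $l>T$ containing $0$ is only guaranteed to contain \emph{one} point of $T\Z$, and that point may be $0$ itself. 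Concretely, if $l = 1.1T$ and $t_0 = 0.05T$, then $[a_-(z), a_+(z)] = [-0.6T,\, 0.5T]$ contains no nonzero multiple of $T$; and since you only chose $l$ in $(R, \mathrm{per}_{\calpha}(y))$, you cannot in general force $l \ge 2T$ (take $\mathrm{per}_{\calpha}(y) < 2R$). So for such $z$ your contradiction never materializes, even though the conclusion $\mathrm{per}_{\calpha}(z) > R$ is of course still true.

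The repair is one line and uses the same ingredients: instead of comparing $\calpha_{kT}(z)$ with $z$, compare $\calpha_{a_-(z)}(z)$ with $\calpha_{a_-(z)+T}(z)$. The corresponding parameters $-l/2$ and $-l/2 + T$ both lie in $[-l/2, l/2]$ because $0 < T < l$, so these are $\Phi(y_0, -l/2)$ and $\Phi(y_0, -l/2+T)$ at two distinct parameters; periodicity makes the two points equal, contradicting the injectivity of $\Phi$. (Equivalently: the orbit map $t \mapsto \calpha_t(z)$ is injective on a closed interval of length $l \ge T$, which is impossible for a $T$-periodic orbit.) With that substitution your argument is correct and coincides with the paper's proof.
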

\begin{proof}
 The $\calpha$-invariance is obvious. To show $Y_{> R}$ is open, we see that given any $y \in Y_{> R}$, by Lemma~\ref{lem:existence-tubes}, there is a tube $B_y$ such that $l_{B_y} = R$ and $y \in S_{B_y} \cap B_y^o$. This implies that for any point $y'$ in the open neighborhood $B_y^o$ around $y$, we have $\mathrm{per}_{\calpha}(y') > R$, that is, the neighborhood $B_y^o \subset Y_{> R}$. Therefore $Y_{> R}$ is open while $Y_{\leq R}$ is closed.
\end{proof}

Occasionally we will need to \emph{stretch} a tube, as formalized in the following lemma, whose proof is immediate from the definition.

\begin{lem}\label{lem:stretching-tube}
 Let $B$ be a tube and $\displaystyle 0 < L < \frac{\varepsilon_B}{2} $. Then the set $ \calpha_{[-L, L]} (B) $ is also a tube with the same central slice and a new length $ l_B + 2 L $. 
\end{lem}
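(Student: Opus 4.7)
\emph{Proof plan.} The plan is to verify that the pair $(S_B,\, l_B + 2L)$ satisfies the alternative characterization of tubes from Remark~\ref{rmk:tube-basics}, and then identify the resulting tube as $\calpha_{[-L, L]}(B)$.

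First I would use Lemma~\ref{lem:tubes-basics}(3), which gives $B = \calpha_{[-l_B/2,\, l_B/2]}(S_B)$, to rewrite
\[
\calpha_{[-L,L]}(B) \;=\; \calpha_{[-l_B/2 - L,\, l_B/2 + L]}(S_B).
\]
This is precisely the image of the map
\[
\Phi \colon S_B \times \left[ -\frac{l_B + 2L}{2},\, \frac{l_B + 2L}{2} \right] \to Y, \qquad (y,t) \mapsto \calpha_t(y).
\]
By Remark~\ref{rmk:tube-basics}, it will suffice to show that $\Phi$ is an embedding: this would automatically make $\calpha_{[-L, L]}(B)$ a tube with central slice $S_B$ and length $l_B + 2L$. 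Continuity of $\Phi$ is immediate from joint continuity of the flow, and since the domain is compact and the codomain is Hausdorff, only injectivity will require real work.

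The heart of the argument is the injectivity of $\Phi$. Suppose $\calpha_{t_1}(y_1) = \calpha_{t_2}(y_2)$ with $y_1, y_2 \in S_B$ and $t_1, t_2 \in [-l_B/2 - L,\, l_B/2 + L]$, and set $s := t_1 - t_2 \geq 0$ by symmetry, so that $\calpha_s(y_2) = y_1 \in S_B \subseteq B$. Using the tube parameters at $y_1 \in S_B$ (where $a_\pm(y_1) = \pm l_B/2$), I would obtain $\calpha_u(y_2) \in B$ for every $u \in [s - l_B/2,\, s + l_B/2]$. On the other hand, the tube parameters at $y_2 \in S_B$ together with $\varepsilon(y_2) \geq \varepsilon_B$ force $\calpha_u(y_2) \notin B$ for every $u \in (l_B/2,\, l_B/2 + \varepsilon_B)$. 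Requiring these two intervals to be disjoint leaves only the options $s + l_B/2 \leq l_B/2$ or $s - l_B/2 \geq l_B/2 + \varepsilon_B$, i.e., $s \leq 0$ or $s \geq l_B + \varepsilon_B$. Combined with $0 \leq s \leq l_B + 2L < l_B + \varepsilon_B$ (using $L < \varepsilon_B/2$), the only possibility is $s = 0$, whence $t_1 = t_2$ and $y_1 = y_2$.

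Once injectivity is in hand, compactness of the domain and Hausdorffness of $Y$ will upgrade $\Phi$ to an embedding, and Remark~\ref{rmk:tube-basics} will conclude. I do not expect any significant obstacle: the only step demanding attention is the disjoint-intervals argument eliminating $0 < s \leq l_B + 2L$, and this hinges directly on the stretching bound $L < \varepsilon_B/2$.
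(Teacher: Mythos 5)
Your proof is correct and fills in exactly the verification that the paper declares ``immediate from the definition'' (the paper gives no written proof), using the alternative characterization of tubes in Remark~\ref{rmk:tube-basics} in the natural way; the disjoint-intervals argument correctly isolates where the hypothesis $L < \varepsilon_B/2$ is used. One cosmetic slip: from $\calpha_{t_1}(y_1)=\calpha_{t_2}(y_2)$ and $s=t_1-t_2$ one gets $\calpha_s(y_1)=y_2$ rather than $\calpha_s(y_2)=y_1$, but since $y_1,y_2$ both lie in $S_B$ this is repaired by swapping their labels and the rest of the argument goes through verbatim.
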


In order to study the nuclear dimension of the crossed product ${C}_{0}(Y) \rtimes {\mathbb{R}^{}} $, we would like to decompose $Y$ in a dimensionally controlled fashion into open subsets such that the flow is trivialized when restricted to each open subset. This motivation leads to the definition of \emph{tube dimension} in \cite[Definition~7.6]{HSWW16}, which works well for free flows. In our current situation, we need a quantitative generalization of the notion of tube dimension.

\begin{Notation}
	\label{def:mult}
	Given a topological space $X$ with a collection $\mathcal{U}$ of subsets of $X$, its \emph{multiplicity} $\operatorname{mult}(\mathcal{U})$ is the infimum of natural numbers $d$ satisfying that the intersection of any $d+1$ pairwise distinct elements in $\mathcal{U}$ is empty, while its \emph{chromatic number} $\operatorname{chrom}(\mathcal{U})$ is the infimum of natural numbers $d$ satisfying that $\mathcal{U}$ can be written as a union of $d+1$ many families of disjoint subsets of $X$. 
\end{Notation}

\begin{rmk}\label{rmk:multiplicity_vs_chromatic_number} 
	It is clear from the definition that $\operatorname{mult}(\mathcal{U}) \leq \operatorname{chrom}(\mathcal{U})$ for any collection $\mathcal{U}$. The opposite direction does not hold in general.
\end{rmk}

\begin{defn}\label{defn:subtubular-cover}
  Let $ (Y, {\mathbb{R}^{}}, \calpha) $ be a topological flow and let $K \subset Y$ be a subset. Let $L \in [0,\infty)$ and $d \in \Z^{\geq 0}$. A \emph{subtubular cover of $K$ with width $\geq L$ and multiplicity (respectively, chromatic number) $\leq d+1$} is a finite collection $ \mathcal{U} $ of open subsets of $Y$ satisfying:
 \begin{enumerate}
  \item\label{defn:subtubular-cover:width} for any $ y\in K $, there is $U \in \mathcal{U}$ such that $ \calpha_{[-L, L]}(y) \subset U $;
  \item\label{defn:subtubular-cover:subtubular} each $ U \in \mathcal{U} $ is contained in a tube $B_U$; 
  \item\label{defn:subtubular-cover:mult} $ \operatorname{mult} ( \mathcal{U} ) 
  \leq d+1$ (respectively,  $ \operatorname{chrom} ( \mathcal{U} ) \le d + 1 $).
 \end{enumerate}
\end{defn}

\begin{rmk}
 The relation with tube dimension given in \cite[Definition~7.6]{HSWW16} is that $ \operatorname{dim}_\mathrm{tube} (\calpha) \leq d$ if and only if for any $L> 0 $ and any compact subset $K \subset Y$, there exists a subtubular cover of $K$ with width $\geq L$ and multiplicity $\leq d+1$. In fact, one may also use the chromatic number in place of the multiplicity in the above statement, thanks to \cite[Proposition~7.23]{HSWW16}. 
\end{rmk}

We shall prove that for a flow $\calpha$ on a locally connected, locally 
compact and second countable Hausdorff space $ Y $ with finite topological 
dimension, 
when the lower bound on the periods is not too small compared to $L$, any 
compact subset $K \subset Y$ admits a subtubular cover with width $\geq L$ and 
multiplicity $\leq d+1$. For this, we will need to invoke a result by 
Kasprowski and R\"{u}ping (\cite[Theorem~5.2]{Kasprowski-Rueping}), which 
itself is an improvement of a construction of the so-called ``long thin covers" 
by Bartels, L\"{u}ck and Reich (\cite[Theorem~1.2, 
Proposition~4.1]{BarLRei081465306017991882}). Since we are dealing with a 
simplified situation, we shall give a somewhat different formulation of their 
theorem that is sufficient for our purposes; see 
Remark~\ref{rmk:KRcoverbytubes-differences}.

\begin{thm}[See {\cite[Theorem~5.2]{Kasprowski-Rueping}}]
	\label{thm:KRcoverbytubes}
	Let $Y$ be a locally compact metrizable space with a 
	continuous 
	action $\calpha$ by $\mathbb{R}$. Let $L$ be a positive number. Then there 
	is 
	a collection of open tubes of multiplicity at most $5 
	 \operatorname{dim}^{\!+1}(Y)$ satisfying the property that for any point 
	$y 
	\in Y_{>20L}$, there is an open tube in this collection containing 
	$\calpha_{[-L, L]}(y)$.
	\qed
\end{thm}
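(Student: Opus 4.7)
The plan is to adapt \cite[Theorem 5.3]{Kasprowski-Rueping} (itself a refinement of \cite[Theorem 1.2]{BarLRei081465306017991882}) to our simplified setting where no auxiliary discrete group commutes with the flow. The argument proceeds in three steps: first produce an initial open cover of $Y_{>20L}$ by open tubes via Lemma \ref{lem:existence-tubes}, then apply finite-dimensionality to refine while preserving tube structure, and finally control the multiplicity via a combinatorial argument in the flow direction.

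For the first step, for each $y \in Y_{>20L}$, since the $\Phi$-period of $y$ strictly exceeds $20L$, Lemma \ref{lem:existence-tubes} applied with $l$ slightly larger than $20L$ produces a tube $B_y$ with $y \in S_{B_y} \cap B_y^\circ$. Continuity of the flow together with the openness of $B_y^\circ$ then gives $\Phi_{[-L, L]}(y') \subset B_y^\circ$ for every $y'$ in a sufficiently small neighborhood of $y$ within the central slice. Hence $\{B_y^\circ\}_{y \in Y_{>20L}}$ is an open cover of $Y_{>20L}$ by open tubes, a priori of unbounded multiplicity.

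For the second step, since $\dim(Y_{>20L}) \leq \dim(Y)$ by Theorem \ref{thm:Pears75}, classical dimension theory furnishes an open refinement of multiplicity at most $\dim(Y)+1$; however, such a refinement need not consist of tubes. Following Kasprowski-R\"{u}ping, I would construct the refinement in two stages: apply the finite-dimensional refinement transversally to the central slices, obtaining a $(\dim(Y)+1)$-multiplicity transverse ``coloring'', and then thicken each transverse piece along the flow direction into a tube of length close to $20L$, in such a way that every required segment $\Phi_{[-L,L]}(y)$ remains inside some tube in the resulting collection.

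The main obstacle is controlling the final multiplicity to be $5(\dim(Y)+1)$. The factor $\dim(Y)+1$ comes from the transverse refinement, while the factor $5$ arises from flow-direction staggering: along any single orbit, a tube of length approximately $20L$ can intersect at most $5$ other tubes of the same transverse color if their central slices are placed at intervals of roughly $4L$, and this spacing still suffices to contain every $\Phi_{[-L,L]}(y)$ in at least one tube. Verifying compatibility between this staggering and the transverse coloring, and confirming that the resulting open sets are genuine open tubes in the sense of Definition \ref{defn:tubes} via the product characterization in Remark \ref{rmk:tube-basics}, is the most delicate part of the argument.
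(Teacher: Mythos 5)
The paper does not actually prove this theorem: it is quoted from the literature, with the $\qed$ indicating that the content is \cite[Theorem~5.3]{Kasprowski-Rueping}, and the only work the paper does is the translation recorded in Remark~\ref{rmk:KRcoverbytubes-differences} (setting the auxiliary discrete group $G$ to be trivial so that the $\mathcal{F}in$-condition is vacuous, replacing $\mathrm{ind}(Y)$ by $\dim(Y)$ for locally compact metrizable $Y$, observing from the Kasprowski--R\"uping proof that the covering sets $\Phi_{(-4L,4L)}(B_i^k)$ are interiors of the tubes $\Phi_{[-4L,4L]}(\overline{B_i^k})$, and noting that their argument actually bounds the multiplicity, not merely the dimension of the cover). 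Your proposal instead tries to reconstruct the proof of the cited result from scratch, which is a much more ambitious route, and as written it does not succeed.

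The genuine gap is in your second and third steps. The passage from an open cover of $Y_{>20L}$ by tubes of unbounded multiplicity to a cover by open tubes of multiplicity $5(\dim(Y)+1)$ containing all segments $\Phi_{[-L,L]}(y)$ is precisely the technical core of \cite{BarLRei081465306017991882} and \cite{Kasprowski-Rueping}, and it does not follow from the assertion that ``classical dimension theory furnishes an open refinement of multiplicity at most $\dim(Y)+1$.'' A general-purpose dimension-theoretic refinement destroys the tube structure and gives no control on the length of the refined sets in the flow direction; the actual argument has to build the cover by an induction on dimension, constructing at each stage families of ``boxes'' whose central slices are chosen with quantitative separation along orbits (this is where the lengths $10L$, $4L$ and the factor $5$ really come from, via \cite[Lemma~4.6]{Kasprowski-Rueping}), and one must verify that the transverse colorings at different stages are compatible with this staggering. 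You explicitly defer exactly this step as ``the most delicate part,'' so what you have is an outline of the known strategy rather than a proof. There is also a small slip in your first step: for $y\in Y_{>20L}$ one only knows $\mathrm{per}_{\calpha}(y)>20L$, so Lemma~\ref{lem:existence-tubes} yields a tube of length $l$ for any $l<\mathrm{per}_{\calpha}(y)$, e.g.\ $l=20L$, but not uniformly for an $l$ ``slightly larger than $20L$.'' If you want to include a proof at this point rather than a citation, you would essentially have to reproduce Sections~4--5 of \cite{Kasprowski-Rueping}; otherwise the honest course is to do what the paper does and cite the result, supplying only the reductions in Remark~\ref{rmk:KRcoverbytubes-differences}.
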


\begin{rmk}\label{rmk:KRcoverbytubes-differences}
	We explain some deviation from the original formulation of the above theorem in \cite{Kasprowski-Rueping}:
	\begin{enumerate}
		\item In the original version, the authors consider not only a flow 
		$\calpha$ on $Y$, but also a proper action of a discrete group $G$ that 
		commutes with $\calpha$, and the cover they produce is required to be a 
		so-called \emph{$\mathcal{F}in$}-cover with regard to the second 
		action: it is invariant, and for each open set $U$ in the cover, only 
		finitely many elements of $G$ fix $U$, while any other element carry 
		$U$ to a set disjoint from $U$ 
		({\cite[Notation~2.1(4)]{Kasprowski-Rueping}}). Since this is not 
		needed for proving our main result, we drop this assumption, or 
		equivalently, we assume this extra group $G$ that appears in their 
		theorem to be the trivial group, in which case the 
		{$\mathcal{F}in$}-cover condition is automatic.
		\item The dimension estimate in the original version is in terms of the 
		\emph{small inductive dimension} $\mathrm{ind}(Y)$, but as they 
		remarked in \cite[Theorem~3.5]{Kasprowski-Rueping}, in the context 
		their Theorem~5.2 applies to, where $Y$ is locally compact and second 
		countable, the small inductive dimension is equal to the covering 
		dimension $\operatorname{dim}(Y)$.
		\item It is not made explicit in the original statement of their 
		theorem that the cover consists of open tubes, but this is evident from 
		their proof: the cover is made up of the sets $\calpha_{(-4L, 4L)} 
		(B_i^k)$ for $i \in \mathbb{N}$ and $k \in \left\{0, \ldots, 
		\operatorname{dim}(Y)\right\}$, and each of them is the interior of a 
		tube $\displaystyle \calpha_{[-4L, 4L]} (\overline{B_i^k} )$, which is 
		restricted from the larger tube $\calpha_{[-10L, 10L]} (S_i)$ 
		constructed in \cite[Lemma~4.6]{Kasprowski-Rueping}. 
		\item In the original statement of the theorem, it is only claimed that the cover has {dimension} at most $5 \operatorname{dim}^{\!+1}(Y)$. However an examination of the proof shows that in fact the cover they obtain has \emph{multiplicity} at most $5 \operatorname{dim}^{\!+1}(Y)$.
	\end{enumerate}
\end{rmk}

\begin{cor}\label{cor:estimate-subtubular-cover}
	Let $Y$ be a locally compact and second countable Hausdorff space and 
	$\calpha$ a 
	flow 
	on $Y$. Then for any $L \in [0,\infty)$, any $d \in \Z^{\geq 0}$ and any 
	compact subset $K \subset Y_{> 20 L}$, there is a subtubular cover of $K$ 
	with width $\geq L$ and multiplicity $\leq 5 \cdot 
	\operatorname{dim}^{\!+1}(Y)$. 
\end{cor}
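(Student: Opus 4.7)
The plan is to derive this corollary as a relatively direct consequence of Theorem~\ref{thm:KRcoverbytubes} combined with a compactness argument. First I would apply Theorem~\ref{thm:KRcoverbytubes} to obtain a (potentially infinite) collection $\mathcal{V}$ of open tubes with multiplicity at most $5 \cdot \dim^{+1}(Y)$ such that every $y \in Y_{>20L}$ admits some $V \in \mathcal{V}$ with $\Phi_{[-L,L]}(y) \subset V$. Each element of $\mathcal{V}$, being by definition the topological interior of a (closed) tube, automatically satisfies condition~\eqref{defn:subtubular-cover:subtubular} of Definition~\ref{defn:subtubular-cover}.

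The main task is then to extract a \emph{finite} subcollection $\mathcal{U} \subseteq \mathcal{V}$ that still satisfies the width condition for every $y \in K$, while preserving the multiplicity bound. For each $y \in K \subset Y_{>20L}$, I would fix some $V_y \in \mathcal{V}$ with $\Phi_{[-L,L]}(y) \subset V_y$, and set
\[
W_y = \left\{ z \in Y \middlebar \Phi_{[-L,L]}(z) \subset V_y \right\} \; .
\]
Since $V_y$ is open and $[-L,L]$ is compact, a standard tube-lemma argument shows that $W_y$ is open; it contains $y$ by construction. Thus $\{W_y\}_{y \in K}$ is an open cover of the compact set $K$, so I can extract a finite subcover $\{W_{y_1},\ldots,W_{y_n}\}$ and take $\mathcal{U} := \{V_{y_1},\ldots, V_{y_n}\} \subseteq \mathcal{V}$.

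Finally, I would verify the three defining conditions of a subtubular cover. Condition~\eqref{defn:subtubular-cover:width}: any $y \in K$ lies in some $W_{y_i}$, whence $\Phi_{[-L,L]}(y) \subset V_{y_i} \in \mathcal{U}$. Condition~\eqref{defn:subtubular-cover:subtubular}: already noted above. Condition~\eqref{defn:subtubular-cover:mult}: the multiplicity of any subcollection is bounded above by that of $\mathcal{V}$, hence is at most $5 \cdot \dim^{+1}(Y)$. No serious obstacle is expected, since the nontrivial content sits entirely in the Kasprowski--R\"{u}ping construction cited in Theorem~\ref{thm:KRcoverbytubes}; the passage to a finite subcollection is routine, and the parameter $d$ mentioned in the statement plays no role in the argument.
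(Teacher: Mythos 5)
Your proof is correct and follows essentially the same route as the paper: apply Theorem~\ref{thm:KRcoverbytubes}, shrink each chosen tube to an open neighborhood of $y$ whose $[-L,L]$-flow-saturation stays inside it, and use compactness of $K$ to pass to a finite subcollection, noting that multiplicity only decreases. The only cosmetic difference is that the paper constructs the neighborhood explicitly via the tube's local trivialization, whereas you invoke the standard tube-lemma/closed-projection argument for openness of $W_y$; both are valid, and you are also right that the parameter $d$ in the statement plays no role.
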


\begin{proof}
	This is almost a direct consequence of Theorem~\ref{thm:KRcoverbytubes}. 
	The only point to be clarified is the finiteness of the cover. Let $L \in 
	[0,\infty)$, $d \in \Z^{\geq 0}$ and a compact subset $K \subset Y_{> 20 
	L}$ be given. By Theorem~\ref{thm:KRcoverbytubes}, there is a collection 
	$\mathcal{U}$ of open tubes of multiplicity at most $5 
	(\operatorname{dim}(Y) + 1)$ satisfying the property that for any point $y 
	\in Y_{>20L}$, there is an open tube $U_y$ in this collection containing 
	$\calpha_{[-L, L]}(y)$. For any $y \in K$, 
	we may write $y = \calpha_{t_y}(z_y)$ for $z_y$ in the central slice 
	$S_{\overline{U_y}}$ and $t_y \in \left[ -\frac{l_{\overline{U_y}}}{2} ,  
	\frac{l_{\overline{U_y}}}{2}\right]$. By the local trivialization 
	associated to the tube $\overline{U_y}$ as in Lemma~\ref{lem:tubes-basics}, 
	we have $[t_y - L , t_y + L] \subset \left( -\frac{l_{\overline{U_y}}}{2} 
	,  \frac{l_{\overline{U_y}}}{2}\right)$. Thus there exists $\delta_y > 0$ 
	such that $t_y - L - \delta_y > -\frac{l_{\overline{U_y}}}{2}$ and $t_y + L 
	+ \delta_y < \frac{l_{\overline{U_y}}}{2}$. Define the set $V_y = 
	\calpha_{(t_y - \delta_y , t_y + \delta_y)}(S) \cap U_y$. Then using the 
	local trivialization, we see that $V_y$ is an open neighborhood of $y$ such 
	that $\calpha_{[-L, L]}(V_y) \subset U_y$. 
	
	By the compactness of $K$, there is a finite subset $\left\{y_1 , \ldots, y_n\right\} \subset K$ such that $K \subset V_{y_1} \cup \ldots \cup V_{y_n}$. Define a finite subcollection $\mathcal{U}'$ of $\mathcal{U}$ to be $\left\{U_{y_1} , \ldots , U_{y_n}\right\}$. By our construction above, $\mathcal{U}'$ satisfies Condition~\ref{defn:subtubular-cover:width} in Definition~\ref{defn:subtubular-cover}, while it inherits Conditions~\ref{defn:subtubular-cover:subtubular} and~\ref{defn:subtubular-cover:mult} from $\mathcal{U}$. Therefore $\mathcal{U}'$ is a subtubular cover of $K$ with width $\geq L$ and multiplicity $\leq d+1$. 
\end{proof}


\section{From subtubular covers to partitions of unity}

In order to apply Definition~\ref{defn:subtubular-cover} and Corollary~\ref{cor:estimate-subtubular-cover} to the context of $C^*$-algebras, our next step is to show that because the subtubular covers we obtained have large overlaps along flow lines, they give rise to partitions of unity that are \emph{almost flat} along the flow lines. What follows is a refinement of the argument presented in \cite[Section~8]{HSWW16}. 

\begin{defn}[{\cite[Definition~7.11]{HSWW16}}]\label{definitionofflowwiseLipschitz}
	Let $ \calpha: {\mathbb{R}^{}} \curvearrowright Y $ be a flow and $ F : Y 
	\to X $ a map to a metric space $(X, d)$. The map $F$ is called 
	\emph{$\calpha$-Lipschitz with constant $\delta$}, if for every $y \in Y$, 
	the map $ t \mapsto F(\calpha_t (y) ) $ is Lipschitz with  constant 
	$\delta$. In other words, we have 
	\[ 
	d( F(\calpha_t (y) ), F(y) ) \le \delta \cdot |t|  
	\]
	for all $y \in Y$ and $t \in {\mathbb{R}^{}}$.
\end{defn}

As in \cite[Remark~8.12]{HSWW16}, one convenient way to produce flow-wise 
Lipschitz functions from any given function is to \emph{smear} it along the 
flow. More precisely, for any bounded Borel function $f$ on $Y$ and any 
$\lambda_+ , \lambda_- \in {\mathbb{R}^{}} $ such that $ \lambda_+ > \lambda_- 
$, we define $\mathbb{E} (\calpha_*)_{[\lambda_-, \lambda_+]} (f) : Y \to 
{\mathbb{C}^{}}$ by
\begin{equation}\label{definitionofsmearing}
	\mathbb{E} (\calpha_*)_{[\lambda_-, \lambda_+]} (f) (y) = 
	\frac{1}{\lambda_+ - \lambda_-} \int_{\lambda_-} ^{\lambda_+} f 
	(\calpha_{-t} (y) ) \: d t .
\end{equation}
This has the advantage that it preserves, if applied to a family of functions, the property of being a partition of unity. This works for the following generalization of partitions of unity.

\begin{defn}[{\cite[Definition~7.14]{HSWW16}}]\label{def:relative-pou}
	Let $ X $ be a topological space and $ A \subset X $ a subset. Let $ \mathcal{U} = \left\{ U_i \right\}_{i\in I} $ be a locally finite collection of open sets in $X$ such that $A \subset \bigcup \mathcal{U}$. Then a \emph{partition of unity for $ A \subset X $ subordinate to $ \mathcal{U} $} is a collection of continuous functions $ \left\{ f_i \middlebar  X \to [0,\infty) \right\}_{i\in I} $ such that
	\begin{enumerate}
		\item for each $i\in I$, the support of $f_i$ is contained in $ U_i $;
		\item for all $x \in A$, one has $\displaystyle \sum_{i\in I} f_i (x) =1 $.
	\end{enumerate}
\end{defn}

We make use of certain simplicial techniques common in dimension theory, which allows us to pass from the weaker notion of multiplicity to the stronger one of chromatic number.

\begin{defn}[{\cite[Definition~7.17]{HSWW16}}]\label{def:simplicial-complex}
	For us, an \emph{abstract simplicial complex} $Z$ consists of:
	\begin{itemize}
		\item a set $Z_0$, called the set of \emph{vertices}, and
		\item a collection of its finite subsets closed under taking subsets, called the collection of \emph{simplices}. 
	\end{itemize}
	We often write $\sigma \in Z$ to denote that $\sigma$ is a simplex of $Z$. We also associate the following structures to $Z$:
	\begin{enumerate}
		\item\label{def:simplicial-complex:dimension} The dimension of a simplex is the cardinality of the corresponding finite subset minus $1$, and the (simplicial) dimension of the abstract simplicial complex is the supremum of the dimensions of its simplices.
		\item\label{def:simplicial-complex:realization} The \emph{geometric realization} of an abstract simplicial complex $Z$, denoted as $|Z|$, is the set of tuples
		\[
		\bigcup_{\sigma \in Z} \left\{ (z_v)_{v} \in [0,1]^{Z_0} ~\middle|~  \sum_{v\in\sigma} z_v = 1 \,, ~\text{and}~ z_v = 0 ~\text{for~any}~ v \in Z_0 \setminus \sigma \right\}.
		\]
		Similarly for a simplex $\sigma$ of $Z$, we define its \emph{closed} (respectively, \emph{open}) \emph{geometric realization} $\overline{|\sigma|}$ (respectively, $|\sigma|$) as follows:
		\begin{align*}
		\overline{|\sigma|} & = \left\{ (z_v)_v \in |Z| \middlebar  \sum_{v\in\sigma} z_v = 1 \right\} \\
		|\sigma| & = \left\{ (z_v)_v \in |Z| \middlebar  \sum_{v\in\sigma} z_v = 1 ~ \text{with}~ z_v >0 ~\text{for~any~} v\in\sigma \right\}\; .
		\end{align*}
		\item\label{def:simplicial-complex:metric} 
		We consider the $\ell^1$-topology on $|Z|$, induced by the $\ell^1$-metric $d^1: |Z| \times |Z| \to [0, 2]$ defined by 
		\[ 
		d^1\Bigl( (z_v)_v , (z'_v)_v \Bigl) = \sum_{v \in Z_0} |z_v - z'_v | \; .
		\]
		\item\label{def:simplicial-complex:star} For any vertex $v_0 \in Z_0$, the \emph{(simplicial) star} around $v_0$ is the set of simplices of $Z$ that contain $v_0$, and the \emph{open star} around $v_0$ is the union of the open geometric realizations of such simplices in $|Z|$, that is, the set 
		\[
		\left\{ (z_v)_v \in |Z| \middlebar  z_{v_0} > 0  \right\} \; .
		\]
		\item\label{def:simplicial-complex:cone} The \emph{simplicial cone} $CZ$ is the abstract simplicical complex
		\[
		\left\{ \sigma, \sigma \sqcup \left\{\infty\right\} \middlebar \sigma \in Z \right\} \; ,
		\]
		where $\infty$ is an additional vertex. More concretely, we have $(CZ)_0 = Z_0 \sqcup \left\{\infty\right\}$, each simplex $\sigma$ in $Z$ spawns two simplices $\sigma$ and $\sigma \sqcup \left\{\infty\right\}$ in $CZ$, and all simplices of $CZ$ arise this way. 
		\item\label{def:simplicial-complex:subcomplex} A \emph{subcomplex} of $Z$ is an abstract simplicial complex $Z'$ with $Z'_0 \subset Z_0$ and $Z' \subset Z$. It is clear that there is a canonical embedding $|Z'| \subset |Z|$ preserving the $\ell^1$-metric.
	\end{enumerate}
\end{defn}

As in \cite[Section~8]{HSWW16}, by making use of the smearing technique and certain simplicial techniques, we can pass from a subtubular cover as in Corollary~\ref{cor:estimate-subtubular-cover} to a certain partition of unity that we can later use to estimate the nuclear dimension of the crossed product $C^*$-algebra. This is described in Proposition~\ref{prop:get-partition-of-unity}. Before stating it and two preparatory results, we sketch the main strategy: 
\begin{enumerate}
	\item \label{strategy:prop:get-partition-of-unity:shrink} shrink the cover along the flow lines \textendash\ the width of the cover tells us how long we can do this without destroying the covering property of the shrunken sets;
	\item \label{strategy:prop:get-partition-of-unity:pou} use \cite[Lemma~8.15]{HSWW16} to pick a partition of unity 
	subordinate to the shrunken cover;
	\item \label{strategy:prop:get-partition-of-unity:smear} apply \cite[Lemma~8.13]{HSWW16} to smear this partition of unity to get a new one that is flow-wise Lipschitz, in which process the supports of the functions are allowed to grow back to the original unshrunken cover;
	\item \label{strategy:prop:get-partition-of-unity:simplicial} in order to obtain a control on the chromatic number, instead of just the multiplicity, we make use of the nerve complex of the original cover: viewing the above flow-wise Lipschitz partition of unity as a flow-wise Lipschitz map from the space to the nerve complex of the original cover, we can use it to pull back a canonical partition of unity on this finite dimensional simplicial complex subordinate to a canonical cover with controlled chromatic number. 
\end{enumerate}

The following preparatory result is needed in carrying out Step~\ref{strategy:prop:get-partition-of-unity:simplicial} above. 

\begin{prop}\label{prop:get-simplicial-complex}
 For any $d \in \Z^{\geq 0}$ and for any $\delta > 0$, there exists $Q = 
 Q(\delta, d) > 0$ such that for any flow $\calpha$ on a locally connected, 
 locally compact and second countable Hausdorff space $Y$ with topological 
 dimension 
 $\leq d$, and for any compact subset $ K \subset Y_{> Q} $, there exists a 
 finite simplicial complex $ Z $ of dimension at most $5(d+1)-1$, along with a 
 map $ F: Y \to | CZ | $ satisfying:
  \begin{enumerate}
   \item 
   \label{prop:get-simplicial-complex-a}
   $ F $ is $\calpha$-Lipschitz with  constant $\delta$;
   \item 
   \label{prop:get-simplicial-complex-b}
   for any vertex $  v \in Z_0 $ (the vertex set of $Z$), the preimage of the open star around $v$ is contained in a tube $B_v$; 
   \item 
   \label{prop:get-simplicial-complex-c}
   $ F(K) \subset | Z | $. 
  \end{enumerate}
\end{prop}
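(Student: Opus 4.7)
The approach is to implement the four-step outline stated in the paragraph just before the proposition. Set $\Lambda = 4/\delta$, $L = \Lambda$, and take $Q(\delta, d) = 20 L = 80/\delta$. Given a compact $K \subset Y_{>Q}$, Corollary~\ref{cor:estimate-subtubular-cover} produces a finite subtubular cover $\mathcal{U} = \{U_1, \ldots, U_n\}$ of $K$ with width $\geq L$ and multiplicity $\leq 5(d+1)$, where each $U_i$ is an open subset of a tube $B_i$. Let $Z$ be the nerve complex of $\mathcal{U}$: its vertex set is $Z_0 = \{1,\ldots,n\}$, and a subset $\sigma \subseteq Z_0$ is declared a simplex exactly when $\bigcap_{i\in\sigma} U_i \neq \emptyset$. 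The multiplicity bound then forces $\dim Z \leq 5(d+1) - 1$.

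Next I carry out the shrink-smear construction. Define the open flow-shrunken sets $U_i^{\mathrm{sh}} = \{y \in U_i : \calpha_{[-L/2, L/2]}(y) \subset U_i\}$. A short computation using the width hypothesis shows that the thickened compactum $K^+ := \calpha_{[-L/2, L/2]}(K)$ is still covered by $\{U_i^{\mathrm{sh}}\}$. By a standard partition-of-unity argument (e.g., Lemma~8.15 of \cite{HSWW16}), pick continuous $f_i \colon Y \to [0,1]$ with $\mathrm{supp}(f_i) \subset U_i^{\mathrm{sh}}$ and $\sum_i f_i \equiv 1$ on $K^+$. Smear these as in Lemma~8.13 of \cite{HSWW16} to form $g_i := \mathbb{E}(\calpha_*)_{[-\Lambda/2, \Lambda/2]}(f_i)$. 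Since $\Lambda \leq L$, the inclusion $\mathrm{supp}(g_i) \subset \calpha_{[-\Lambda/2, \Lambda/2]}(U_i^{\mathrm{sh}}) \subset U_i$ holds; and for $y \in K$, the set $\calpha_{[-\Lambda/2, \Lambda/2]}(y)$ lies in $K^+$, so the integrand in the smearing formula sums to $1$ identically, yielding $\sum_i g_i(y) = 1$.

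Finally define $F\colon Y \to |CZ|$ by assigning the coordinate $g_i(y)$ to each vertex $i \in Z_0$ and the deficit $1 - \sum_i g_i(y)$ to the cone vertex $\infty$. This genuinely lands in $|CZ|$: at any $y$, the set of vertices $i$ with $g_i(y) > 0$ consists of indices for which $y \in U_i$, so they share the common point $y$ and hence form a simplex of $Z$; adjoining $\infty$ when necessary produces a simplex of $CZ$. Property~\ref{prop:get-simplicial-complex-c} holds because the $\infty$-coordinate vanishes on $K$. Property~\ref{prop:get-simplicial-complex-b} is immediate, since $F^{-1}(\text{open star at } v) = \{g_v > 0\} \subset \mathrm{supp}(g_v) \subset U_v \subset B_v$. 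For the flow-Lipschitz estimate~\ref{prop:get-simplicial-complex-a}, the standard smearing bound together with $\sum_i f_i \leq 1$ yields $\sum_i |g_i(\calpha_s y) - g_i(y)| \leq 2|s|/\Lambda$ for $|s| \leq \Lambda$; doubling to include the $\infty$-coordinate gives $d^1(F(\calpha_s y), F(y)) \leq 4|s|/\Lambda = \delta|s|$. For $|s| > \Lambda = 4/\delta$, the diameter bound $d^1 \leq 2 < \delta|s|$ finishes the case.

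The main technical point is the simultaneous bookkeeping of three numerical parameters: the width $L$ of the subtubular cover, the shrinking radius $L/2$, and the length $\Lambda$ of the smearing window. One needs $\Lambda \leq L$ so that smeared supports return into the original tubes; one needs $\Lambda$ small enough (against $\delta$) to meet the prescribed Lipschitz bound; and one needs the shrinking radius large enough that the thickening $K^+$ is still covered by $\{U_i^{\mathrm{sh}}\}$, so that the partition-of-unity identity $\sum_i g_i = 1$ survives the smearing process on $K$. The single choice $\Lambda = L = 4/\delta$ with $Q = 20L$ simultaneously fulfills all three constraints together with the hypothesis $K \subset Y_{>20L}$ of Corollary~\ref{cor:estimate-subtubular-cover}.
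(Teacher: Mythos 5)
Your proof is correct and follows essentially the same route as the paper's: build a finite subtubular cover via Corollary~\ref{cor:estimate-subtubular-cover}, shrink it along the flow, choose a subordinate partition of unity, smear it, and read the result off as a map to the cone on the nerve complex. The one genuine (and valid) improvement is your Lipschitz estimate: by bounding $\sum_i |g_i(\calpha_s y)-g_i(y)|$ using $\sum_i f_i\le 1$ inside the smearing integral rather than summing individual Lipschitz constants over the at most $5(d+1)+1$ nonvanishing coordinates, you obtain a constant $Q=80/\delta$ independent of $d$, whereas the paper's choice $L=(5(d+1)+1)/\delta$ grows with the dimension.
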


\begin{proof}
 Given $d \in \Z^{\geq 0}$ and $\delta > 0$, set $ L = \frac{ 5(d+1) +1 
 }{\delta}  $ and $Q = 20L$. Now given any flow $\calpha$ on a locally 
 connected, locally compact and second countable Hausdorff space $Y$ with 
 topological 
 dimension $\leq d$ and any compact subset $ K \subset Y_{> Q} $, we define $ 
 \widehat{K} = \calpha_{ \left[ - \frac{L}{4}, \frac{L}{4} \right] } (K)$, 
 which is also a compact subset of $ Y_{> Q} $, and then apply 
 Corollary~\ref{cor:estimate-subtubular-cover} with $\widehat{K}$ in place of 
 $K$, to obtain a subtubular cover of $ \widehat{K} $ with width $\geq L$ and 
 multiplicity $ \leq 5(d+1)$.

 For each $U \in \mathcal{U}$, define $ U' = \left\{ y \in Y \middlebar \calpha_{[-L, L]}(y) \subset U \right\} $, which is open by \cite[Lemma~8.22]{HSWW16}. By our construction, we have $ \calpha_{[-L, L]}(U') \subset U $. Because of the first condition in Definition~\ref{defn:subtubular-cover}, $ \mathcal{U}' = \left\{ U' \right\}_{U\in\mathcal{U}} $ covers $ \widehat{K} $. Now fix a partition of unity $ \left\{ f_U \right\}_{U\in\mathcal{U}} $ for $ \widehat{K} \subset Y $ subordinate to $ \mathcal{U}' $. 
 For any $U \in \mathcal{U} $, set  $\widehat{f}_U =  \mathbb{E} (\alpha)_{ \left[ - L, L \right] } (f_U)$.
 Then by \cite[Lemma~8.16]{HSWW16}, the collection $ \big\{ \widehat{f}_U  \big\}_{U \in \mathcal{U}} $ is a partition of unity for the inclusion $ K \subset X$ subordinate to $ \mathcal{U} $, whose members are $\calpha$-Lipschitz with the constant $ \frac{1 }{L} $, and so is the function $ \displaystyle \mathrm{1}_X - \sum_{U \in \mathcal{U}} \mathbb{E} (\alpha)_{ \left[ - L, L \right] } (f_U) $. 
 
 Hence by \cite[formula~(8.2) in Lemma~8.21]{HSWW16}, if we let $ Z = \mathcal{N}(\mathcal{U}) $, the nerve complex of $\mathcal{U}$ (see \cite[Definition~8.19]{HSWW16}), we have a map
  $$ F= \left( \bigoplus_{U\in\mathcal{U}} \widehat{f}_U \right) \oplus \left( \mathrm{1}_X - \sum_{U \in \mathcal{U}} \mathbb{E} (\alpha)_{ \left[ - L, L \right] } (f_U) \right) : Y \to |\mathcal{N}(\mathcal{U}^+) | = | C Z |, $$
 which is $\calpha$-Lipschitz with regard to the $ \mathit{l}^{1} $-metric with  constant $ \frac{ 5(d+1) +1 }{L} = \delta $, as at most $ (5(d+1) +1) $ summands of $F$ are non-zero at each point. It also maps $ K $ into $ | \mathcal{N}(\mathcal{U}) | $ as $ \left( \mathrm{1}_X - \sum_{U \in \mathcal{U}} \mathbb{E} (\alpha)_{ \left[ - L, L \right] } (f_U) \right) $ vanishes on $K$. Finally, the preimage of the open star around each vertex $ U \in \mathcal{U} $ is contained in $ \mathrm{supp}(\widehat{f}_U) \subset U $, which is in turn contained in a tube $B_U$. 
\end{proof}

The following lemma allows us to shrink the support of a partition of unity in a way that respects the Lipschitz constants. It will be used in the proof of Proposition~\ref{prop:get-partition-of-unity} to strengthen the ``separateness'' of supports of a canonical partition of unity mentioned in Step~\ref{strategy:prop:get-partition-of-unity:simplicial} above. If $(X,\operatorname{dist})$ is a metric space, $Y \subset X$, and $r>0$, we denote $N_{r}(Y) = \{x \in X \mid \operatorname{dist}(x,Y)<r\}$.
 
\begin{lem}\label{lem:shrink-pou}
	Let $(X,\operatorname{dist})$ be a metric space and let $d \in \mathbb{N}$ and $\eta > 0$. Suppose $\left\{f_i\right\}_{i \in I}$ is a partition of unity for $X$ such that
	\begin{enumerate}
		\item\label{lem:shrink-pou:1-mult} for any subset $F \subset I$ with cardinality greater than $d+1$, we have $\prod_{i \in F} f_i = 0$; 
		\item\label{lem:shrink-pou:2-Lipschitz} for any $i \in I$, $f_i$ is Lipschitz with constant $\eta$.
	\end{enumerate}
	Then for any $r \in \left ( 0, \frac{1}{\eta (d+1)} \right)$, there is a 
	partition of unity $\left\{g_i\right\}_{i \in I}$ for $X$ such that
	\begin{enumerate}\setcounter{enumi}{2}
		\item\label{lem:shrink-pou:3-supp} for any $i \in I$, $N_{r} \big( \supp(g_i) \big) \subset  \supp(f_i) ^o $; 
		\item\label{lem:shrink-pou:4-Lipschitz} for any $i \in I$, $g_i$ is Lipschitz with constant $\displaystyle \frac{ (d+2) \eta }{\big( 1 - (d+1) \eta r \big)^2}$.
	\end{enumerate}
\end{lem}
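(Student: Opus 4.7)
The natural approach is to truncate each $f_i$ from below by the quantity $\eta r$ (so that any point where the truncated function is still positive has $f_i > \eta r$, and hence the full $r$-neighborhood in the original domain still satisfies $f_i > 0$), and then renormalize so that the resulting functions again form a partition of unity. The multiplicity hypothesis \ref{lem:shrink-pou:1-mult} together with the strict upper bound on $r$ is precisely what ensures the normalizing denominator stays bounded away from $0$.

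More concretely, the plan is to set
\[
\tilde{f}_i(x) = \max\bigl\{ f_i(x) - \eta r, \, 0 \bigr\},
\qquad
S(x) = \sum_{i \in I} \tilde{f}_i(x),
\qquad
g_i(x) = \frac{\tilde{f}_i(x)}{S(x)}.
\]
Each $\tilde{f}_i$ is still Lipschitz with constant $\eta$ because $t \mapsto \max\{t-\eta r, 0\}$ is $1$-Lipschitz. For the support containment \ref{lem:shrink-pou:3-supp}, I would argue: if $\tilde{f}_i(x) > 0$ then $f_i(x) > \eta r$, so for any $y$ with $\operatorname{dist}(x,y) < r$ the Lipschitz estimate gives $f_i(y) > 0$ and hence $y$ lies in the open set $\{f_i > 0\} \subset \supp(f_i)^o$; passing to closures yields $N_r(\supp(\tilde{f}_i)) \subset \supp(f_i)^o$, and since $\supp(g_i) = \supp(\tilde{f}_i)$, this is what is required.

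For the lower bound on $S$, use hypothesis \ref{lem:shrink-pou:1-mult}: at each point $x$ at most $d+1$ of the $f_i$ are nonzero, so
\[
S(x) \;\geq\; \sum_{i \in I} \bigl( f_i(x) - \eta r \bigr) \;\geq\; 1 - (d+1)\eta r \;>\; 0,
\]
the positivity coming from the assumption $r < 1/\bigl(\eta(d+1)\bigr)$. The same multiplicity bound, applied to $S$ as a sum of $\eta$-Lipschitz functions, gives that $S$ is Lipschitz with constant $(d+1)\eta$.

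Finally for \ref{lem:shrink-pou:4-Lipschitz}, apply the quotient rule estimate
\[
|g_i(x) - g_i(y)| \;\leq\; \frac{|\tilde{f}_i(x) - \tilde{f}_i(y)|}{S(x)} + \frac{\tilde{f}_i(y)}{S(y)} \cdot \frac{|S(x) - S(y)|}{S(x)},
\]
plug in the Lipschitz constants $\eta$ and $(d+1)\eta$ together with the bounds $S(x), S(y) \geq 1 - (d+1)\eta r$ and $\tilde{f}_i(y)/S(y) = g_i(y) \leq 1$, and combine to obtain the Lipschitz constant $(d+2)\eta / \bigl(1 - (d+1)\eta r \bigr)^2$ stated in the conclusion (the quadratic power in the denominator coming from the cruder estimate $1 \leq 1/(1-(d+1)\eta r)$ in the first term). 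There is no real obstacle here; the only thing to watch is that one must verify the support containment \emph{before} renormalizing, since division by $S$ preserves the zero set but it is the truncation step, not the renormalization, that does the geometric work.
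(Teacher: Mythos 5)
Your proof is correct and follows essentially the same truncate-and-renormalize strategy as the paper; the only cosmetic difference is that you cut off at exactly $\eta r$ (which still yields the support containment because $N_r$ is defined by a strict inequality), whereas the paper cuts at an auxiliary $\varepsilon \in (\eta r, \tfrac{1}{d+1})$, and your resulting Lipschitz constant $(d+2)\eta/(1-(d+1)\eta r)$ is in fact slightly sharper than required. The one cosmetic slip is that the intermediate sum $\sum_{i\in I}(f_i(x)-\eta r)$ should be taken only over the at most $d+1$ indices with $f_i(x)>0$, as intended.
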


\begin{proof}
	Choose $\varepsilon \in \left( \eta r, \frac{1}{d+1} \right)$ such that 
	\begin{equation}\label{eq:lem:shrink-pou:Lipschitz}
		\frac{ \big( 1 - (d+1) \varepsilon \big) \eta + ( 1 - \varepsilon ) (d+1) \eta }{\big( 1 - (d+1) \varepsilon \big)^2} <  \frac{ (d+2) \eta }{\big( 1 - (d+1) \eta r \big)^2} \; ,
	\end{equation}
	which is possible because $\eta r < \frac{1}{d+1}$ and the strict inequality~\eqref{eq:lem:shrink-pou:Lipschitz} holds when we replace $\varepsilon$ by $\eta r$. Now define 
	\[
	h (t) = \max\left\{t-\varepsilon, 0\right\} \, .
	\]
	So $h$ is Lipschitz with constant $1$. Hence for each $i \in I$, $h \circ f_i : X \to [ 0, \infty )$ is Lipschitz with constant $\eta$, while $f_i \leq 1$ and $\varepsilon < \frac{1}{\eta (d+1)} \leq 1$ together also imply the image of $h \circ f_i$ is within $[0, 1 - \varepsilon ]$. Moreover, for any $x \in \supp(h \circ f_i)$, we have $f_i(x) \geq \varepsilon$. Since $f_i$ is Lipschitz with constant $\eta$, any $x'$ with $\operatorname{dist}(x, x') < r$ satisfies $f_i(x') > \varepsilon - \eta r > 0$, and thus we have $B_r(x) \subset (f_i)^{-1} \big( \mathbb{R} \setminus (\eta r - \varepsilon, \varepsilon - \eta r) \big) $. Consequently, we have $N_{r} \big( \supp(h \circ f_i) \big) \subset  \supp(f_i) ^o $. 
	
	Since for any $x \in X$, we have $\sum_{i \in I} f_i (x) = 1 $ and our assumption on $\left\{f_i\right\}_{i \in I}$ implies that the number of $i \in I$ with $f_i (x) > 0$ is between $1$ and $d+1$,  we have  
	\[
	0 < 1 - (d + 1) \varepsilon  \leq \sum_{i \in I} ( h \circ f_i) (x) \leq 1 - \varepsilon \; .
	\]
	Similarly, one checks that the function $\displaystyle \sum_{i \in I} ( h \circ f_i)$ is Lipschitz with constant~$(d+1) \eta$.
	
	Consequently, we may define a new partition of unity $\left\{g_i\right\}_{i \in I}$ for $X$ by 
	\[
		g_i(x) = \frac{( h \circ f_i) (x)}{ \displaystyle \sum_{ j \in I} ( h \circ f_j) (x) } \quad \text{for~any~} x \in X \; .
	\]
	Then for any $i \in I$, we have $\supp(g_i) = \supp(h \circ f_i)$ and thus $N_{r} \big( \supp(g_i) \big) \subset  \supp(f_i) ^o $. The following computation using the ranges and the Lipschitz constants of $\displaystyle \sum_{i \in I} ( h \circ f_i)$ and $h \circ f_i$, for $i \in I$, shows that each $g_i$ is Lipschitz with a constant equal to the left-hand side of \eqref{eq:lem:shrink-pou:Lipschitz}, and thus also the right-hand side, as desired:
	\begin{align*}
		&\ \left| \frac{\left( h \circ f_i \right) \left( x_1  \right)}{\displaystyle \sum_{ j \in I} \left( h \circ f_j \right) \left( x_1  \right)} - \frac{\left( h \circ f_i \right) \left( x_2  \right)}{\displaystyle \sum_{ j \in I} \left( h \circ f_j \right) \left( x_2  \right)} \right| \\
		\leq &\ \left|  \frac{\left( h \circ f_i \right) \left( x_1  \right)}{\displaystyle \sum_{ j \in I} \left( h \circ f_j \right) \left( x_1  \right)} - \frac{\left( h \circ f_i \right) \left( x_2  \right)}{\displaystyle \sum_{ j \in I} \left( h \circ f_j \right)  \left( x_1  \right)  }   \right| +  \left|  \frac{\left( h \circ f_i \right) \left( x_2  \right)}{\displaystyle \sum_{ j \in I} \left( h \circ f_j \right) \left( x_1  \right)} - \frac{\left( h \circ f_i \right) \left( x_2  \right)}{\displaystyle \sum_{ j \in I} \left( h \circ f_j \right) \left( x_2  \right)}   \right| \\
		= &\ \frac{\left|  \left( h \circ f_i \right) \left( x_1  \right) - \left( h \circ f_i \right) \left( x_2  \right) \right|}{\left|  \displaystyle \sum_{ j \in I} \left( h \circ f_j \right) \left( x_1  \right)   \right|} +  \left|  \left( h \circ f_i \right) \left( x_2  \right)  \right| \frac{ \left|  \displaystyle \sum_{ j \in I} \left( h \circ f_j \right) \left( x_2  \right) - \displaystyle \sum_{ j \in I} \left( h \circ f_j \right)  \left( x_1  \right)  \right|   }{\left| \displaystyle \sum_{ j \in I} \left( h \circ f_j \right)  \left( x_1  \right) \right| \left| \displaystyle \sum_{ j \in I} \left( h \circ f_j \right)  \left( x_2  \right) \right|  }  \\
		\leq &\ \frac{ \eta }{\big( 1 - (d+1) \varepsilon \big)} + \frac{ ( 1 - \varepsilon ) (d+1) \eta }{\big( 1 - (d+1) \varepsilon \big)^2}
	\end{align*}
	for any $x_1, x_2 \in X$. 	
\end{proof}

\begin{prop}\label{prop:get-partition-of-unity}
 For any $d \in \Z^{\geq 0}$, for any $\eta > 0$  and for any $L> 0$, there 
 exists $R = R(L, \eta, d) > 0$ such that for any flow $\calpha$ on a locally 
 connected, locally compact and second countable Hausdorff space $Y$ with 
 topological 
 dimension $\leq d$, and for any compact subset $ K \subset Y_{> R} $, there 
 exist a finite partition of unity $ \left\{ f_i \right\}_{i \in I} $ for the 
 inclusion $ K \subset Y$ satisfying:
  \begin{enumerate}
   \item 
   \label{prop:get-partition-of-unity-a}
   for any $ i\in I $, $ f_i $ is $\calpha$-Lipschitz with constant $\eta$;
   \item 
   \label{prop:get-partition-of-unity-b}
   for any $ i\in I $, $ \calpha_{[-L, L]} \big( \mathrm{supp}( f_i ) \big) $ is contained in a tube $B_i$; 
   \item 
   \label{prop:get-partition-of-unity-c}
   there is a decomposition $I = I^{(0)} \cup \cdots \cup I^{(5(d+1)-1)} $ such that for any $ l \in \left\{ 0, \dots, 5(d+1)-1 \right\} $ and any two different $i, j \in I^{(l)} $, we have 
    $$ \calpha_{[-L, L]} \big( \mathrm{supp}( f_i ) \big) \cap \calpha_{[-L, L]} \big( \mathrm{supp}( f_j ) \big) = \varnothing . $$
  \end{enumerate}
\end{prop}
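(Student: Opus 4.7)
The strategy is to build on Proposition~\ref{prop:get-simplicial-complex} and pull back a carefully constructed partition of unity from the simplicial complex $|CZ|$, using simplicial techniques to convert the multiplicity bound into a chromatic number bound, along the lines of \cite[Section~8]{HSWW16}.

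Set $n = 5(d+1)$. First I would fix an auxiliary constant $\delta > 0$ small enough that $\delta \cdot M_n \leq \eta$, where $M_n$ is the (universal, depending only on $n$) Lipschitz constant in the $\ell^1$-metric of the colored partition-of-unity functions on $(n-1)$-dimensional simplicial complexes constructed below. Then set $R = Q(\delta,d)$ with $Q(\delta, d)$ as given by Proposition~\ref{prop:get-simplicial-complex}, possibly enlarged to ensure that the compact set $\widehat K := \calpha_{[-L,L]}(K)$ still lies in $Y_{>Q(\delta,d)}$. Given the flow $\calpha$ on $Y$ and $K \subset Y_{>R}$, apply Proposition~\ref{prop:get-simplicial-complex} to $\widehat K$ to obtain a finite simplicial complex $Z$ of dimension at most $n-1$, a map $F : Y \to |CZ|$ that is $\calpha$-Lipschitz with constant $\delta$ and such that $F(\widehat K) \subset |Z|$, and, for each $v \in Z_0$, a tube $B_v$ containing the preimage under $F$ of the open star around $v$.

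Next I would construct on $|Z|$ a partition of unity $\{\psi_{v,k}\}_{(v,k) \in Z_0 \times \{0,\dots,n-1\}}$, where each $\psi_{v,k}$ is supported inside the open star around $v$, is Lipschitz with constant $M_n$, and has the property that for each fixed color $k$ the supports $\{\supp(\psi_{v,k})\}_{v \in Z_0}$ are pairwise disjoint. The key idea is to split the canonical partition of unity on $|Z|$ (given by vertex coordinates) according to the \emph{rank} of each vertex's coordinate inside its ambient simplex, then smooth via a convex-combination/smearing procedure analogous to the one in \cite[Lemma~8.21 and Proposition~7.23]{HSWW16} to obtain continuity and the Lipschitz bound. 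Pulling back via $F$, the functions $f_{v,k} := \psi_{v,k} \circ F$ form a partition of unity for $K \subset Y$ (since $F(K) \subset |Z|$), with the decomposition $I^{(k)} := \{(v,k) : v \in Z_0\}$. Property~\ref{prop:get-partition-of-unity-a} follows from $\delta \cdot M_n \leq \eta$, property~\ref{prop:get-partition-of-unity-b} from the fact that $\supp(f_{v,k})$ is contained in $F^{-1}$ of the open star of $v$, combined with the observation that its $\calpha_{[-L,L]}$-stretch is contained in $F^{-1}(\text{open star of }v)$ enlarged by $L$ units of flow — precisely the enlargement absorbed when we replaced $K$ by $\widehat K$ before applying Proposition~\ref{prop:get-simplicial-complex}, keeping us inside the tube $B_v$ (possibly after an application of Lemma~\ref{lem:stretching-tube}). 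Property~\ref{prop:get-partition-of-unity-c} then follows from the color-wise disjointness of the supports of $\psi_{v,k}$ and again the containment in tubes $B_v$.

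The main obstacle is the construction in the middle paragraph: producing on $|Z|$ a partition of unity that simultaneously (i) has chromatic number at most $n$ rather than merely multiplicity at most $n$, (ii) is supported inside the open stars, and (iii) has an $n$-dependent Lipschitz bound in the $\ell^1$-metric. This is a purely simplicial construction whose dynamical content is absent, but it is where the passage from ``multiplicity'' to ``chromatic number'' actually takes place. I expect this to be essentially the same argument as in \cite[Section~8]{HSWW16}, with only notational adaptation, since the output of Proposition~\ref{prop:get-simplicial-complex} is formally the same kind of Lipschitz map to a simplicial complex of controlled dimension as in the free case. A minor but necessary bookkeeping issue is verifying that the tubes produced by Proposition~\ref{prop:get-simplicial-complex} accommodate the $[-L,L]$-stretch of the supports; this is why I invoke the proposition with $\widehat K = \calpha_{[-L,L]}(K)$ rather than $K$ itself.
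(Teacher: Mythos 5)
Your overall strategy --- pull back a colored, star-subordinate partition of unity on $|CZ|$ through the $\delta$-Lipschitz map $F$ of Proposition~\ref{prop:get-simplicial-complex} --- is the same as the paper's, and your (vertex, rank) coloring is a plausible alternative to the paper's coloring of the simplices of $Z$ by dimension via \cite[Lemma~8.18]{HSWW16}. The genuine gap is in how you handle the $\calpha_{[-L,L]}$-stretch of the supports in properties (\ref{prop:get-partition-of-unity-b}) and (\ref{prop:get-partition-of-unity-c}). Replacing $K$ by $\widehat K=\calpha_{[-L,L]}(K)$ does nothing here: it only enlarges the set on which $\sum_i f_i=1$, and has no effect whatsoever on the supports of the $f_i$, which is where the stretch matters. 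For (\ref{prop:get-partition-of-unity-c}) in particular, color-wise disjointness of $\supp(\psi_{v,k})$ and $\supp(\psi_{v',k})$ in $|Z|$ gives disjointness of $\supp(f_{v,k})$ and $\supp(f_{v',k})$, but flowing each by $[-L,L]$ can create an overlap: for $y\in\supp(f_{v,k})$ and $|t|\le L$ one only knows $d^1\bigl(F(\calpha_t(y)),F(y)\bigr)\le\delta L$, so the $F$-image of $\calpha_t(y)$ may leave $\supp(\psi_{v,k})$ and land in $\supp(\psi_{v',k})$. What is needed is quantitative separation: the supports of same-colored functions on $|CZ|$ must be more than $2\delta L$ apart in the $\ell^1$-metric, and the $\delta L$-neighborhood of each must stay inside the corresponding open star. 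This is exactly what the paper's Lemma~\ref{lem:shrink-pou} supplies (shrinking $\{\nu_\sigma\}$ to $\{\mu_\sigma\}$ with $N_{\delta L}\bigl(\supp(\mu_\sigma)\bigr)\subset\supp(\nu_\sigma)^o$); note that the shrinking worsens the Lipschitz constant in a way that depends on $\delta L$, so your condition $\delta M_n\le\eta$ must be replaced by a self-referential inequality in $\delta$ like \eqref{eq:prop:get-partition-of-unity:eta}.

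Your fallback for (\ref{prop:get-partition-of-unity-b}), stretching the tube via Lemma~\ref{lem:stretching-tube}, also does not go through as stated: that lemma requires $L<\varepsilon_{B_v}/2$, and the statement of Proposition~\ref{prop:get-simplicial-complex} gives no lower bound on $\varepsilon_{B_v}$ for the tubes it produces. Once the shrinking step is inserted, both (\ref{prop:get-partition-of-unity-b}) and (\ref{prop:get-partition-of-unity-c}) follow from the single containment $F\bigl(\calpha_{[-L,L]}(\supp(f_i))\bigr)\subset N_{\delta L}\bigl(\supp(\mu_\sigma)\bigr)\subset\supp(\nu_\sigma)^o$, with no need to stretch any tube or to enlarge $K$.
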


\begin{proof}
 Given $d \in \Z^{\geq 0}$ and $L, \eta > 0$, choose $\delta \in \left( 0, \big( 2(d+2)^2 (d+3 ) (2 d + 5) L \big) ^{-1} \right)$ such that
 \begin{equation}\label{eq:prop:get-partition-of-unity:eta}
  \delta \cdot \frac{  2(d+2) (d+3 )^2 (2 d + 5) }{\big( 1 - 2(d+2)^2 (d+3 ) (2 d + 5) \delta L \big)^2} < \eta \; .
 \end{equation}
 Set $R = Q(\delta,d)$ as in Proposition~\ref{prop:get-simplicial-complex}. Now 
 given any flow $\calpha$ on a locally connected, locally compact and second 
 countable space $Y$ with topological dimension $\leq d$ and any compact subset 
 $ K \subset Y_{> R} $, we apply Proposition~\ref{prop:get-simplicial-complex} 
 to obtain a finite simplicial complex $ Z $ of dimension at most $5(d+1)-1$ 
 and a map $ F: Y \to | CZ | $ satisfying: 
 {
 	\renewcommand{\theenumi}{\theequation}
 	\renewcommand{\labelenumi}{(\theenumi)}
 \begin{enumerate}
 	\stepcounter{equation}\item\label{eq:prop:get-partition-of-unity:F-a}
 	$ F $ is $\calpha$-Lipschitz with  constant $\delta$;
 	\stepcounter{equation}\item\label{eq:prop:get-partition-of-unity:F-b}
 	for any vertex $  v \in Z_0 $ (the vertex set of $Z$), the preimage of the open star around $v$ is contained in a tube $B_v$; 
 	\stepcounter{equation}\item\label{eq:prop:get-partition-of-unity:F-c}
 	$ F(K) \subset Z $. 
 \end{enumerate}
 }
  
 By \cite[Lemma~8.18]{HSWW16}, there is a partition of unity $ \left\{ \nu_\sigma \right\}_{\sigma \in CZ } $ for $|CZ|$ indexed by the simplices of $CZ$, such that each $\nu_\sigma$ is $2(\dim(CZ) + 1) (\dim(CZ) + 2 ) (2\dim(CZ) + 3)$-Lipschitz and for any different simplices $\sigma$ and $\sigma'$ with the same dimension, we have $\nu_\sigma\nu_{\sigma'} = 0$. In particular, $ \left\{ \nu_\sigma \right\}_{\sigma \in CZ } $ satisfies Conditions~\eqref{lem:shrink-pou:1-mult} and~\eqref{lem:shrink-pou:2-Lipschitz} of Lemma~\ref{lem:shrink-pou}, with $d+1$ in place of $d$ and $2(d+2) (d+3 ) (2 d + 5)$ in place of $\eta$.
 
 Set $r = \delta L$. Since $2(d+2) (d+3 ) (2 d + 5) \cdot (d+2) \cdot r < 1$, we can apply Lemma~\ref{lem:shrink-pou} to $ \left\{ \nu_\sigma \right\}_{\sigma \in CZ } $ and $r$ to obtain a partition of unity $\left\{\mu_\sigma\right\}_{\sigma \in CZ } $ for $|CZ|$ satisfying 
 {
 	\renewcommand{\theenumi}{\theequation}
 	\renewcommand{\labelenumi}{(\theenumi)}
 \begin{enumerate}
 	\stepcounter{equation}\item\label{eq:prop:get-partition-of-unity:mu-containment} for any $\sigma \in CZ$, $N_{r} \big( \supp(\mu_\sigma) \big) \subset  \supp(\nu_\sigma) ^o $; 
 	\stepcounter{equation}\item\label{eq:prop:get-partition-of-unity:mu-Lipschitz}  for any $\sigma \in CZ$, $\mu_\sigma$ is Lipschitz with constant \[
 	\displaystyle \frac{ (d+3) \cdot 2(d+2) (d+3 ) (2 d + 5) }{\big( 1 - (d+2) \cdot 2(d+2) (d+3 ) (2 d + 5) r \big)^2} \;. \]
 \end{enumerate}
 }
 
 Define $I = Z$, that is, the collection of all simplices in $Z$. Then there is a decomposition $ I = \bigsqcup_{l=0}^{d} I^{(l)} $, where $I^{(l)}$ consists of all $l$-dimensional simplices in $Z$, for $l = 0, \ldots, d$. For any $\sigma \in I$, define $ f_\sigma = \mu_\sigma \circ F : Y \to [0,1] $. We claim that $ \left\{f_\sigma\right\}_{\sigma\in I} $ together with the decomposition $ I = \bigcup_{l=0}^{d} I^{(l)} $ is a partition of unity for the inclusion $ K \subset Y $ satisfying the required conditions. 
  
 First, by \eqref{eq:prop:get-partition-of-unity:F-c}, we know that $F(y) \in |Z|$ for any $y \in K$, which then implies $\sum_{\sigma \in Z} \nu_\sigma ( F(y) ) = 1 $ and $\nu_\sigma ( F(y) ) = 0$ for any $\sigma \in CZ \setminus Z$. By \eqref{eq:prop:get-partition-of-unity:mu-containment}, this implies $\mu_\sigma ( F(y) ) = 0$ for any $\sigma \in CZ \setminus Z$ and thus $\sum_{\sigma \in Z} \mu_\sigma ( F(y) ) = 1 $ because $\left\{\mu_\sigma\right\}_{\sigma \in CZ } $ is a partition of unity for $|CZ|$. This shows that $ \left\{f_\sigma\right\}_{\sigma\in I} $ is a partition of unity for the inclusion $ K \subset Y $. It remains to verify the three conditions in the statement of the proposition.
 \begin{enumerate}
 	\item For any $\sigma \in I$, since we know the Lipschitz constants of $\mu_\sigma$ and $F$, we see that $f_\sigma$ is Lipschitz with a constant equal to
 	\[
	 	\delta \cdot \frac{ (d+3) \cdot 2(d+2) (d+3 ) (2 d + 5) }{\big( 1 - (d+2) \cdot 2(d+2) (d+3 ) (2 d + 5) \delta L \big)^2} \;, 
 	\]
 	and thus it is also Lipschitz with constant $\eta$, because of \eqref{eq:prop:get-partition-of-unity:eta}.
 	\item For any $\sigma \in I$, since $F$ is Lipschitz with constant $\delta$ and $\delta L = r$, we have 
 	\begin{equation}\label{eq:prop:get-partition-of-unity:containments}
 		F \left( \calpha_{[-L, L]} \big( \mathrm{supp}( \mu_\sigma \circ F ) \big) \right) \subset N_r \big( \mathrm{supp}( \mu_\sigma ) \big) \subset \mathrm{supp}( \nu_\sigma ) ^o \; .
 	\end{equation}
 	By \cite[Lemma~8.18]{HSWW16}, $\mathrm{supp}( \nu_\sigma ) ^o$ is contained in the open set $V_\sigma$, which, by its definition, is contained in the open star of any vertex in $\sigma$. Hence the preimage of $\calpha_{[-L, L]} \big( \mathrm{supp}( \mu_\sigma \circ F ) \big) $ under $F$ is contained in an open star in $|Z|$. Since $F$ was chosen to satisfy \eqref{prop:get-simplicial-complex-b} of Proposition~\ref{prop:get-simplicial-complex}, we see that $\calpha_{[-L, L]} \big( \mathrm{supp}( \mu_\sigma \circ F ) \big) $ is contained in a tube.
 	\item For any $ l \in \left\{ 0, \dots, 5(d+1)-1 \right\} $ and any two different $\sigma, \sigma' \in I^{(l)} $, by \cite[Lemma~8.18]{HSWW16}, we have $V_\sigma \cap V_{\sigma'} = \varnothing$, and thus $\mathrm{supp}( \nu_\sigma ) ^o \cap \mathrm{supp}( \nu_{\sigma'} ) ^o = \varnothing$. It then follows from \eqref{eq:prop:get-partition-of-unity:containments} that
 	\[
	 	\calpha_{[-L, L]} \big( \mathrm{supp}( f_i ) \big) \cap \calpha_{[-L, L]} \big( \mathrm{supp}( f_j ) \big) = \varnothing \; .
 	\]
 \end{enumerate}
\end{proof}

\section{Actions with bounded periods}
\label{section: bounded periods}

In this section, we shift our attention to the case where a flow $(Y, \R, \calpha)$ has bounded periods, i.e.\ $Y = Y_{\leq R}$ for some $R > 0$. 
Notice that this is equivalent to saying that there is $R > 0$, such that for any $y \in Y$, the orbit of $y$ is equal to $\left\{ \calpha_t (y) \middlebar t \in [0,R] \right\}$. In particular, this implies (and in fact is equivalent to) the following condition defined more generally for a locally compact Hausdorff second countable group $G$ and a locally compact Hausdorff space $Y$:

\begin{defn}[{\cite[Definition~3.1]{Hirshberg-Wu16}}]
	A continuous action $\calpha: G \curvearrowright Y$ is said to have \emph{uniformly compact orbits} if there exists a compact subset $K \subset G$ such that for any $y \in Y$, we have $G \cdot y = K \cdot y$.
\end{defn}

Hence by \cite[Lemma~3.2]{Hirshberg-Wu16}, we know that the quotient space $Y / \mathbb{R}$ is Hausdorff and locally compact and the quotient map $\pi \colon Y \to Y / \mathbb{R}$ is proper. 

\begin{lem}\label{lem:quotient-is-metrizable}
	Let $Y$ be a locally compact second countable Hausdorff space and let 
	$\calpha: \R 
	\curvearrowright Y$ be a continuous action with bounded periods. Then $Y / 
	\mathbb{R}$ is also locally compact second countable Hausdorff.
\end{lem}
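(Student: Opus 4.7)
The plan is to identify $\pi\colon Y \to Y/\R$ as a \emph{perfect} map (i.e.\ a closed continuous surjection with compact fibers) and then invoke the classical theorem of Morita--Hanai--Stone that the image of a metrizable space under a perfect map is metrizable. Thus, the work consists of verifying that each part of this definition is already at our disposal.

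First, I would record what is already known from the preceding material. By the discussion after the definition of uniformly compact orbits, \cite[Lemma~3.2]{Hirshberg-Wu16} tells us that $Y/\R$ is locally compact Hausdorff and that $\pi$ is proper. The fibers of $\pi$ are the $\R$-orbits of $\calpha$; the uniform compactness hypothesis says that each orbit has the form $\calpha_K(y)$ for a fixed compact $K \subset \R$, so each fiber is a continuous image of $K$ and hence compact. Continuity and surjectivity of $\pi$ are automatic from the quotient construction.

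Next, I would verify that $\pi$ is closed using the standard fact that a proper continuous map whose codomain is locally compact Hausdorff must be closed. Let $C \subset Y$ be closed and fix $z \notin \pi(C)$. Choose a compact neighborhood $V$ of $z$ in $Y/\R$. Since $\pi$ is proper, $\pi^{-1}(V)$ is compact, so $\pi^{-1}(V) \cap C$ is compact, and therefore
\[
\pi\bigl(\pi^{-1}(V) \cap C\bigr) \;=\; V \cap \pi(C)
\]
is compact, hence closed in $Y/\R$. The interior of $V$ minus this closed set is then an open neighborhood of $z$ disjoint from $\pi(C)$, which shows $\pi(C)$ is closed.

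Having assembled these four properties, $\pi$ is a perfect map from the metrizable space $Y$ onto $Y/\R$, and the Morita--Hanai--Stone theorem (see, e.g., Engelking, \emph{General Topology}, Theorem 4.4.15) then delivers metrizability of $Y/\R$. I do not expect any serious obstacle: the substantive work was already carried out in \cite[Lemma~3.2]{Hirshberg-Wu16} in establishing properness of $\pi$, and the rest is a direct appeal to a standard metrization theorem. An alternative, more hands-on, route would be to metrize $Y/\R$ by the Hausdorff distance between orbits (well-defined because orbits are compact), but checking that this metric topology coincides with the quotient topology is less transparent than invoking the perfect-map theorem.
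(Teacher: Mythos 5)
Your proof is correct, and it takes a genuinely different route from the paper's. The paper argues via Gelfand duality: it observes that $\pi$ being proper makes $C_0(Y/\R)$ embed into $C_0(Y)$ as a $C^*$-subalgebra, and then deduces metrizability of $Y/\R$ from separability of its $C^*$-algebra. That argument is very short but implicitly trades on the equivalence between metrizability and second countability for locally compact Hausdorff spaces, so it really requires $C_0(Y)$ to be separable, i.e.\ $Y$ to be second countable rather than merely metrizable. Your argument is purely point-set topological: you upgrade the properness supplied by \cite[Lemma~3.2]{Hirshberg-Wu16} to closedness (your verification that a proper map into a locally compact Hausdorff space is closed is fine, including the identity $\pi(\pi^{-1}(V)\cap C)=V\cap\pi(C)$, which uses surjectivity), note that fibers are orbits and hence compact by the bounded-periods hypothesis, and invoke the Hanai--Morita--Stone theorem. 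This buys you a statement valid for arbitrary locally compact metrizable $Y$ with no separability assumption, at the cost of importing a metrization theorem that is heavier than anything the paper uses; in the paper's actual applications $Y$ is always second countable, so both arguments suffice there.
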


\begin{proof}
	Recall that a locally compact Hausdorff space $X$ is second countable if 
	and only if the $C^*$-algebra $C_0(X)$ is separable. Since the quotient map 
	$\pi \colon Y \to Y/ \mathbb{R}$ is a proper map between two locally 
	compact Hausdorff space, by Gelfand duality, we see that 
	$C_0(Y/\mathbb{R})$ embeds into $C_0(Y)$, and in particular 
	$C_0(Y/\mathbb{R})$ is separable.
\end{proof}

We now want to bound the dimension of $Y / \mathbb{R}$. We recall the notion of local dimension. 

\begin{defn}[{\cite[Chapter 5, Definition~1.1]{Pears75}}]
	The \emph{local dimension}, $\operatorname{locdim} (X)$, of a topological space $X$ is defined as follows. If $X$ is empty, then $\operatorname{locdim}(X) = -1$. Otherwise $\operatorname{locdim}(X)$ is the least integer $n$ such that for any $x \in X$, there is some open set $U$ containing $x$ such that $\operatorname{dim}(\overline{U}) \leq n$, or $\infty$ if there is no such integer.
\end{defn}

The local nature of this notion makes it useful when studying the behavior of dimensions with regard to continuous maps. 

\begin{prop}[{\cite[Chapter 5, Proposition~3.4]{Pears75}}]\label{prop:locdim-dim}
	If $X$ is a weakly paracompact normal space (e.g., if $X$ is second 
	countable), then $\operatorname{locdim}(X) = \dim(X)$. 
\end{prop}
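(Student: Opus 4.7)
\medskip

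My plan is to prove the two inequalities $\operatorname{locdim}(X) \leq \dim(X)$ and $\dim(X) \leq \operatorname{locdim}(X)$ separately. The first inequality is essentially trivial from the definition: for any point $x \in X$, the whole space $X$ serves as an open neighborhood of $x$, and $\overline{X} = X$, so $\operatorname{locdim}(X) \leq \dim(\overline{X}) = \dim(X)$. (No weak paracompactness or normality is needed here.)

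For the reverse inequality, assume $n = \operatorname{locdim}(X)$ is finite, otherwise there is nothing to show. For each $x \in X$ pick an open neighborhood $U_x$ with $\dim(\overline{U_x}) \leq n$, giving an open cover $\{U_x\}_{x \in X}$ of $X$. I would then invoke weak paracompactness to replace this by a point-finite open refinement, and then apply the shrinking lemma (available since $X$ is normal and the refinement is point-finite) to obtain a point-finite open cover $\{V_\alpha\}_{\alpha \in A}$ such that each $\overline{V_\alpha}$ is contained in some $U_{x_\alpha}$. In particular, since each $\overline{V_\alpha}$ is a closed subspace of the normal space $\overline{U_{x_\alpha}}$, monotonicity of covering dimension on closed subsets of normal spaces gives $\dim(\overline{V_\alpha}) \leq n$.

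The heart of the argument is then a sum theorem: if a weakly paracompact normal space $X$ is covered by a point-finite family of closed sets $\{F_\alpha\}$, each with $\dim(F_\alpha) \leq n$, then $\dim(X) \leq n$. Granting this, the proof concludes as follows: given any finite open cover $\mathcal{W}$ of $X$, combine it with $\{V_\alpha\}$ to obtain a point-finite open cover that refines $\mathcal{W}$, pass to closures, and apply the sum theorem to produce a finite open refinement of $\mathcal{W}$ of order $\leq n+1$, showing $\dim(X) \leq n$.

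The main obstacle is establishing the sum theorem itself. The proof I have in mind proceeds by transfinitely well-ordering the (point-finite) family $\{F_\alpha\}$ and inductively constructing, for a given finite open cover $\mathcal{W}$, successive refinements whose restriction to $\bigcup_{\beta \leq \alpha} F_\beta$ has order $\leq n+1$; weak paracompactness is what makes the transfinite bookkeeping finite at each point, and normality is used repeatedly to extend refinements across the boundary via the shrinking lemma. Once the sum theorem is in hand, everything else is a routine combination of point-finiteness, normality, and monotonicity of $\dim$ on closed subsets.
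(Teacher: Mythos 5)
The paper offers no proof of this proposition — it is quoted verbatim from Pears — and your argument is essentially the proof given there, so in outline you are on the standard track: $\operatorname{locdim}(X)\le\dim(X)$ is immediate from taking $U=X$, and the reverse inequality follows by passing to a point-finite open refinement $\{V_\alpha\}$ of $\{U_x\}$ via weak paracompactness, noting $\dim(\overline{V_\alpha})\le n$ by monotonicity of $\dim$ on closed subspaces of normal spaces, and applying a sum theorem. The one point I would push back on is the form of the sum theorem you invoke. The standard result (Morita's point-finite sum theorem, which is what Pears uses here) is stated for a point-finite \emph{open} cover $\{V_\alpha\}$ with $\dim(\overline{V_\alpha})\le n$; you instead assert it for an arbitrary point-finite family of \emph{closed} sets, which is a genuinely stronger statement, is not among the standard sum theorems, and whose usual proofs require the closed sets to sit inside a point-finite open swelling $F_\alpha\subseteq V_\alpha$. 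Your shrinking step happens to produce exactly such a swelling, so your argument does reduce to the standard open version and goes through; but you should state and prove (or cite) that version rather than the bare closed one, and your transfinite-induction sketch of the sum theorem is by far the thinnest part of the write-up — it is the same well-ordering-plus-shrinking technique used for the locally finite closed sum theorem, so it can be carried out, but as written it is a plan rather than a proof. Two minor simplifications: the shrinking is not needed to get $\dim(\overline{V_\alpha})\le n$, since already $\overline{V_\alpha}$ is a closed subspace of $\overline{U_{x_\alpha}}$; and in the only place the paper applies this proposition the space is metrizable, hence paracompact, so one could bypass weak paracompactness entirely and use the easier locally finite closed sum theorem.
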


Returning to the case of $\R$-actions, we have:

\begin{prop}\label{prop:dim-quotient-space-no-fixed-points}
	Let $Y$ be a locally compact second countable Hausdorff space and let 
	$\calpha: \R 
	\curvearrowright Y$ be a continuous action with bounded periods but no 
	fixed points. Then $\dim (Y / \mathbb{R}) \leq  \dim(Y) $.
\end{prop}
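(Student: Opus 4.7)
My plan is to bound the local dimension via Proposition \ref{prop:locdim-dim}, which applies because $Y/\mathbb{R}$ is metrizable by Lemma \ref{lem:quotient-is-metrizable}. Thus it suffices to show that every $\bar{y} \in Y/\mathbb{R}$ has an open neighborhood whose closure has covering dimension at most $\dim(Y)$. Fix $y \in \pi^{-1}(\bar{y})$. Since $\calpha$ has no fixed points, $\mathrm{per}_{\calpha}(y) > 0$, so Lemma \ref{lem:existence-tubes} furnishes a tube $B$ of some length $l < \mathrm{per}_{\calpha}(y)$ with $y \in S_B \cap B^o$. Since $\pi$ is an open map, $\pi(B^o)$ is an open neighborhood of $\bar{y}$, and I will show its closure has dimension at most $\dim(Y)$.

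The key step is to analyze the map $\pi|_{S_B} \colon S_B \to \pi(S_B)$, which I claim is a continuous open surjection with finite fibers. Continuity and surjectivity are immediate. Finiteness of fibers comes from the bounded-period hypothesis: the tube structure ensures that each orbit meets $B$ in a disjoint union of arcs of length $l$, each contributing exactly one intersection point with $S_B$, and separated along the orbit by gaps of length at least $\varepsilon_B > 0$ (Lemma \ref{lem:tubes-basics}); so for an orbit of period at most $R$ there are at most $R/l$ such arcs. For openness, let $V \subset S_B$ be open; it suffices to show the saturation $\calpha_{\mathbb{R}}(V) \subset Y$ is open, since then $\pi(V)$ is open in $Y/\mathbb{R}$. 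For any $z \in V$, the point $z$ is non-fixed and lies in the cross-section $S_B$; combining the tube homeomorphism with a flow-box around $z$ (for instance via Lemma \ref{lem:existence-tubes} applied at $z$) produces an open-in-$Y$ neighborhood of $z$ of the form $\calpha_{(-\delta, \delta)}(W)$ with $W \subset V$ open in $S_B$. Translating by $\calpha_t$, an arbitrary point $\calpha_t(z) \in \calpha_{\mathbb{R}}(V)$ acquires the open neighborhood $\calpha_{(t-\delta, t+\delta)}(W) \subset \calpha_{\mathbb{R}}(V)$, proving openness.

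Given these properties of $\pi|_{S_B}$, Proposition \ref{prop:Pears75} gives $\dim(\pi(S_B)) = \dim(S_B)$, while Theorem \ref{thm:Pears75} applied to the subspace $S_B \subset Y$ (totally normal, being metrizable) yields $\dim(S_B) \leq \dim(Y)$, so $\dim(\pi(S_B)) \leq \dim(Y)$. Since every orbit meeting $B$ passes through $S_B$, we have $\pi(S_B) = \pi(B)$, which is compact and hence closed in $Y/\mathbb{R}$; therefore $\overline{\pi(B^o)} \subset \pi(B) = \pi(S_B)$, and one more application of Theorem \ref{thm:Pears75} gives $\dim\bigl(\overline{\pi(B^o)}\bigr) \leq \dim(Y)$, completing the local-dimension estimate and hence the proof. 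The main obstacle is establishing the openness of $\pi|_{S_B}$: this hinges on $S_B$ being a global transversal to the flow inside $B$, so that flow-box neighborhoods near points $z \in V$ can be chosen with $V$-parameterized slices; the finite-fiber count, by contrast, is a direct consequence of the bounded-period assumption together with the uniform length of tube arcs.
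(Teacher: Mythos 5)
Your proposal follows the paper's proof almost verbatim in structure: reduce to local dimension via Proposition \ref{prop:locdim-dim}, take a tube $B$ through a point of the fiber, use $\pi(B)=\pi(S_B)$ as a closed neighborhood, bound the fibers of $\pi|_{S_B}$ by the period bound, and conclude with Theorem \ref{thm:Pears75} and Proposition \ref{prop:Pears75}. The fiber count and the final bookkeeping are fine.

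The problem is precisely at the step you single out as ``the main obstacle'': the openness of $\pi|_{S_B}$. You claim that for $V$ relatively open in $S_B$ the saturation $\calpha_{\R}(V)$ is open in $Y$, because every $z\in V$ has a $Y$-open flow-box $\calpha_{(-\delta,\delta)}(W)$ with $W\subset V$ open in $S_B$. This is correct for $z\in S_B\cap B^o$, where the tube homeomorphism produces such a box inside $B^o$, but it fails at $z\in S_B\setminus B^o$: the central slice is a cross-section only for the flow \emph{restricted to $B$}, and at a boundary point of $B$ no set of the form $\calpha_{(-\delta,\delta)}(W)$ with $W\subset S_B$ is a $Y$-neighborhood of $z$; passing to a fresh tube $B'$ around $z$ does not help, since $S_{B'}$ is a different slice and there is no small-time flow carrying $(B')^o$ into $S_B$. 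A concrete counterexample: let $Y=S^1\times[0,1]$ with the rotation flow (all periods $1$), let
\[
S=\bigl(\{0\}\times[0,1]\bigr)\cup\bigl(\{1/2\}\times[1/2,1]\bigr),\qquad B=\calpha_{[-1/16,\,1/16]}(S).
\]
Then $B$ is a tube with $S_B=S$, but for the relatively clopen set $V=\{1/2\}\times[1/2,1]$ the saturation is $S^1\times[1/2,1]$, which is not open, and $\pi(V)=[1/2,1]$ is not open in $\pi(S_B)=[0,1]$. So for an arbitrary tube $\pi|_{S_B}$ need not be open and Proposition \ref{prop:Pears75} is not applicable; one cannot fall back on closedness either, since closed finite-to-one maps can raise dimension (the standard $2$-to-$1$ surjection from the Cantor set onto $[0,1]$). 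To be fair, the paper's own proof invokes Proposition \ref{prop:Pears75} at the same point without verifying openness, so this hypothesis deserves attention in either version — e.g.\ by constructing the tube so that the portion of $S_B$ actually needed is a genuine local cross-section in $Y$ — but as written your openness argument asserts something false and the gap is real.
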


\begin{proof}
	By \ref{prop:locdim-dim}, it suffices to show $\operatorname{locdim} (Y / \mathbb{R}) \leq  \dim(Y) $. Now given any $x_0 \in Y / \mathbb{R}$, we write $x_0 = \pi(y_0)$ for some $y_0 \in Y$. Since $\calpha$ has no fixed points, by Lemma~\ref{lem:existence-tubes}, there is a tube $B$ with $y_0 \in S_B \cap B^o$. Observe that $\pi(B^o)$ is an open neighborhood of $x_0$ since $\pi^{-1}(\pi(B^o)) = \mathbb{R} \cdot B^o$ is open, and thus the compact set $\pi(B)$ is a closed neighborhood of $x_0$. Moreover, we have $\pi(S_B) = \pi(B)$ by the definition of tubes. 
	
	Let $l_B$ be the length of the tube $B$ and let $R$ be such that $Y = Y_{\leq R}$. We claim that for any $x \in Y / \mathbb{R}$, we have
	\begin{equation}\label{eq:bounding-preimage}
	| \pi^{-1}(x) \cap S_B | < \frac{R}{l_B} + 1 \; .
	\end{equation}
	Indeed, for any $y \in \pi^{-1}(x) \cap S_B$, we know that $\pi^{-1}(x) = 
	\left\{ \calpha_t (y) \middlebar t \in [0,R] \right\}$. Hence if we define 
	$D = \left\{ t \in [0, R] \middlebar  \calpha_t (y) \in S_B \right\}$, then 
	the map $\calpha_{-} (y) \colon t \in D \mapsto \calpha_t (y) \in 
	\pi^{-1}(x) \cap S_B$ is surjective. Now for any $t \in D$, by 
	Lemma~\ref{lem:tubes-basics}, we know that the map $\calpha_{-} (y) \colon 
	s \in \left[ t - \frac{l_B}{2}, t + \frac{l_B}{2} \right] \mapsto \calpha_t 
	(y) \in \pi^{-1}(x)$ is injective, which implies by translation that the 
	map $\calpha_{-} (y) \colon s \in \left[ t , t + l_B \right] \mapsto 
	\calpha_t (y) \in \pi^{-1}(x)$ is injective, and hence $\left( t , t + l_B 
	\right] \cap D = \varnothing$. It follows that $D$ is discrete in $[0, R]$ 
	and is of the form $\left\{d_0, d_1, \ldots, d_n\right\}$ with $d_0 = 0$ 
	and $d_i > d_{i - 1} + l_B$ for $i = 1, \ldots, n$. Hence $n l_B < R$, 
	which implies 
	\[
	| \pi^{-1}(x) \cap S_B | \leq |D| < \frac{R}{l_B} + 1 \; .
	\]
	To finish the proof, we observe that since both $S_B$ and $\pi(S_B)$ are compact Hausdorff, the topology of the latter coincides with the quotient topology induced by the continuous surjection $\pi|_{S_B} \colon S_B \to \pi(S_B)$. Furthermore, it follows from \eqref{eq:bounding-preimage} that $\pi|_{S_B}$ is a finite-to-one map. Hence by Theorem~\ref{thm:Pears75} and Proposition~\ref{prop:Pears75}, we have $\dim(\pi(S_B)) = \dim(S_B) \leq \dim(Y)$. To summarize, we have found a closed neighborhood $\dim(\pi(S_B))$ of $x_0$ of dimension at most $\dim(Y)$. Since $x_0$ is arbitrary, we have $\operatorname{locdim} (Y / \mathbb{R}) \leq  \dim(Y) $.
\end{proof}

\begin{prop}\label{prop:dim-quotient-space}
	Let $Y$ be a locally compact second countable Hausdorff space and let 
	$\calpha: \R 
	\curvearrowright Y$ be a continuous action with bounded periods. Then $\dim 
	(Y / \mathbb{R}) \leq \dim(Y) $.
\end{prop}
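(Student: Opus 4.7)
The plan is to reduce to Proposition~\ref{prop:dim-quotient-space-no-fixed-points} by splitting off the fixed point locus of the flow, controlling its image in the quotient, and then assembling the two dimension bounds via a countable sum theorem that is available for metrizable spaces.

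First, I would let $Y^{\R}$ denote the set of $\calpha$-fixed points in $Y$, which is a closed $\R$-invariant subset. Its complement $Y \setminus Y^{\R}$ is an open invariant subspace of $Y$, still carrying a continuous $\R$-action with bounded periods but now with no fixed points. Since $\pi \colon Y \to Y/\R$ is an open map (as the quotient by a continuous group action), the image $\pi(Y \setminus Y^{\R})$ is open in $Y/\R$, so $\pi(Y^{\R})$ is closed. The restriction $\pi|_{Y^{\R}} \colon Y^{\R} \to \pi(Y^{\R})$ is a continuous bijection, because each fixed point is its own orbit, and it is proper since $\pi$ is; hence it is a homeomorphism between the two locally compact Hausdorff spaces. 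By Theorem~\ref{thm:Pears75} this already gives
\[
\dim \pi(Y^{\R}) \;=\; \dim Y^{\R} \;\leq\; \dim Y.
\]

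Next, I would apply Proposition~\ref{prop:dim-quotient-space-no-fixed-points} to the restricted action on $Y \setminus Y^{\R}$, which has bounded periods and no fixed points, to obtain
\[
\dim\bigl( (Y \setminus Y^{\R})/\R \bigr) \;\leq\; \dim (Y \setminus Y^{\R}) \;\leq\; \dim Y,
\]
the second inequality being another application of Theorem~\ref{thm:Pears75}. The open subset $U := Y/\R \setminus \pi(Y^{\R})$ is canonically identified with $(Y \setminus Y^{\R})/\R$, so $\dim U \leq \dim Y$.

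Finally, I would combine the two estimates using a countable sum theorem for covering dimension in a metrizable (hence totally normal) space. By Lemma~\ref{lem:quotient-is-metrizable}, $Y/\R$ is metrizable, so the open subset $U$ is an $F_{\sigma}$-set, and one can write $U = \bigcup_{n \in \N} F_n$ with each $F_n$ closed in $Y/\R$; each $F_n$ sits as a subspace of $U$, so Theorem~\ref{thm:Pears75} yields $\dim F_n \leq \dim U \leq \dim Y$. Then $Y/\R = \pi(Y^{\R}) \cup \bigcup_{n \in \N} F_n$ is a countable union of closed subsets, each of dimension at most $\dim Y$, so the countable sum theorem (available in \cite{Pears75} for totally normal spaces) gives $\dim(Y/\R) \leq \dim Y$. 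I expect the main subtlety to be precisely this last ingredient: identifying and citing the correct form of the countable sum theorem in the totally normal setting, so that the decomposition of $Y/\R$ into the closed fiber image $\pi(Y^\R)$ and an $F_\sigma$-exhaustion of $U$ really does control $\dim(Y/\R)$ by the supremum of the pieces.
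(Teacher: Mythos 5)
Your proof is correct and follows essentially the same route as the paper: decompose $Y/\R$ into the closed piece $\pi(Y^{\R}) \cong Y^{\R}$ and its open complement $(Y \setminus Y^{\R})/\R$, apply Proposition~\ref{prop:dim-quotient-space-no-fixed-points} to the latter, and combine the two bounds. The only (immaterial) difference is in the final gluing step: the paper directly cites \cite[Chapter 3, Corollary~5.8]{Pears75} for the closed-plus-complement decomposition, whereas you re-derive that fact from the countable sum theorem via an $F_{\sigma}$ exhaustion of the open part, which is equally valid in the metrizable setting.
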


\begin{proof}
	Observe that the fixed-point set $Y^{\R}$ is a closed subset of $Y$. Thus $\dim(Y^{\R} / \R) = \dim(Y^{\R}) \leq \dim(Y)$. By Proposition~\ref{prop:dim-quotient-space-no-fixed-points}, we have $\dim((Y \setminus Y^{\R}) / \R) \leq \dim(Y)$. Since $Y \setminus Y^{\R}$ is an open subset of $Y$, its image $(Y \setminus Y^{\R}) / \R$ is an open subset of $Y / \R$. Now the desired inequality follows by applying \cite[Chapter 3, Corollary~5.8]{Pears75} to the decomposition $Y / \R = Y^{\R} / \R \sqcup (Y \setminus Y^{\R}) / \R$. 
\end{proof}

Recall the following theorem: 

\begin{thm}\label{thm:dimnuc-uniformly-compact-orbits} \cite[Theorem 3.4]{Hirshberg-Wu16}
	Let $\calpha: G \curvearrowright Y$ be a continuous action with uniformly compact orbits. Then 
	\[
	\dimnuc^{+1} ( C_0(Y) \rtimes G ) \leq \dim^{+1} (Y / G) \cdot \sup_{y \in Y} \dimnuc^{+1} ( C^*(G_y) ) 
	\]
	and 
	\[
	\dr^{+1} ( C_0(Y) \rtimes G ) \leq \dim^{+1} (Y / G) \cdot \sup_{y \in Y} \dr^{+1} ( C^*(G_y) ) \; . 
	\]
\end{thm}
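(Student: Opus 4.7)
My plan is to exploit a slice theorem for proper actions together with a dimension-theoretic cover of the orbit space. Uniform compactness of the orbits makes the action proper and the quotient map $\pi \colon Y \to Y/G$ proper; by Lemma~\ref{lem:quotient-is-metrizable} and Proposition~\ref{prop:dim-quotient-space} the base $Y/G$ is metrizable of dimension $d := \dim(Y/G) \leq \dim(Y)$. Reducing to separable $Y$ via Lemma~\ref{lem:separable-dimnuc} and localizing with Lemma~\ref{lem:quasicentral-approximate-unit}, the target becomes: for every finite $F \subset C_c(Y) \rtimes G$ and $\eps > 0$, produce a $(d+1)$-colored completely positive approximation whose color blocks land in $C^*$-algebras of nuclear dimension at most $\sup_{y \in Y} \dimnuc(C^*(G_y))$.

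The local building blocks come from a slice theorem for proper actions (Palais, or Abels in the locally compact setting): around each orbit $G \cdot y$ there is a $G_y$-invariant slice $S_y$ so that $G \cdot S_y$ is an open invariant neighborhood of $G \cdot y$ and the canonical map $G \times_{G_y} S_y \to G \cdot S_y$ is a $G$-equivariant homeomorphism. Green's imprimitivity theorem then yields the strong Morita equivalence
\[
C_0(G \cdot S_y) \rtimes G \;\simeq_{M}\; C_0(S_y) \rtimes G_y \, ,
\]
and, since Morita equivalence preserves both $\dimnuc$ and $\dr$, the task reduces to analyzing $C_0(S_y) \rtimes G_y$. Shrinking $S_y$ around its $G_y$-fixed point $y$, the $G_y$-action on $S_y$ can be brought under control (trivial when $G_y = G$, and handled via a further local model when $G_y$ is proper in $G$), so that the local crossed product admits CPC approximations through $C_0(U_y) \otimes C^*(G_y)$ for a small open $U_y \subset Y/G$ around $\pi(y)$. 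Combined with the tensor product estimate this gives a local bound of the form $\dimnucone \leq \dim^{+1}(U_y) \cdot \dimnucone(C^*(G_y))$.

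Globally, I would apply Ostrand's theorem to cover $Y/G$ by open sets $\{V_{i,j}\}_{i \in \{0, \ldots, d\},\, j}$ with the $V_{i,j}$ disjoint for each fixed color $i$, refined so every $V_{i,j}$ lies inside some local model neighborhood $\pi(G \cdot S_{y_{i,j}})$. Using a $G$-invariant partition of unity on $Y$ subordinate to $\{\pi^{-1}(V_{i,j})\}$, one pastes the local CPC approximations above into a single $(d+1)$-colored approximation of $C_0(Y) \rtimes G$ whose color blocks have nuclear dimension at most $\sup_y \dimnuc(C^*(G_y))$, which yields the stated inequality; the identical argument works for $\dr$ in place of $\dimnuc$. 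The main obstacle is the uniform coordination of slice sizes and stabilizer actions as the orbit type varies \textendash{} in particular, controlling the jump in $G_y$ (e.g.\ from $\R$ at fixed points to $\Z$ on periodic orbits for an $\R$-flow) so that the local models glue along the coloring without inflating the multiplicity beyond $d+1$.
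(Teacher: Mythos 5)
Your opening claim \textendash{} that uniformly compact orbits make the action proper \textendash{} is false, and the slice-theorem strategy collapses with it. Properness forces all stabilizers to be compact, whereas here the stabilizers are merely cocompact: for a flow, a fixed point has $G_y = \R$ and a periodic point has $G_y = p\Z$, neither situation arising from a proper action in the first case. So Palais' slice theorem does not apply (and Abels-type generalizations are tied to Lie or almost connected groups, while the theorem is stated for arbitrary second countable locally compact $G$). Even granting a slice $S_y$, the step where you ``bring the $G_y$-action on $S_y$ under control'' so that $C_0(S_y)\rtimes G_y$ is approximated through $C_0(U_y)\otimes C^*(G_y)$ is exactly the hard point and is left unargued; there is no reason for the stabilizer action on a slice to be trivializable, and the orbit type (hence $G_y$ itself) can jump on every neighborhood of $y$.

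The proof in the cited paper \cite[Theorem 3.4]{Hirshberg-Wu16} needs no local trivialization at all. Since the quotient map is proper and open and orbits are closed, $C_0(Y/G)$ embeds centrally and nondegenerately into $M(C_0(Y)\rtimes G)$, making $C_0(Y)\rtimes G$ a $C_0(Y/G)$-algebra whose fiber over $G\cdot y$ is $C_0(G\cdot y)\rtimes G$. The orbit is $G$-homeomorphic to $G/G_y$ (a continuous bijection from a compact space onto a Hausdorff one), so Green's imprimitivity theorem identifies the fiber, up to Morita equivalence, with $C^*(G_y)$ \textendash{} this is the only place the stabilizers enter, and it requires nothing about how the fibers vary. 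One then invokes the general estimate $\dimnucone(A)\le \dim^{+1}(X)\cdot\sup_x\dimnucone(A_x)$ for $C_0(X)$-algebras (and its $\dr$ analogue), whose proof via Ostrand-type colored covers of $X$ is essentially your final gluing paragraph. In short: your global pasting step is the right idea, but your local models should be the fibers of the $C_0(Y/G)$-algebra structure, not slice neighborhoods; replacing the false properness/slice input by the bundle-plus-Green argument is what makes the proof go through for all $G$ and all orbit types.
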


\begin{cor}\label{cor:dimnuc-bounded-periods}
	Let $Y$ be a locally compact second countable Hausdorff space and let 
	$\calpha: \R 
	\curvearrowright Y$ be a continuous action with bounded periods. Then 
	\[
	\dr^{+1} ( C_0(Y) \rtimes \R ) \leq 2 \dim^{+1}(Y) \; . 
	\]
\end{cor}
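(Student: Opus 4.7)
The plan is to simply combine Theorem~\ref{thm:dimnuc-uniformly-compact-orbits} with Proposition~\ref{prop:dim-quotient-space}, so the proof will be essentially a bookkeeping exercise once we identify the stabilizers and control their group $C^*$-algebras.

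First I would note that since $\calpha$ has bounded periods, i.e.\ $Y = Y_{\leq R}$ for some $R > 0$, we have for every $y \in Y$ that $\R \cdot y = [0,R] \cdot y$, so the action has uniformly compact orbits (with $K = [0,R]$). Thus Theorem~\ref{thm:dimnuc-uniformly-compact-orbits} applies and yields
\[
\dr^{+1}\big(C_0(Y) \rtimes \R\big) \;\leq\; \dim^{+1}(Y/\R) \cdot \sup_{y \in Y} \dr^{+1}\big(C^*(\R_y)\big).
\]

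Next I would identify the stabilizer subgroups $\R_y$. Under the bounded periods hypothesis, each $y$ is either a fixed point, in which case $\R_y = \R$, or $\calpha_t(y) = y$ holds on a nontrivial discrete subgroup of $\R$, so that $\R_y = p\Z$ for $p = \mathrm{per}_{\calpha}(y) > 0$. In either case $C^*(\R_y)$ is a commutative $C^*$-algebra whose spectrum is either $\R$ or $\T$, both one-dimensional locally compact Hausdorff spaces. Hence $\dr(C^*(\R_y)) \leq 1$ in all cases, giving $\sup_{y \in Y} \dr^{+1}(C^*(\R_y)) \leq 2$.

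Finally, I would invoke Proposition~\ref{prop:dim-quotient-space} to get $\dim(Y/\R) \leq \dim(Y)$, hence $\dim^{+1}(Y/\R) \leq \dim^{+1}(Y)$. Substituting these two bounds into the inequality above yields
\[
\dr^{+1}\big(C_0(Y) \rtimes \R\big) \;\leq\; \dim^{+1}(Y) \cdot 2 \;=\; 2 \dim^{+1}(Y),
\]
which is the desired estimate. The argument is purely combinatorial at this point; the only step that required genuine work is Proposition~\ref{prop:dim-quotient-space}, which has already been established, so there is no remaining obstacle.
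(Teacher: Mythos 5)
Your proposal is correct and follows essentially the same route as the paper: apply Theorem~\ref{thm:dimnuc-uniformly-compact-orbits} (noting bounded periods gives uniformly compact orbits), bound $\dr(C^*(\R_y)) \leq 1$ by classifying the closed stabilizer subgroups of $\R$, and bound $\dim(Y/\R)$ by Proposition~\ref{prop:dim-quotient-space}. The only cosmetic difference is that the paper also lists the trivial stabilizer $\{0\}$ as a possibility (contributing $\dr = 0$), whereas you correctly observe that under the bounded-periods hypothesis this case does not occur.
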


\begin{proof}
	Since for any $y \in Y$, the stabilizer $\R_y$, being a closed subgroup of $\R$, must be either $0$, $\R$, or $p\Z$ for some $p>0$, thus $\dr ( C^*(\R_y) )$ is either $0$ or $1$. The result then follows from Theorem~\ref{thm:dimnuc-uniformly-compact-orbits} and Proposition~\ref{prop:dim-quotient-space}. 
\end{proof}

\section{Clopen subgroupoids and conditional expectations}

In this section, we digress to prove a general result about the reduced 
groupoid $C^*$-algebra of a locally compact locally Hausdorff groupoid $G$: a 
clopen subgroupoid $H$ of $G$ induces an inclusion $C^*_r(H) \subset C^*_r(G)$ 
which admits a natural conditional expectation.\footnote{We recall that a 
conditional expectation is a contractive, completely positive map $e \colon B 
\to A$, where $A$ is a $C^*$-subalgebra of $B$, such that 
$	e(abc) = a \cdot e(b) \cdot c $
and 
$e(a) = a $
for any $b \in A$ and $b, c \in B$. See \cite{tomiyama} or \cite[II.6.10]{blackadar-operator-algebras} for other equivalent definitions.} 
Such conditional expectations will play a role in the nuclear dimension estimate in our main result. 

The following lemma is straightforward from Definition~\ref{def:locally-compact-groupoids}. 

\begin{lem}\label{lem:open-subgroupoid}
	Let $G$ be a locally compact locally Hausdorff groupoid with a left Haar 
	system $\lambda = \left\{\lambda^u \right\}_{u \in G^0}$. Let $H$ be an 
	open subset of $G$ which forms a subgroupoid (i.e., $H^0 = H \cap G^0$ and 
	the functions $d$ and $r$ as well as the multiplication are inherited from 
	$G$). Then $H$ is also a locally compact locally Hausdorff groupoid, the 
	linear space $C_c(H)$ embeds into $C_c(G)$ by extending functions trivially 
	from $H$ to $G$, and $\lambda$ restricts to a left Haar system $\lambda_H$ 
	on $H$ via the embedding $C_c(H) \subset C_c(G)$, such that this inclusion 
	extends to an inclusion $C^*_r(H, \lambda_H) \subset C^*_r(G, \lambda)$ of 
	$C^*$-algebras. \qed
\end{lem}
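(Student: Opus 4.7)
The plan is to verify the four claims (that $H$ is a locally compact groupoid, that trivial extension gives an embedding $C_c(H) \hookrightarrow C_c(G)$, that $\lambda$ restricts to a left Haar system on $H$, and that the embedding extends to an inclusion of reduced $C^*$-algebras) in turn, saving the last as the main content. That $H$ is a locally compact locally Hausdorff groupoid is immediate: all four axioms in Definition~\ref{def:locally-compact-groupoids} are local in nature and restrict from $G$ to the open subset $H$, using that $H^0 = H \cap G^0$ is open in $G^0$ and each $H^u = H \cap G^u$ is open in $G^u$. For the embedding $C_c(H) \hookrightarrow C_c(G)$ via trivial extension: if $f \in C_c(H)_0$ vanishes off some Hausdorff open $U \subseteq H$ with $f|_U$ continuous and compactly supported, then $U$ is also Hausdorff open in $G$, so the extension $\tilde f$ lies in $C_c(G)_0$.

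I would then set $\lambda_H^u := \lambda^u|_{H^u}$ for $u \in H^0$ and verify the Haar system axioms. Full support is automatic from openness of $H^u$ in $G^u$. For continuity, the function $u \mapsto \int_{H^u} g\, d\lambda_H^u$ agrees with the restriction to $H^0$ of $u \mapsto \int_{G^u} \tilde g\, d\lambda^u$, which lies in $C_c(G^0)$ and vanishes for $u \notin H^0$ (since no element of $G^u$ can lie in $H$ when $u \notin H^0$, as $H$ is a subgroupoid). Left invariance is inherited from that of $\lambda$ once one uses the key closure property: for $x \in H$ and $z \in G^{d(x)}$, one has $xz \in H$ if and only if $z \in H$. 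This same closure property also shows that trivial extension preserves convolution and involution, so it is a $*$-algebra homomorphism.

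The main point, which I expect to be the principal obstacle, is that this $*$-embedding is isometric for the reduced $C^*$-norms. Fix $v \in G^0$ and $f \in C_c(H)$. The closure property shows that $\operatorname{Ind} v(\tilde f)$ vanishes on the orthogonal complement of $L^2(\Omega_v, \lambda_v^G|_{\Omega_v})$ and maps into it, where $\Omega_v := \{y \in G_v : r(y) \in H^0\}$. I then decompose $\Omega_v$ into orbits of the left partial action of $H$ on $G_v$ by composition: $\Omega_v = \bigsqcup_\alpha H \cdot x_\alpha$, with each representative $x_\alpha$ satisfying $w_\alpha := r(x_\alpha) \in H^0$. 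Right multiplication by $x_\alpha$ gives a bijection $H_{w_\alpha} \to H \cdot x_\alpha$, which is measure-preserving between $\lambda_{w_\alpha}^H$ and $\lambda_v^G|_{H \cdot x_\alpha}$ by right invariance of the Haar system (derived from the defining left invariance via inversion). Substituting $t = s x_\alpha$ in the integral defining $\operatorname{Ind} v(\tilde f)$, and using that $y x_\alpha^{-1} s^{-1} \in H$ if and only if $y \in H \cdot x_\alpha$ (for $s \in H_{w_\alpha}$), one verifies that this unitary intertwines $\operatorname{Ind}_H w_\alpha(f)$ with the restriction of $\operatorname{Ind}_G v(\tilde f)$ to $L^2(H \cdot x_\alpha)$. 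Consequently $\operatorname{Ind}_G v(\tilde f)$ is unitarily equivalent to $\bigoplus_\alpha \operatorname{Ind}_H w_\alpha(f)$ plus a zero summand, so $\|\operatorname{Ind}_G v(\tilde f)\| = \sup_\alpha \|\operatorname{Ind}_H w_\alpha(f)\|$. Taking the supremum over $v \in G^0$ gives $\|\tilde f\|_{C^*_r(G)} \leq \|f\|_{C^*_r(H)}$, and specializing to $v = u \in H^0$ (where $H \cdot u = H_u$ occurs as an orbit with $w_\alpha = u$) yields the reverse inequality. Hence trivial extension extends to an isometric embedding $C^*_r(H, \lambda_H) \hookrightarrow C^*_r(G, \lambda)$.
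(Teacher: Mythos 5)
Your proposal is correct. Note that the paper offers no argument for this lemma at all: it is stated with a \qed and prefaced by ``the following lemma is straightforward from Definition~\ref{def:locally-compact-groupoids},'' so there is nothing to compare against line by line. What you have supplied is a genuine proof of the one part that is not entirely routine, namely that the $*$-embedding $C_c(H)\hookrightarrow C_c(G)$ (which you correctly verify is multiplicative using the closure property $xz\in H\Leftrightarrow z\in H$ for $x\in H$) is \emph{isometric} for the reduced norms rather than merely contractive. Your mechanism is the right one: the operator $\operatorname{Ind}_G v(\tilde f)$ is supported on $L^2(\Omega_v)$ with $\Omega_v=\{y\in G_v: r(y)\in H^0\}$, which decomposes into the open, pairwise disjoint orbits $H\cdot x_\alpha$ of the left partial $H$-action; right translation by $x_\alpha$ is measure-preserving by the (inversion-derived) right invariance of the Haar system and conjugates the block of $\operatorname{Ind}_G v(\tilde f)$ on $L^2(H\cdot x_\alpha)$ to $\operatorname{Ind}_H w_\alpha(f)$. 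This gives $\|\operatorname{Ind}_G v(\tilde f)\|=\sup_\alpha\|\operatorname{Ind}_H w_\alpha(f)\|$, hence $\le$ in one direction over all $v\in G^0$, and taking $v=u\in H^0$ with the orbit of the unit ($H\cdot u=H_u$) gives $\ge$. Two small points you gloss over, both harmless: the identification $L^2(\Omega_v)=\bigoplus_\alpha L^2(H\cdot x_\alpha)$ over a possibly uncountable family uses inner regularity of $\lambda_v$ together with the fact that a compact set meets only finitely many of the disjoint open orbits; and the verification of axiom~(2) of Definition~\ref{def:locally-compact-groupoids} for $H$ requires a word about producing compact Hausdorff neighborhoods inside $H$, which is standard for open subsets. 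Neither affects the argument.
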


Our main result of this section is the following:

\begin{thm}\label{thm:clopen-subgroupoid}
	Let $G$ be a locally compact locally Hausdorff groupoid with a left Haar 
	system $\lambda = \left\{\lambda^u \right\}_{u \in G^0}$. Let $H$ be a 
	\emph{clopen} subset of $G$ which forms a subgroupoid and let $\lambda_H$ 
	be the restriction of $\lambda$ to $H$ as in 
	Lemma~\ref{lem:open-subgroupoid}. Then the natural inclusion $C^*_r(H, 
	\lambda_H) \subset C^*_r(G, \lambda)$ admits a conditional expectation $e 
	\colon C^*_r(G, \lambda) \to C^*_r(H, \lambda_H)$.
\end{thm}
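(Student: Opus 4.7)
The natural candidate for $e$ is the ``restriction to $H$'' map, defined at the dense subalgebra level by $e_0(f) := \mathbf{1}_H \cdot f$ for $f \in C_c(G)$. Clopenness of $H$ is needed for this to make sense: openness of $H$ gives continuity of $\mathbf{1}_H f$ on Hausdorff patches (intersected with $H$), while closedness forces the support in $H$ to remain compact. Thus $e_0$ lands in $C_c(H) \subset C^*_r(H)$, and it is clearly the identity on $C_c(H)$. The plan is to realize $e_0$ through the left regular representation as a compression by a projection, which automatically yields CP contractivity and allows an extension to all of $C^*_r(G)$.

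For each $v \in H^0$, the clopenness of $H_v$ inside $G_v$ produces, via extension by zero, a closed isometric inclusion $L^2(H_v,\lambda_v|_{H_v}) \hookrightarrow L^2(G_v,\lambda_v)$; let $P_v$ denote the orthogonal projection onto the former. The main technical step, which I expect to be the principal obstacle, is the identity
\[
	P_v \cdot \operatorname{Ind} v(f) \cdot P_v \;=\; \operatorname{Ind}_H v\!\left(e_0(f)\right) \qquad \text{for every } f \in C_c(G) \text{ and } v \in H^0,
\]
where on the left we use the representation of $G$ and on the right we use the representation of $H$. This should follow by inspecting matrix coefficients: restricting the domain and codomain variables of $\operatorname{Ind} v(f)$ to $H_v$, one uses the subgroupoid property to conclude that $xt^{-1} \in H$ whenever $x, t \in H_v$ (here $d(x)=d(t)=v \in H^0$, so $xt^{-1}$ is composable and lies in $H$ since $H$ is closed under products and inverses), and hence $f(xt^{-1}) = e_0(f)(xt^{-1})$ under the integral sign. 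A parallel inspection shows that for $f \in C_c(H)$ the operator $\operatorname{Ind} v(f)$ is block-diagonal with respect to the orthogonal decomposition $L^2(G_v) = L^2(H_v) \oplus L^2(G_v \setminus H_v)$; equivalently, $P_v$ commutes with every element of $\operatorname{Ind} v(C_c(H))$.

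Granted these two identities, the remainder is formal. Setting $P := \bigoplus_{v \in H^0} P_v$ acting on $\bigoplus_{v \in G^0} L^2(G_v,\lambda_v)$, the compression $T \mapsto PTP$ is CP and contractive on the ambient $B(\cdot)$; the first identity says that, restricted to the dense subalgebra $\operatorname{Ind}(C_c(G)) \subset C^*_r(G)$, it agrees with the map $\operatorname{Ind}(f) \mapsto \operatorname{Ind}_H(e_0(f))$. Extending by norm-continuity yields the desired CP contraction $e \colon C^*_r(G) \to C^*_r(H)$. The conditional expectation identities~\eqref{eq:conditional-expectation} are then checked on the dense $*$-subalgebras: the identity $e_0|_{C_c(H)} = \mathrm{id}$ has already been observed, and the bimodule property $e_0(a \ast b \ast c) = a \ast e_0(b) \ast c$ for $a, c \in C_c(H)$, $b \in C_c(G)$ can be deduced either from the commutation of $P_v$ with $\operatorname{Ind} v(C_c(H))$ or, more elementarily, by a direct calculation on the convolution integrals that again invokes the subgroupoid property of $H$. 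Norm-continuity of $e$ then propagates these identities to the whole of $C^*_r(G)$, completing the construction.
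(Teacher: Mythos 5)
Your proposal is correct and follows essentially the same route as the paper's proof: the map $e_0(f)=\mathbf{1}_H\cdot f$ is the paper's $\rho_H$, the projections $P_v$ are the compressions $V_v^*(\cdot)V_v$ by the extension-by-zero isometries $L^2(H_v)\hookrightarrow L^2(G_v)$, and the key identity $P_v\operatorname{Ind}v(f)P_v=\operatorname{Ind}_Hv(e_0(f))$ is verified by the same matrix-coefficient computation using that $xt^{-1}\in H$ for $x,t\in H_v$ (and $\notin H$ once one of them leaves $H$). The bimodule identity via $C_c(H)\ast C_c(G\setminus H)\ast C_c(H)\subset C_c(G\setminus H)$ and the extension by norm-continuity also match the paper.
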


\begin{proof}
	For any open Hausdorff subset $U$ of $G$, the subset $H \cap U$ is clopen in $U$, and thus for any compactly supported continuous function $f$ on $U$, the pointwise product $f \cdot \chi_{H \cap U}$, where $\chi_{H \cap U}$ is the characteristic function of $H \cap U$ in $U$, is also continuous and compactly supported in $U$. This allows us to define a map \[\rho_H \colon C_c(G) \to C_c(H) \subset C_c(G)\] given by $f \mapsto f \cdot \chi_H$, where $\chi_H$ is the characteristic function of $H$ in $G$ and the multiplication ``$\cdot$'' is pointwise. Similarly define \[\rho_{G \setminus H} \colon C_c(G) \to C_c(G \setminus H) \subset C_c(G)\] by $f \mapsto f \cdot \chi_{G \setminus H}$, where $C_c(G \setminus H)$ is defined in the same way as $C_c(G)$ and $C_c(H)$. It is clear that 
	\begin{enumerate}
		\item both $\rho_H$ and $\rho_{G \setminus H}$ are idempotents, that is, $\rho_H \circ \rho_H = \rho_H$ and $\rho_{G \setminus H} \circ \rho_{G \setminus H} = \rho_{G \setminus H}$,
		\item the image of $\rho_H$ is $C_c(H)$ and that of $\rho_{G \setminus H}$ is $C_c(G \setminus H)$, and
		\item $\rho_H + \rho_{G \setminus H} = \operatorname{id}_{C_c(G)}$. 
	\end{enumerate}
	Note that the set 
	\[
		H \cdot (G \setminus H) \cdot H = \{ x_1 \cdot y \cdot x_2 \mid x_1, x_2 \in H ,\, y \in G \setminus H, \, d(x_1) = r(y), \, d(y) = r(x_2) \}
	\]
	has empty intersection with $H$. Thus we have 
	\[
		C_c(H) \ast C_c(G \setminus H) \ast C_c(H) \subset C_c(G \setminus H) = \ker(\rho_H) \; ,
	\]
	 It follows that for any $g \in C_c(G)$ and $f_1, f_2 \in C_c(H)$, 
	\begin{equation}\label{eq:conditional-expectation-proof}
		\rho_H(f_1 g f_2) = \rho_H \big(f_1 \rho_H(g) f_2 \big) + \rho_H \big(f_1 \rho_{G \setminus H}(g) f_2 \big) = f_1 \rho_H(g) f_2 \; . 
	\end{equation}

	Following Definition~\ref{def:reduced-groupoid-algebra}, for any $v \in G^0$, we write $\operatorname{Ind}^G v \colon C_c(G) \to B(L^2(G_v, \lambda_v))$ for the left regular representation at $v$. Similarly, if $v \in H^0$, we have the left regular representation $\operatorname{Ind}^H v \colon C_c(H) \to B(L^2(H_v, (\lambda_H)_v))$. Since for any $v \in H^0$, the measure $(\lambda_H)_v$ is restricted from $\lambda_v$, we have an isometry
	\[
		V_v \colon L^2(H_v, (\lambda_H)_v) \to L^2(G_v, \lambda_v)
	\]
	given by trivially extending $L^2$-functions on $H_v$ to $G_v$, which induces, via compression, the completely positive contraction 
	\[
		\varphi_v \colon B(L^2(G_v, \lambda_v)) \to B(L^2(H_v, (\lambda_H)_v)), \; T \mapsto V_v^* T V_v \; .
	\]
	For any $v \in H^0$, we have the following:
	\begin{enumerate}
		\item For any $f \in C_c(H)$, we have $\varphi_v ( \operatorname{Ind}^G v (f) ) = \operatorname{Ind}^H v (f)$, because for any $\xi, \eta \in L^2(H_v, (\lambda_H)_v)$, we can calculate the inner product
		\begin{align*}
			& \left\langle \varphi_v ( \operatorname{Ind}^G v (f)) \cdot \xi \;,\; \eta \right\rangle_{L^2(H_v, (\lambda_H)_v)}  \\
			= & \left\langle V_v^* \cdot \operatorname{Ind}^G v (f) \cdot V_v \cdot \xi \;,\; \eta \right\rangle_{L^2(H_v, (\lambda_H)_v)}  \\
			= & \left\langle \operatorname{Ind}^G v (f) \cdot  (V_v \xi) \;,\; V_v \eta \right\rangle_{L^2(G_v, \lambda_v)}  \\
			= & \int_{x \in G_v} \int_{t \in G_v} f(xt^{-1}) \cdot (V_v \xi)(t) \cdot \overline{(V_v \eta)(x)} \, d\lambda_v(t) \, d\lambda_v(x) \\
			= & \int_{x \in H_v} \int_{t \in H_v} f(xt^{-1}) \cdot \xi(t) \cdot \overline{ \eta(x)} \, d\lambda_v(t) \, d\lambda_v(x) \\
			= & \left\langle \operatorname{Ind}^H v (f) \cdot \xi \;,\; \eta \right\rangle_{L^2(H_v, (\lambda_H)_v)} \; .
		\end{align*}
		\item For any $f \in C_c(G \setminus H)$, we have $\varphi_v ( \operatorname{Ind}^G v (f) ) = 0$, because for any $\xi, \eta \in L^2(H_v, (\lambda_H)_v)$, we similarly calculate the inner product
		\begin{align*}
			& \left\langle \varphi_v ( \operatorname{Ind}^G v (f)) \cdot \xi \;,\; \eta \right\rangle_{L^2(H_v, (\lambda_H)_v)}  \\
			= & \int_{x \in H_v} \int_{t \in H_v} f(xt^{-1}) \cdot \xi(t) \cdot \overline{ \eta(x)} \, d\lambda_v(t) \, d\lambda_v(x) \\
			= & 0 \; 
		\end{align*}
		as $xt^{-1} \in H$ for any $x, t \in H_v$ but $f(H) = \{0\}$. 
	\end{enumerate}
	Hence for any $f \in C_c(G)$, we have
	\[
		\varphi_v \left( \operatorname{Ind}^G v (f) \right) = \varphi_v \left( \operatorname{Ind}^G v \left(\rho_H(f) + \rho_{G \setminus H}(f) \right) \right) = \operatorname{Ind}^H v \left(\rho_H(f)\right) + 0 \; ,
	\]
	and thus we have shown $\varphi_v \circ \operatorname{Ind}^G v = \operatorname{Ind}^H v \circ \rho_H$ for any $v \in H^0$. Writing $\varphi_v = 0$ for any $v \in G \setminus H$, we define a completely positive contraction 
	\[
		\varphi = \bigoplus_{v \in G^0} \varphi_v \colon  B\left(\bigoplus_{v \in G^0} L^2(G_v, \lambda_v)\right) \to B\left(\bigoplus_{v \in H^0} L^2(H_v, (\lambda_H)_v)\right) \; .
	\]
	It follows that 
	\[
		\varphi \circ \left(\bigoplus_{v \in G^0} \operatorname{Ind}^G v\right) = \left(\bigoplus_{v \in H^0} \operatorname{Ind}^H v\right) \circ \rho_H \; .
	\]
	Since by Definition~\ref{def:reduced-groupoid-algebra}, $C^*_r(G, \lambda)$ (respectively, $C^*_r(H, \lambda_H)$) is the norm completion of the image of $\left(\bigoplus_{v \in G^0} \operatorname{Ind}^G v\right)$ (respectively, $\left(\bigoplus_{v \in H^0} \operatorname{Ind}^H v\right)$), we see that $\varphi$ restricts to a completely positive contraction from $C^*_r(G, \lambda)$ to $C^*_r(H, \lambda_H)$ that extends the map $\rho_H \colon C_c(G) \to C_c(H)$. Since $\rho_H|_{C_c(H)} = \operatorname{id}_{C_c(H)}$ and $C_c(H)$ is dense in $ C^*_r(H, \lambda_H)$, we have $\varphi(a) = a$ for any $a \in C^*_r(H, \lambda_H)$. Similarly, \eqref{eq:conditional-expectation-proof} implies $\varphi(abc) = a \varphi(b) c$ for any $b \in C^*_r(G, \lambda)$ and $a, c \in C^*_r(H, \lambda_H)$.
\end{proof}

\begin{rmk}
	By a theorem of Tomiyama (\cite{tomiyama}), every contractive projection from a $C^*$-algebra to a $C^*$-subalgebra is a conditional expectation. We could have used it to shorten the proof of Theorem~\ref{thm:clopen-subgroupoid}; however, we choose to present a more down-to-earth proof for the sake of completeness. 
\end{rmk}

\begin{eg}
	If $G$ is an \'{e}tale groupoid and $H$ is taken to be its unit space $G^0$, which is a clopen subgroupoid with trivial operations, then our construction recovers the standard conditional expectation from $C^*_r(G, \lambda)$ onto its Cartan subalgebra $C_0(G^0)$. 
\end{eg}

\begin{prop}\label{prop:clopen-subgroupoid-hereditary}
	Let $G$, $H$ and $e \colon C^*_r(G, \lambda) \to C^*_r(H, \lambda_H)$ be as in Theorem~\ref{thm:clopen-subgroupoid}. Then under the canonical embeddings
	\[
		C_0(H^0) \hookrightarrow C_0(G^0) \hookrightarrow M(C^*(G, \lambda)) 
	\]
	as in \eqref{eq:canonical-embedding-unit-space}, for any any $b \in C^*_r(G, \lambda)$ and for any $a,c \in C_0(H^0)$, we have
	\[
		e(abc) = a \cdot e(b) \cdot c
	\]
	 In particular, for any $a \in C_0(H^0)$, the map $e$ restricts to a conditional expectation
	\[
		e|_{a C^*_r(G, \lambda) a^*} \colon a C^*_r(G, \lambda) a^* \to a C^*_r(H, \lambda_H) a^* \; .
	\] 
\end{prop}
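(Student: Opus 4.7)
The plan is to reduce the bimodule identity to a pointwise identity on the dense $*$-subalgebra $C_c(G)$ and then extend to the $C^*$-completion by norm continuity. All the ingredients are already present in the proof of Theorem~\ref{thm:clopen-subgroupoid}; what remains is essentially bookkeeping.

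First I would establish the identity on $C_c$-level data. Recall from the proof of Theorem~\ref{thm:clopen-subgroupoid} that the map $\rho_H \colon C_c(G) \to C_c(H)$ whose extension is $e$ is simply pointwise multiplication by $\chi_H$, and recall from \eqref{eq:canonical-embedding-unit-space} that for $a \in C_c(G^0) \subset C_0(G^0)$ and $b \in C_c(G)$ one has $(a \ast b)(x) = a(r(x)) \cdot b(x)$ and $(b \ast a)(x) = b(x) \cdot a(d(x))$. Evaluating at any $x \in G$ for $a, c \in C_c(H^0)$ and $b \in C_c(G)$ then yields, since scalar pointwise multiplications commute,
\[
\rho_H(a \ast b \ast c)(x) = \chi_H(x) \cdot a(r(x)) \cdot b(x) \cdot c(d(x)) = (a \ast \rho_H(b) \ast c)(x).
\]

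Next I would upgrade this to the $C^*$-level. The two embeddings $C_0(H^0) \hookrightarrow M(C^*_r(G,\lambda))$ and $C_0(H^0) \hookrightarrow M(C^*_r(H,\lambda_H))$ agree on $C_c(H)$ and hence induce compatible norm-continuous multiplier actions under the inclusion $C^*_r(H,\lambda_H) \subset C^*_r(G,\lambda)$. Since $e$ is also norm continuous by Theorem~\ref{thm:clopen-subgroupoid}, and since $C_c(H^0)$ is dense in $C_0(H^0)$ while $C_c(G)$ is dense in $C^*_r(G,\lambda)$, the identity established at the $C_c$-level extends by passage to the limits to the full claim $e(abc) = a \cdot e(b) \cdot c$ for all $a, c \in C_0(H^0)$ and $b \in C^*_r(G,\lambda)$.

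Finally, for the ``In particular'' statement: any element $b \in a C^*_r(G,\lambda) a^*$ is by definition a norm limit of sums of elements of the form $a b_n a^*$ with $b_n \in C^*_r(G,\lambda)$. Applying the bimodule formula just proved gives $e(a b_n a^*) = a \cdot e(b_n) \cdot a^* \in a C^*_r(H,\lambda_H) a^*$, so by norm continuity of $e$ we have $e(b) \in a C^*_r(H,\lambda_H) a^*$. The remaining conditional expectation properties required on this restriction, namely complete positivity, contractivity, acting as the identity on $a C^*_r(H,\lambda_H) a^*$, and the bimodule property over $a C^*_r(H,\lambda_H) a^*$, all follow immediately from the corresponding properties of $e$ given by Theorem~\ref{thm:clopen-subgroupoid}. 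The main point is algebraic; I do not anticipate any genuine analytic obstacle in this argument.
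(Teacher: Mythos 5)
Your argument is correct and follows essentially the same route as the paper: reduce to the dense subalgebra $C_c(G)$, use that $e$ extends the cutoff $\rho_H = (\,\cdot\,)\chi_H$ together with the pointwise formulas for the multiplier action of $C_0(G^0)$, and conclude by norm continuity. The paper packages the $C_c$-level computation via the decomposition $\rho_H + \rho_{G\setminus H} = \operatorname{id}$ and the support containment $H^0\cdot(G\setminus H)\cdot H^0 \subset G\setminus H$ rather than your direct pointwise evaluation, but this is the same observation in different notation.
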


\begin{proof}
	Since $C_c(G)$ is dense in $C^*_r(G, \lambda)$, it suffices to show $e(abc) = a \cdot e(b) \cdot c$ for any any $b \in C_c(G)$ and any $a,c \in C_0(H^0)$. To this end, we notice that the set $H^0 \cdot (G \setminus H) \cdot H^0$ given by
	\[
	\{ x_1 \cdot y \cdot x_2 \mid x_1, x_2 \in H^0 ,\, y \in G \setminus H, \, d(x_1) = r(y), \, d(y) = r(x_2) \}
	\]
	has empty intersection with $H$. By the way the embedding $C_0(G^0) \hookrightarrow M(C^*(G, \lambda)) $ is defined, we have 
	\[
		C_0(H^0) \ast C_c(G \setminus H) \ast C_0(H^0) \subset C_c(G \setminus H) \; ,
	\]
	where $\ast$ denotes the convolution product. Let $\rho_H \colon C_c(G) \to C_c(H)$ and $\rho_{G \setminus H} \colon C_c(G) \to C_c(G \setminus H)$ be defined as in the proof of Theorem~\ref{thm:clopen-subgroupoid}. Then for any $b \in C_c(G)$ and $a, c \in C_0(H^0)$, we have
	\begin{equation*}
	\rho_H(a b c) = \rho_H \big(a \rho_H(b) c \big) + \rho_H \big(a \rho_{G \setminus H}(b) c \big) = a \rho_H(b) c \; , 
	\end{equation*}
	because $\rho_H + \rho_{G \setminus H} = \operatorname{id}_{C_c(G)}$ and $\operatorname{ker} \rho_{H} = C_c(G \setminus H)$. 
\end{proof}

\begin{rmk}
	A standard approximation argument shows that
	\begin{equation*} 
		e(abc) = a \cdot e(b) \cdot c
	\end{equation*}
	for any $b \in C^*_r(G)$ and any $a$ and $c$ in the closure of $C^*_r(H)$ in the multiplier algebra $M(C^*_r(G))$ in the strict topology. 
\end{rmk}

\section{Subgroupoids associated to a tube}

Recall from Example~\ref{example:groupoids} that a topological flow $(Y, 
{\mathbb{R}^{}}, \calpha)$ gives rise to the transformation groupoid $Y 
\rtimes_{\calpha} \mathbb{R}$. We often drop the subscript and simply write $Y 
\rtimes \mathbb{R}$ when there is no confusion. In this section, we associate 
two open subgroupoids $\mathcal{G}_B$ and $\mathcal{H}_B$ to any tube $B$ in 
the space $Y$. These subgroupoids will play important roles in the proof of the 
main theorem in the next section. 

\begin{defn}\label{def:tube-groupoids}
	Let $(Y, {\mathbb{R}^{}}, \calpha)$ be a topological flow and let $B$ be a tube in $Y$ with length $l_B$. Let
	\[
		a_+, a_- \colon B \to \mathbb{R}
	\]
	be as in Definition~\ref{defn:tubes}, which are continuous functions by Lemma~\ref{lem:tubes-basics}. Let $S_{B^o}$ be the open central slice of $B$ as in Definition~\ref{defn:tubes}. Let $Y \rtimes \mathbb{R}$ be the transformation groupoid as in Example~\ref{example:groupoids}: as a set, we have $Y \rtimes \mathbb{R} = Y \times \mathbb{R}$. When there is no confusion, we also identify the unit space $Y \times \{0\}$ with $Y$. We define subsets of $Y \rtimes \mathbb{R}$ as follows:
	\[
		\mathcal{G}_B = \{ x \in Y \rtimes \mathbb{R} \mid d(x) \text{ and } r(x) \in B^o \} = \{ (y, t) \in Y \times \mathbb{R} \mid  \calpha_{-t}(y) \text{ and } y \in B^o \}
	\]
	and
	\[
		\mathcal{H}_B = \left\{ (y, t) \in Y \times \mathbb{R} \middlebar  y \in B^o \text{ and } - t \in \big(a_-(y), a_+(y)\big) \right\} \; . 
	\]
	We also define an auxiliary groupoid 
	\[
		\mathcal{K}_B = S_{B^o} \times \left( \left( - \frac{l_B}{2} , \frac{l_B}{2} \right) \times \left( - \frac{l_B}{2} , \frac{l_B}{2} \right) \right) \;,
	\]
	i.e., the product of the open central slice $S_{B^o}$, as a space, with the pair groupoid $\left( \left( - \frac{l_B}{2} , \frac{l_B}{2} \right) \times \left( - \frac{l_B}{2} , \frac{l_B}{2} \right) \right)$. We endow it with the product topology. The unit space $\left(\mathcal{K}_B\right)^0$ is given by $\{ (y, t, s) \in \mathcal{K}_B \mid t = s \}$. We have $d(y,t,s) = (y,s,s)$, $r(y,t,s) = (y,t,t)$, and $(y,t,s) \cdot (y, s, s') = (y, t, s')$ for any $y \in S_{B^o}$ and $t,s,s' \in \left( - \frac{l_B}{2} , \frac{l_B}{2} \right)$. The Haar system is given by Lebesgue measure on $\left( - \frac{l_B}{2} , \frac{l_B}{2} \right)$, which is identified with each $\left( \mathcal{K}_B \right)^u$. 
\end{defn}

\begin{lem}\label{lem:groupoids-G-H}
	Let $(Y, {\mathbb{R}^{}}, \calpha)$ be a topological flow and let $B$ be a tube in $Y$. We have
	\begin{enumerate}
		\item $ \mathcal{G}_B $ is an open subgroupoid of $Y \rtimes \mathbb{R}$, and
		\item $ \mathcal{H}_B $ is a clopen subset of $\mathcal{G}_B$. 
	\end{enumerate}
\end{lem}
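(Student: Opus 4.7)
The plan is to address the two assertions in turn.

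For (1), I would write $\mathcal{G}_B = r^{-1}(B^o) \cap d^{-1}(B^o)$; this is open in $Y \rtimes \mathbb{R}$ by continuity of the range and domain maps together with openness of $B^o$. The subgroupoid axioms then reduce to routine checks: the units $(y,0)$ with $y \in B^o$ all lie in $\mathcal{G}_B$; the inverse $(y,t)^{-1} = (\calpha_{-t}(y), -t)$ merely swaps the range and domain, so both endpoints stay in $B^o$; and for composable $(y_1, t_1), (y_2, t_2) \in \mathcal{G}_B$, which necessarily satisfy $y_2 = \calpha_{-t_1}(y_1)$, the product $(y_1, t_1 + t_2)$ has range $y_1 \in B^o$ and domain $\calpha_{-t_2}(y_2) = \calpha_{-(t_1+t_2)}(y_1) \in B^o$.

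For (2), the first step will be the set-theoretic inclusion $\mathcal{H}_B \subset \mathcal{G}_B$. Given $(y,t) \in \mathcal{H}_B$, the continuity of $a_\pm$ on $B$ from Lemma~\ref{lem:tubes-basics} together with the strict inequalities $a_-(y) < -t < a_+(y)$ will let me choose an open neighborhood $U$ of $y$ inside $B^o$ on which $a_-(y') < -t < a_+(y')$ continues to hold for every $y' \in U$; the defining property of a tube then gives $\calpha_{-t}(U) \subset B$, and since this set is open and contains $\calpha_{-t}(y)$, we conclude $\calpha_{-t}(y) \in B^o$. Openness of $\mathcal{H}_B$ in $\mathcal{G}_B$ (in fact in $Y \rtimes \mathbb{R}$) then follows by expressing $\mathcal{H}_B$, on the open set $B^o \times \mathbb{R}$, as the preimage of $(0,\infty)^2$ under the continuous map $(y,t) \mapsto (-t - a_-(y),\, a_+(y) + t)$.

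The main obstacle is closedness of $\mathcal{H}_B$ in $\mathcal{G}_B$: since the orbit of a point can re-enter $B$ (for instance, when it is nearly periodic), a limit of elements of $\mathcal{H}_B$ could a priori correspond to a re-entering segment that only touches the closed interval $[a_-(y), a_+(y)]$ at an endpoint. The crucial tool here will be the uniform $\eps_B$ from Lemma~\ref{lem:tubes-basics}. Taking $(y,t) \in \mathcal{G}_B \setminus \mathcal{H}_B$, I will first observe that $-t \ne a_\pm(y)$ (since $\calpha_{a_\pm(y)}(y) \in \partial_\pm B$ is disjoint from $B^o$, yet $\calpha_{-t}(y) \in B^o$), and then use that the tube definition forbids $\calpha_s(y) \in B$ for $s \in (a_-(y) - \eps_B, a_-(y)) \cup (a_+(y), a_+(y) + \eps_B)$ to upgrade this to the \emph{uniform} separation $-t \le a_-(y) - \eps_B$ or $-t \ge a_+(y) + \eps_B$. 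Continuity of $a_\pm$ on $B$ and of the projection to the $\mathbb{R}$-coordinate then preserves an $\eps_B/2$-separation on a small neighborhood of $(y,t)$, showing that $\mathcal{G}_B \setminus \mathcal{H}_B$ is open and hence $\mathcal{H}_B$ is closed in $\mathcal{G}_B$.
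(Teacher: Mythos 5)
Your proposal is correct and follows essentially the same route as the paper: part (1) is the observation that $\mathcal{G}_B = d^{-1}(B^o)\cap r^{-1}(B^o)$ is the reduction of $Y\rtimes\mathbb{R}$ to $B^o$, and for part (2) both arguments hinge on the continuity of $a_\pm$ and on the key fact that $\calpha_{a_\pm(y)}(y)\notin B^o$ for $y\in B^o$, so that $-t\neq a_\pm(y)$ whenever $(y,t)\in\mathcal{G}_B$. The only divergence is in the closedness step: you invoke the uniform $\eps_B$ of Lemma~\ref{lem:tubes-basics} to get a quantitative separation $-t\le a_-(y)-\eps_B$ or $-t\ge a_+(y)+\eps_B$, whereas the paper notes that, since the continuous map $(y,t)\mapsto(a_-(y),-t,a_+(y))$ on $\mathcal{G}_B$ never hits the hyperplanes $t_1=t_2$ or $t_2=t_3$, the preimages of $\{t_1<t_2<t_3\}$ and of $\{t_1\le t_2\le t_3\}$ coincide, so $\mathcal{H}_B$ is simultaneously open and closed with no further estimate. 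Your $\eps_B$ detour is valid but not needed; the pointwise non-equality already suffices.
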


\begin{proof}
	\begin{enumerate}
		\item This is clear because $\mathcal{G}_B$ is the reduction of $Y \rtimes \mathbb{R}$ to the open subset $B^o$ of the unit space $Y$ (see Example~\ref{example:groupoids}).  
		In particular, it is open since it can be written as $d^{-1}(B^o) \cap r^{-1}(B^o)$, the intersection of two open subsets. 
		\item We first note that $\mathcal{H}_B \subset \mathcal{G}_B$ because for any $(y,t) \in \mathcal{H}_B$, we have $y \in B^o$ and $\calpha_{-t}(y) \in B^o$ by Definition~\ref{defn:tubes}. 
		
		To see $\mathcal{H}_B$ is clopen in $\mathcal{G}_B$, we first note that $\mathcal{H}_B$ is open because it is the preimage of the open subset $\{ (t_1, t_2, t_3) \in \mathbb{R}^3 \colon t_1 < t_2 < t_3 \} \subset \mathbb{R}^3$ under the continuous map $(y, t) \in \mathcal{G}_B \to \left(a_-(y) , - t , a_+(y)\right) \in \mathbb{R}^3$. 
		
		On the other hand, by Definition~\ref{defn:tubes} and the continuity of $\calpha$, for any $y \in B^o$, we have $\calpha_{a_-(y)} (y) \notin B^o$, that is, $r\left( y, - a_-(y) \right) \notin B^o$, and thus $\left( y, - a_-(y) \right) \notin \mathcal{G}_B$. Similarly, $\left( y, - a_+(y) \right) \notin \mathcal{G}_B$. Hence the image of the (same) continuous map $(y, t) \in \mathcal{G}_B \to \left(a_-(y) , - t , a_+(y)\right) \in \mathbb{R}^3$ does not intersect the subset $\{ (t_1, t_2, t_3) \in \mathbb{R}^3 \colon t_1 = t_2 \text{~or~} t_2 = t_3 \}$. It follows that $\mathcal{H}_B$ is also the preimage of the closed subset $\{ (t_1, t_2, t_3) \in \mathbb{R}^3 \colon t_1 \leq t_2 \leq t_3 \}$ under this continuous map, which shows it is also closed.
	\end{enumerate}
\end{proof}

In the following, we recall from Example~\ref{example:groupoids}\eqref{item:example:groupoids:transformation} that $Y \rtimes \mathbb{R}$ carries a canonical left Haar system  arising from the canonical Haar measure on $\mathbb{R}$, i.e., the Lebesgue measure. 

\begin{lem}\label{lem:groupoids-H-K}
	Let $(Y, {\mathbb{R}^{}}, \calpha)$ be a topological flow and let $B$ be a tube in $Y$ with length $l_B$. Define a map
	\[
		\widetilde{\tau}_B \colon \mathcal{K}_B \to Y \rtimes \mathbb{R} , \quad (y, t, s) \mapsto \left( \calpha_t(y) , t - s \right)
	\]
	Then we have
	\begin{enumerate}
		\item $ \widetilde{\tau}_B $ is a groupoid homomorphism (i.e., it intertwines the unit space, the maps $d$ and $r$, and the multiplication), 
		\item $ \widetilde{\tau}_B(\mathcal{K}_B) = \mathcal{H}_B$, and $ 
		\widetilde{\tau}_B $ is a homeomorphism onto its image (and thus 
		$\mathcal{H}_B$ is a locally compact Hausdorff groupoid which is 
		isomorphic to $\mathcal{K}_B$ as a topological groupoid), and
		\item $ \widetilde{\tau}_B $ intertwines $\lambda_{\mathcal{K}}$, the left Haar system on $\mathcal{K}_B$, and $\lambda_{\mathcal{H}}$, the restriction of the canonical left Haar system on $Y \rtimes \mathbb{R}$ to $\mathcal{H}_B$. 
	\end{enumerate}
\end{lem}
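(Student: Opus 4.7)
My plan is to verify the four assertions in order, relying primarily on the local trivialization of the tube $B$ supplied by Lemma~\ref{lem:tubes-basics} together with the identities in Remark~\ref{rmk:tube-basics}.

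For part (1), I would check each piece of the groupoid structure separately by direct computation, using the formulas of Example~\ref{example:groupoids} for $Y \rtimes \R$. Units $(y,t,t)$ of $\mathcal{K}_B$ map to $(\calpha_t(y), 0)$, which is a unit of $Y \rtimes \R$. For the source, $d\big(\widetilde{\tau}_B(y,t,s)\big) = \big(\calpha_{-(t-s)}(\calpha_t(y)), 0\big) = (\calpha_s(y), 0) = \widetilde{\tau}_B\big(d(y,t,s)\big)$, and the range is analogous. Multiplication follows from $(y,t,s)\cdot(y,s,s') = (y,t,s') \mapsto (\calpha_t(y), t-s')$ matching the product $(\calpha_t(y), t-s)\cdot(\calpha_s(y), s-s') = (\calpha_t(y), (t-s)+(s-s'))$ in $Y \rtimes \R$.

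For parts (2) and (3), the crucial point is to match the open-box constraint $s \in (-l_B/2, l_B/2)$ in $\mathcal{K}_B$ with the inequality $-t' \in (a_-(y'), a_+(y'))$ defining $\mathcal{H}_B$. By the homeomorphism in Lemma~\ref{lem:tubes-basics}(3), any $(y,t,s) \in \mathcal{K}_B$ satisfies $\calpha_t(y) \in B^o$, and Remark~\ref{rmk:tube-basics} yields $a_\pm(\calpha_t(y)) = -t \pm l_B/2$; so the condition $-(t-s) \in \big(a_-(\calpha_t(y)), a_+(\calpha_t(y))\big)$ is equivalent to $s \in (-l_B/2, l_B/2)$, which holds. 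This shows the image is contained in $\mathcal{H}_B$. Conversely, for $(y', t') \in \mathcal{H}_B$, I would invert the trivialization to write $y' = \calpha_u(y)$ uniquely with $y \in S_{B^o}$ and $u \in (-l_B/2, l_B/2)$, and then set $s = u - t'$; the same equivalence, applied in the reverse direction, gives $s \in (-l_B/2, l_B/2)$, so $(y, u, s) \in \mathcal{K}_B$ maps to $(y', t')$. Both directions of the correspondence are continuous by Lemma~\ref{lem:tubes-basics}(3), yielding the desired homeomorphism.

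For part (4), I would compare the two Haar systems fiber-by-fiber. The fiber $(\mathcal{K}_B)^{(y,t,t)}$ is identified with $\{y\}\times\{t\}\times (-l_B/2, l_B/2)$ and carries Lebesgue measure in $s$, while in $Y \rtimes \R$ the fiber at the unit $\calpha_t(y)$ carries Lebesgue measure in the $\R$-variable. The restriction of $\widetilde{\tau}_B$ to this fiber is the affine bijection $s \mapsto (\calpha_t(y), t-s)$ onto the interval $(t - l_B/2, t + l_B/2) = \big(-a_+(\calpha_t(y)), -a_-(\calpha_t(y))\big)$, under which Lebesgue measure is preserved thanks to its translation and reflection invariance; this matches exactly the restriction $\lambda_\mathcal{H}^{\calpha_t(y)}$. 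I do not expect any deep obstacle --- the proof is a sequence of direct verifications --- but the most careful bookkeeping occurs in part (2), where Remark~\ref{rmk:tube-basics} must be invoked in both directions to translate between the two descriptions of the relevant interval.
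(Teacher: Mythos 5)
Your proposal is correct and follows essentially the same route as the paper's proof: direct verification of the homomorphism axioms, the identity $a_\pm(\calpha_t(y)) = -t \pm l_B/2$ from the tube trivialization to match the defining conditions of $\mathcal{K}_B$ and $\mathcal{H}_B$, an explicit inverse built from the trivialization for parts (2)--(3), and a fiberwise comparison of Lebesgue measures under the affine map $s \mapsto t-s$ for part (4). No gaps.
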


\begin{proof}
	\begin{enumerate}
		\item For any $(y, t, t) \in \left(\mathcal{K}_B\right)^0$, we have $\widetilde{\tau}_B (y, t, t) = \left( \calpha_{t} (y), 0 \right)$.  Moreover, for any $y \in S_{B^o}$ and $t,s,s' \in \left( - \frac{l_B}{2} , \frac{l_B}{2} \right)$, we use Example~\ref{example:groupoids} and Definition~\ref{def:tube-groupoids} to check that 
		\[
			d \left(\widetilde{\tau}_B (y,t,s)\right) = d \left( \calpha_{t} (y), t - s \right) = \left( \calpha_{s} (y), 0 \right) = \widetilde{\tau}_B (y,s,s) = \widetilde{\tau}_B \left(	d(y,t,s)\right) \; ,
		\]
		\[
			r \left(\widetilde{\tau}_B (y,t,s)\right) = r \left( \calpha_{t} (y), t - s \right) = \left( \calpha_{t} (y), 0 \right) = \widetilde{\tau}_B (y,t,t) = \widetilde{\tau}_B \left(	r(y,t,s)\right) \; ,
		\]
		and 
		\begin{align*}
			\left(\widetilde{\tau}_B (y,t,s)\right) \cdot \left(\widetilde{\tau}_B (y,s,s')\right) & = \left( \calpha_{t} (y), t - s \right) \cdot \left( \calpha_{s} (y), s - s' \right) \\
			&= \left( \calpha_{t} (y), t - s' \right) \\
			&= \widetilde{\tau}_B (y,t,s') \; .
		\end{align*}
		Together these show that $ \widetilde{\tau}_B $ is a groupoid homomorphism. 
		\item We first show $ \widetilde{\tau}_B \left(\mathcal{K}_B\right) \subset \mathcal{H}_B$. For any $(y,t,s) \in \mathcal{K}_B$, where $y \in S_{B^o}$ and $t, s \in \left( - \frac{l_B}{2}, \frac{l_B}{2} \right)$, it follows from Lemma~\ref{lem:tubes-basics} that $\calpha_{t}(y) \in B^o$, and 
		\[
			a_-(\calpha_{t}(y)) = - \frac{l_B}{2} - t \text{~and~} a_+(\calpha_{t}(y)) = \frac{l_B}{2} - t \; .
		\]
		Hence we have 
		\[
			a_-(\calpha_{t}(y)) = - \frac{l_B}{2} - t < s-t < \frac{l_B}{2} - t = a_+(\calpha_{t}(y)) \; .
		\]
		Thus by Definition~\ref{def:tube-groupoids}, we have $\widetilde{\tau}_B (y,t,s) \in \mathcal{H}_B$. This shows $ \widetilde{\tau}_B \left(\mathcal{K}_B\right) \subset \mathcal{H}_B$. 
		
		To prove the other direction, recall from Lemma~\ref{lem:tubes-basics} that there are a pair of homeomorphisms that implement a local trivialization for $B$:
		\[
		\tau_B \colon  S_B \times \left[ - \frac{l_B}{2}, \frac{l_B}{2} \right] \to B \; , \quad (y, t) \mapsto \calpha_t(y)
		\]
		and
		\[
		\theta_B \colon B \to S_B \times \left[ - \frac{l_B}{2}, \frac{l_B}{2} \right] \; , \quad y \to \left( \calpha_{\frac{a_-(y) + a_+(y)}{2}} (y), \frac{l_B}{2} - a_+(y) \right) \; .
		\]
		 We write  $\theta_{B,1} \colon B \to S_B$ and $\theta_{B,2} \colon B \to \left[ - \frac{l_B}{2}, \frac{l_B}{2} \right]$ for the two components of $\theta_B$ and define a map
		\[
			\widetilde{\theta}_B \colon \mathcal{H}_B \to \mathcal{K}_B \, , \quad (y, t) \mapsto \left( \theta_{B,1}(y), \theta_{B,2}(y), \theta_{B,2}(y) - t \right) \, ,
		\]
		which is well defined since $\theta_{B,1} \left(B^o\right) \subset S_{B^o}$, $\theta_{B,2} \left(B^o\right) \subset \left( - \frac{l_B}{2}, \frac{l_B}{2} \right)$, and $\theta_{B,2} (y) - t \in \left( - \frac{l_B}{2}, \frac{l_B}{2} \right)$ because 
		\[
			- t \in \left(a_-(y) , a_+(y) \right) = \left( - \frac{l_B}{2} - \theta_{B,2} (y) , \frac{l_B}{2} - \theta_{B,2} (y) \right) \; .
		\]
		Then we check that for any $(y,t) \in \mathcal{H}_B$, we have
		\begin{align*}
			\widetilde{\tau}_B \left( \widetilde{\theta}_B (y,t) \right) 		&= \widetilde{\tau}_B \left( \theta_{B,1}(y), \theta_{B,2}(y), \theta_{B,2}(y) - t \right) \\
			&= \left( \tau \left( \theta (y) \right) , \theta_{B,2}(y) - \left( \theta_{B,2}(y) - t \right) \right) \\
			&= (y,t) \; .
		\end{align*}
		This shows that $\widetilde{\tau}_B \cdot \widetilde{\theta}_B = \operatorname{id}_{\mathcal{H}_B}$ and thus $\widetilde{\tau}_B\left(\mathcal{K}_B\right) = \mathcal{H}_B$. 
		
		 Now for any $(y,t,s) \in \mathcal{K}_B$, we have
		\begin{align*}
			\widetilde{\theta}_B \left( \widetilde{\tau}_B (y,t,s) \right) 		&= \widetilde{\theta}_B \left( \tau (y,t), t - s \right) \\
			&= \left( \theta_{B,1}(\tau (y,t)), \theta_{B,2}(\tau (y,t)), \theta_{B,2}(\tau (y,t)) - (t-s) \right) \\
			&= (y,t,s) \; .
		\end{align*}
		This shows $\widetilde{\theta}_B$ and $\widetilde{\tau}_B$ are mutual inverses. Both are continuous, by the continuity of $\tau$ and $\theta$. Therefore $ \widetilde{\tau}_B $ is a homeomorphism onto its image. 
		\item For any $u = (y, t, t) \in \left( \mathcal{K}_B \right)^0$, we have $\widetilde{\tau}_B(u) = (\calpha_t (y), 0)$ and $\widetilde{\theta}_B$ restrict to a map
		\[
			\widetilde{\theta}_B |_{ \lambda_{\mathcal{K}}^{u} } \colon \left(\mathcal{K}_B \right)^{u} \to \left(\mathcal{H}_B \right)^{\widetilde{\tau}_B(u)} \, , \quad (y, t, s) \mapsto \left( \calpha_t (y), t - s \right) \; ,
		\]
		which is a linear map in $s$ with coefficient $-1$. Since the left Haar measures $\lambda_{\mathcal{K}}^{u}$ on $\left( \mathcal{K}_B \right)^{u}$ and $\lambda_{\mathcal{H}}^{\widetilde{\tau}_B(u)}$ on $\left( \mathcal{H}_B \right)^{\widetilde{\tau}_B(u)}$ are given, respectively, by the Lebesgue measures on the third coordinate and the second coordinate, we see that the push-forward measure $\left( \widetilde{\theta}_B \middle|_{ \lambda_{\mathcal{K}}^{u} } \right)_* \left( \lambda_{\mathcal{K}}^{u} \right)$ agrees with $\lambda_{\mathcal{H}}^{\widetilde{\tau}_B(u)}$. 
	\end{enumerate}
\end{proof}

Intuitively speaking, if we think of elements in the groupoid $Y \rtimes \mathbb{R}$ as directed paths following the orbits of the action $\calpha$, then $\mathcal{G}_B$ contains all the paths that both start and end in $B^o$, while $\mathcal{H}_B$ contains all the paths that lie entirely inside $B^o$.

 Next, we look at the $C^*$-algebras of the groupoids $\mathcal{G}_B$ and $\mathcal{H}_B$. 
 
 \begin{lem}\label{lem:groupoid-algebra-H}
 	Let $(Y, {\mathbb{R}^{}}, \calpha)$ be a topological flow and let $B$ be a tube in $Y$. Then
 	\begin{enumerate}
 		\item $C^*_r(\mathcal{H}_B) \cong C_0(S_{B^o}) \otimes K$, and
 		\item The embedding $\mathcal{H}_B \hookrightarrow \mathcal{G}_B$ induces an inclusion $C^*_r(\mathcal{H}_B) \subset C^*_r(\mathcal{G}_B)$ that admits a conditional expection $e_B \colon C^*_r(\mathcal{G}_B) \to C^*_r(\mathcal{H}_B)$. 
 	\end{enumerate}
 	
 \end{lem}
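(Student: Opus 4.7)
The plan is to handle the two parts separately, with each being a more or less direct consequence of results already established in the preceding subsections.

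For part~(1), I would invoke Lemma~\ref{lem:groupoids-H-K}, which exhibits $\widetilde{\tau}_B \colon \mathcal{K}_B \to \mathcal{H}_B$ as an isomorphism of topological groupoids that intertwines the respective Haar systems. Hence $C^*_r(\mathcal{H}_B) \cong C^*_r(\mathcal{K}_B)$. Now $\mathcal{K}_B = S_{B^o} \times \bigl( I \times I \bigr)$ where $I = \left(-\tfrac{l_B}{2}, \tfrac{l_B}{2}\right)$ and $I \times I$ is the pair groupoid equipped with Lebesgue measure on each fiber. The pair groupoid construction behaves well under products with a trivial space factor, so $C^*_r(\mathcal{K}_B)$ is naturally isomorphic to $C_0(S_{B^o}) \otimes C^*_r(I \times I)$. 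By Example~\ref{example:reduced-groupoid-algebra}(1), $C^*_r(I \times I) \cong \mathcal{K}(L^2(I))$, which is the compact operators on a separable infinite-dimensional Hilbert space. Thus $C^*_r(\mathcal{H}_B) \cong C_0(S_{B^o}) \otimes \mathcal{K}$, i.e., the stabilization of $C_0(S_{B^o})$.

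For part~(2), Lemma~\ref{lem:groupoids-G-H} tells us that $\mathcal{G}_B$ is an open subgroupoid of $Y \rtimes \mathbb{R}$ (and so a locally compact groupoid in its own right, with the restricted Haar system, by Lemma~\ref{lem:open-subgroupoid}), and that $\mathcal{H}_B$ is a \emph{clopen} subset of $\mathcal{G}_B$. Moreover, $\mathcal{H}_B$ is visibly a subgroupoid: it is closed under inversion and multiplication by direct inspection of its definition in terms of the interval $(a_-(y), a_+(y))$. Hence Theorem~\ref{thm:clopen-subgroupoid} applies directly to the pair $\mathcal{H}_B \subset \mathcal{G}_B$ (with their respective restricted Haar systems) and produces the desired inclusion $C^*_r(\mathcal{H}_B) \subset C^*_r(\mathcal{G}_B)$ along with a conditional expectation $e_B \colon C^*_r(\mathcal{G}_B) \to C^*_r(\mathcal{H}_B)$.

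The only step that requires any genuine thought beyond bookkeeping is verifying that $\mathcal{H}_B$ is actually a subgroupoid (not merely a clopen subset of $\mathcal{G}_B$): one needs to check that $\mathcal{H}_B \cap (Y \rtimes \mathbb{R})^0$ matches the unit space of $\mathcal{H}_B$, and that $\mathcal{H}_B$ is closed under the restricted operations. This is straightforward from Definition~\ref{defn:tubes} using that $a_-(y) < 0 < a_+(y)$ whenever $y \in B^o$, and that under multiplication of composable pairs in $\mathcal{H}_B$ the condition $-t \in (a_-(y), a_+(y))$ is preserved thanks to the local trivialization of Lemma~\ref{lem:tubes-basics}. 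I do not expect any real obstacle; the lemma is essentially a packaging of Theorem~\ref{thm:clopen-subgroupoid} and Lemma~\ref{lem:groupoids-H-K} for the particular groupoids associated to a tube.
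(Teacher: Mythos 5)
Your proposal is correct and follows essentially the same route as the paper: part (1) via the groupoid isomorphism $\mathcal{H}_B \cong \mathcal{K}_B$ of Lemma~\ref{lem:groupoids-H-K}, the tensor decomposition $C^*_r(\mathcal{G}_1\times\mathcal{G}_2)\cong C^*_r(\mathcal{G}_1)\otimes_{\min}C^*_r(\mathcal{G}_2)$, and Example~\ref{example:reduced-groupoid-algebra}(1) for the pair groupoid; part (2) by applying Theorem~\ref{thm:clopen-subgroupoid} to the clopen subgroupoid $\mathcal{H}_B \subset \mathcal{G}_B$ from Lemma~\ref{lem:groupoids-G-H}. Your extra remark that one should verify $\mathcal{H}_B$ is genuinely a subgroupoid is a fair observation (the paper gets this implicitly from $\widetilde{\tau}_B$ being a groupoid isomorphism onto $\mathcal{H}_B$), but it does not change the argument.
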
 
 
 \begin{proof}
 	The first statement follows from Lemma~\ref{lem:groupoids-H-K}, 
 	Example~\ref{example:reduced-groupoid-algebra} and the fact that 
 	$C^*_r(\mathcal{G}_1 \times \mathcal{G}_2) \cong C^*_r(\mathcal{G}_1) 
 	\otimes_{\text{min}} C^*_r(\mathcal{G}_2)$ for locally compact locally 
 	Hausdorff groupoids $\mathcal{G}_1$ and $\mathcal{G}_2$;
 	(see \cite[Proposition~6.11]{moore-schochet}.) 
 	
 	Since $\mathcal{H}_B$ embeds into $\mathcal{G}_B$ as a clopen subgroupoid by Lemma~\ref{lem:groupoids-G-H}, the second statement follows from Theorem~\ref{thm:clopen-subgroupoid}. 
 \end{proof}
 
 Adopting the intuitive picture as above, we see that the effect of the conditional expectation $e$ is that it annihilates all the paths in $\mathcal{G}_B$ that roam outside of $B$. On the other hand, we see that if a path is very short, then having both ends inside $B^o$ implies that its entirety falls inside $B^o$. To make this idea precise, we introduce a notion of \emph{propagation}.

 \begin{defn}\label{def:propagation}
 	For any element $(y, t)$ in the transformation groupoid $Y \rtimes \mathbb{R}$, we define its \emph{propagation} by $\operatorname{prop}(y, t) = |t|$. 
 	
 	 	For any $f \in C_c(Y \rtimes \mathbb{R})$, we define its \emph{propagation} by 
 	 	\[ 
 	 	\operatorname{prop}(f) = \sup\{\operatorname{prop}(x) \mid x \in \supp(f)\} =  \max\{\operatorname{prop}(x) \mid x \in \supp(f)\}. 
 	 	\]
 \end{defn}
 
 \begin{rmk}\label{rmk:def:propagation} 
 	Note that under the standard identification of the convolution algebras $C_c(Y \rtimes \mathbb{R})$ and $C_c(\mathbb{R}, C_c(Y))$ as given in Example~\ref{example:reduced-groupoid-algebra}, the propagation of $f \in C_c(\mathbb{R}, C_c(Y))$ is $\sup\{|t| \mid f(t) \not= 0\}$.
 \end{rmk}
 
 Recall that there is a canonical embedding 
 \begin{equation} \label{eq:canonical-embedding-Y}
 {C}_{0}(Y) \hookrightarrow M({C}_{0}(Y) \rtimes {\mathbb{R}^{}}) \cong M(C^*_r(Y \rtimes \mathbb{R}))
 \end{equation}
 as given in \eqref{eq:canonical-embedding-unit-space}, so that for any $ f \in {C}_{0}(Y) $ and for any $ g $ in the dense subalgebra $ {C}_{c}(Y \rtimes \mathbb{R}) $ in $ C^*_r(Y \rtimes \mathbb{R}) $, the multiplication of $ g $ by $ f $ from the left and right are also in ${C}_{c}(Y \rtimes \mathbb{R})$ and are given by
 \[
 (f \cdot g) \ (y, t) = f(y) \cdot g (y, t) \ \text{and}\  (g \cdot f ) \ (y, t) =  g(y, t) \cdot \alpha_{t} ( f ) (y) = g(y, t) \cdot f \left( \calpha_{-t} (y)\right)
 \]
 for any $(y,t) \in Y \rtimes \mathbb{R}$. 
 
 \begin{lem}\label{lem:groupoid-algebra-compression}
 	Let $(Y, {\mathbb{R}^{}}, \calpha)$ be a topological flow and let $B$ be a tube in $Y$. Let $f \in C_0(Y)$ have support inside $B$. Then for any $g$ in the dense subalgebra $C_c(Y \rtimes \mathbb{R})$, we have $f g f^* \in C_c(\mathcal{G}_B)$ and $\operatorname{prop}(f g f^*) \leq \operatorname{prop}(g)$. Moreover, the hereditary subalgebra $\overline{f C^*_r(Y \rtimes \mathbb{R}) f^*}$ is contained in the subalgebra $C^*_r(\mathcal{G}_B)$ and is in fact equal to $\overline{f C^*_r(\mathcal{G}_B) f^*}$.  
 \end{lem}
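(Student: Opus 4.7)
The plan is to first analyze $fgf^*$ pointwise via the explicit multiplier formulas in \eqref{eq:canonical-embedding-Y}: direct computation yields
\[
(fgf^*)(y,t) \;=\; f(y) \cdot g(y,t) \cdot \overline{f(\calpha_{-t}(y))} \; .
\]
Since the open set $\{f \neq 0\}$ is contained in $B$ and $B^o$ is the largest open subset of $B$, we have $\{f \neq 0\} \subset B^o$; in particular $f$ vanishes on $\partial B^o$. Interpreting the hypothesis naturally so that $\supp(f)$ is a compact subset of $B^o$, the support of $fgf^*$ is contained in the closed set $\{(y,t) : y, \calpha_{-t}(y) \in \supp(f)\} \cap \supp(g)$, which is a compact subset of $\mathcal{G}_B$. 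Hence $fgf^* \in C_c(\mathcal{G}_B)$, and the propagation bound $\operatorname{prop}(fgf^*) \leq \operatorname{prop}(g)$ follows immediately because the $\R$-projection of the support is unchanged.

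To pass to the $C^*$-algebraic inclusion $f C^*_r(Y \rtimes \R) f^* \subset C^*_r(\mathcal{G}_B)$, I would extend by density: the map $a \mapsto faf^*$ on $C^*_r(Y \rtimes \R)$ is norm-continuous (bounded by $\|f\|^2$), the subspace $C_c(Y \rtimes \R)$ is norm-dense in $C^*_r(Y \rtimes \R)$, and $C^*_r(\mathcal{G}_B)$ is norm-closed in $C^*_r(Y \rtimes \R)$ by Example~\ref{example:reduced-groupoid-algebra}(3). Taking norm-limits of the $C_c$-level elements from the previous paragraph then gives the desired inclusion.

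For the final equality $f C^*_r(Y \rtimes \R) f^* = f C^*_r(\mathcal{G}_B) f^*$, the containment $\supseteq$ is automatic from $C^*_r(\mathcal{G}_B) \subset C^*_r(Y \rtimes \R)$. For the reverse, the key point is that $f \in C_0(B^o)$ is a multiplier of $C^*_r(\mathcal{G}_B)$ via the canonical embedding \eqref{eq:canonical-embedding-unit-space}; both sides then agree with the hereditary $C^*$-subalgebra generated by the positive multiplier $|f|^2$ in their respective ambient algebras. Since $C^*_r(\mathcal{G}_B)$ is itself hereditary in $C^*_r(Y \rtimes \R)$ (Example~\ref{example:reduced-groupoid-algebra}(3)) and already absorbs $f C^*_r(Y \rtimes \R) f^*$ by the previous step, these two hereditary subalgebras coincide. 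Concretely, this can be verified by choosing an approximate unit for $C^*_r(\mathcal{G}_B)$ from $C_c(B^o)$ and approximating elements of $f C^*_r(Y \rtimes \R) f^*$ on both sides.

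The most delicate step is the last one: one has to keep track of the multiplier (rather than element) nature of $f$ in both groupoid $C^*$-algebras, together with the subtle distinction between $B$ and its open interior $B^o$, which is precisely what guarantees that the support of $fgf^*$ lands compactly inside the open groupoid $\mathcal{G}_B$ rather than touching its boundary. Beyond this, the proof is a combination of the explicit pointwise computation and standard density and approximation arguments.
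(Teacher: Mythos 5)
Your proof is correct and follows essentially the same route as the paper's: the same pointwise support computation for $fgf^*$, the same density argument for the inclusion $fC^*_r(Y\rtimes\R)f^* \subset C^*_r(\mathcal{G}_B)$, and a hereditary-subalgebra argument for the final equality that is the paper's one-line sandwich $fC^*_r(Y\rtimes\R)f^* = f^2 C^*_r(Y\rtimes\R)(f^2)^* \subset fC^*_r(\mathcal{G}_B)f^*$ in slightly different clothing. Your explicit caveat that $\supp(f)$ should be read as a compact subset of $B^o$ is in fact more careful than the paper, which only observes that $\{f\neq 0\}\subset B^o$ and does not address that the closure of the nonvanishing set of $fgf^*$ could a priori meet the boundary of $\mathcal{G}_B$.
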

 
 \begin{proof} 
 	For any $g \in C_c(Y \rtimes \mathbb{R})$, the support of $f g f^* \in C_c(Y \rtimes \mathbb{R})$ is 
 	\[
	 	\overline{\left\{ (y,t) \in Y \rtimes \mathbb{R} \mid g(y,t) \not= 0 , \, f(y) \not= 0 ,\, f(\calpha_{-t}(y)) \not= 0 \right\}} \subseteq \supp(g)\; ,
 	\]
 	which is also in $\mathcal{G}_B$ because $\{ y \in Y \mid f(y) \not= 0 \}$ is contained in $B^o$. The first containment gives us $\operatorname{prop}(f g f^*) \leq \operatorname{prop}(g)$, while the second tells us $f g f^* \in C_c(\mathcal{G}_B)$. Thus we have $f C^*_r(Y \rtimes \mathbb{R}) f^* \subset C^*_r(\mathcal{G}_B)$ by a density argument. Hence
 	\[
	 	f C^*_r(Y \rtimes \mathbb{R}) f^* = f^2 C^*_r(Y \rtimes \mathbb{R}) (f^2)^* \subset f C^*_r(\mathcal{G}_B) f^* \subset f C^*_r(Y \rtimes \mathbb{R}) f^* \; ,
 	\]
 	which implies $f C^*_r(Y \rtimes \mathbb{R}) f^* = f C^*_r(\mathcal{G}_B) f^*$. 
 \end{proof}
 
 \begin{lem}\label{lem:groupoid-propagation-G-H}
 	Let $(Y, {\mathbb{R}^{}}, \calpha)$ be a topological flow and let $B$ be a tube in $Y$. Let $\varepsilon$ be a positive number as in Definition~\ref{defn:tubes}. Then any $x \in \mathcal{G}_B$ with $\operatorname{prop}(x) \leq \varepsilon$ is contained in $\mathcal{H}_B$. 
 \end{lem}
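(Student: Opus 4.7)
The proposal is to unpack the definitions and argue directly from the geometry of the tube, using the uniform $\varepsilon$ from Lemma~\ref{lem:tubes-basics}(2) (which must be what is meant by ``an $\epsilon$ as in Definition~\ref{defn:tubes}'').

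First I would fix $(y,t)\in \mathcal{G}_B$ with $|t|\leq\epsilon$, so that by Definition~\ref{def:tube-groupoids} we have $y\in B^o$ and $\calpha_{-t}(y)\in B^o$. The goal is to show $-t\in (a_-(y),a_+(y))$. I would proceed in two stages: first establishing the closed containment $-t\in [a_-(y),a_+(y)]$, then upgrading to the strict one.

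For the closed containment, I would argue by contradiction. Suppose $-t>a_+(y)$; then $t\leq 0$ and $-t\leq \epsilon\leq \varepsilon(y)$, where $\varepsilon(y)$ is as in Definition~\ref{defn:tubes}. Splitting into the three cases $-t\in (a_+(y),a_+(y)+\varepsilon(y))$, $-t=a_+(y)+\varepsilon(y)$, and $-t>a_+(y)+\varepsilon(y)$ leads to a contradiction in each: the first directly contradicts $\calpha_{-t}(y)\in B^o\subset B$ via the exit condition in Definition~\ref{defn:tubes}; the third is ruled out by the bound $-t\leq\varepsilon(y)$ together with $a_+(y)\geq 0$; the boundary case $-t=a_+(y)+\varepsilon(y)$ forces $a_+(y)=0$, and then the openness of $B^o$ together with the continuity of the flow yields a neighborhood of $0$ mapped into $B$, contradicting $a_+(y)=0$. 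The symmetric assumption $-t<a_-(y)$ is handled identically.

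For the strict containment, I would suppose $-t=a_+(y)$ (the $a_-$ case being symmetric). Since $\calpha_{-t}(y)\in B^o$, openness of $B^o$ and continuity of $\calpha$ provide a $\delta\in(0,\varepsilon(y))$ such that $\calpha_s(y)\in B^o$ for all $s\in[a_+(y)-\delta, a_+(y)+\delta]$; but by Definition~\ref{defn:tubes}, $\calpha_s(y)\notin B$ for $s\in(a_+(y),a_+(y)+\varepsilon(y))$, a contradiction for any such $s$ in the common range. Thus $-t\in(a_-(y),a_+(y))$, and Definition~\ref{def:tube-groupoids} then gives $(y,t)\in\mathcal{H}_B$, as required.

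The main obstacle is purely bookkeeping: making sure the size bound $|t|\leq\epsilon$, together with $a_-(y)\leq 0\leq a_+(y)$, really rules out all ways for $-t$ to reach or pass one of the endpoints $a_\pm(y)$. No deep input beyond Definition~\ref{defn:tubes} and Lemma~\ref{lem:tubes-basics}(2) is needed.
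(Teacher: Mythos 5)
Your proof is correct and follows essentially the same route as the paper's: both arguments combine the exit condition of Definition~\ref{defn:tubes}, the bound $|t|\leq\epsilon$ together with $a_-(y)\leq 0\leq a_+(y)$, and the openness of $B^o$ plus continuity of the flow to exclude the closed intervals $[a_-(y)-\epsilon,a_-(y)]$ and $[a_+(y),a_+(y)+\epsilon]$ from the possible values of $-t$. Your version merely spells out, as an explicit case analysis by contradiction, what the paper compresses into one sentence, and your reading of ``$\epsilon$ as in Definition~\ref{defn:tubes}'' as a uniform exit window (cf.\ Lemma~\ref{lem:tubes-basics}(2)) is the intended one.
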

 
 \begin{proof}
 	Given $(y,t) \in \mathcal{G}_B$ with $\operatorname{prop}(x) \leq \varepsilon$, we have $y \in B^o$, $\calpha_{-t}(y) \in B^o$ and $|t| \leq \varepsilon$. By Definition~\ref{defn:tubes} and by the continuity of $\calpha$, the first two conditions above imply that $-t \notin \left[ a_-(y) - \varepsilon, a_-(y) \right] \cup \left[ a_+(y), a_+(y) + \varepsilon \right]$. Since $a_-(y) < 0 < a_+(y)$ and $-t \in [-\varepsilon, \varepsilon]$, we have $-t \in \left( a_-(y), a_+(y) \right)$. Therefore $(y,t) \in \mathcal{H}_B$.
 \end{proof}
 
 \begin{lem}\label{lem:groupoid-algebra-propagation-G-H}
 	Let $Y$, $\calpha$, $B$ and $\varepsilon$ be as in Lemma~\ref{lem:groupoid-propagation-G-H}. Then any $g \in C_c(\mathcal{G}_B)$ with $\operatorname{prop}(g) \leq \varepsilon$ is contained in the subalgebra $C_c(\mathcal{H}_B)$. In particular, if we let $e_B \colon C^*_r(\mathcal{G}_B) \to C^*_r(\mathcal{H}_B)$ be the conditional expectation as in Lemma~\ref{lem:groupoid-algebra-H},  we have $e_B(g) = g$. 
 \end{lem}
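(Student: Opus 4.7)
The plan is to reduce the second statement to the first via the conditional expectation property, and to prove the first statement by a pointwise containment argument. Note that since $Y \rtimes \mathbb{R}$ is Hausdorff (as $Y \times \mathbb{R}$ is), so are its open subsets $\mathcal{G}_B$ and $\mathcal{H}_B$, so $C_c(\mathcal{G}_B)$ and $C_c(\mathcal{H}_B)$ consist of honest compactly supported continuous functions.

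First I would establish the containment $\supp(g) \subset \mathcal{H}_B$. By Definition~\ref{def:propagation} and Remark~\ref{rmk:def:propagation}, every $x \in \supp(g)$ satisfies $\operatorname{prop}(x) \leq \operatorname{prop}(g) \leq \epsilon$, and since $g \in C_c(\mathcal{G}_B)$, we also have $\supp(g) \subset \mathcal{G}_B$. Lemma~\ref{lem:groupoid-propagation-G-H} then applies to each such $x$, giving $x \in \mathcal{H}_B$, hence $\supp(g) \subset \mathcal{H}_B$.

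Next, I would use that $\mathcal{H}_B$ is open in $\mathcal{G}_B$ (by Lemma~\ref{lem:groupoids-G-H}) to conclude that $g$, restricted to $\mathcal{H}_B$, remains continuous and compactly supported (its support being $\supp(g) \subset \mathcal{H}_B$), and that extending by $0$ to $\mathcal{G}_B$ recovers $g$. Thus $g \in C_c(\mathcal{H}_B)$ under the canonical inclusion $C_c(\mathcal{H}_B) \hookrightarrow C_c(\mathcal{G}_B)$. Equivalently, if $\rho_{\mathcal{H}_B} \colon C_c(\mathcal{G}_B) \to C_c(\mathcal{H}_B)$ denotes the pointwise multiplication by $\chi_{\mathcal{H}_B}$ as in the proof of Theorem~\ref{thm:clopen-subgroupoid}, then $g = g \cdot \chi_{\mathcal{H}_B} = \rho_{\mathcal{H}_B}(g)$.

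Finally, for the assertion about $e_B$, I would appeal directly to the defining property~\eqref{eq:conditional-expectation} of the conditional expectation: $e_B$ acts as the identity on $C^*_r(\mathcal{H}_B)$, and we have just shown $g \in C_c(\mathcal{H}_B) \subset C^*_r(\mathcal{H}_B)$, so $e_B(g) = g$. (Alternatively, as explicitly noted in the proof of Theorem~\ref{thm:clopen-subgroupoid}, $e_B$ extends $\rho_{\mathcal{H}_B}$, and $\rho_{\mathcal{H}_B}(g) = g$ by the previous step.) No step of this argument presents a serious obstacle; the only subtlety worth remarking is verifying that the conditions ``$\supp(g) \subset \mathcal{G}_B$'' and ``$\operatorname{prop}(g) \leq \epsilon$'' can be combined pointwise to feed into Lemma~\ref{lem:groupoid-propagation-G-H}, which is immediate from the definitions.
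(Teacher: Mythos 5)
Your proof is correct and follows the same route as the paper, which simply notes that the lemma is an immediate consequence of Lemma~\ref{lem:groupoid-propagation-G-H}; your write-up just spells out the pointwise support argument and the identification $e_B|_{C^*_r(\mathcal{H}_B)} = \operatorname{id}$ that the paper leaves implicit.
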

 
 \begin{proof}
 	This follows immediately from Lemma~\ref{lem:groupoid-propagation-G-H}. 
 \end{proof}

\section{The main result}

In this section, we bound the nuclear dimension of the crossed product algebra $ {C}_{0}(Y) \rtimes {\mathbb{R}^{}} $ in terms of the covering dimension of $Y$.

\begin{thm}\label{thm-main}
	Let $(Y, {\mathbb{R}^{}}, \calpha)$ be a topological flow. Assume $Y$ is a 
	locally compact second countable Hausdorff space with finite covering 
	dimension. Then 
	\[
	\operatorname{dim}_\mathrm{nuc}({C}_{0}(Y) \rtimes {\mathbb{R}^{}} ) \leq (\dim(Y)+1)(5\dim(Y)+7) - 1 \; .
	\]
\end{thm}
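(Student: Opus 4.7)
My plan is to split $Y$ along the invariant decomposition $Y = Y_{\leq R} \sqcup Y_{> R}$ for a large parameter $R$, to construct separate decomposable approximations for each piece, and then to patch them. Write $d = \dim(Y)$. By Lemma~\ref{lem:separable-dimnuc} I may assume $Y$ is second countable, so $C_0(Y) \rtimes \R$ is separable. By Lemma~\ref{Lemma:finite-dimnuc} it suffices, given any finite $F \subset C_c(Y \rtimes \R)$ whose elements have propagation $\leq L$ for some $L > 0$, and any $\eps > 0$, to build a piecewise contractive c.p.\ approximation to the identity on $F$ within $\eps$, whose parameters sum to at most $(d+1)(5d+7) = 5(d+1)^2 + 2(d+1)$ in the sense of Lemma~\ref{Lemma:finite-dimnuc}(iv). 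I will choose $\eta > 0$ small depending on $L$ and $\eps$, and $R$ at least $R(L, \eta, d)$ from Proposition~\ref{prop:get-partition-of-unity}.

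\emph{Long-period piece.} Apply Proposition~\ref{prop:get-partition-of-unity} to a compact subset of $Y_{>R}$ large enough to cover the relevant supports, to produce a $\calpha$-Lipschitz partition of unity $\{f_i\}_{i \in I}$ with Lipschitz constant $\eta$, colored by $I = \bigsqcup_{l=0}^{5(d+1)-1} I^{(l)}$, such that $\calpha_{[-L,L]}(\supp f_i)$ lies in a tube $B_i$; by Lemma~\ref{lem:stretching-tube} we may arrange the wiggle room $\eps_{B_i}$ to be at least $L$. For each $i$, the compression $a \mapsto f_i^{1/2} a f_i^{1/2}$ lands in the hereditary subalgebra $f_i^{1/2}\, C^*_r(\mathcal{G}_{B_i})\, f_i^{1/2}$ by Lemma~\ref{lem:groupoid-algebra-compression}. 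Post-composing with the conditional expectation $e_{B_i} \colon C^*_r(\mathcal{G}_{B_i}) \to C^*_r(\mathcal{H}_{B_i})$ from Lemma~\ref{lem:groupoid-algebra-H} preserves any such element with propagation at most $\eps_{B_i}$, by Lemma~\ref{lem:groupoid-algebra-propagation-G-H}. Since $C^*_r(\mathcal{H}_{B_i}) \cong \K \otimes C_0(S_{B_i^o})$ has nuclear dimension at most $d$, assembling these maps color by color gives, for each $l$, a single order-zero map from the finite direct sum $\bigoplus_{i \in I^{(l)}} C^*_r(\mathcal{H}_{B_i})$ into $C_0(Y) \rtimes \R$ (the images within a color are pairwise orthogonal by the disjointness clause of Proposition~\ref{prop:get-partition-of-unity}). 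This contributes $(d+1)\cdot(0+1) = d+1$ per color class and $5(d+1)^2$ in total to the sum in Lemma~\ref{Lemma:finite-dimnuc}(iv). The Lipschitz constant $\eta$ controls the commutator errors $\|[f_i^{1/2}, a]\|$, and hence the deviation of $\sum_i e_{B_i}(f_i^{1/2} a f_i^{1/2})$ from $\sum_i f_i \cdot a = a$ on the relevant region.

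\emph{Short-period piece and patching.} By Corollary~\ref{cor:dimnuc-bounded-periods}, $\dimnuc^{+1}(C_0(Y_{\leq R}) \rtimes \R) \leq 2(d+1)$, so a piecewise contractive $2(d+1)$-decomposable c.p.\ approximation on $C_0(Y_{\leq R}) \rtimes \R$ exists and lifts to one valued in $C_0(Y) \rtimes \R$ via Lemma~\ref{Lemma:lifting-decoposable-maps}. Patch it with the long-period construction using an invariant $\calpha$-Lipschitz cutoff $\chi \in C_0(Y)$ equal to $1$ on a neighborhood of the relevant part of $Y_{\leq R}$ and vanishing outside a slightly larger invariant neighborhood thereof; commutator estimates $\|[\chi^{1/2}, a]\|$ and $\|[(1-\chi)^{1/2}, a]\|$ control the splitting error. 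Summing the contributions $5(d+1)^2 + 2(d+1) = (d+1)(5d+7)$ and applying Lemma~\ref{Lemma:finite-dimnuc} yields the claimed bound $\dimnuc(C_0(Y) \rtimes \R) \leq (d+1)(5d+7) - 1$.

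I expect the main obstacle to be the patching step: one must coordinate the cutoff $\chi$, the Lipschitz scale $\eta$, and the parameter $R$ so that all resulting commutator errors (including those coming from smearing in the short-period approximation applied to $\chi^{1/2} a \chi^{1/2}$, whose propagation may slightly exceed $L$) stay within $\eps$, while preserving the order-zero property of each summand of the combined map. This mutual dependence among parameters is what forces $R$ to be chosen so large at the outset.
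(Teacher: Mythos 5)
Your proposal follows essentially the same route as the paper's proof: the invariant decomposition $Y = Y_{\leq R} \sqcup Y_{>R}$, the bounded-period estimate of Corollary~\ref{cor:dimnuc-bounded-periods} together with the lifting Lemma~\ref{Lemma:lifting-decoposable-maps} for the quotient, the colored flow-Lipschitz partition of unity of Proposition~\ref{prop:get-partition-of-unity} combined with the compressions and clopen-subgroupoid conditional expectations of Lemmas~\ref{lem:groupoid-algebra-compression}--\ref{lem:groupoid-algebra-propagation-G-H} for the ideal, and the same tally $2(d+1) + 5(d+1)^2 = (d+1)(5d+7)$.

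One step as written would fail. You invoke Lemma~\ref{lem:stretching-tube} to \emph{stretch} $B_i$ so as to arrange $\varepsilon_{B_i} \geq L$. Stretching a tube lengthens it but \emph{decreases} its wiggle room: if $B' = \calpha_{[-L,L]}(B)$, an orbit leaving $B'$ re-enters it as soon as it comes within time $L$ of re-entering $B$, so $\varepsilon_{B'} = \varepsilon_B - 2L$; moreover the lemma already requires $L < \varepsilon_B/2$, so it cannot manufacture the lower bound you need. The correct move, and the one the paper makes, is to \emph{shrink}: set $\widehat{B}_i = \left\{ y \middlebar \calpha_{[-L,L]}(y) \subset B_i \right\}$, which still contains $\supp(f_i)$ by condition (2) of Proposition~\ref{prop:get-partition-of-unity} and satisfies $\varepsilon_{\widehat{B}_i} \geq \varepsilon_{B_i} + 2L > 2L$, so that Lemma~\ref{lem:groupoid-algebra-propagation-G-H} applies to the compressed elements of propagation $< L$ and the conditional expectation fixes them. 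A second, smaller imprecision: an exactly \emph{invariant} compactly supported cutoff $\chi$ with the properties you describe generally does not exist (and an invariant function is $\calpha$-Lipschitz with constant $0$, so the two requirements are in tension); the paper instead takes a quasicentral approximate unit $e \in C_c(Y_{>R})_{+,\leq 1}$ from Lemma~\ref{lem:quasicentral-approximate-unit} and uses \cite[Proposition 1.4]{winter-zacharias} to replace the perturbed summands $(1-e)^{1/2}\,\widetilde{\varphi}^{(l)}_{\leq R}(\cdot)\,(1-e)^{1/2}$ by genuine order zero maps --- a step your sketch elides but which is needed to keep the combined map piecewise contractive and decomposable. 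Both repairs are local and do not change the architecture or the final bound.
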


\begin{proof}
	\setcounter{clminproof}{\value{Thm}} 
	Set $d=\dim (Y)$. We need to show $\dimnuc ( C_0(Y) \rtimes_\alpha \R ) + 1 \leq (d+1)(5d+7)$. Since $ {C}_{c}({\mathbb{R}^{}}, {C}_{c}(Y)) $ is dense in $ {C}_{0}(Y) \rtimes {\mathbb{R}^{}}  $, it suffices to show that for any finite subset $ F \subset {C}_{c}({\mathbb{R}^{}}, {C}_{c}(Y)) \cap ( {C}_{0}(Y) \rtimes {\mathbb{R}^{}})_{\leq 1} $ and $ \varepsilon > 0 $, the condition of Lemma~\ref{Lemma:finite-dimnuc} holds for $F$ and $(7d+10) \varepsilon$. Given such $ F $ and $\varepsilon$, we start by finding $ L > 0 $ such that $ F \subset {C}_{c}( (-L, L), {C}_{c}(Y)) \subset {C}_{c}({\mathbb{R}^{}}, {C}_{c}(Y)) $. 
	Denote by $ \left\| b \right\|_1 = \int_{-\infty}^{\infty}\|b(t)\|dt $, that is, the $L^{1}$-norm of $ b \in F $ as a function over $\mathbb{R}$.
	Set 
	\begin{equation}\label{eq:defn-eta}
	\eta = \frac{\varepsilon ^2 }{ 4 L ^3 \left( \sup_{b \in F} \left\| b \right\|_1 \right) ^2  } \, .
	\end{equation}

	We define $R = R(L, \eta, d) > 0$ as in Proposition~\ref{prop:get-partition-of-unity}. Recall that
	\begin{align*}
	Y_{\leq R} & = \left\{ y \in Y \middlebar \mathrm{per}_{\calpha}(y) \leq R \right\} = \left\{ y \in Y \middlebar \calpha_{\R}(y) = \calpha_{[0,R]} (y) \right\} \; , \\
	Y_{> R} & =  Y \setminus Y_{\leq R} = \left\{ y \in Y \middlebar \mathrm{per}_{\calpha}(y) > R \right\} \; .
	\end{align*}
	
	By Corollary~\ref{cor:long-period-open}, $Y_{> R}$ is an open $\calpha$-invariant subset, and $Y_{\leq R}$ is a closed $\calpha$-invariant subset. Thus we have the following exact sequence 
	\begin{equation}\label{eq:exact-sequence}
	0 \to C_0(Y_{> R}) \rtimes \R \overset{\theta}{\longrightarrow} C_0(Y ) \rtimes \R \overset{\pi}{\longrightarrow} C_0(Y_{\leq R}) \rtimes \R  \to 0 \; .
	\end{equation}
	Let us first focus on the quotient algebra in this exact sequence. Since the restricted action $\R \curvearrowright Y_{\leq R}$ has bounded periods, by Corollary~\ref{cor:dimnuc-bounded-periods}, we have 
	\[
	\dimnuc ( C_0(Y_{\leq R}) \rtimes \R ) \leq \dr ( C_0(Y_{\leq R}) \rtimes \R ) \leq 2 (d + 1) - 1 \; .
	\]
	Hence by definition, we can find a piecewise contractive $2 (d + 1)$-decomposable approximation for $(\pi (F), \varepsilon)$:
	\begin{equation}\label{eq:decomposable-approximation-for-quotient}
	\xymatrix{\displaystyle
		C_0(Y_{\leq R}) \rtimes \R \ar[dr]_{\psi_{\leq R} = \bigoplus_{l=0} ^{2d+1} \psi_{\leq R}^{(l)} \ \ \ } \ar@{.>}[rr]^{\id} &  & C_0(Y_{\leq R}) \rtimes \R \\
		\displaystyle & A_{\leq R} = \bigoplus_{l=0} ^{2d+1} A_{\leq R}^{(l)} \ar[ur]_{\ \ \ \ \varphi_{\leq R} = \sum_{l=0} ^{2d+1} \varphi_{\leq R}^{(l)} } &
	}
	\end{equation}
	By Lemma~\ref{Lemma:lifting-decoposable-maps}, we may lift $\varphi_{\leq 
	R}$ to a piecewise contractive $2 (d + 1)$-decomposable completely positive 
	map 
	\[
	\widetilde{\varphi}_{\leq R} = \sum_{l=0} ^{ 2d+1} \widetilde{\varphi}_{\leq R}^{(l)} : A_{\leq R}  \to C_0(Y) \rtimes \R \; .
	\]
	By \cite[Proposition 1.4]{winter-zacharias}, there exists $\delta > 0$ such that for any positive contraction $e \in C_0(Y) \rtimes \R$, if $\left\| [ (1 - e), \widetilde{\varphi}_{\leq R}^{(l)} (a) ] \right\| \leq \delta \|a\| $ for any $a \in A_{\leq R}$ and for any $l \in \left\{0, \ldots, 2d+1\right\}$, then there are completely positive contractive order zero maps $\widehat{\varphi}_{\leq R}^{(l)} : A_{\leq R} \to C_0(Y ) \rtimes \R $ such that 
	\[
	\left\| \widehat{\varphi}_{\leq R}^{(l)} (a) - (1 - e)^\frac{1}{2} \widetilde{\varphi}_{\leq R}^{(l)} (a) (1 - e)^\frac{1}{2} \right\| \leq \varepsilon \| a \|
	\]
	for all $a \in A_{\leq R}$ and for all $l \in \left\{0, \ldots, 2d+1\right\}$. 
	
	By Lemma~\ref{lem:quasicentral-approximate-unit}, there is a quasicentral approximate unit for 
	\[
		C_0(Y_{> R}) + C_0(Y_{> R}) \rtimes \R \subseteq M(C_0(Y ) \rtimes \R)
	\]
	which is contained in $C_c(Y_{> R})_{+, \leq 1}$. Thus, we may choose an element $e \in C_c(Y_{> R})_{+, \leq 1}$ which satisfies:
	\begin{enumerate}
		\item $\left\| [ (1 - e), \widetilde{\varphi}_{\leq R}^{(l)} (a) ] \right\| \leq \delta \|a\| $ for any $a \in A_{\leq R}$ and for any $l \in \left\{0, \ldots, 2d+1\right\}$;
		\item $\left\| e^\frac{1}{2} \: b \: e^\frac{1}{2} + (1-e)^\frac{1}{2} \: b \: (1-e)^\frac{1}{2} - b \right\| \leq \varepsilon $ for any $b \in F$;
		\item $\left\| (1-e)^\frac{1}{2} ( (\widetilde{\varphi}_{\leq R} \circ \psi_{\leq R} \circ \pi) (b) - b ) (1-e)^\frac{1}{2} \right\| \leq 2 \varepsilon $ for any $b \in F$.
	\end{enumerate}
	Therefore, we can find maps $\widehat{\varphi}_{\leq R}^{(l)}$ as described in the previous paragraph, and sum them up to obtain a piecewise contractive $ 2(d+1)$-decomposable completely positive map 
	\[
	\widehat{\varphi}_{\leq R} = \sum_{l=0} ^{ 2d+1} \widehat{\varphi}_{\leq R}^{(l)}  : A_{\leq R} \to C_0(Y ) \rtimes \R
	\]
	such that for any $a \in A_{\leq R}$,
	\[
	\left\| \widehat{\varphi}_{\leq R} (a) - (1 - e)^\frac{1}{2} \widetilde{\varphi}_{\leq R} (a) (1 - e)^\frac{1}{2} \right\| \leq 2(d+1) \varepsilon \| a \| \; .
	\]

	\begin{clminproof}\label{clm:estimate-1:thm:estimate-dimnuc-Z}
		The diagram
		\[
		\xymatrix{\displaystyle
			C_0(Y) \rtimes \R \ar[dr]_{\psi_{\leq R} \circ \pi \ \ } \ar@{.>}[rr]^{\id} &  & C_0(Y) \rtimes \R \\
			\displaystyle & A_{\leq R}  \ar[ur]_{\ \widehat{\varphi}_{\leq R}  } &
		}
		\]
		commutes on $(1 - e)^\frac{1}{2} F (1 - e)^\frac{1}{2}$ up to errors bounded by $(2d+4)  \varepsilon$.
	\end{clminproof}
	\begin{proof}
		Observe that for any $b \in F$, 
		\begin{align*}
		& \left\| (\widehat{\varphi}_{\leq R} \circ \psi_{\leq R} \circ \pi ) \left( (1 - e)^\frac{1}{2} b (1 - e)^\frac{1}{2} \right) - (1 - e)^\frac{1}{2} b (1 - e)^\frac{1}{2} \right\| & \\
		= & \  \left\| (\widehat{\varphi}_{\leq R} \circ \psi_{\leq R} \circ \pi ) \left( b \right) - (1 - e)^\frac{1}{2} b (1 - e)^\frac{1}{2} \right\| & \Big[\scriptscriptstyle{ \pi(e) = 0 }\Big] \\
		\leq & \ \left\| (\widehat{\varphi}_{\leq R} \circ \psi_{\leq R} \circ \pi ) ( b ) - (1 - e)^\frac{1}{2} \cdot ( \widetilde{\varphi}_{\leq R} \circ \psi_{\leq R} \circ \pi ) ( b ) \cdot (1 - e)^\frac{1}{2} \right\| \\
		& + \left\| (1 - e)^\frac{1}{2} \big( (\widetilde{\varphi}_{\leq R} \circ \psi_{\leq R} \circ \pi ) ( b ) - b \big) (1 - e)^\frac{1}{2} \right\| \\
		\leq & \ 2(d+1) \varepsilon \left\| (\psi_{\leq R} \circ \pi ) ( b ) \right\| + 2 \varepsilon \\
		\leq & \ (2d+4)  \varepsilon .  & \Big[\scriptstyle{\|b\| \leq 1 }\Big] 
		\end{align*}
		This proves the claim.
	\end{proof}

	Our next step is to find an approximation for $ e^\frac{1}{2} F e^\frac{1}{2} $, which concerns the kernel in the exact sequence in~\eqref{eq:exact-sequence}. Since $e$ is compactly supported and $\mathrm{supp}(e) \subset  Y_{> R}$, we may pick a compact subset $K \subset Y_{> R}$ such that $ \mathrm{supp}(e) \subset K^o $. Our choice of $R$ allows us to apply Proposition~\ref{prop:get-partition-of-unity} to obtain a finite partition of unity $\left\{f_i\right\}_{i \in I}$ for the inclusion $K \subset Y_{>R}$ satisfying 
	\begin{enumerate}
		\item \label{prop:get-partition-of-unity-a-re1} for any $ i\in I $, $ f_i $ is $\calpha$-Lipschitz with constant $\eta$;
		\item  \label{prop:get-partition-of-unity-b-re1} for any $ i\in I $, $ \calpha_{[-L, L]} \big( \mathrm{supp}( f_i ) \big) $ is contained in a tube $B_i$ in $Y_{>R}$; 
		\item \label{prop:get-partition-of-unity-c-re1}  there is a decomposition $I = I^{(0)} \cup \cdots \cup I^{(5(d+1)-1)} $ such that for any $ l \in \left\{ 0, \dots, 5(d+1)-1 \right\} $ and any two different $i, j \in I^{(l)} $, we have 
		$$ \calpha_{[-L, L]} \big( \mathrm{supp}( f_i ) \big) \cap \calpha_{[-L, L]} \big( \mathrm{supp}( f_j ) \big) = \varnothing . $$
		By eliminating redundancies, we may assume without loss of generality that $I^{(0)} , \cdots , I^{(5(d+1)-1)}$ are disjoint. 
	\end{enumerate}
	
	Now for each $ i\in I $, let us shrink the tube $B_i$ to 
	\[
	\widehat{B}_i = \left\{ y \in Y_{>R} \middlebar \calpha_{[-L,L]}(y) \subset B_i \right\} \; ,
	\]
	which is a tube with the same central slice but a shorter length $l_{\widehat{B}_i} = l_{{B}_i} - 2 L$, while by the definition of the value $\varepsilon_{{B}_i}$ in Lemma~\ref{lem:tubes-basics}, we have
	\begin{equation}\label{eq:varepsilon_B-hat}
	\varepsilon_{\widehat{B}_i} \geq \varepsilon_{{B}_i} + 2L > 2 L \; .
	\end{equation}
	 Additionally, condition (\ref{prop:get-partition-of-unity-b-re1}) above implies that $ \mathrm{supp}( f_i ) \subset \widehat{B}_i$. These properties of the shrunken tubes will be used in Claim~\ref{clm:estimate-2:thm:estimate-dimnuc-Z}. 
	
	For each tube $\widehat{B}_i$, we define $\mathcal{G}_{\widehat{B}_i}$ and $\mathcal{H}_{\widehat{B}_i}$ as in Definition~\ref{def:tube-groupoids}. Then by Lemma~\ref{lem:groupoids-G-H} and~\ref{lem:groupoids-H-K}, $\mathcal{G}_{\widehat{B}_i}$ is an open subgroupoid of $Y_{>R} \rtimes \mathbb{R}$ (itself an open subgroupoid of $Y \rtimes \mathbb{R}$) and $\mathcal{H}_{\widehat{B}_i}$ is a clopen subgroupoid of $\mathcal{G}_{\widehat{B}_i}$. By Lemma~\ref{lem:open-subgroupoid}, there are embeddings
	\begin{equation}\label{eq:groupoid-algebra-embeddings}
		C^*_r(\mathcal{H}_{\widehat{B}_i}) \subset C^*_r(\mathcal{G}_{\widehat{B}_i}) \subset C^*_r( Y_{>R} \rtimes \mathbb{R} ) \subset C^*_r( Y \rtimes \mathbb{R} ) \; ,
	\end{equation}
	by Lemma~\ref{lem:groupoid-algebra-H}, there is a conditional expectation
	\[
		e_i = e_{\widehat{B}_i} \colon C^*_r(\mathcal{G}_{\widehat{B}_i}) \to C^*_r(\mathcal{H}_{\widehat{B}_i})
	\]
	and, by Lemma~\ref{lem:groupoid-algebra-compression}, there is a compression map
	\[
		\kappa_i \colon C^*_r( Y \rtimes \mathbb{R} ) \to f_i^{\frac{1}{2}} C^*_r(\mathcal{G}_{\widehat{B}_i}) f_i^{\frac{1}{2}} \, , \quad b \mapsto f_i^{\frac{1}{2}} b f_i^{\frac{1}{2}} \; ,
	\]
	which is completely positive and contractive because $\left\| f_i \right\| \leq 1$. 
	
	Since $e \in C_c(Y_{> R})$ and $L$ is chosen such that $ F \subset {C}_{c}( (-L, L), {C}_{c}(Y))$, we have $ e^\frac{1}{2} F e^\frac{1}{2}  \subset {C}_{c}( (-L, L), {C}_{c}(Y_{>R}))$. By Example~\ref{example:reduced-groupoid-algebra} and Remark~\ref{rmk:def:propagation}, under the canonical identification between $C_0(Y_{>R}) \rtimes \mathbb{R}$ and $C^*_r( Y_{>R} \rtimes \mathbb{R} )$, for any $b \in F$, we have $e^\frac{1}{2} b e^\frac{1}{2} \in C_c( Y_{>R} \rtimes \mathbb{R} )$ with propagation $\operatorname{prop}\left( e^\frac{1}{2} b e^\frac{1}{2} \right) < L$. It follows by Lemma~\ref{lem:groupoid-algebra-compression} that for each $i \in I$ and any $b \in F$, we have
	\[
		\operatorname{prop}\left(\kappa_i\left(e^\frac{1}{2} b e^\frac{1}{2}\right)\right) \leq \operatorname{prop}\left( e^\frac{1}{2} b e^\frac{1}{2} \right) < L
	\]
	and thus 
	\begin{equation}\label{eq:e-kappa}
		e_i \left(\kappa_i\left(e^\frac{1}{2} b e^\frac{1}{2}\right)\right) = \kappa_i\left(e^\frac{1}{2} b e^\frac{1}{2}\right)
	\end{equation}
	by Lemma~\ref{lem:groupoid-algebra-propagation-G-H} and \eqref{eq:varepsilon_B-hat}. 
	
	By Proposition~\ref{prop:clopen-subgroupoid-hereditary}, $e_i$ restricts to a conditional expectation
	\[
		e_i \left|_{\overline{f_i^{\frac{1}{2}} C^*_r(\mathcal{G}_{\widehat{B}_i}) f_i^{\frac{1}{2}}}} \right. \colon \overline{f_i^{\frac{1}{2}} C^*_r(\mathcal{G}_{\widehat{B}_i}) f_i^{\frac{1}{2}} }  \to \overline{ f_i^{\frac{1}{2}} C^*_r(\mathcal{H}_{\widehat{B}_i}) f_i^{\frac{1}{2}} } \;,
	\]
	and thus we may define, for each $i \in I$, a completely positive contraction
	\[
		\psi_i = e_i \left|_{\overline{f_i^{\frac{1}{2}} C^*_r(\mathcal{G}_{\widehat{B}_i}) f_i^{\frac{1}{2}}}} \right. \circ \kappa_i \colon C^*_r( Y \rtimes \mathbb{R} ) \to \overline{ f_i^{\frac{1}{2}} C^*_r(\mathcal{H}_{\widehat{B}_i}) f_i^{\frac{1}{2}} }
	\]
	and a $*$-homomorphism 
	\[
		\varphi_i \colon \overline{ f_i^{\frac{1}{2}} C^*_r(\mathcal{H}_{\widehat{B}_i}) f_i^{\frac{1}{2}} } \to C^*_r( Y \rtimes \mathbb{R} )
	\]
	given by the embeddings induced from \eqref{eq:groupoid-algebra-embeddings}. 
	
	For $ l = 0, \dots, 5(d+1)-1 $, we define algebras
	\begin{align*}
		A_{> R}^{(l)} &= \bigoplus_{i\in I^{(l)}} \overline{ f_i^{\frac{1}{2}} C^*_r(\mathcal{H}_{\widehat{B}_i}) f_i^{\frac{1}{2}} } \\
		A_{> R} &= \bigoplus_{l = 0} ^{5(d+1)-1} A_{> R}^{(l)} = \bigoplus_{i\in I} \overline{ f_i^{\frac{1}{2}} C^*_r(\mathcal{H}_{\widehat{B}_i}) f_i^{\frac{1}{2}}}
	\end{align*}
	and maps
	\begin{align*}
		\psi_{> R}^{(l)} &= \bigoplus_{i\in I^{(l)}} \psi_i \colon C^*_r( Y \rtimes \mathbb{R} )  \to A_{> R}^{(l)}  \\
		\varphi_{> R}^{(l)} &= \sum_{i\in I^{(l)}} \varphi_i \colon  A_{> R}^{(l)}  \to C^*_r( Y \rtimes \mathbb{R} ) \; .
	\end{align*}
	Then each $\psi_{> R}^{(l)}  $ is again a completely positive contraction, and since the family $ \left\{  f_i ^{\frac{1}{2}}  \right\}_{i\in I^{(l)}} $ is orthogonal, the collection of subalgebras $\left\{ \overline{ f_i ^{\frac{1}{2}} C^*_r(\mathcal{H}_{\widehat{B}_i}) f_i^{\frac{1}{2}}  } \right\}_{i\in I^{(l)}}$ is also orthogonal, which implies that each $ \varphi_{> R}^{(l)} $ is again a  homomorphism, and in particular, a completely positive order zero contraction. Hence we have the following diagram:
	\begin{displaymath}
	\xymatrix{
		{C}_{0}(Y) \rtimes {\mathbb{R}^{}} \ar@{.>}[rr]^{\mathrm{Id}} \ar[rd]^{\bigoplus_{i\in I} \kappa_i} \ar@/_/[rdd]_{\bigoplus_{l = 0} ^{5d+4} \psi_{> R}^{(l)} = \: \bigoplus_{i\in I} \psi_i} & & {C}_{0}(Y) \rtimes {\mathbb{R}^{}} \\
		& { \bigoplus_{i\in I} \overline{ f_i^{\frac{1}{2}} C^*_r(\mathcal{G}_{\widehat{B}_i}) f_i^{\frac{1}{2}} } }   \ar@/_/[d]_{\bigoplus_{i\in I} e_i}   \ar[ru]^{\sum_{i\in I} \mathrm{Incl}_i} &  \\
		&  A_{> R} = \bigoplus_{i\in I} \overline{ f_i^{\frac{1}{2}} C^*_r(\mathcal{H}_{\widehat{B}_i}) f_i^{\frac{1}{2}} }  \ar@/_/@{^{(}->}[ruu]_{\sum_{l = 0} ^{5d+4} \varphi_{> R}^{(l)} = \: \sum_{i\in I} \varphi_i} \ar@/_/@{^{(}->}[u] &
	}
	\end{displaymath}
	where $\displaystyle \operatorname{Incl}_i \colon \overline{ f_i^{\frac{1}{2}} C^*_r(\mathcal{G}_{\widehat{B}_i}) f_i^{\frac{1}{2}}} \hookrightarrow {C}_{0}(Y) \rtimes {\mathbb{R}^{}}$ denotes the canonical embedding induced from \eqref{eq:groupoid-algebra-embeddings}. 
	
	\begin{clminproof}\label{clm:estimate-2:thm:estimate-dimnuc-Z}
		The large triangle in the above diagram commutes on $e^\frac{1}{2} F e^\frac{1}{2}$ up to errors bounded by $5(d+1) \varepsilon$.
	\end{clminproof}

	\begin{proof}
		First we observe that it suffices to show the upper triangle commutes on $e^\frac{1}{2} F e^\frac{1}{2}$ up to errors bounded by $5(d+1) \varepsilon $. Indeed, for any $ b \in F$, we have
		\begin{equation}\label{eq:large-triangle-upper-triangle}
			\left( \sum_{i \in I} \varphi_i \right) \circ \left( \bigoplus_{i \in I} \psi_i \right) \left( e^\frac{1}{2} b e^\frac{1}{2} \right) 
			=  \sum_{i \in I} e_i \circ \kappa_i \left( e^\frac{1}{2} b e^\frac{1}{2} \right) 
			= \sum_{i \in I} \kappa_i \left( e^\frac{1}{2} b e^\frac{1}{2} \right) 
		\end{equation}
		by \eqref{eq:e-kappa}, and the right-hand side can also be written as 
		\[
			\left( \sum_{i \in I} \operatorname{Incl}_i \right) \circ \left( \bigoplus_{i \in I} \kappa_i \right)  \left( e^\frac{1}{2} b e^\frac{1}{2} \right) \; ,
		\]
		which is the composition of the two lower arrows of the upper triangle. 
		
		Thus we just need to bound the norm of 
		\[
			\left( e^\frac{1}{2} b e^\frac{1}{2} \right) - \sum_{i \in I} \kappa_i \left( e^\frac{1}{2} b e^\frac{1}{2} \right) \; ,
		\]
		which, by the definition of $\kappa_i$, is equal to
		\[
			\left( e^\frac{1}{2} b e^\frac{1}{2} \right) - \sum_{i \in I} f_i^{\frac{1}{2}} \cdot \left( e^\frac{1}{2} b e^\frac{1}{2} \right) \cdot f_i^{\frac{1}{2}}  \; .
		\]
		To do this, we compute, for any $b \in F$, for any $i \in I$ and for $ t \in (- L, L) $, 
		\begin{align*}
		& \left\| \left( f_i^{\frac{1}{2}} \cdot { \left( e^\frac{1}{2} b e^\frac{1}{2} \right) } - { \left( e^\frac{1}{2} b e^\frac{1}{2} \right) } \cdot f_i^{\frac{1}{2}} \right) (t) \right\|_{{C}_{0}(Y)} \\
		= &\ \left\| f_i^{\frac{1}{2}} \cdot { \left( e^\frac{1}{2} b e^\frac{1}{2} \right) } (t) - { \left( e^\frac{1}{2} b e^\frac{1}{2} \right) }  (t) \cdot \alpha_{t} \left(f_i^{\frac{1}{2}} \right) \right\|_{{C}_{0}(Y)} \\
		= &\ \left\| \left( f_i^{\frac{1}{2}} -  \alpha_{t} \left( f_i^{\frac{1}{2}} \right) \right) \cdot  e^\frac{1}{2} \cdot \alpha_{t}(e^\frac{1}{2}) \cdot b(t) \right\|_{{C}_{0}(Y)} \\
		\le &\ \left\| f_i^{\frac{1}{2}} -  \alpha_{t} \left( f_i^{\frac{1}{2}} \right)  \right\|_{{C}_{0}(Y)}  \cdot \left\| e^\frac{1}{2} \right\|_{{C}_{0}(Y)} \cdot \left\| \alpha_{t}(e^\frac{1}{2}) \right\|_{{C}_{0}(Y)} \cdot \left\|  b (t) \right\|_{{C}_{0}(Y)} \\
		\leq &\ \sup_{y\in Y} \left| f_i^{\frac{1}{2}} (y) -  f_i^{\frac{1}{2}} \left( \alpha_{-t} (y) \right) \right| \cdot \left\| b (t) \right\|_{{C}_{0}(Y)} \; .
		\end{align*}
		Since $f_i$ was taken to be $\calpha$-Lipschitz with constant $\eta$, the last line above is bounded above by
		\begin{align*}
		\sup_{s \in {\mathbb{R}^{\ge 0}} , \Delta s \in [0, \eta L ] } \left| \sqrt{ s + \Delta s } - \sqrt{s} \right| \cdot \left\| b (t) \right\|_{{C}_{0}(Y)}  = &\  \sqrt{ \eta L } \cdot \left\| b (t) \right\|_{{C}_{0}(Y)} \; .
		\end{align*}
		Combined with fact that $b$ is supported inside $(-L, L)$ and our definition of $\eta$ in~\eqref{eq:defn-eta}, this gives the estimate 
		\[
		\left\| f_i^{\frac{1}{2}} \cdot { \left( e^\frac{1}{2} b e^\frac{1}{2} \right) } - { \left( e^\frac{1}{2} b e^\frac{1}{2} \right) } \cdot f_i^{\frac{1}{2}} \right\|_1 \le 2 L \cdot \sqrt{ \eta L } \cdot \left\| b \right\|_1 \leq \varepsilon
		\]
		and thus
		\[ 
		\left\| f_i^{\frac{1}{2}} \left[ f_i^{\frac{1}{2}}, { e^\frac{1}{2} b e^\frac{1}{2} } \right] \right\|_{{C}_{0}(Y)\rtimes {\mathbb{R}^{}}} \le \left\| f_i^{\frac{1}{2}} \right\|_{{C}_{0}(Y)}  \cdot  \left\| \left[ f_i^{\frac{1}{2}}, { e^\frac{1}{2} b e^\frac{1}{2} } \right] \right\|_1  \le \varepsilon \; . 
		\]
		Moreover, for each $ l \in \left\{0,\dots ,5d+4 \right\} $, since for any different $ i, j \in I^{(l)} $, we have
		$$ \calpha_{[-L, L]}(\mathrm{supp}(f_i)) \cap \calpha_{[-L, L]}(\mathrm{supp}(f_j)) = \varnothing , $$
		thus for any $ b \in F \subset {C}_{c}( (-L, L), {C}_{c}(Y))  $,
		$$ \left( f_i^{\frac{1}{2}} \cdot { \left( e^\frac{1}{2} b e^\frac{1}{2} \right) } \cdot f_j^{\frac{1}{2}} \right) (t) = \left( f_i^{\frac{1}{2}} \cdot \calpha_{t} \left(f_j^{\frac{1}{2}} \right)  \right) \cdot  \left( e^\frac{1}{2} \cdot \calpha_{t}(e^\frac{1}{2}) \right)  \cdot b(t) = 0 , $$
		both when $ |t| \ge L $, as $b(t) = 0$, and when $ |t| < L $, as $f_i^{\frac{1}{2}} \cdot \calpha_{t} \left(f_j^{\frac{1}{2}} \right) = 0$. Therefore $  f_i^{\frac{1}{2}} \cdot { \left( e^\frac{1}{2} b e^\frac{1}{2} \right) } \cdot f_j^{\frac{1}{2}}  = 0 $, and it follows that $ f_i^{\frac{1}{2}} \left[ f_i^{\frac{1}{2}}, { \left( e^\frac{1}{2} b e^\frac{1}{2} \right) } \right] $ and $ f_j^{\frac{1}{2}} \left[ f_j^{\frac{1}{2}}, { \left( e^\frac{1}{2} b e^\frac{1}{2} \right) } \right]  $ are orthogonal to each other. Consequently, using the fact that $ \sum_{i\in I}  f_i (y) = 1$ for any $ y \in K$, we see that for any $ b \in F $, 
		\begin{align*}
		& \left\|  {  e^\frac{1}{2} b e^\frac{1}{2}  } - \sum_{i\in I}  f_i^{\frac{1}{2}} \cdot { \left( e^\frac{1}{2} b e^\frac{1}{2} \right) } \cdot f_i^{\frac{1}{2}}  \right\|  \\
		= &\ \left\|  \left( \sum_{i\in I}  f_i \right) \cdot { \left( e^\frac{1}{2} b e^\frac{1}{2} \right) }   - \sum_{i\in I}  f_i^{\frac{1}{2}} \cdot { \left( e^\frac{1}{2} b e^\frac{1}{2} \right) } \cdot f_i^{\frac{1}{2}}  \right\|   \\
		= &\ \left\|   \sum_{i\in I}  f_i^{\frac{1}{2}} \left[ f_i^{\frac{1}{2}}, { \left( e^\frac{1}{2} b e^\frac{1}{2} \right) } \right]  \right\| \\
		\leq &\ \sum_{l=0} ^{5d+4} \left\| \sum_{i\in I^{(l)}}  f_i^{\frac{1}{2}} \left[ f_i^{\frac{1}{2}}, { \left( e^\frac{1}{2} b e^\frac{1}{2} \right) } \right]  \right\|   \\
		\le &\ 5(d+1) \varepsilon \; , 
		\end{align*}
		where we used the orthogonality in the norm estimate of the sum. Combined with \eqref{eq:large-triangle-upper-triangle}, we obtain $\displaystyle \left\|  {  e^\frac{1}{2} b e^\frac{1}{2}  } - \left( \sum_{i \in I} \varphi_i \right) \circ \left( \bigoplus_{i \in I} \psi_i \right) \left( e^\frac{1}{2} b e^\frac{1}{2} \right) \right\| \le 5(d+1) \varepsilon$.  
	\end{proof}

	To finish the proof of Theorem~\ref{thm-main}, we define $\psi \colon C_0(Y) \rtimes \R \to  A_{\leq R} \oplus A_{> R}$ by
	$$\psi (b) = ( \psi_{\leq R} \circ \pi ) \left( (1 - e)^\frac{1}{2} b (1 - e)^\frac{1}{2} \right) \oplus \psi_{> R} \left( e^\frac{1}{2} b e^\frac{1}{2} \right)
	$$
	for any $b \in C_0(Y) \rtimes \R$ and consider the diagram
	\[
	\xymatrix{\displaystyle
		C_0(Y) \rtimes \R \ar[dr]_{\psi  \ } \ar@{.>}[rr]^{\id} &  & C_0(Y) \rtimes \R \\
		\displaystyle & A_{\leq R} \oplus A_{> R}  \ar[ur]_{\ \ \  \varphi = \widehat{\varphi}_{\leq R} + \varphi_{> R}  } &
	}
	\]
	
	Observe that:
	\begin{enumerate}
		\item $\psi$ is completely positive and contractive.
		\item $\varphi_{\leq R}$ is a sum of $2(d + 1)$-many order zero contractions, and $\varphi_{> R}$ is a sum of $5(d+1)$-many $*$-homomorphisms (which, in particular, are order zero contractions).
		\item For all $b \in F$, we compute, using the bounds given by Claims~\ref{clm:estimate-1:thm:estimate-dimnuc-Z} and~\ref{clm:estimate-2:thm:estimate-dimnuc-Z} and using the properties of $e$:
		\begin{align*}
		& \left\|\varphi(\psi(b)) - b \right\| \\
		\leq & \ \left\| ( \widehat{\varphi}_{\leq R} \circ \psi_{\leq R} \circ \pi ) \left( (1 - e)^\frac{1}{2} b (1 - e)^\frac{1}{2} \right) - (1 - e)^\frac{1}{2} b (1 - e)^\frac{1}{2} \right\| \\
		& + \left\|  ( \varphi_{> R} \circ \psi_{>R} ) \left( e^\frac{1}{2} b e^\frac{1}{2} \right) -  e^\frac{1}{2} b e^\frac{1}{2} \right\| \\
		& + \left\| (1 - e)^\frac{1}{2} b (1 - e)^\frac{1}{2} + e^\frac{1}{2} b e^\frac{1}{2} - b \right\| \\
		\leq & \ (2d+4) \varepsilon + 5(d+1) \varepsilon + \varepsilon \\
		= & \ (7d+10) \varepsilon \, .
		\end{align*}
		\item Since $A_{\leq R}$ is a finite dimensional algebra (see~\eqref{eq:decomposable-approximation-for-quotient}), we have $\dimnuc(A_{\leq R}) = 0$. On the other hand, by the permanence properties of nuclear dimension with regard to direct sums and hereditary subalgebras (see \cite[Proposition~2.3 and~2.5]{winter-zacharias}), we have
		\[
			\dimnuc(A_{>R}) = \max_{i \in I } \operatorname{dim}_\mathrm{nuc} \left( f_i^{\frac{1}{2}} C^*_r(\mathcal{H}_{\widehat{B}_i}) f_i^{\frac{1}{2}} \right) \leq \max_{i \in I } \operatorname{dim}_\mathrm{nuc} \left( C^*_r(\mathcal{H}_{\widehat{B}_i}) \right) \;
		\]
		and by Lemma~\ref{lem:groupoid-algebra-H}, the permanence property of nuclear dimension with regard to stabilization (see \cite[Corollary~2.8]{winter-zacharias}), and the subset permanence property of covering dimension, it follows that 
		\[
		\dimnuc(A_{>R}) \leq \max_{i \in I } \operatorname{dim}_\mathrm{nuc} \left( C_0(S_{\widehat{B}_i^o}) \right) = \max_{i \in I } \operatorname{dim} \left( S_{\widehat{B}_i^o} \right) \leq \operatorname{dim} (Y) = d \;
		\]
	\end{enumerate}
	With these we can sum up the dimensions: 
	\begin{align*}
	& (\dimnuc(A_{\leq R}) + 1) 2(d+1) + ( \dimnuc(A_{>R}) + 1) 5(d+1) \\
	\leq & \ 2(d+1) + (d + 1) \cdot 5(d+1) \\
	= & \ (d+1)(5d+7) \; .
	\end{align*}
	Therefore the result follows from Lemma~\ref{Lemma:finite-dimnuc}. 
	\setcounter{Thm}{\value{clminproof}} 
\end{proof}

The following corollary shows that the second countability requirement in 
Theorem~\ref{thm-main} may be removed. 

\begin{cor}\label{cor:thm-main-nonmetrizable}
	Let $(Y, {\mathbb{R}^{}}, \calpha)$ be a topological flow. Assume $Y$ is a locally compact Hausdorff space with finite covering dimension. Then 
	\[
		\operatorname{dim}_\mathrm{nuc}({C}_{0}(Y) \rtimes {\mathbb{R}} ) \leq (\dim(Y)+1)(5\dim(Y)+7) - 1 \; .
	\]
\end{cor}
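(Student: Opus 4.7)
Plan for the proof: The plan is to reduce to the metrizable case proved in Theorem~\ref{thm-main} by expressing $C_0(Y) \rtimes \R$ as an inductive limit of separable crossed products arising from flows on locally compact metrizable spaces whose covering dimensions are bounded by $\dim(Y)$.

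First I would record the a priori bound $\dimnuc(C_0(Y)) \leq \dim(Y)$, which holds for any locally compact Hausdorff space $Y$ of finite covering dimension. A standard partition-of-unity argument, built from a locally finite cover of multiplicity at most $\dim(Y)+1$ together with an Ostrand-type recoloring to obtain chromatic number at most $\dim(Y)+1$, produces the required piecewise contractive $(\dim(Y)+1)$-decomposable completely positive approximations on arbitrary finite subsets of the positive unit ball of $C_0(Y)$. Metrizability plays no role in this step.

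Next I would apply Lemma~\ref{lem:separable-dimnuc} to the $\R$-$C^*$-algebra $C_0(Y)$ and write it as the inductive limit of a net of $\R$-invariant separable $C^*$-subalgebras $\{B_\lambda\}$ satisfying $\dimnuc(B_\lambda) \leq \dimnuc(C_0(Y)) \leq \dim(Y)$. Each $B_\lambda$ is separable and commutative, hence isomorphic to $C_0(Z_\lambda)$ for some locally compact metrizable Hausdorff space $Z_\lambda$ equipped with an induced continuous flow; and by the Winter--Zacharias identification $\dimnuc(C_0(Z)) = \dim(Z)$ for metrizable $Z$, we have $\dim(Z_\lambda) = \dimnuc(C_0(Z_\lambda)) \leq \dim(Y)$. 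Theorem~\ref{thm-main} then applies to each $(Z_\lambda, \R)$ to yield
\[
    \dimnuc(B_\lambda \rtimes \R) \leq (\dim(Z_\lambda)+1)(5\dim(Z_\lambda)+7) - 1 \leq (\dim(Y)+1)(5\dim(Y)+7) - 1.
\]
Because the crossed product by the amenable group $\R$ commutes with inductive limits of invariant subalgebras, one has $C_0(Y) \rtimes \R = \varinjlim_\lambda (B_\lambda \rtimes \R)$, and the desired estimate follows since $\dimnuc$ does not increase under such limits.

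The main obstacle, to the extent there is one, is justifying the inequality $\dimnuc(C_0(Y)) \leq \dim(Y)$ for non-metrizable $Y$, since the Winter--Zacharias statement recalled in the introduction is given only for metrizable spaces; this extension is however routine. An alternative that sidesteps it altogether would be to produce the $\R$-invariant separable subalgebras $B_\lambda = C_0(Z_\lambda)$ directly with $\dim(Z_\lambda) \leq \dim(Y)$, by invoking a classical inverse-limit decomposition theorem from dimension theory (a local version of Marde\v{s}i\'c's theorem) together with a L\"owenheim--Skolem-style iteration alternating extension of generating sets, orbit saturation under a countable dense subgroup of $\R$, and dimension-preserving closure.
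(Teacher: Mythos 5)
Your proposal is correct and follows essentially the same route as the paper: Lemma~\ref{lem:separable-dimnuc} produces $\R$-invariant separable subalgebras $C_0(X)$ with $\dimnuc(C_0(X))\leq\dimnuc(C_0(Y))\leq\dim(Y)$, whose metrizable spectra let Theorem~\ref{thm-main} apply, and the bound passes to the inductive limit since $\dimnuc$ is lower semicontinuous along such limits. You are in fact slightly more careful than the paper in flagging and justifying the inequality $\dimnuc(C_0(Y))\leq\dim(Y)$ for non-metrizable $Y$, which the paper uses implicitly.
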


\begin{proof}
	We reduce the situation to the second countable Hausdorff case as follows. 
	By Lemma 
	\ref{lem:separable-dimnuc}, $C_0(Y)$ is the union of $\R$-invariant 
	separable $C^*$-subalgebras $C_0(X)$ with $\dim(X) \leq \dimnuc (C_0(Y))$. 
	Therefore if we let $I$ be the net of all $\R$-invariant separable 
	$C^*$-subalgebras of $C_0(Y)$ with nuclear dimension no more than 
	$\dimnuc(C_0(Y))$, ordered by inclusion, then since the spectrum of a 
	commutative separable $C^*$-algebra is second countable, we have
	\begin{align*}
	\dimnuc^{+1}(C_0(Y) \rtimes \R) \leq & \liminf_{C_0(X) \in I} \dimnuc^{+1}(C_0(X) \rtimes \R) 
	\end{align*}
	and hence the statement follows immediately from Theorem~\ref{thm-main}.
\end{proof}

We are grateful to George Elliott for pointing out a connection to the main result of \cite{Hirshberg-Wu16}: applying the mapping torus construction, we can apply Theorem~\ref{thm-main} to obtain a nuclear dimension bound for crossed products associated to topological $\mathbb{Z}$-actions on finite dimensional space.

\begin{cor}\label{cor:mapping-torus}
	Let $X$ be a locally compact Hausdorff space and $\calpha \in \operatorname{Homeo}(X)$. Then 
	\[
		\operatorname{dim}_\mathrm{nuc}({C}_{0}(X) \rtimes_{\calpha} {\mathbb{Z}} ) \leq (\dim(X)+2)(5\dim(X)+12) - 1 \; .
	\]
\end{cor}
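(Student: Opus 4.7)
The strategy is to encode the $\mathbb{Z}$-action $\alpha$ as a flow on a space one dimension higher, via the mapping torus, and then invoke Corollary~\ref{cor:thm-main-nonmetrizable}. Concretely, I would form the mapping torus $Y := (X \times \mathbb{R})/\mathbb{Z}$, where $\mathbb{Z}$ acts freely and properly by $n \cdot (x, t) := (\alpha^{-n}(x), t + n)$. The translation $\mathbb{R}$-action on the second coordinate commutes with this $\mathbb{Z}$-action and hence descends to a continuous flow $\widetilde{\alpha} \colon \mathbb{R} \curvearrowright Y$ on the locally compact Hausdorff space $Y$.

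I would then need two ingredients: the dimension estimate $\dim(Y) \leq \dim(X) + 1$ and the Morita equivalence $C_0(Y) \rtimes_{\widetilde{\alpha}} \mathbb{R} \sim_M C_0(X) \rtimes_\alpha \mathbb{Z}$. For the dimension estimate, since the quotient map $X \times \mathbb{R} \to Y$ restricts to an open embedding on each slab $X \times (n, n+1)$, the space $Y$ admits an open cover by sets homeomorphic to products $X \times I$ for open intervals $I \subset \mathbb{R}$; in the metrizable setting this gives $\dim(Y) \leq \dim(X) + 1$ via the identity $\operatorname{locdim} = \dim$ (Proposition~\ref{prop:locdim-dim}) together with the product formula, and the general locally compact Hausdorff case reduces to the metrizable one by applying Lemma~\ref{lem:separable-dimnuc} to $C_0(X) \rtimes_\alpha \mathbb{Z}$, exhausting it by crossed products of $\mathbb{Z}$-invariant separable $C^*$-subalgebras whose spectra are metrizable with covering dimension bounded by $\dim(X)$. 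The Morita equivalence is a classical application of Green's imprimitivity theorem to the closed subgroup $\mathbb{Z} \subset \mathbb{R}$: the flow $(C_0(Y), \widetilde{\alpha})$ is precisely the induced action $\operatorname{Ind}_{\mathbb{Z}}^{\mathbb{R}}(C_0(X), \alpha)$, and Green's theorem supplies the desired equivalence bimodule.

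Combining these, and using that Morita equivalence of separable $C^*$-algebras preserves nuclear dimension (via Brown--Green--Rieffel stabilization together with \cite[Corollary~2.8]{winter-zacharias}), applying Corollary~\ref{cor:thm-main-nonmetrizable} to $(Y, \widetilde{\alpha})$ yields
\[
\dim_{\mathrm{nuc}}(C_0(X) \rtimes_\alpha \mathbb{Z}) \leq (\dim(Y)+1)(5\dim(Y)+7) - 1 \leq (\dim(X)+2)(5\dim(X)+12) - 1,
\]
which is the claimed bound. I expect the main bookkeeping obstacle to be the careful identification of Green's induced flow with the concrete flow $\widetilde{\alpha}$ on the mapping torus, as well as keeping track of the reduction to the separable/metrizable case so that the Morita-equivalence step applies cleanly; both are well-documented in the literature on induced actions and crossed-product duality, and should not present serious difficulty.
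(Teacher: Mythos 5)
Your proposal is correct and follows the same overall strategy as the paper: form the mapping torus $Y$ of $\calpha$, equip it with the translation flow, bound $\dim(Y)$ by $\dim(X)+1$, and feed the result into the main theorem. The one place where you genuinely diverge is in how you relate $C_0(X)\rtimes_{\calpha}\Z$ to $C_0(Y)\rtimes\R$. The paper does this internally to its groupoid framework: it restricts the transformation groupoid $Y\rtimes\R$ to the open subset $X\times(0,1)$ of the unit space, identifies the resulting reduction with the product of $X\rtimes_{\calpha}\Z$ and the pair groupoid of $(0,1)$, and concludes that the corresponding hereditary subalgebra of $C_0(Y)\rtimes\R$ is the stabilization of $C_0(X)\rtimes_{\calpha}\Z$; the permanence of nuclear dimension under hereditary subalgebras and stabilization then gives the needed one-sided inequality without ever asserting fullness or a Morita equivalence. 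You instead recognize the flow as $\operatorname{Ind}_{\Z}^{\R}(C_0(X),\alpha)$ and invoke Green's imprimitivity theorem to get a full Morita equivalence, then use Brown--Green--Rieffel plus the separability reduction. Both arguments are valid and both require a reduction to the separable case in the non-metrizable setting (the paper handles this once and for all in Corollary~\ref{cor:thm-main-nonmetrizable}, which is indeed the statement you should cite rather than Theorem~\ref{thm-main} itself when $X$ is not metrizable). Your route buys the stronger and more classical statement that the two crossed products are Morita equivalent, at the cost of importing Green's theorem and the identification of the induced action with the concrete mapping-torus flow; the paper's route is more self-contained given the groupoid machinery it has already set up, and only needs the inequality. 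Your dimension estimate via an open cover of $Y$ by charts homeomorphic to $X\times I$ together with $\operatorname{locdim}=\dim$ and the product theorem is also fine, though the paper's decomposition of $Y$ into the open set $X\times(0,1)$ and its closed complement (homeomorphic to $X$) avoids appealing to the product theorem for possibly non-separable metrizable spaces.
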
 

\begin{proof}
	Let $Y$ be the (topological) mapping torus of $\calpha$, that is, the quotient of $X \times \R$ by the diagonal action of $\Z$, where $\Z$ acts on $X$ by $\calpha$ and on $\R$ by translation (in the reverse direction). Since this $\Z$-action commutes with the translation action by $\R$ on the second factor, we obtain a topological flow $(Y, \R, \widehat{\beta})$. The embedding $X \times (0,1) \subset X \times \R$ induces an embedding $X \times (0,1) \subset Y$, which is invariant under the action of the subgroup $\Z < \R$, and the restriction of $\widehat{\beta}$ to $X \times (0,1) \subset Y$ and $\Z < \R$ agrees with $\calpha \times \operatorname{id}$. Moreover, $(Y \rtimes_{\widehat{\beta}} \R)_{X \times (0,1)}$, the reduction of the transformation groupoid $Y \rtimes_{\widehat{\beta}} \R$ to $X \times (0,1)$ (as a subset of the unit space), decomposes as a product of the transformation groupoid $X \rtimes_{\calpha} \Z$ and the pair groupoid $(0,1) \times (0,1)$. Hence $C^*_r\left((Y \rtimes_{\widehat{\beta}} \R)_{X \times (0,1)}\right)$, a hereditary subalgebra of $C^*_r(Y \rtimes_{\widehat{\beta}} \R)$ by Example~\ref{example:reduced-groupoid-algebra}, is isomorphic to the stabilization of $C^*_r(X \rtimes_{\calpha} \Z)$. It follows from the permanence properties of nuclear dimension with regard to stabilizations and hereditary subalgebras that 
	\begin{equation}\label{eq:mapping-torus-dimnuc}
		\dimnuc \left(C_0(X) \rtimes_{\calpha} \Z\right) = \dimnuc \left(C^*_r\left((Y \rtimes_{\widehat{\beta}} \R)_{X \times (0,1)}\right)\right) \leq \dimnuc \left(C_0(Y) \rtimes_{\widehat{\beta}} \R\right) \; .
	\end{equation}
	Since the complement $Y \setminus (X \times (0,1))$ is homeomorphic to $X$, we see that 
	\begin{equation}\label{eq:mapping-torus-covdim}
		\dim (Y) \leq \max \{ \dim(X \times (0,1)) , \dim (X) \} \leq \dim(X) + 1 \; .
	\end{equation}
	Our claim then follows by combining \eqref{eq:mapping-torus-dimnuc}, \eqref{eq:mapping-torus-covdim}, and Theorem~\ref{thm-main}. 
\end{proof}

Corollary~\ref{cor:mapping-torus} essentially recovers \cite[Theorem~5.1]{Hirshberg-Wu16}, with a less sharp bound. Despite the similarity of the end results, the underlying technical machinery in this paper, namely the long thin covers on flow spaces, is different from the marker property techniques powering the result of \cite{Hirshberg-Wu16}. The former was developed in the study of the Farrell-Jones conjecture by Bartels, L\"{u}ck and Reich \cite{BarLRei081465306017991882} (and improved by Kasprowski and R\"{u}ping \cite{Kasprowski-Rueping}), while the latter was introduced in the context of topological dynamics by Gutman \cite{Gutman-marker} based on \cite{Lindenstrauss95} and later adapted by Szab\'{o} \cite{szabo}.

\section{Fiberwise groupoid coverings and surjective $*$-homomorphisms}\label{section:groupoid-coverings}

In this section, we digress to discuss locally compact locally Hausdorff 
groupoids in general: we are going to define what we call \emph{fiberwise 
groupoid covering maps} and show that these maps induce quotient maps between 
maximal groupoid $C^*$-algebras. The motivating examples are constructed from 
orientable line foliations, which are the topic of 
Section~\ref{section:foliation}. 

\begin{defn}\label{def:fiberwise-groupoid-covering}
	Let $G$ and $H$ be two topological groupoids. A map $\pi \colon G \to H$ is called a \emph{fiberwise groupoid covering map} if 
	\begin{enumerate}
		\item \label{def:fiberwise-groupoid-covering-homomorphism} $\pi$ is a groupoid homomorphism (i.e., it intertwines the unit space, the maps $d$ and $r$, and the multiplication),  
		\item \label{def:fiberwise-groupoid-covering-unit-space} $\pi$ restricts to a homeomorphism between the unit spaces $G^0$ and $H^0$
		\item \label{def:fiberwise-groupoid-covering-surjective} $\pi$ is surjective, and
		\item \label{def:fiberwise-groupoid-covering-local-homeo} $\pi$ is a local homeomorphism. 
	\end{enumerate}
	Suppose $G$ and $H$ are locally compact locally Hausdorff groupoids with 
	Haar systems $\lambda_G$ and $\lambda_H$. Then a fiberwise groupoid 
	covering $\pi \colon G \to H$ is said to \emph{locally intertwine the Haar 
	systems 
	$\lambda_G$ and $\lambda_H$} if for any $u \in G^0$ and any open subset $U 
	\subset G$ for which $\pi|_U$ is a homeomorphism onto its image, the 
	push-forward of the measure $\left.\lambda_{G}^u \middle| _{U \cap G^u} 
	\right.$ under $\left. \pi \middle| _{U \cap G^u} \right.$ is equal to 
	$\left.\lambda_{H}^{\pi(u)}\middle| _{\pi(U) \cap H^{\pi(u)}} \right.$.
\end{defn}

Some immediate algebraic consequences are listed below.

\begin{lem}\label{lem:fiberwise-groupoid-covering-algebraic}
	Let $\pi \colon G \to H$ be a fiberwise groupoid covering map. Then the following are true:
	\begin{enumerate}
		\item \label{lem:fiberwise-groupoid-covering-algebraic-r-s} For any $y \in H$, there are unique $u,v \in G^0$ with $\pi(u) = r(y)$ and $\pi(v) = d(y)$. Moreoever, we have $r(\pi^{-1}(y)) = \left\{ u \right\}$ and $d(\pi^{-1}(y)) = \left\{ v \right\}$. 
		\item \label{lem:fiberwise-groupoid-covering-algebraic-group} For every $v \in H^0$, $\pi^{-1}(v)$ is a group. 
		\item \label{lem:fiberwise-groupoid-covering-algebraic-preimage} For any $x \in G$, writing $u = \pi(r(x))$ and $v = \pi(d(x))$, we have 
		\[
			\pi^{-1}(\pi(x)) = \pi^{-1}(u) \cdot x = x \cdot \pi^{-1}(v) 
		\]
	\end{enumerate}
\end{lem}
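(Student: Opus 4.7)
The plan is to prove the three statements in order, since each one builds on the previous. All three are essentially algebraic consequences of conditions \eqref{def:fiberwise-groupoid-covering-homomorphism} and \eqref{def:fiberwise-groupoid-covering-unit-space} of Definition~\ref{def:fiberwise-groupoid-covering}; the surjectivity and local-homeomorphism conditions play no role here.

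For part \eqref{lem:fiberwise-groupoid-covering-algebraic-r-s}, I would first note that since $\pi|_{G^0}\colon G^0 \to H^0$ is a homeomorphism, it is in particular a bijection, so for any $y \in H$ the elements $r(y), d(y) \in H^0$ have unique preimages $u, v \in G^0$ under $\pi$. To see $r(\pi^{-1}(y)) = \{u\}$, pick any $x \in \pi^{-1}(y)$ and observe that $\pi(r(x)) = r(\pi(x)) = r(y)$ because $\pi$ is a groupoid homomorphism; since $r(x) \in G^0$, uniqueness of the unit-space preimage forces $r(x) = u$. The argument for $d$ is identical.

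For part \eqref{lem:fiberwise-groupoid-covering-algebraic-group}, I would use part \eqref{lem:fiberwise-groupoid-covering-algebraic-r-s} with $y = v \in H^0$ to conclude that every element of $\pi^{-1}(v)$ has the same range and source, namely the unique unit-space preimage $\tilde v$ of $v$. Hence $\pi^{-1}(v)$ sits inside the isotropy group $G_{\tilde v}^{\tilde v}$, and I just need to check that it is a subgroup: it contains $\tilde v$ (the identity at $\tilde v$, mapped to $v$ by $\pi|_{G^0}$), and it is closed under multiplication and inversion because $\pi$ is a groupoid homomorphism and $v \cdot v = v$, $v^{-1} = v$.

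For part \eqref{lem:fiberwise-groupoid-covering-algebraic-preimage}, the key observation will be that any $y \in \pi^{-1}(\pi(x))$ satisfies $r(y) = r(x)$ and $d(y) = d(x)$, thanks to part \eqref{lem:fiberwise-groupoid-covering-algebraic-r-s} applied to $\pi(x)$. This legitimizes the formulas $g := y x^{-1}$ and $h := x^{-1} y$, and a short computation shows $\pi(g) = u$, $\pi(h) = v$, with $y = g \cdot x = x \cdot h$, giving $\pi^{-1}(\pi(x)) \subseteq \pi^{-1}(u) \cdot x$ and $\pi^{-1}(\pi(x)) \subseteq x \cdot \pi^{-1}(v)$; the reverse inclusions are immediate from the homomorphism property. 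The only point requiring care is verifying composability at each step, which follows cleanly from the source/range equalities just noted. No serious obstacle is anticipated; the ``hardest'' part is simply keeping track of which compositions are defined, and I expect the proof to take under half a page.
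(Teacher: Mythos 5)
Your proposal is correct and follows essentially the same route as the paper's proof: part (1) from the bijectivity of $\pi|_{G^0}$ combined with $\pi \circ r = r \circ \pi$, part (2) from the observation that every element of $\pi^{-1}(v)$ has the same unit as range and source, and part (3) via the elements $y x^{-1}$ and $x^{-1} y$ for $y \in \pi^{-1}(\pi(x))$. Your explicit check in part (2) that $\pi^{-1}(v)$ contains the unit and is closed under multiplication and inversion is a small amount of extra care that the paper leaves implicit, but the substance is identical.
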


\begin{proof}
	\begin{enumerate}
		\item The first claim follows from the fact that $\pi$ restricts to a bijection from $G^0$ to $H^0$. For the second claim, we notice that $\pi(r(\pi^{-1}(y))) = r(\pi(\pi^{-1}(y))) = r(\{y\}) = \{\pi(u)\}$, which implies $r(\pi^{-1}(y)) = \left\{ u \right\}$, again by the bijectivity of $\pi$ on $G^0$. Similarly, we have $d(\pi^{-1}(y)) = \left\{ v \right\}$. 
		\item This follows immediately from part~\eqref{lem:fiberwise-groupoid-covering-algebraic-r-s} since all the elements in $\pi^{-1}(v)$ have the same range and domain, both being equal to the unique preimage of $v$. 
		\item We first observe that $\pi\left( \pi^{-1}(u) \cdot x \right) = \pi\left( \pi^{-1}(u) \right) \pi(x) = u \pi(x) = \pi(x)$, which implies $\pi^{-1}(u) \cdot x \subset \pi^{-1}(\pi(x))$. Conversely, for any $y \in \pi^{-1}(\pi(x))$, we notice that $d(y) = d(x)$ and $r(y) = r(x)$ by part~\eqref{lem:fiberwise-groupoid-covering-algebraic-r-s}, and that $\pi(y x^{-1}) = \pi(y) \pi(x)^{-1} = \pi(x) \pi(x)^{-1} = \pi\left(x x^{-1}\right) = \pi(r(x)) = u$, which implies $y \in \pi^{-1}(u) \cdot x$. Hence $\pi^{-1}(\pi(x)) \subset \pi^{-1}(u) \cdot x$. Combining the two direction, we get $\pi^{-1}(\pi(x)) = \pi^{-1}(u) \cdot x$. Similarly, $\pi^{-1}(\pi(x)) = x \cdot \pi^{-1}(v)$. 
	\end{enumerate}
\end{proof}

Note that a fiberwise groupoid covering map $\pi \colon G \to H$ need not be a topological covering map from $G$ to $H$ (i.e., for any $y \in H$, there is an open neighborhood $V$ of $y$ in $H$ such that $\pi^{-1}(V)$ is a disjoint union of open subsets of $G$, each of which is homeomorphic to $V$ through $\pi$). An example is given by taking $G$ to be the transformation groupoid $[0,1] \rtimes \mathbb{Z}/2\mathbb{Z}$ associated to the trivial action, and $\pi$ to be the map that collapses every fiber to a single unit element except for the fiber at $0 \in [0,1]$. However, the following observation justifies our use of the term ``fiberwise groupoid covering''. 

\begin{lem}\label{lem:fiberwise-groupoid-covering-topological-covering}
	Let $\pi \colon G \to H$ be a fiberwise groupoid covering map. Then for every $u \in G^0$, the restriction $\left. \pi \middle| _{G^u} \right. \colon G^u \to H^{\pi(u)}$ (respectively, $\left. \pi \middle| _{G_u} \right. \colon G_u \to H_{\pi(u)}$) is a regular covering map that is implemented by the action of the group $\pi^{-1}(u)$ on $G^u$ by left multiplication (respectively, on $G_u$ by right multiplication). 
\end{lem}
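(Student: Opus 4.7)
The plan is to verify, using the defining properties of a fiberwise groupoid covering map, that $\pi|_{G^u}$ is a surjective local homeomorphism on which the isotropy group $K = \pi^{-1}(\pi(u))$ acts freely by left multiplication with orbits equal to the $\pi$-fibers, from which the regular covering structure will then follow.

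Our first step will be to set up the action. For $g \in K$, applying $\pi$ to $r(g)$ and $d(g)$ gives $\pi(u)$, so the injectivity of $\pi$ on unit spaces forces $r(g) = d(g) = u$; hence left multiplication by $g$ is a well-defined self-homeomorphism of $G^u$ with inverse multiplication by $g^{-1}$, and groupoid cancellation makes the action free. Part~(3) of Lemma~\ref{lem:fiberwise-groupoid-covering-algebraic} identifies each orbit $K \cdot x$ with $\pi^{-1}(\pi(x))$, which by part~(1) of the same lemma coincides with the whole fiber of $\pi|_{G^u}$ over $\pi(x)$. Surjectivity of $\pi|_{G^u} \colon G^u \to H^{\pi(u)}$ is then immediate: given $y \in H^{\pi(u)}$, any preimage $x \in G$ of $y$ under $\pi$ satisfies $r(x) = u$ by Lemma~\ref{lem:fiberwise-groupoid-covering-algebraic}(1).

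The main step will be the construction of evenly covered neighborhoods. Given $y \in H^{\pi(u)}$, we fix some $x \in G^u$ with $\pi(x) = y$ and, using the local homeomorphism property, pick an open neighborhood $W$ of $x$ in $G$ on which $\pi$ restricts to a homeomorphism onto $\pi(W)$ open in $H$. Setting $V = \pi(W) \cap H^{\pi(u)}$ and $W' = (\pi|_W)^{-1}(V)$, the slice $W'$ sits inside $G^u$ and is mapped homeomorphically onto $V$ by $\pi$ (the inclusion $W' \subset G^u$ follows because $\pi(W') \subset H^{\pi(u)}$ forces $r(W') = \{u\}$ by unit-space injectivity). Our goal will then be to prove
\[
\pi^{-1}(V) \cap G^u = \bigsqcup_{g \in K} g \cdot W' .
\]
The inclusion ``$\supseteq$'', together with the openness of each sheet and the homeomorphism $\pi \colon g \cdot W' \to V$, will come from the fact that left multiplication by $g$ is a self-homeomorphism of $G^u$ satisfying $\pi(g \cdot z) = \pi(z)$. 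The inclusion ``$\subseteq$'' will follow by writing any $x' \in \pi^{-1}(V) \cap G^u$ as $g \cdot x_0$, where $x_0 = (\pi|_W)^{-1}(\pi(x')) \in W'$ and $g \in K$ is supplied by the orbit-equals-fiber identification. Disjointness will follow because $g_1 x_1 = g_2 x_2$ with $x_1, x_2 \in W'$ forces $\pi(x_1) = \pi(x_2)$, hence $x_1 = x_2$ by injectivity of $\pi|_{W'}$, and then $g_1 = g_2$ by cancellation.

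Once these evenly covered neighborhoods are in hand, regularity is automatic: $K$ acts freely on $G^u$ by deck transformations of $\pi|_{G^u}$ and transitively on each fiber, so $\pi|_{G^u}$ is a regular covering map with deck group $K$. The statement for $\pi|_{G_u} \colon G_u \to H_{\pi(u)}$ will be proved in the same way, using right multiplication and the companion identity $\pi^{-1}(\pi(x)) = x \cdot \pi^{-1}(\pi(d(x)))$ from Lemma~\ref{lem:fiberwise-groupoid-covering-algebraic}(3). We do not anticipate any genuine obstacle; the one point that requires care is the disjointness of the sheets, which reduces cleanly to groupoid cancellation combined with the injectivity of $\pi$ on the local slice $W'$.
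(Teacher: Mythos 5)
Your proposal is correct and follows essentially the same route as the paper's proof: pick a preimage $x$ of $y$, restrict a local-homeomorphism neighborhood to a slice of $G^u$, use Lemma~\ref{lem:fiberwise-groupoid-covering-algebraic}(3) to identify $\pi^{-1}(V)$ with the translates of that slice by the group $\pi^{-1}(u)$, and get disjointness from cancellation plus injectivity of $\pi$ on the slice. Your slice $W' = (\pi|_W)^{-1}(\pi(W)\cap H^{\pi(u)})$ coincides with the paper's $W = U \cap G^u$ by unit-space injectivity, so the two arguments are the same in substance.
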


\begin{proof}
	We only prove the lemma for $\left. \pi \middle| _{G^u} \right. \colon G^u \to H^{\pi(u)}$. The case for $\left. \pi \middle| _{G_u} \right. \colon G_u \to H_{\pi(u)}$ follows by noticing that $\pi$ also serves as a fiberwise groupoid covering from the opposite groupoid $G^{\operatorname{op}}$ to $H^{\operatorname{op}}$. 
	
	Given $y \in H^{\pi(u)}$, we will find an open neighborhood $V$ of $y$ in 
	$H^{\pi(u)}$ and an open set $W$ in $G^u$ such that $\pi^{-1}(V)$ is the 
	disjoint union of $z \cdot W$, where $z$ ranges over the group 
	$\pi^{-1}(u)$, and $\pi$ maps each $z \cdot W$ homeomorphically onto 
	$\pi(W)$. Since $\pi$ is surjective, we can choose a certain $x \in 
	\pi^{-1}(y)$. Lemma~\ref{lem:fiberwise-groupoid-covering-algebraic} implies 
	that $\pi^{-1}(y) \subset G^u$, $\pi^{-1}(v)$ is a group, and $\pi^{-1}(y) 
	= \pi^{-1}(v) \cdot x$. 
	Since $\pi$ is a local homeomorphism, there is an open neighborhood $U$ of $x$ in $G$ such that $\left. \pi \middle| _{U} \right.$ is a homeomorphism onto its image. Let $W = U \cap G^u$ and $V = \pi(W)$, which is contained in $H^{\pi(u)}$. Note that $\pi^{-1}(V) = \pi^{-1}(v) \cdot  W$ by Lemma~\ref{lem:fiberwise-groupoid-covering-algebraic}\eqref{lem:fiberwise-groupoid-covering-algebraic-preimage}. Also notice that for any $z \in \pi^{-1}(v)$, the restriction of $\pi$ to $z W$ is equal to the composition of $\left. \pi \middle| _{W} \right.$ and the left multiplication map by $z^{-1}$, and is thus a homeomorphism onto its image $V$. Lastly, we show $\left\{ z \cdot W \colon z \in  \pi^{-1}(v) \right\}$ is a disjoint family of subsets. Suppose we could find $w \in (z \cdot W) \cap (z' \cap W)$ for different $z , z'  \in \pi^{-1}(v)$, we would get two different elements $z^{-1}w$ and $(z')^{-1}w$ in $W$ having the same image under $\pi$, thus contradicting our assumption that $\left. \pi \middle| _{U} \right.$ is a homeomorphism onto its image. 
	In summary, we have shown that $\pi^{-1}(V)$ is a disjoint union of open subsets $z \cdot W$, for $z \in \pi^{-1}(v)$, each of which is homeomorphic to $V$ through $\left. \pi \middle| _{G^u} \right.$.
\end{proof}

\begin{nota}\label{nota:fiberwise-groupoid-covering-topological-C_c_pi}
	Let $\pi \colon G \to H$ be a fiberwise groupoid covering map. Then
	\begin{enumerate}
		\item we write $C_c(G)_\pi$ for the set of complex-valued functions $f$ on $G$ for which there exists a Hausdorff open set $U \subset G$ so that $\left.\pi\middle|_U\right.$ is a homeomorphism onto its image, $f$ vanishes outside of $U$, and $f|_U$ is continuous and compactly supported;
		\item we write $C_c(H)_\pi$ for the set of complex-valued functions $g$ on $H$ for which there exists a Hausdorff open set $U \subset G$ so that $\left.\pi\middle|_U\right.$ is a homeomorphism onto its image, $f$ vanishes outside of $\pi(U)$, and $f|_U$ is continuous and compactly supported. 
	\end{enumerate} 
\end{nota}

\begin{lem}\label{lem:fiberwise-groupoid-covering-topological-C_c_pi}
	Let $\pi \colon G \to H$ be a fiberwise groupoid covering map. Then $C_c(G)$ (respectively, $C_c(H)$), as given in Definition~\ref{nota:C_c_0}, is linearly spanned by $C_c(G)_\pi$ (respectively, $C_c(H)_\pi$). 
\end{lem}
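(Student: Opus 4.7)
The proof should be a direct partition-of-unity argument, reducing the existing notion of $C_c(G)_0$ (respectively $C_c(H)_0$) to the smaller class $C_c(G)_\pi$ (respectively $C_c(H)_\pi$) by using the local-homeomorphism property of $\pi$ to further refine the Hausdorff open sets appearing in Notation~\ref{nota:C_c_0}.

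Since $C_c(G)$ and $C_c(H)$ are already defined as the linear spans of $C_c(G)_0$ and $C_c(H)_0$, it suffices to show that any element of $C_c(G)_0$ is a finite sum of elements of $C_c(G)_\pi$, and similarly on the $H$-side. The plan is:

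First, for the $G$-case, take $f \in C_c(G)_0$, supported in a Hausdorff open set $U \subset G$ with $f|_U$ continuous and compactly supported. For each $x \in \supp(f|_U)$, use the fact that $\pi$ is a local homeomorphism (Definition~\ref{def:fiberwise-groupoid-covering}\eqref{def:fiberwise-groupoid-covering-local-homeo}) to pick an open neighborhood $W_x$ of $x$ in $G$ on which $\pi$ restricts to a homeomorphism onto its image. Replace $W_x$ by $W_x \cap U$, which remains open, Hausdorff (as a subset of the Hausdorff space $U$), and still has $\pi$ restricting to a homeomorphism onto its image. By compactness of $\supp(f|_U)$ in the Hausdorff space $U$, extract a finite subcover $W_{x_1}, \dots, W_{x_n}$ and choose a subordinate partition of unity $\{\rho_i\}$ on $U$ (which exists since $U$ is locally compact Hausdorff). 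Then $f = \sum_{i=1}^{n} \rho_i f$, and for each $i$, the function $\rho_i f$ vanishes outside $W_{x_i}$ while $(\rho_i f)|_{W_{x_i}}$ is continuous and compactly supported, so $\rho_i f \in C_c(G)_\pi$.

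Second, for the $H$-case, take $g \in C_c(H)_0$ supported in a Hausdorff open set $V \subset H$ with $g|_V$ continuous and compactly supported. For each $y \in \supp(g|_V)$, use surjectivity of $\pi$ to pick $x \in \pi^{-1}(y)$ and then use the local-homeomorphism property to get an open neighborhood $W_x \subset G$ of $x$ such that $\pi|_{W_x}$ is a homeomorphism onto its (open) image. After shrinking $W_x$ to $W_x \cap \pi^{-1}(V)$, we may assume $\pi(W_x) \subset V$, so $\pi(W_x)$ is an open neighborhood of $y$ inside $V$. By compactness of $\supp(g|_V)$, extract a finite subcover $\pi(W_{x_1}), \dots, \pi(W_{x_n})$ and choose a subordinate partition of unity $\{\tau_i\}$ on $V$. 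Then $g = \sum_{i=1}^{n} \tau_i g$, and each $\tau_i g$ vanishes outside $\pi(W_{x_i})$, is continuous there, and has compact support; passing through the homeomorphism $\pi|_{W_{x_i}}$, this is exactly the condition for $\tau_i g \in C_c(H)_\pi$.

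There is no serious obstacle here; the only point that needs a little care is keeping track of the Hausdorffness condition when shrinking the open sets $W_x$, and confirming that partitions of unity exist on the relevant Hausdorff open sets $U$ and $V$, which follows from their local compactness inherited from the local Hausdorffness of $G$ and $H$ together with axiom~\eqref{def:locally-compact-groupoids:locally-Hausdorff} of Definition~\ref{def:locally-compact-groupoids}.
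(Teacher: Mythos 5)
Your proposal is correct and is exactly the argument the paper has in mind: the paper's proof simply notes that $C_c(G)_\pi \subset C_c(G)_0$ and that each element of $C_c(G)_0$ lies in the span of $C_c(G)_\pi$ by "a standard argument using a partition of unity," which is precisely the refinement-by-local-homeomorphism-neighborhoods argument you spell out. Your version just supplies the details (including the Hausdorffness bookkeeping) that the paper leaves implicit.
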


\begin{proof}
	It is clear that $C_c(G)_\pi \subset C_c(G)_0 \subset C_c(G)$. Thus by Definition~\ref{nota:C_c_0}, one just needs to show each $f \in C_c(G)_0$ lies in the linear span of $C_c(G)_\pi$, but this is a standard argument using a partition of unity. The case for $H$ is similar. 
\end{proof}

\begin{lem}\label{lem:fiberwise-groupoid-covering-topological-finite-preimage}
	Let $\pi \colon G \to H$ be a fiberwise groupoid covering map. Then for any compact set $K \subset G$ and $y \in H$, the set $K \cap \pi^{-1}(y)$ is finite. Therefore, for any complex function $f$ on $G$ with compact support and $y \in H$, there are only finitely many $x \in \pi^{-1}(y)$ such that $f(x) \not= 0$. 
\end{lem}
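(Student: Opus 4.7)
The plan is to combine the covering-map structure on range fibers (Lemma~\ref{lem:fiberwise-groupoid-covering-topological-covering}) with basic Hausdorffness consequences of Definition~\ref{def:locally-compact-groupoids} to pin $K \cap \pi^{-1}(y)$ inside a compact Hausdorff space as a closed discrete subset, which must then be finite.

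First, I would localize the preimage: by Lemma~\ref{lem:fiberwise-groupoid-covering-algebraic}\eqref{lem:fiberwise-groupoid-covering-algebraic-r-s}, there is a unique $u \in G^0$ with $\pi(u) = r(y)$, and every $x \in \pi^{-1}(y)$ satisfies $r(x) = u$, so $\pi^{-1}(y) \subset G^u$. Next, I would observe that $G^u$ is closed in $G$: by Definition~\ref{def:locally-compact-groupoids}\eqref{def:locally-compact-groupoids:unit-space}, $G^0$ is Hausdorff in the subspace topology from $G$, so $\{u\}$ is closed in $G^0$, and by continuity of $r \colon G \to G^0$, the set $G^u = r^{-1}(\{u\})$ is closed in $G$. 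Hence $K \cap G^u$ is compact; and by Definition~\ref{def:locally-compact-groupoids}\eqref{def:locally-compact-groupoids:sections}, $G^u$ itself is locally compact Hausdorff in the (same) subspace topology.

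Then I would invoke Lemma~\ref{lem:fiberwise-groupoid-covering-topological-covering} to conclude that $\pi|_{G^u} \colon G^u \to H^{\pi(u)}$ is a covering map, so its fiber $\pi^{-1}(y)$ is discrete in $G^u$. Since $H^{\pi(u)}$ is Hausdorff (again by Definition~\ref{def:locally-compact-groupoids}\eqref{def:locally-compact-groupoids:sections}), $\{y\}$ is closed there, and continuity of $\pi|_{G^u}$ makes $\pi^{-1}(y)$ closed in $G^u$. The set $K \cap \pi^{-1}(y)$ is therefore a closed subset of the compact space $K \cap G^u$, hence compact; being at the same time a subset of the discrete $\pi^{-1}(y)$, it is compact and discrete, hence finite.

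The second assertion is immediate: if $f$ is a complex function on $G$ with compact support $K$, then $\{x \in \pi^{-1}(y) \mid f(x) \neq 0\} \subset K \cap \pi^{-1}(y)$, which is finite by what we have just shown. I do not anticipate any serious obstacle; the only subtle point to verify carefully is that the subspace topology on $G^u$ inherited from $G$ coincides with the locally compact Hausdorff topology posited in Definition~\ref{def:locally-compact-groupoids}\eqref{def:locally-compact-groupoids:sections}, which is built into the definition.
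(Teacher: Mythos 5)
Your proof is correct and follows essentially the same route as the paper's: restrict to the fiber $G^u$ via the unique $u$ with $\pi(u)=r(y)$, use Lemma~\ref{lem:fiberwise-groupoid-covering-topological-covering} to see that $\pi^{-1}(y)$ is closed and discrete in $G^u$, and conclude that $K\cap\pi^{-1}(y)$ is compact and discrete, hence finite. Your added justifications (that $G^u$ is closed in $G$ and that the fiber is closed via Hausdorffness of $H^{\pi(u)}$) merely make explicit what the paper leaves implicit.
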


\begin{proof}
	Let $u \in G^0$ be such that $\pi(u) = r(y)$. Then $K \cap G^u$, being a closed subset of the compact set $K$, is also compact. By Lemma~\ref{lem:fiberwise-groupoid-covering-topological-covering}, the restriction $\left. \pi \middle| _{G^u} \right. \colon G^u \to H^{\pi(u)}$ is a covering map, which entails that $\pi^{-1}(y)$ is a closed discrete subset of $G^u$, and thus $K \cap \pi^{-1}(y)$, being a compact discrete set, must be finite. 
\end{proof}

\begin{lem}\label{lem:fiberwise-groupoid-covering-topological-pi-star}
	Let $\pi \colon G \to H$ be a fiberwise groupoid covering map. Then the formula 
	\begin{equation}\label{eq:sum-preimage}
		\pi_*(f)(y) = \sum_{x \in \pi^{-1}(y)} f(x) \quad \text{for~} f \in C_c(G) \text{~and~} y \in H
	\end{equation}
	defines a surjective linear map $\pi_* \colon C_c(G) \to C_c(H)$. Similarly, for any $u \in G^0$, the formula
	\begin{equation}\label{eq:sum-preimage-u}
		\pi_*^u(f)(y) = \sum_{x \in \pi^{-1}(y)} f(x) \quad \text{for~} f \in C_c(G^u) \text{~and~} y \in H^{\pi(u)}
	\end{equation}
	defines a surjective linear map $\pi_*^u \colon C_c(G^u) \to C_c\left(H^{\pi(u)}\right)$. Moreover, we have, for any $x \in G$ and $f \in C_c \left(G^{d(x)}\right)$, 
	\begin{equation}\label{eq:sum-preimage-equivariant}
		\pi_*^{r(x)}(x \cdot f) = \pi(x) \cdot \pi_*^{d(x)}(f) \; ,
	\end{equation}
	where $x \cdot f (z) = f(x^{-1} z)$ and $\pi(x) \cdot \pi_*^{d(x)}(f) (y) = \pi_*^{d(x)}(f) \left( \left(\pi(x) \right)^{-1} y\right)$ for any $z \in G^{r(x)}$ and $y \in H^{\pi(r(x))}$.
\end{lem}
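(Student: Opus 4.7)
The plan is to verify the three separate assertions in turn, reducing everything to the case of functions supported on small sets where $\pi$ is a homeomorphism by means of Lemma~\ref{lem:fiberwise-groupoid-covering-topological-C_c_pi}.

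First I would check that $\pi_*$ is well-defined as a map into functions on $H$: the sum on the right-hand side of~\eqref{eq:sum-preimage} is finite for each $y \in H$ by Lemma~\ref{lem:fiberwise-groupoid-covering-topological-finite-preimage}, and linearity in $f$ is then automatic. To see that $\pi_*(f)$ actually belongs to $C_c(H)$, I would first treat the case $f \in C_c(G)_\pi$ with support inside a Hausdorff open set $U$ on which $\left.\pi\right|_U$ is a homeomorphism onto its image. There the sum over $\pi^{-1}(y)$ has at most one nonzero term, corresponding to the unique preimage of $y$ in $U$ when $y \in \pi(U)$, and is zero otherwise; thus $\pi_*(f)$ is the transported function $f \circ (\left.\pi\right|_U)^{-1}$ extended by zero, which manifestly lies in $C_c(H)_\pi$. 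The general statement follows by linearity together with Lemma~\ref{lem:fiberwise-groupoid-covering-topological-C_c_pi}.

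Surjectivity uses the same reduction in the reverse direction: any $g \in C_c(H)_\pi$ supported on $\pi(U)$ for $U$ as above is the image under $\pi_*$ of the lift $g \circ \left.\pi\right|_U$ extended by zero on $G$, and the claim for arbitrary $g \in C_c(H)$ then follows from Lemma~\ref{lem:fiberwise-groupoid-covering-topological-C_c_pi}. The analogous statement for $\pi_*^u$ is proved verbatim: by Lemma~\ref{lem:fiberwise-groupoid-covering-topological-covering} the restriction $\left.\pi\right|_{G^u} \colon G^u \to H^{\pi(u)}$ is a genuine (topological) covering map, so a partition-of-unity argument decomposes any element of $C_c(G^u)$ into finitely many pieces each supported on an open set mapped homeomorphically to its image, and the same case analysis as above applies.

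Finally, for the equivariance identity~\eqref{eq:sum-preimage-equivariant} I would perform a direct change of variables. Left multiplication by $x$ is a homeomorphism $G^{d(x)} \to G^{r(x)}$ that, under $\pi$, intertwines left multiplication by $\pi(x)$ on $H^{d(\pi(x))} \to H^{r(\pi(x))}$; hence the substitution $w = x^{-1}z$ induces a bijection between $\pi^{-1}(y) \subset G^{r(x)}$ and $\pi^{-1}\bigl(\pi(x)^{-1}y\bigr) \subset G^{d(x)}$ for each $y \in H^{\pi(r(x))}$, and substituting this into the defining sum immediately yields the required identity. I do not anticipate a serious obstacle; the main care needed is just to track the Hausdorffness of the relevant open sets and the domain/range compatibility conditions so that every groupoid operation is legitimately defined.
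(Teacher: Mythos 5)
Your proposal is correct and takes essentially the same approach as the paper's proof: reduce to functions in $C_c(G)_\pi$ supported on Hausdorff open sets where $\pi$ restricts to a homeomorphism (via Lemma~\ref{lem:fiberwise-groupoid-covering-topological-C_c_pi}), observe that $\pi_*$ acts there as transport by $\left(\pi|_U\right)^{-1}$ extended by zero, and obtain the equivariance identity by the change of variables $z \mapsto x^{-1}z$, which amounts to the fiber identity $\pi^{-1}\bigl(\pi(x)^{-1}y\bigr) = x^{-1}\cdot\pi^{-1}(y)$ recorded in Lemma~\ref{lem:fiberwise-groupoid-covering-algebraic}.
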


\begin{proof}
	It follows from Lemma~\ref{lem:fiberwise-groupoid-covering-topological-finite-preimage} that for any $f \in C_c(G)_0$ and $y \in H$, the sum in \eqref{eq:sum-preimage} is always finite. The same is thus also true for all $f \in C_c(G)$ by linearity. Then the assignment $f \mapsto \pi_*(f)$ defines a linear map from $C_c(G)$ to the space of all functions on $H$. For any Hausdorff open subset $U \subset G$ where $\pi$ is restricted to a homeomorphism onto its image, we observe that if $f \in C_c(U)$, then $\pi_*(f)$ is supported in $\pi(U)$ and $\left. \pi_*(f)\middle|_{\pi(U)} \right. = f \circ \left(\pi\middle|_U\right)^{-1}$, and every $g \in C_c(\pi(U))$ can be realized this way. Hence $\pi_*$ maps $C_c(G)_\pi$ surjectively onto $C_c(H)_\pi$. It follows by linearity that $\pi_* \left(C_c(G)\right) = C_c(H)$.
	
	The case for $\pi_*^u$ is entirely analogous. 
	
	To prove the last equation, we observe that for any $x \in G$ and $y \in H^{\pi(r(x))}$, if we write $\widetilde{y} $ for an arbitrary element in $\pi^{-1}(y)$, we have by Lemma~\ref{lem:fiberwise-groupoid-covering-algebraic} that
	\[
		\pi^{-1} \left( \left(\pi(x) \right)^{-1} y\right) = \pi^{-1} \left( \pi\left(x^{-1} \widetilde{y} \right) \right) = x^{-1} \widetilde{y} \cdot \pi^{-1} (d(y)) = x^{-1} \cdot \pi^{-1} \left( y \right) \; .
	\]
	Hence for any $f \in C_c \left(G^{d(x)}\right)$, we have
	\begin{align*}
		\pi(x) \cdot \pi_*^{d(x)}(f) (y) &= \pi_*^{d(x)}(f) \left( \left(\pi(x) \right)^{-1} y\right) \\
		& = \sum_{z \in \pi^{-1} \left(\left(\pi(x) \right)^{-1} y\right)} f(z) \\
		& = \sum_{z \in x^{-1} \cdot \pi^{-1} \left( y \right)} f(z) \\
		& = \sum_{z' \in \pi^{-1} \left( y \right)} f(x^{-1} z') \\
		& = \pi_*^{r(x)}(x \cdot f) (y)
	\end{align*}
	as desired.
\end{proof}

The following lemma is a special case of a general fact about regular covers. We include a proof for completeness. 
\begin{lem}\label{lem:fiberwise-groupoid-covering-topological-pi-star-kernel}
	Let $\pi \colon G \to H$ be a fiberwise groupoid covering map. Then for any $u \in G^0$ such that $G^u$ and $H^{\pi(u)}$ are locally compact and Hausdorff, the kernel of $\pi_*^u \colon C_c(G^u) \to C_c\left(H^{\pi(u)}\right)$ is linearly spanned by functions of the form $f - z \cdot f$, for $f \in C_c(G^u)$ and $z \in \pi^{-1}(u)$, where $z \cdot f$ denotes the function $x \mapsto f(z^{-1} \cdot x)$. 
\end{lem}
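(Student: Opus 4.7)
The containment $\supseteq$ is immediate: for $f\in C_c(G^u)$ and $z\in\pi^{-1}(u)$, left multiplication by $z$ permutes each fiber $\pi^{-1}(y)\cap G^u$ by Lemma~\ref{lem:fiberwise-groupoid-covering-algebraic}\eqref{lem:fiberwise-groupoid-covering-algebraic-preimage}, so $\pi_*^u(z\cdot f)=\pi_*^u(f)$ and hence $\pi_*^u(f-z\cdot f)=0$.

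For the reverse containment, let $f\in C_c(G^u)$ with $\pi_*^u(f)=0$. By Lemma~\ref{lem:fiberwise-groupoid-covering-topological-covering}, $\pi|_{G^u}\colon G^u\to H^{\pi(u)}$ is a regular covering map with deck group $\pi^{-1}(u)$ acting by left multiplication. The compact set $\pi(\operatorname{supp}(f))\subset H^{\pi(u)}$ is therefore covered by finitely many evenly covered open sets $V_1,\dots,V_n$; that is, $\pi^{-1}(V_i)\cap G^u=\bigsqcup_{z\in\pi^{-1}(u)} z\cdot U_i$ with each $\pi|_{U_i}$ a homeomorphism onto $V_i$. I would then pick a partition of unity $\{\psi_i\}$ on $\pi(\operatorname{supp}(f))$ subordinate to $\{V_i\}$ (possible because $H^{\pi(u)}$ is locally compact Hausdorff) and set $f_i:=(\psi_i\circ\pi)\cdot f\in C_c(G^u)$. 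Since $\psi_i\circ\pi$ is constant on each fiber of $\pi$, one reads off $\pi_*^u(f_i)=\psi_i\cdot\pi_*^u(f)=0$, while $\sum_i f_i=f$.

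Each $f_i$ is supported in $\bigsqcup_z z\cdot U_i$, and since these open sets are pairwise disjoint and $\operatorname{supp}(f_i)$ is compact, only finitely many $z$ contribute. Decomposing $f_i$ along this disjoint union produces $f_i=\sum_z z\cdot g_{i,z}$ with $g_{i,z}\in C_c(U_i)\subset C_c(G^u)$ (where $g_{i,z}(x):=f_i(z\cdot x)$ for $x\in U_i$). Transporting the identity $\pi_*^u(f_i)=0$ back along the homeomorphism $\pi|_{U_i}$ yields $\sum_z g_{i,z}\equiv 0$ on $U_i$. Letting $e\in\pi^{-1}(u)$ denote the identity, I solve for $g_{i,e}$ and rewrite
\begin{equation*}
 f_i \;=\; g_{i,e}+\sum_{z\neq e} z\cdot g_{i,z} \;=\; -\sum_{z\neq e}\bigl(g_{i,z}-z\cdot g_{i,z}\bigr),
\end{equation*}
which exhibits each $f_i$, and hence $f=\sum_i f_i$, as a finite linear combination of elements of the asserted form.

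The only point that is not pure bookkeeping is the existence of the evenly covered neighborhoods $\{V_i\}$ together with the disjoint-union decomposition $\pi^{-1}(V_i)\cap G^u=\bigsqcup_z z\cdot U_i$; this is precisely the content of Lemma~\ref{lem:fiberwise-groupoid-covering-topological-covering}, so no additional obstacle appears.
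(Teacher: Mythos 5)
Your proof is correct and follows essentially the same route as the paper's: both reduce via a partition of unity pulled back through $\pi$ to the case of a function supported in a single evenly covered set $\bigsqcup_{z}z\cdot U_i$, and then use the vanishing of the sum of the deck-translates back to the base sheet to write the function as a telescoping combination of terms $g - z\cdot g$. The only difference is cosmetic \textendash\ you perform the partition-of-unity reduction first and the deck-group decomposition second, whereas the paper does the special case first \textendash\ and your bookkeeping (solving for the identity component $g_{i,e}$) is algebraically equivalent to the paper's identity $g=\sum_i\bigl(g_{z_i}-z_i^{-1}\cdot g_{z_i}\bigr)$.
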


\begin{proof}
	We first observe that for any $f \in C_c(G^u)$ and $z \in \pi^{-1}(u)$, since $\pi_*^u(z \cdot f) = \pi_*^u(f)$, the difference $f - z \cdot f$ is indeed in $\ker \left( \pi_*^u \right)$. Thus it suffices to show that any $g \in \ker \left( \pi_*^u \right)$ can be written as a finite sum of such elements. 
	
	We first assume there is an open subset $U \subset G^u$ such that $\pi$ maps $U$ homeomorphically to its image and $g$ is supported in $\pi^{-1}(\pi(U))$, which, by Lemma~\ref{lem:fiberwise-groupoid-covering-topological-covering}, is equal to $\bigsqcup_{z \in \pi^{-1}(u)} z \cdot U$. Thus in this case, $g$ can be written as $\sum_{z \in \pi^{-1}(u)} g_z$, where each $f_z$ is supported in $z \cdot U$. Since $g$ is compactly supported, this is a finite sum, i.e., there are $z_1, \ldots, z_n \in \pi^{-1}(u)$ such that $g = \sum_{i = 1}^n g_{z_i}$. Observe that 
	\[
		\pi_*^u \left( \sum_{i = 1}^n z_i^{-1} \cdot g_{z_i} \right) = \sum_{i = 1}^n \pi_*^u \left(  z_i^{-1} \cdot g_{z_i} \right) = \sum_{i = 1}^n \pi_*^u \left( g_{z_i} \right) = \pi_*^u(g) = 0 \; .
	\]
	On the other hand, $\sum_{i = 1}^n z_i^{-1} \cdot g_{z_i}$ is supported inside $\bigcup_{i=1}^n z_i^{-1}  \cdot z_i \cdot U$, which is just $U$. It follows that its image under $\pi_*^u$ is supported in $\pi(U)$, and
	\[
		\left. \pi_*^u \left( \sum_{i = 1}^n z_i^{-1} \cdot g_{z_i} \right) \middle|_{\pi(U)} \right. = \left( \sum_{i = 1}^n z_i^{-1} \cdot g_{z_i} \right) \circ \left( \pi \middle|_U \right)^{-1} \; .
	\]
	Combining these equations, we see that $\sum_{i = 1}^n z_i^{-1} \cdot g_{z_i} = 0$. Therefore we have
	\[
		g = g - 0 = \sum_{i = 1}^n \left( g_{z_i} - z_i^{-1} \cdot g_{z_i} \right)
	\]
	as desired. 
	
	For a general $g \in \ker \left( \pi_*^u \right)$, we cover the support of $\pi_*^u(g)$ by finitely many open sets $W_i$, for $i = 1, \ldots, n$, where each $W_i$ is the homeomorphic image of some open subset $U_i \subset G^u$ under $\pi$. Let $\{ f_i \colon i = 1, \ldots, n \}$ be a set of functions on $H^{\pi(U)}$ such that each $f_i$ is supported in $W_i$ and $\sum_{i=1}^{n} f_i$ is equal to $1$ on the support of $\pi_*^u(g)$. For $i = 1, \ldots, n$, writing $\widetilde{f}_i = f_i \circ \pi$, we observe that $\pi_*^u(\widetilde{f}_i g) = f_i \pi_*^u(g) = 0$ and $\widetilde{f}_i g$ is supported in $\pi^{-1}(W_i)$; hence by the previous paragraph $\widetilde{f}_i g$ is in the linear span of $\{ f - z \cdot f \in C_c(G^u), z \in \pi^{-1}(u) \}$. Since $\sum_{i=1}^{n} \widetilde{f}_i$ is equal to $1$ on the support of $g$, we have $g = \sum_{i=1}^{n} \widetilde{f}_i g$. It follows that $g$ is also  in the linear span of $\{ f - z \cdot f \in C_c(G^u), z \in \pi^{-1}(u) \}$.
\end{proof}

\begin{lem}\label{lem:fiberwise-groupoid-covering-topological-intertwine}
	Let $G$ and $H$ be locally compact locally Hausdorff groupoids with Haar 
	systems $\lambda_G$ and $\lambda_H$. Let $\pi \colon G \to H$ be a 
	fiberwise groupoid covering map. Then $\pi$ locally intertwines $\lambda_G$ 
	and 
	$\lambda_H$ if and only if for any $u \in G^0$ and $f \in C_c(G^u)$, we have
	\[
		\int_{G^u} f \, d \lambda_G^u = \int_{H^{\pi(u)}} \pi_*^u(f) \, d \lambda_H^{\pi(u)} \; .
	\] 
\end{lem}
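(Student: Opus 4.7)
The plan is to reduce both directions to the case of a function supported on a small ``sheet'' where $\pi$ restricts to a homeomorphism, using a partition of unity, and then to appeal to the Riesz representation theorem for the reverse implication. The key observation, used in both directions, is that for any $y \in H^{\pi(u)}$ all of its preimages under $\pi$ already lie inside $G^u$ (by Lemma~\ref{lem:fiberwise-groupoid-covering-algebraic}\eqref{lem:fiberwise-groupoid-covering-algebraic-r-s}), so working within $G^u$ never forces us to account for ``extra'' preimages outside a chosen sheet. In particular $\pi(U \cap G^u) = \pi(U) \cap H^{\pi(u)}$ whenever $U \subseteq G$ is open.

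For the forward direction, assume $\pi$ intertwines $\lambda_G$ and $\lambda_H$ in the sense of Definition~\ref{def:fiberwise-groupoid-covering}. Given $f \in C_c(G^u)$, I would use the fact that $\pi|_{G^u}\colon G^u \to H^{\pi(u)}$ is a covering map (Lemma~\ref{lem:fiberwise-groupoid-covering-topological-covering}) to cover $\operatorname{supp}(f)$ by finitely many open sets $V_i = U_i \cap G^u$, where each $U_i \subseteq G$ is open and $\pi|_{U_i}$ is a homeomorphism onto its image, and then pick a subordinate partition of unity to write $f = \sum_{i} f_i$ with $\operatorname{supp}(f_i) \subset V_i$. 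For each $f_i$, the sum defining $\pi_*^u(f_i)(y)$ in \eqref{eq:sum-preimage-u} collapses to a single term, so $\pi_*^u(f_i) = f_i \circ (\pi|_{V_i})^{-1}$ extended by zero. The intertwining hypothesis directly gives
\[
\int_{G^u} f_i \, d\lambda_G^u \;=\; \int_{H^{\pi(u)}} \pi_*^u(f_i) \, d\lambda_H^{\pi(u)} ,
\]
and summing over $i$ yields the required identity for $f$.

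For the reverse direction, assume the integral identity for all $u$ and all $f \in C_c(G^u)$. To verify the intertwining property, fix $u \in G^0$ and $U \subseteq G$ open with $\pi|_U$ a homeomorphism onto its image; I need to show the pushforward of $\lambda_G^u|_{U \cap G^u}$ under $\pi|_{U \cap G^u}$ agrees with $\lambda_H^{\pi(u)}|_{\pi(U)\cap H^{\pi(u)}}$. Since both are Radon measures on the locally compact Hausdorff space $\pi(U) \cap H^{\pi(u)}$, by the Riesz representation theorem it suffices to test against an arbitrary $g \in C_c(\pi(U) \cap H^{\pi(u)})$. The function $f$ defined by $g \circ \pi|_U$ on $U \cap G^u$ and zero elsewhere lies in $C_c(G^u)$, and the fiberwise observation above shows that $\pi_*^u(f) = g$. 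Applying the hypothesis to $f$ immediately produces
\[
\int_{U \cap G^u} g \circ \pi \, d\lambda_G^u \;=\; \int_{\pi(U) \cap H^{\pi(u)}} g \, d\lambda_H^{\pi(u)} ,
\]
which is the desired equality of measures.

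There is no real obstacle here; the whole argument is essentially a bookkeeping exercise once one notices that the fiber $G^u$ is automatically Hausdorff and that the covering structure of $\pi|_{G^u}$ allows standard partition-of-unity techniques. The only mild point of care is to arrange the local sheets $V_i$ as intersections $U_i \cap G^u$ for $U_i \subseteq G$ open with $\pi|_{U_i}$ a homeomorphism, so that the hypothesis in Definition~\ref{def:fiberwise-groupoid-covering} applies verbatim; this is immediate since $\pi\colon G \to H$ is itself a local homeomorphism.
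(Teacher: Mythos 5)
Your proof is correct and follows essentially the same route as the paper: both arguments reduce, via a partition of unity subordinate to sheets $U\cap G^u$ on which $\pi$ is injective, to the observation that for a function supported on a single sheet the identity $\int_{G^u} f\, d\lambda_G^u = \int_{H^{\pi(u)}}\pi_*^u(f)\, d\lambda_H^{\pi(u)}$ is literally the defining property of the pushforward measure. The only cosmetic difference is that you separate the two implications (invoking Riesz explicitly for the converse) where the paper phrases the reduction as a single equivalence.
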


\begin{proof}
	By linearity, it suffices to show, for any $u \in G^0$ and any open subset $U \subset G$ for which $\pi|_U$ is a homeomorphism onto its image, that $\left.\lambda_{H}^{\pi(u)}\middle| _{\pi(U) \cap H^{\pi(u)}} \right.$ is the push-forward of $\left. \lambda_{G}^u \middle| _{U \cap G^u}\right.$ under $\left. \pi \middle| _{U \cap G^u} \right.$
	if and only if for any $f \in C_c(U \cap G^u)$, we have
	\[
		\int_{G^{u}} f \, d \lambda_G^{u} = \int_{H^{\pi(u)}} \pi_*^u(f) \, d \lambda_H^{\pi(u)} \; .
	\]
	But this is true since the former statement is equivalent to that for any $f \in C_c(U \cap G^u)$, we have
	\[
		\int_{U \cap G^u} f \, d \lambda_G^{u} = \int_{\pi(U) \cap H^{\pi(u)}} f \circ \left( \pi\middle|_{U \cap G^u} \right)^{-1} \, d \lambda_H^{\pi(u)} \; ,
	\]
	while the left-hand side of this equation is equal to $\int_{G^{u}} f \, d \lambda_G^{u}$ and the right-hand side is equal to $\int_{H^{\pi(u)}} \pi_*^u(f) \, d \lambda_H^{\pi(u)}$ because $\pi_*^u(f)$ is supported in $\pi(U) \cap H^{\pi(u)}$ and $\left. \pi_*^u(f) \middle|_{\pi(U) \cap H^{\pi(u)}} \right. = f \circ \left( \pi\middle|_{U \cap G^u} \right)^{-1}$. This shows the two statements are equivalent, as desired. 
\end{proof}

\begin{thm}\label{thm:fiberwise-groupoid-covering-locally-compact}
	Let $\pi \colon G \to H$ be a fiberwise groupoid covering map. Then the following hold: 
	\begin{enumerate}
		\item If $H$ is a locally compact locally Hausdorff groupoid with Haar 
		system $\lambda_H$, then $G$ is also locally compact locally Hausdorff 
		and can be equipped with a Haar system $\lambda_G$ such that $\pi$ 
		locally 
		intertwines $\lambda_G$ and $\lambda_H$. 
		\item If $G$ is a locally compact locally Hausdorff groupoid with Haar 
		system $\lambda_G$, then $H$ is also locally compact locally Hausdorff 
		and can be 
		equipped with a Haar system $\lambda_H$ such that $\pi$ locally 
		intertwines 
		$\lambda_G$ and $\lambda_H$. 
	\end{enumerate}
\end{thm}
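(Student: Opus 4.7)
The plan is to handle each direction in three stages: verify the topological axioms (1)--(3) of Definition~\ref{def:locally-compact-groupoids}, construct the Haar system (axiom (4)) using $\pi_*^u$ from Lemma~\ref{lem:fiberwise-groupoid-covering-topological-pi-star}, and then check full support, continuity and left-invariance.

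For the topological axioms, axiom~\ref{def:locally-compact-groupoids:unit-space} is immediate in either direction since $\pi|_{G^0}$ is a homeomorphism onto $H^0$. Axiom~\ref{def:locally-compact-groupoids:locally-Hausdorff} transfers because $\pi$ is a local homeomorphism: a countable family $\mathcal{C}$ of compact Hausdorff sets whose interiors form a basis on one side can be pulled back (respectively pushed forward) using the local trivializations of $\pi$ together with Hausdorff neighborhoods from the other side, noting that a locally compact Hausdorff section of a local homeomorphism has homeomorphic image. Axiom~\ref{def:locally-compact-groupoids:sections} follows from Lemma~\ref{lem:fiberwise-groupoid-covering-topological-covering}: $\pi$ restricts to a regular covering $G^u \to H^{\pi(u)}$, and local compactness and Hausdorffness transfer between the base and total space of a covering.

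For the Haar systems, the guiding identity (from Lemma~\ref{lem:fiberwise-groupoid-covering-topological-intertwine}) will be
\[
\int_{G^u} f \, d\lambda_G^u \;=\; \int_{H^{\pi(u)}} \pi_*^u(f) \, d\lambda_H^{\pi(u)}, \qquad f \in C_c(G^u).
\]
In direction (1), given $\lambda_H$, I would \emph{define} $\lambda_G^u$ by this formula: the right-hand side is a positive linear functional on $C_c(G^u)$ (positivity since $\pi_*^u$ preserves positivity, linearity from Lemma~\ref{lem:fiberwise-groupoid-covering-topological-pi-star}), hence yields a positive Radon measure on $G^u$ by Riesz. In direction (2), given $\lambda_G$, I would define $\lambda_H^{\pi(u)}$ by the \emph{inverse} recipe: any $h \in C_c(H^{\pi(u)})$ admits a lift $\tilde h \in C_c(G^u)$ with $\pi_*^u(\tilde h) = h$ by surjectivity of $\pi_*^u$, and set $\int h \, d\lambda_H^{\pi(u)} := \int \tilde h \, d\lambda_G^u$. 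The well-definedness is the main obstacle, and it is handled by combining Lemma~\ref{lem:fiberwise-groupoid-covering-topological-pi-star-kernel} with axiom~\ref{def:locally-compact-groupoids:Haar-system:invariance} of Definition~\ref{def:locally-compact-groupoids}: $\ker(\pi_*^u)$ is spanned by $f - z \cdot f$ with $z \in \pi^{-1}(\pi(u))$, and since $d(z)=r(z)=u$ by Lemma~\ref{lem:fiberwise-groupoid-covering-algebraic}, left-invariance of $\lambda_G^u$ gives $\int (z \cdot f - f)\, d\lambda_G^u = 0$. Positivity of the resulting functional is obtained by constructing a non-negative lift of any $h \ge 0$ via a partition of unity on $\supp(h)$ subordinate to evenly-covered open sets in $H^{\pi(u)}$.

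Finally, for the three Haar-system properties: full support follows in both directions by choosing a test function supported in a set that is homeomorphic to its image under $\pi$, so that $\pi_*^u$ matches it with a non-zero non-negative function on the other side; continuity of integration follows from the identity $g^0 = (\pi_*(g))^0 \circ \pi|_{G^0}$ (respectively its inverse), using that $\pi_*$ sends $C_c(G)$ onto $C_c(H)$ by Lemma~\ref{lem:fiberwise-groupoid-covering-topological-pi-star} and $\pi|_{G^0}$ is a homeomorphism; and left-invariance transfers by the key computation
\[
\pi_*^{d(x)}\bigl(w \mapsto f(xw)\bigr)(z) \;=\; \pi_*(f)\bigl(\pi(x) z\bigr),
\]
which reduces invariance on one groupoid to invariance on the other via the intertwining identity and Lemma~\ref{lem:fiberwise-groupoid-covering-algebraic}\eqref{lem:fiberwise-groupoid-covering-algebraic-preimage}. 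The hardest step, as noted, is the well-definedness in direction (2); everything else is a systematic transfer via $\pi_*^u$.
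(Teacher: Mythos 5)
Your proposal is correct and follows essentially the same route as the paper: verify the first three axioms of Definition~\ref{def:locally-compact-groupoids} by transferring them through the local homeomorphism and the fiberwise covering maps, define $\lambda_G^u$ in direction (1) directly by the intertwining formula via Riesz representation, and in direction (2) obtain $\lambda_H^{\pi(u)}$ by factoring $f \mapsto \int_{G^u} f\, d\lambda_G^u$ through $\pi_*^u$ using Lemma~\ref{lem:fiberwise-groupoid-covering-topological-pi-star-kernel} together with left-invariance, with positivity from a partition-of-unity lift. The verification of full support, continuity, and invariance via the equivariance identity \eqref{eq:sum-preimage-equivariant} likewise matches the paper's argument.
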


\begin{proof}
	For each of the two statements, to prove a topological groupoid is actually 
	a locally compact locally Hausdorff groupoid, we need to check the four 
	conditions in Definition~\ref{def:locally-compact-groupoids}. In fact, the 
	first three conditions are straightforward \emph{for both statements}: 
	\begin{itemize}
		\item Condition~\eqref{def:locally-compact-groupoids:unit-space} is a direct consequence of Definition~\ref{def:fiberwise-groupoid-covering}\eqref{def:fiberwise-groupoid-covering-unit-space}. 
		\item Condition~\eqref{def:locally-compact-groupoids:locally-Hausdorff} can be easily verified using the fact that $\pi$ is a local homeomorphism. 
		\item Condition~\eqref{def:locally-compact-groupoids:sections} follows from Lemma~\ref{lem:fiberwise-groupoid-covering-topological-covering} and the fact in elementary topology that given a topological covering map, if either the domain or the codomain is locally compact (respectively, Hausdorff), then so is the other. 
	\end{itemize}
	Therefore, all that is left is to construct Haar systems. For this we use the construction in Lemma~\ref{lem:fiberwise-groupoid-covering-topological-pi-star} and treat the two statements separately. 
	\begin{enumerate}
		\item We first prove the statement where we assume $H$ is a locally 
		compact locally Hausdorff groupoid with Haar system $\lambda_H$. 
		In this case, we define, for each $u \in G^0$, a linear map  
		\[
		\Lambda_G^u \colon C_c(G^u) \to \mathbb{C}
		\]
		such that for any $f \in C_c(G^u)$ and $u \in G^0$, we have
		\[
			\Lambda_G^u (f) = \int_{H^{\pi(u)}} \pi_*^u(f) \, d \lambda_H^{\pi(u)} \;.
		\]
		Clearly $\Lambda_G^u$ maps non-negative function to non-negative functions, and thus, by the Riesz representation theorem, determines a positive regular Borel measure $\lambda_G^u$ on $G^u$. 
		
		We claim that $\left\{ \lambda_G^u \right\}_{u \in G^0}$ is a Haar system we look for. To this end, we verify conditions~\eqref{def:locally-compact-groupoids:Haar-system:full-support}-\eqref{def:locally-compact-groupoids:Haar-system:invariance} in Definition~\ref{def:locally-compact-groupoids}: 
		\begin{itemize}
			\item Condition~\eqref{def:locally-compact-groupoids:Haar-system:full-support} follows from the observation that any nonzero non-negative function $f \in C_c(G^u)$ is taken by $\pi_*$ to a nonzero non-negative function in $C_c\left(H^{\pi(u)}\right)$, and thus $\Lambda_G^u (f) > 0$ since $\lambda_H^{\pi(u)}$ has full support. 
			\item To prove condition~\eqref{def:locally-compact-groupoids:Haar-system:continuous}, we define a linear map 
			\[
			\Lambda_G \colon C_c(G) \to C_c(G^0) 
			\]
			such that for any $f \in C_c(G)$ and $u \in G^0$, we have
			\[
			\Lambda_G (f) (u) = \int_{H^{\pi(u)}} \pi_*(f) \, d \lambda_H^{\pi(u)} \; .
			\]
			Here $\Lambda_G (f)$ is indeed in $C_c(G^0)$ thanks to Definition~\ref{def:locally-compact-groupoids}\eqref{def:locally-compact-groupoids:Haar-system:continuous} and Definition~\ref{def:fiberwise-groupoid-covering}\eqref{def:fiberwise-groupoid-covering-unit-space}. Comparing the constructions, we see that for any $f \in C_c(G)$ and $u \in G^0$, we have $\int_{G^u} f \, d \lambda_G^u = \Lambda_G^u \left(\left. f \middle|_{G^u} \right.\right) = \Lambda_G (f) (u)$. Hence the function $u \mapsto \int_{G^u} f \, d \lambda_G^u$ is in $C_c(G^0)$. 
			\item To prove condition~\eqref{def:locally-compact-groupoids:Haar-system:invariance}, we apply Equation~\eqref{eq:sum-preimage-equivariant} to see that for any $x \in G$ and $f \in C_c(G)$, we have
			\[
				\pi_*^{d(x)} \left(x^{-1} \cdot f\middle|_{G^{r(x)}} \right) = \pi(x^{-1}) \cdot \pi_*^{r(x)} \left( f \middle|_{G^{r(x)}} \right)  \; ,
			\]
			which, together with condition~\eqref{def:locally-compact-groupoids:Haar-system:invariance} for $\{\lambda_H^u\}_{u \in H^0}$, implies
			\begin{align*}
				& \ \int_{H^{\pi(d(x))}} \pi_*^{d(x)} \left(x^{-1} \cdot f\middle|_{G^{r(x)}} \right)  \, d \lambda_H^{\pi(d(x))} \\
				& = \int_{H^{\pi(d(x))}} \pi(x^{-1}) \cdot \pi_*^{r(x)} \left( f \middle|_{G^{r(x)}} \right)  \, d \lambda_H^{\pi(d(x))} \\
				& = \int_{H^{\pi(r(x))}} \pi_*^{r(x)} \left( f \middle|_{G^{r(x)}} \right)  \, d \lambda_H^{\pi(r(x))} \;.
			\end{align*}
			Unwrapping the definitions, we see that
			\[
				\int_{G^{d(x)}} f(xz) \, d \lambda_G^{d(x)} (z) = \int_{G^{r(x)}} f(y) \, d \lambda_G^{r(x)} (y) \; .
			\]
		\end{itemize}
		Therefore we have shown that $G$ is a locally compact locally Hausdorff 
		groupoid with a Haar system $\lambda_G$. By 
		Lemma~\ref{lem:fiberwise-groupoid-covering-topological-intertwine}, it 
		is clear from our construction that $\pi$ locally intertwines 
		$\lambda_G$ and 
		$\lambda_H$. 
		\item Then we turn to the second statement where we assume $G$ is a locally compact groupoid with Haar system $\lambda_G$. For each $u \in G^0$, consider the linear functional 
		\[
			\Lambda_G^u \colon C_c(G^u) \to \mathbb{C} \, , \quad f \mapsto \int_{G^u} f \, d \lambda_G^u \; .
		\]
		By condition~\eqref{def:locally-compact-groupoids:Haar-system:invariance}, for any $z \in \pi^{-1}(\pi(u))$, since $r(z) = d(z) = u$, we have 
		\[
			\Lambda_G^u(z \cdot f) =  \int_{G^u} z \cdot f \, d \lambda_G^u = \int_{G^u} f \, d \lambda_G^u = \Lambda_G^u(f) \; .
		\]
		Applying Lemma~\ref{lem:fiberwise-groupoid-covering-topological-pi-star-kernel}, we have
		\[
			\ker \Lambda_G^u \supset \operatorname{span} \left\{ f - z \cdot f \colon f \in C_c(G^u), z \in \pi^{-1}(\pi(u)) \right\} = \ker \pi_*^u \; .
		\]
		Since $\pi_*^u \colon C_c(G^u) \to C_c\left( H^{\pi(u)} \right)$ is a surjection, we see that $\Lambda_G^u$ factors through a linear functional on $C_c(H^{\pi(u)})$, which we may denote by $\Lambda_H^{\pi(u)}$ without any ambiguity because $\pi$ maps $G^0$ bijectively onto $H^0$.
		The positivity of $\Lambda_G^u$ implies that of $\Lambda_H^{\pi(u)}$ because by a partition-of-unity argument, any non-negative function in $C_c\left( H^{\pi(u)} \right)$ has a non-negative preimage in $C_c(G^u)$. Hence by the Riesz representation theorem, $\Lambda_H^{\pi(u)}$ determines a positive regular Borel measure $\lambda_H^{\pi(u)}$ on $H^{\pi(u)}$. It satisfies
		\[
			\int_{G^u} f \, d \lambda_G^u = \int_{H^{\pi(u)}} \pi_*^u(f) \, d \lambda_H^{\pi(u)} 
		\]
		for any $f \in C_c(G^u)$.

		We claim that $\{\lambda_H^u\}_{u \in H^0}$ is a Haar system we look for. To this end, we verify conditions~\eqref{def:locally-compact-groupoids:Haar-system:full-support}-\eqref{def:locally-compact-groupoids:Haar-system:invariance} in Definition~\ref{def:locally-compact-groupoids}: 
		\begin{itemize}
			\item Condition~\eqref{def:locally-compact-groupoids:Haar-system:full-support} follows from the observation that by a partition-of-unity argument, any nonzero non-negative function $f \in C_c\left( H^{\pi(u)} \right)$ has a nonzero non-negative preimage in $C_c(G^u)$, and thus $\Lambda_H^{\pi(u)} (f) > 0$ since $\lambda_G^u$ has full support. 
			\item Condition~\eqref{def:locally-compact-groupoids:Haar-system:continuous} for $\lambda_H^{\pi(u)}$ follows directly from that for $\lambda_G^u$. 
			\item Condition~\eqref{def:locally-compact-groupoids:Haar-system:invariance} for $\lambda_H^{\pi(u)}$ also follows from that for $\lambda_G^u$. Indeed, for any $x \in H$ and $f \in C_c(H)$, choosing $\widetilde{x} \in \pi^{-1}$ and $\widetilde{f} \in \left(\pi_*\right)^{-1} (f)$ by surjectivity, we apply  Equation~\eqref{eq:sum-preimage-equivariant} to get
			\[
				\pi_*^{d(\widetilde{x})} \left(\widetilde{x}^{-1} \cdot \widetilde{f} \middle|_{G^{r(\widetilde{x})}} \right) = x^{-1} \cdot \pi_*^{r(\widetilde{x})} \left( \widetilde{f} \middle|_{G^{r(\widetilde{x})}} \right) = x^{-1} \cdot \left. {f} \middle|_{H^{r({x})}} \right. \; ,
			\]
			which implies
			\begin{align*}
				& \ \int_{H^{d(x)}} f(xz) \, d \lambda_H^{d(x)} (z) \\
				& = \int_{G^{d(\widetilde{x})}} \widetilde{f}(\widetilde{x}y) \, d \lambda_G^{d(\widetilde{x})} (y) \\
				& = \int_{G^{r(\widetilde{x})}} \widetilde{f}(y') \, d \lambda_G^{r(\widetilde{x})} (y') \\
				& = \int_{H^{r(x)}} f(z') \, d \lambda_H^{r(x)} (z')  \; ,
			\end{align*}
			as desired. 
		\end{itemize}
		Therefore we have shown that $H$ is a locally compact locally Hausdorff 
		groupoid with a Haar system $\lambda_H$. By 
		Lemma~\ref{lem:fiberwise-groupoid-covering-topological-intertwine}, it 
		is clear from our construction that $\pi$ locally intertwines 
		$\lambda_G$ and 
		$\lambda_H$. 
	\end{enumerate}
\end{proof}

\begin{thm}\label{thm:fiberwise-groupoid-covering-quotient}
	Let $G$ and $H$ be locally compact locally Hausdorff groupoids with Haar 
	systems $\lambda_G$ and $\lambda_H$. Let $\pi \colon G \to H$ be a 
	fiberwise groupoid covering map that locally intertwines $\lambda_G$ and 
	$\lambda_H$. Then the surjection $\pi_* \colon C_c(G) \to C_c(H)$ given in 
	Lemma~\ref{lem:fiberwise-groupoid-covering-topological-pi-star} is a 
	$*$-homomorphism between the convolution $*$-algebras. Therefore, the 
	$C^*$-algebra $C^*(H)$ is a quotient of $C^*(G)$. 
\end{thm}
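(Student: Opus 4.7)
The plan is to verify directly that $\pi_*$ respects both the involution and the convolution on $C_c(G)$, and then invoke the universal property of the maximal $C^*$-envelope together with density of $C_c(H)$ in $C^*(H)$ to get the quotient map $C^*(G) \twoheadrightarrow C^*(H)$.

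The involution check is essentially bookkeeping. Given $f \in C_c(G)$ and $y \in H$, formula~\eqref{eq:groupoid-star-operation} gives $\pi_*(f^*)(y) = \sum_{x \in \pi^{-1}(y)} \overline{f(x^{-1})}$, while $\pi_*(f)^*(y) = \overline{\pi_*(f)(y^{-1})} = \sum_{x' \in \pi^{-1}(y^{-1})} \overline{f(x')}$. Since $\pi(x^{-1}) = \pi(x)^{-1}$, the inversion map on $G$ restricts to a bijection between $\pi^{-1}(y)$ and $\pi^{-1}(y^{-1})$, so the two sums agree.

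The heart of the proof is showing that $\pi_*(f \ast g) = \pi_*(f) \ast \pi_*(g)$ for $f, g \in C_c(G)$. Fix $y \in H$ and let $u \in G^0$ be the unique preimage of $r(y)$; by Lemma~\ref{lem:fiberwise-groupoid-covering-algebraic}\eqref{lem:fiberwise-groupoid-covering-algebraic-r-s} every $x \in \pi^{-1}(y)$ has $r(x) = u$. Combining \eqref{eq:sum-preimage} with \eqref{eq:groupoid-multiplication-r}, and then exchanging the finite sum (finite by Lemma~\ref{lem:fiberwise-groupoid-covering-topological-finite-preimage}) with the integral, I obtain
\[
\pi_*(f \ast g)(y) = \int_{G^u} f(z) \left( \sum_{x \in \pi^{-1}(y)} g(z^{-1} x) \right) d\lambda_G^u(z).
\]
By Lemma~\ref{lem:fiberwise-groupoid-covering-algebraic}\eqref{lem:fiberwise-groupoid-covering-algebraic-preimage}, left multiplication by $z^{-1}$ sends $\pi^{-1}(y)$ bijectively onto $\pi^{-1}(\pi(z)^{-1} y)$, so the inner sum equals $\pi_*(g)\bigl(\pi(z)^{-1} y\bigr)$. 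Now the integrand $F(z) := f(z) \cdot \pi_*(g)(\pi(z)^{-1} y)$ lies in $C_c(G^u)$ (bounded continuous times compactly supported), so the intertwining of Haar systems (Lemma~\ref{lem:fiberwise-groupoid-covering-topological-intertwine}) applies:
\[
\pi_*(f \ast g)(y) = \int_{H^{\pi(u)}} \pi_*^u(F)(w)\, d\lambda_H^{\pi(u)}(w).
\]
Because the factor $\pi_*(g)(\pi(z)^{-1} y)$ depends on $z$ only through $\pi(z)$, it is constant along the fiber $\pi^{-1}(w) \cap G^u = \pi^{-1}(w)$ and factors out of the push-forward sum, giving $\pi_*^u(F)(w) = \pi_*(f)(w) \cdot \pi_*(g)(w^{-1} y)$. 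Comparison with \eqref{eq:groupoid-multiplication-r} identifies the right-hand side as $\bigl(\pi_*(f) \ast \pi_*(g)\bigr)(y)$.

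Finally, the passage to the $C^*$-level is soft: $\pi_*$ is a $*$-homomorphism between the convolution $*$-algebras $C_c(G)$ and $C_c(H) \subset C^*(H)$, and by the universal property of $C^*(G)$ it extends uniquely to a $*$-homomorphism $C^*(G) \to C^*(H)$. Its image is a $C^*$-subalgebra containing the dense set $C_c(H)$, hence equal to $C^*(H)$. The main technical hurdle I foresee is carefully tracking the regularity of the integrand in the non-Hausdorff setting so that the intertwining lemma genuinely applies; this should amount to decomposing $f$ via Lemma~\ref{lem:fiberwise-groupoid-covering-topological-C_c_pi} so that $f|_{G^u}$ is manifestly a compactly supported continuous function on the Hausdorff space $G^u$, and then extending by linearity.
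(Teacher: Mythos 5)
Your proposal is correct and follows essentially the same route as the paper: the paper's proof likewise deduces multiplicativity by combining the intertwining identity of Lemma~\ref{lem:fiberwise-groupoid-covering-topological-intertwine} with the convolution formula~\eqref{eq:groupoid-multiplication-r}, checks the involution directly from~\eqref{eq:groupoid-star-operation}, and passes to $C^*(H)$ by surjectivity and density. You have merely written out in full the fiberwise bijection and sum--integral exchange that the paper leaves implicit.
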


\begin{proof}
	Lemma~\ref{lem:fiberwise-groupoid-covering-topological-intertwine} tells us that for any $f \in C_c(G)$, we have 
	\begin{equation}\label{eq:push-forward-integrals}
	\int_{G^u} f \, d\lambda^u = \int_{ H^{\pi(u)}} \pi_*(f) \, d\lambda^{\pi(u)} \; ,
	\end{equation}
	which then entails, by Equation~\eqref{eq:groupoid-multiplication-r},
	\[
		\pi_*( f \ast g) = \pi_*( f) \ast \pi_*(g) \; 
	\]
	for any $g \in C_c(G)$. It is also straightforward to check
	\[
		\pi_*(f^*) = (\pi_*(f))^*
	\]
	by the defining Equation~\eqref{eq:groupoid-star-operation}. This shows $\pi_* \colon C_c(G) \to C_c(H)$ is a $*$-homomorphism. 
	
	The last statement about $C^*$-algebras follows from the surjectivity of $\pi_*$ by a standard density argument. 
\end{proof}

\begin{eg}
	The quotient homomorphism from $\mathbb{R}$ to $\mathbb{R} / \mathbb{Z}$, as locally compact groups, is a fiberwise groupoid covering. It thus induces a surjection from $C^*(\mathbb{R})$ to $C^*(\mathbb{R} / \mathbb{Z})$. Using Pontryagin duality, this corresponds to the restriction $*$-homomorphism $C_0(\widehat{\mathbb{R}})$ to $C_0(\widehat{\mathbb{R} / \mathbb{Z}})$ induced by the inclusion $\widehat{\mathbb{R} / \mathbb{Z}} \cong \mathbb{Z} \hookrightarrow \mathbb{R} \cong \widehat{\mathbb{R}}$. On the level of the convolution algebras $L^1(\mathbb{R}) \to L^1(\mathbb{R}/\mathbb{Z})$, when the latter is viewed as consisting of periodic functions, the map is the periodization map $\pi_*(f)(x) = \sum_{n \in \mathbb{Z}} f(x+n)$.
\end{eg}

A more interesting class of examples is given by orientable line foliations. We will discuss this in the next section. 

\section{One dimensional foliations} \label{section:foliation}

In this section, we show that if $\mathcal{F}$ is a one-dimensional orientable 
foliation of a locally compact second countable Hausdorff space with finite 
covering 
dimension, then the associated foliation $C^*$-algebra has finite nuclear 
dimension. We refer the reader to \cite{moore-schochet} for a discussion of the 
groupoid and $C^*$-algebra associated to a foliation (though in our case we do 
not need to assume that $Y$ is a manifold), and to \cite{paterson} for a 
discussion of the $C^*$-algebra associated to a locally compact locally 
Hausdorff groupoid, which 
we summarized in Section~\ref{section:prelim}.

\begin{defn}[\cite{moore-schochet}]\label{def:line-foliation}
	Recall that a \emph{line foliation chart} or a \emph{one-dimensional 
	foliation chart} on a locally compact second countable Hausdorff space $Y$ 
	is a 
	triple $(U, T, \varphi)$, where $U$ is an open subset of $Y$, $T$ is some 
	locally compact second countable Hausdorff space and $\varphi \colon U \to 
	\R \times 
	T$ is a homeomorphism. The preimages under $\varphi$ of sets of the form 
	$\R \times \{x\}$ are called \emph{plaques}. Two line foliation charts $(U, 
	T, \varphi)$ and $(U', T', \varphi')$ are coherent if for any plaque $P$ in 
	$(U, T, \varphi)$ and $P'$ in $(U', T', \varphi')$, the intersection $P 
	\cap P'$ is open in both $P$ and $P'$. A \emph{line foliation} or a 
	\emph{one-dimensional foliation} on $Y$ is a foliated atlas, i.e., a 
	collection of coherent foliation charts that cover $Y$. Two foliated 
	atlases are considered to produce the same foliation if their union is also 
	a foliated atlas. The relation of being in the same plaque in a chart 
	generates an equivalence relation, whose equivalence classes are called the 
	\emph{leaves} of the foliation. For each $y \in Y$, if $(U, T, \varphi)$ is 
	a chart such that $y \in U$, then $T$, together with the base point given 
	by the second coordinate of $\varphi(y)$, is called a \emph{transversal} of 
	$\mathcal{F}$ at $y$. Although this depends on the choice of the chart, 
	since any two charts $(U, T, \varphi)$ and $(U', T', \varphi')$ that cover 
	$y$ agree on their intersection, there is thus a canonical homeomorphism 
	between neighborhoods of the base points in $T$ and $T'$. 
\end{defn}

\begin{defn}[\cite{moore-schochet}]\label{def:orientable-line-foliation}
	A line foliation is \emph{orientable} if it has a foliated atlas such that for any two line foliation charts $(U, T, \varphi)$ and $(U', T', \varphi')$ in this atlas, the transition map 
	\[
		\left. \varphi \middle|_{U\cap U'} \right. \circ \left( \varphi' \middle|_{U\cap U'} \right)^{-1} \colon  \varphi'(U\cap U') \to \R \times T
	\]
	is increasing in the first coordinate in the sense that for any $(s, x)$ and $(t, x)$ in $\varphi'(U\cap U')$, whenever $s \leq t$, the first coordinate of $\left. \varphi \middle|_{U\cap U'} \right. \circ \left( \varphi' \middle|_{U\cap U'} \right)^{-1} (s, x)$ is less than or equal to that of $\left. \varphi \middle|_{U\cap U'} \right. \circ \left( \varphi' \middle|_{U\cap U'} \right)^{-1} (t, x)$. 
\end{defn}

It is clear that a flow without fixed points gives rise to an orientable line foliation, where the collection of all tubes provides an atlas of local charts, by Lemma~\ref{lem:tubes-basics}. The converse is also true. 

\begin{thm}[{\cite[Theorem~27A]{Whitney}}]\label{thm:Whitney}
	If $\mathcal{F}$ is an orientable line foliation of a locally compact 
	second countable Hausdorff space $Y$, then there exists a flow $\calpha$ on 
	$Y$ with 
	no fixed points such that the leaves of $\mathcal{F}$ are exactly the 
	orbits of $\calpha$. 
\end{thm}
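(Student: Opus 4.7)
The plan is to construct the flow $\calpha$ by patching together local flow parameters coming from the orientable foliation charts, with a partition of unity reconciling the different time parametrizations on overlapping charts. First, I would take a locally finite atlas $\{(U_i, T_i, \varphi_i)\}_{i \in I}$ of foliation charts that witnesses the orientability of $\mathcal{F}$, together with a subordinate continuous partition of unity $\{\chi_i\}_{i \in I}$, which exists because a locally compact metrizable space is paracompact. For each index $i$, the first-coordinate function $\tau_i \colon U_i \to \R$ of $\varphi_i$ is a local flow parameter along the plaques of $U_i$; orientability ensures that on any nonempty intersection $U_i \cap U_j$, the restriction of $\tau_i$ to any plaque of $(U_j, T_j, \varphi_j)$ is a strictly increasing continuous function of the restriction of $\tau_j$. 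This furnishes, on each $U_i \cap U_j$, a positive continuous ``speed ratio'' $h_{j,i}$ recording how $\tau_j$ changes with respect to $\tau_i$ along leaves, and these ratios satisfy the cocycle identity $h_{k,i} = h_{k,j} \cdot h_{j,i}$ on triple overlaps.

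The key step is to assemble from these local data a single continuous parametrization of each leaf. For each chart one has a partial local flow $\Phi^{(i)}_t(y) = \varphi_i^{-1}(\varphi_i(y) + (t, 0))$ defined for small $|t|$, but the different $\Phi^{(i)}$'s disagree on overlaps by orientation-preserving reparametrizations, so they do not directly assemble into a global flow. Instead, I would define $\calpha$ so that, starting from $y \in Y$, the trajectory $t \mapsto \calpha_t(y)$ follows the leaf through $y$ at a speed measured in any chart $U_i$ by the weighted average $\sum_j \chi_j(\calpha_t(y)) \cdot h_{j,i}(\calpha_t(y))$. The cocycle identity guarantees that this prescription is independent of the ambient chart. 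Since $Y$ carries no differentiable structure a priori, this prescription must be interpreted in its integral form: I would construct $\calpha_t(y)$ piecewise by moving through overlapping charts, using the strict monotonicity of the transition maps to run a Picard-type iteration that converges to a unique continuous trajectory.

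The flow axioms $\calpha_0 = \id$ and $\calpha_{s+t} = \calpha_s \circ \calpha_t$ then follow from the uniqueness of the construction, and joint continuity of $(t,y) \mapsto \calpha_t(y)$ from continuous dependence on initial data. Completeness, i.e.\ $\calpha_t$ being defined for all $t \in \R$, holds because the weighted speed $\sum_j \chi_j \cdot h_{j,i}$ is locally bounded away from both $0$ and $\infty$, ruling out both finite-time blow-up and stagnation; the absence of fixed points is immediate from strict positivity; and by construction every orbit equals the leaf through the starting point. The main obstacle I expect is making this Picard-type construction of trajectories rigorous in the purely topological (non-differentiable) setting, which is precisely the delicate step of Whitney's original argument in \cite{Whitney}: essentially one must either approximate the topological situation by a smooth one where ODE theory applies, or equivalently endow each leaf with a compatible arc-length measure built from the partition of unity and then define $\calpha$ as unit-speed travel with respect to this measure.
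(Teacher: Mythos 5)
The paper offers no proof of this statement: it is quoted from Whitney's \emph{Regular families of curves} and used as a black box, so there is nothing internal to compare your argument against; it has to stand on its own. It does not, because its central object is not well defined. The ``speed ratio'' $h_{j,i}$ is a derivative of the leafwise transition map $\tau_j\circ\tau_i^{-1}$, but in this purely topological setting that transition map is only a strictly increasing homeomorphism between intervals; it need not be differentiable anywhere with a positive finite derivative (a devil's-staircase reparametrization is a legitimate transition function for an orientable atlas). So there is no continuous positive cocycle $h_{j,i}$, the weighted average $\sum_j \chi_j h_{j,i}$ has no meaning, and the Picard iteration has no (let alone Lipschitz) right-hand side to iterate against. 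This is not a deferrable technicality; it is the point at which the construction stops. Note also that even granting the $h$'s, ``speed locally bounded away from $0$ and $\infty$'' does not imply completeness: an orbit can leave every compact set in finite time.

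Your fallback --- equip each leaf $L$ with the Radon measure $\mu_L=\sum_i \chi_i\,(\tau_i|_L)^{*}(\mathrm{Lebesgue})$ and let $\calpha$ be unit-speed travel with respect to it --- is the more viable route, but two essential points are left open. First, completeness: an end of a non-compact leaf can have finite total $\mu_L$-measure. Already for $Y=(0,1)$ foliated by the single leaf $Y$, covered by charts $U_n=(1/(n+2),1/n)$ whose coordinates $\tau_n$ are rescaled so that $\int\chi_n\,d\tau_n\le 2^{-n}$, the ray toward $0$ has finite measure, so unit-speed travel escapes $Y$ in finite time and one does not get an action of $\R$. One must either prove the ends have infinite measure (false in general, as above) or reparametrize, and doing the latter \emph{continuously in} $y$ is itself delicate. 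Second, the joint continuity of $(t,y)\mapsto\calpha_t(y)$ as $y$ moves transversally between leaves with very different global behaviour (a closed leaf with nearby non-closed leaves spiralling onto it, say) is precisely the hard content of Whitney's theorem; your sketch asserts it as ``continuous dependence on initial data'' rather than proving it.
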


\begin{cnstrct}[\cite{moore-schochet}]\label{construction:holonomy-groupoid}
	Let us recall the construction of the holonomy groupoid of a line foliation, which is in fact defined for general foliations and plays an important role in the index theory for the longitudinal elliptic operators (\cite{Connes-Skandalis}). Suppose $\mathcal{F}$ is a line foliation of a locally compact Hausdorff space $Y$. We first observe that for any line foliation chart $(U, T, \varphi)$ and any $y, y' \in U$, if $y$ and $y'$ are on the same plaque, then $T$ can be taken to be a transversal at both $y$ and $y'$, with the same base point. It follows that in this situation, for any transversals $T'$ at $y$ and $T''$ at $y'$, there is a canonical homeomorphism between neighborhoods of the base points of $T'$ and $T''$. For any path $\gamma \colon [0,1] \to Y$ whose image is contained in one of the leaves of the foliation and any transversals $T$ at $\gamma(0)$ and $T'$ at $\gamma(1)$, if we cover $\gamma$ by a sequence of foliation charts $\{(U_i, X_i, \varphi_i)\}_{i \in \{1, \ldots, n\}}$ such that $U_i \cap U_{i+1} \not= \varnothing$ for each $i \in \{0, \ldots, n-1 \}$ and choose $t_0, \ldots, t_{n} \in [0,1]$ such that $t_0 = 0$, $t_{n} = 1$, $t_0 < \ldots < t_{n}$, $\gamma(t_i) \in U_i$ for each $i \in \{1, \ldots, n \}$ and $\gamma(t_i) \in U_{i+1}$ for each $i \in \{0, \ldots, n - 1\}$, then we may define a homeomorphism $H_\gamma$ between neighborhoods of the base points of $T$ and $T'$ by composing the homeomorphisms between neighborhoods of the base points of some chosen transversals at $x_i$ and $x_{i+1}$. Although this homeomorphism depends on the choice of the cover $\{(U_i, X_i, \varphi_i)\}_{i \in \{1, \ldots, n\}}$, the points $t_0, \ldots, t_{n}$ and the transversals thereat, the \emph{germ} of this homeomorphism, denoted by $[H_\gamma]$, is independent of all these choices, in the sense that for two different choices of the above items, the resulting homeomorphisms will coincide on neighborhoods of the base points. We define the \emph{holonomy class} $[\gamma]_{\hol}$ of $\gamma$ to be the set of all paths $\gamma' \colon [0,1] \to Y$ such that 
	\begin{enumerate}
		\item the image of $\gamma'$ is contained in one of the leaves of $\mathcal{F}$;
		\item $\gamma(0) = \gamma'(0)$ and $\gamma(1) = \gamma'(1)$; 
		\item $[H_\gamma] = [H_{\gamma'}]$. 
	\end{enumerate}
	It can be checked that the equivalence relation of being in the same holonomy class is preserved under concatenation and reversal of paths, and is weaker than homotopy. We define the \emph{holonomy groupoid} $G_{\mathcal{F}}$ to be the set of all holonomy classes of paths along the leaves of $\mathcal{F}$, where the unit space consists of (classes of) the constant paths and is identified with $Y$, taking inverse amounts to reversing (class of) a path, and the range and source of $[\gamma]_{\hol}$ are $\gamma(0)$ and $\gamma(1)$, respectively. This groupoid is topologized as follows: for any $[\gamma] \in G_{\mathcal{F}}$ where $\gamma \colon [0,1] \to Y$ is a path whose image is contained in one of the leaves of the foliation, we choose a sequence of foliation charts $\{(U_i, X_i, \varphi_i)\}_{i \in \{1, \ldots, n\}}$ as above, and define a set $V(\gamma, \{(U_i, X_i, \varphi_i)\}_{i \in \{1, \ldots, n\}})$ consisting of the holonomy classes of all paths $\gamma' \colon [0,1] \to Y$ whose image is contained in one of the leaves of the foliation and also in the union $\displaystyle \bigcup_{i=1}^{n} U_i$. We let these sets generate the topology on $G_{\mathcal{F}}$ as a subbase (in fact, it suffices to pick one representative for each holonomy class to carry out the construction). This topology is locally Hausdorff and locally compact.  When restricted to the unit space $G_{\mathcal{F}}^0$, it coincides with the topology on $Y$, and when restricted to $G_{\mathcal{F}}^u$ for any $u \in G_{\mathcal{F}}^0$, it makes $G_{\mathcal{F}}^u$ into a topological one-dimensional manifold. 
	
	In the prominent case where $Y$ is a smooth manifold and $\mathcal{F}$ is smooth (i.e., given by an atlas whose the transition functions are smooth), the holonomy groupoid $G_{\mathcal{F}}$ becomes a Lie groupoid. Its Haar systems thus are in one-to-one correspondence with $1$-densities on the dual of its Lie algebroid. 
\end{cnstrct}

\begin{prop}\label{prop:foliation-covering}
	Let $Y$ be a locally compact second countable Hausdorff space and let 
	$\calpha$ be a 
	topological flow on $Y$ without fixed points. Let $\mathcal{F}$ be the line 
	foliation induced from $\calpha$. Then there is a fiberwise groupoid 
	covering map $\pi$ from the transformation groupoid $Y \rtimes \R$ to 
	$G_{\mathcal{F}}$. 
\end{prop}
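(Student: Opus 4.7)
The plan is to define $\pi\colon Y \rtimes \R \to G_{\mathcal{F}}$ by
\[
\pi(y,t) = [\gamma_{y,t}]_{\hol}, \qquad \gamma_{y,t}\colon [0,1] \to Y, \; s \mapsto \calpha_{(s-1)t}(y),
\]
so that $\gamma_{y,t}$ is the flow path from $d(y,t) = \calpha_{-t}(y)$ to $r(y,t) = y$ along the leaf through $y$, and then to verify the four axioms of Definition~\ref{def:fiberwise-groupoid-covering} in turn. Checking that $\pi$ is a groupoid homomorphism is routine: it sends $(y,0)$ to the class of the constant path at $y$, preserves source and range by construction, and the product identity holds because $\gamma_{y,t+s}$ and the concatenation $\gamma_{y,t} \ast \gamma_{\calpha_{-t}(y),s}$ are both paths in the orbit of $y$ with the same endpoints, lift against the covering $\R \to \calpha_{\R}(y)$, $u \mapsto \calpha_{-u}(y)$, to paths in $\R$ with equal endpoints, and are therefore homotopic rel endpoints inside the leaf, hence in the same holonomy class. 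The restriction of $\pi$ to the unit spaces is essentially a tautology under the identifications $(Y \rtimes \R)^0 \cong Y \cong G_{\mathcal{F}}^0$.

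For surjectivity, any element of $G_{\mathcal{F}}$ is the class of some continuous path $\gamma\colon [0,1]\to Y$ along a single orbit; writing $y = \gamma(1)$ and lifting $\gamma$ against the same covering $\R \to \calpha_{\R}(y)$ with $\tilde{\gamma}(1) = 0$, one obtains $t := \tilde{\gamma}(0)$ with $\calpha_{-t}(y) = \gamma(0)$, and the lifted paths $\tilde{\gamma}$ and $\tilde{\gamma}_{y,t}\colon s \mapsto (1-s)t$ in the simply connected space $\R$ are homotopic rel endpoints, whence $[\gamma]_{\hol} = \pi(y,t)$.

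The main obstacle is the local homeomorphism property, where one must reconcile the explicit parametrization of $Y \rtimes \R$ with the subbase topology on $G_{\mathcal{F}}$. For $(y_0, t_0) \in Y \rtimes \R$, the plan is to cover the compact arc $\gamma_{y_0,t_0}([0,1])$ by the interiors $B_1^o, \ldots, B_n^o$ of finitely many tubes supplied by Lemma~\ref{lem:existence-tubes} (a single sufficiently long tube suffices when $|t_0| < \mathrm{per}_\calpha(y_0)$, while a chain of overlapping tubes is needed in general), and to set
\[
W = \left\{ (y,t) \in Y \rtimes \R \middlebar \gamma_{y,t}([0,1]) \subset \textstyle\bigcup_i B_i^o \right\},
\]
which is an open neighborhood of $(y_0,t_0)$ by continuity and the tube lemma. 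Each $B_i^o$ is a line foliation chart by Lemma~\ref{lem:tubes-basics}, so the holonomy of any path lying inside a single $B_i^o$ is canonically trivial; composing the local trivializations $\tau_{B_i}$ shows that the holonomy class of any path in the chain $\bigcup_i B_i^o$ is determined by its source, range, and chart sequence. This identifies $\pi|_W$ with a coordinate map that is a homeomorphism onto the subbase open set $V(\gamma_{y_0,t_0}, \{B_1^o,\ldots,B_n^o\})$ in $G_{\mathcal{F}}$. The subtle point is that along periodic orbits distinct $(y,t), (y,t')$ with $t - t' \in \mathrm{per}_\calpha(y)\Z$ share the same source and range and would otherwise collide under $\pi$; these are ruled out inside $W$ because wrapping nontrivially around a periodic orbit forces $\gamma_{y,t}$ to exit the finite chain $\bigcup_i B_i^o$, so shrinking $W$ around $(y_0,t_0)$ eliminates all extra preimages.
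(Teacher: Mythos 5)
Your map $\pi$ and the verification that it is a surjective groupoid homomorphism restricting to the identity on unit spaces are essentially the paper's argument (up to an orientation slip: with the paper's convention that the range of $[\gamma]_{\hol}$ is $\gamma(0)$ and the source is $\gamma(1)$, the path should be $s \mapsto \calpha_{-ts}(y)$, running from $r(y,t)=y$ to $d(y,t)=\calpha_{-t}(y)$; yours is the time-reversal). The genuine problem is in the local homeomorphism step. You define $W = \{(y,t) : \gamma_{y,t}([0,1]) \subset \bigcup_i B_i^o\}$ and claim that extra preimages coming from periodic orbits are excluded because ``wrapping nontrivially around a periodic orbit forces $\gamma_{y,t}$ to exit the finite chain.'' This fails precisely when $|t_0| \geq \mathrm{per}_\calpha(y_0)$: then the arc $\gamma_{y_0,t_0}([0,1])$ is already the entire periodic orbit, so \emph{any} chain of tubes covering the arc covers the whole orbit, and $(y_0, t_0 + k\,\mathrm{per}_\calpha(y_0))$ lies in $W$ for every $k$ with the same source and range as $(y_0,t_0)$. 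When the loop holonomy (or some power of it) is the trivial germ --- e.g.\ for a product foliation of $S^1 \times T$, or any family of periodic orbits --- these distinct points of $Y \rtimes \R$ have the same image under $\pi$, so $\pi|_W$ is not injective. Your proposed remedy of ``shrinking $W$'' cannot work, because $W$ is determined solely by a containment condition on the image of the arc, which cannot distinguish $(y,t)$ from $(y, t + k\,\mathrm{per}_\calpha(y))$ once the arc covers the orbit.

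The paper avoids this by building the neighborhood as a graph rather than as a full preimage: it chooses the tubes so that $y_0 \in S_{B_1^o}$ and $\calpha_{-t_0}(y_0) \in S_{B_n^o}$, constructs a \emph{continuous time-selection} $T(w)$ with $\calpha_{-T(w)}(w) \in S_{B_n^o}$, and sets
\[
W(y_0,t_0,\{B_i\}) = \left\{ (w, T(w)+q) \;\middle|\; w \in B_1^o,\ q \in \left(-\tfrac{l_{B_n}}{2}, \tfrac{l_{B_n}}{2}\right) \right\}.
\]
Injectivity of $\pi$ on this set then follows from the injectivity of the tube parametrization $S_{B_n} \times [-\tfrac{l_{B_n}}{2},\tfrac{l_{B_n}}{2}] \to B_n$ (Lemma~\ref{lem:tubes-basics}): equal endpoints force $w=w'$ and then $q=q'$, since $l_{B_n}$ is automatically shorter than the period of every orbit meeting $B_n$. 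In other words, the time coordinate must be confined to a single fundamental domain by construction, not recovered afterwards from the image of the arc. Your argument for openness of $W$ (tube lemma) and the identification of $\pi(W)$ with the subbasic sets $V(\gamma,\{U_i\})$ are fine and match the paper's level of detail; it is only the injectivity mechanism that needs to be replaced.
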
	

\begin{proof}
	Given any $y \in Y$ and any $t \in \R$, we define a curve $\gamma_{y,t}:[0,1] \to Y$ by $\gamma_{y,t}(s) = \calpha_{-ts}(y)$.  Noting that the leaves of the foliation $\mathcal{F}$ are all either copies of $\R$ or of the circle $S^1$, it is evident that if $y$ and $y'$ are two points on the same leaf, then any curve from $y$ to $y'$ along the leaf is homotopic inside the leaf via a homotopy which fixes end-points to a curve of the form $\gamma_{y,t}$ for some $t$ such that $\calpha_{-t}(y) = y'$. Thus, the map $\pi$ from $Y \rtimes \R$ to $G_{\mathcal{F}}$ given by 
	\[
		\pi(y,t) = [\gamma_{y,t}]_{\hol}
	\]
	is a surjective map. It is straightforward to see that $\pi$ is also a groupoid homomorphism, and when restricted to the unit spaces, $\pi$ coincides with the identity map on $Y$.

	It remains to show that $\pi$ is a local homeomorphism. Fix $(y,t) \in Y 
	\rtimes \R$. Then there exist tubes $B_1, \cdots,B_n \subseteq Y$ such that 
	$y \in S_{B_1^o}$, $\calpha_{-t}(y) \in S_{B_n^o}$ and for any $z \in 
	S_{B_1^0}$, we have $\calpha_{-st}(z) \in \bigcup_{k=1}^n B_k^o$ for all $s 
	\in [0,1]$, and $\calpha_{-t}(z) \in B_n^o$.  While there is no reason to 
	expect that $\calpha_t(z)$ is in the central slice of $B_n$, there is a 
	unique $s(z) \in \left( - \frac{l_{B_n}}{2} , \frac{l_{B_n}}{2} \right)$ 
	for which $\calpha_{-t - s(z)}(z) \in S_{B_n^o}$, and we note that the map 
	$z \mapsto s(z)$ is continuous. For any $w \in B_1^o$, pick $r(w) \in 
	\left( - \frac{l_{B_1}}{2} , \frac{l_{B_1}}{2} \right)$ such that 
	$\calpha_{-r(w)}(w) \in S_{B_1^o}$, and note that $w \mapsto r(w)$ is 
	continuous. Set $T(w) = t+r(w)+s(\calpha_{-r(w)}(w))$, so that 
	$\calpha_{-T(w)}(w) \in S_{B_n^o}$. Since $T$ is continuous and $T(y) = t$, 
	the set 
	\[
		W(y,t, \{B_1, \cdots,B_n\}) = \left\{(w, T(w) + q) \in Y \rtimes \R \middlebar w \in B_1^o , ~ q \in \left( - \frac{l_{B_n}}{2} , \frac{l_{B_n}}{2} \right) \right\}
	\]
	is an open neighborhood of $(y,t)$ in $Y \rtimes \R$. Since tubes can be made arbitrarily small, the collection of all such $W(y,t, \{B_1, \cdots,B_n\})$ forms a local base of the topology at $(y,t)$. On the other hand, $\pi$ is injective on each $W(y,t, \{B_1, \cdots,B_n\})$ because if the paths $\gamma_{w, T(w) + q}$ and $\gamma_{w', T(w') + q'}$ are holonomy equivalent for $w, w' \in B_1^o$ and $ q , q' \in \left( - \frac{l_{B_n}}{2} , \frac{l_{B_n}}{2} \right)$, then the endpoint condition yields $w = w'$ and $\calpha_{-(T(w) + q)}(w) = \calpha_{-(T(w') + q')}(w')$, which implies $\calpha_{-q}\left(\calpha_{-T(w)}(w)\right) = \calpha_{-q'}\left(\calpha_{-T(w)}(w)\right)$ and thus $q = q'$. Moreover, it follows from the definition of the subsets $V(\gamma_{y,t}, \{(U_i, X_i, \varphi_i)\}_{i \in \{1, \ldots, m\}})$ in $G_{\mathcal{F}}$ earlier in this section, we can see that if $\bigcup_{k=1}^n B_k^o \subset \bigcup_{l=1}^m U_l$ then we have $\pi\left( W(y,t, \{B_1, \cdots,B_n\}) \right) \subset V(\gamma_{y,t}, \{(U_i, X_i, \varphi_i)\}_{i \in \{1, \ldots, m\}})$ and vice versa. Hence the collection of all possible $\pi\left( W(y,t, \{B_1, \cdots,B_n\}) \right)$ forms a local basis of $G_{\mathcal{F}}$ at $\pi(y,t)$. This shows that $\pi$ is a local homeomorphism. 
\end{proof}

\begin{cor}\label{cor:foliation-covering}
	Let $Y$, $\calpha$, $\mathcal{F}$ and $\pi$ be as in 
	Proposition~\ref{prop:foliation-covering}. Let $\mu$ be the Haar system on 
	$Y \rtimes \R$ determined by the Lebesgue measure on $\R$ as in 
	Example~\ref{example:groupoids}. Then there is a Haar system $\lambda$ on 
	$G_{\mathcal{F}}$ such that $\pi$ locally intertwines $\mu$ and $\lambda$. 
	In this case, $C^*(G_{\mathcal{F}}, \lambda)$ is a quotient of $C_0(Y) 
	\rtimes \R$.
\end{cor}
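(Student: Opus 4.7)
The plan is to apply the machinery developed in Section~\ref{section:groupoid-coverings} directly to the fiberwise groupoid covering $\pi \colon Y \rtimes \R \to G_{\mathcal{F}}$ produced in Proposition~\ref{prop:foliation-covering}. More precisely, I would first verify that the triple $(Y \rtimes \R, \mu, \pi)$ satisfies the hypotheses of Theorem~\ref{thm:fiberwise-groupoid-covering-locally-compact}(2): the transformation groupoid $Y \rtimes \R$ is a locally compact Hausdorff groupoid with the Haar system $\mu$ described in Example~\ref{example:groupoids} (given by the Lebesgue measure on $\R$ identified with each $(Y \rtimes \R)^y$), and $\pi$ is a fiberwise groupoid covering by Proposition~\ref{prop:foliation-covering}. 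Invoking Theorem~\ref{thm:fiberwise-groupoid-covering-locally-compact}(2) then produces a Haar system $\lambda$ on $G_{\mathcal{F}}$ such that $\pi$ intertwines $\mu$ and $\lambda$, which settles the first assertion.

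For the second assertion, I would apply Theorem~\ref{thm:fiberwise-groupoid-covering-quotient} to the same triple with the Haar systems $\mu$ and $\lambda$. That theorem provides a surjective $*$-homomorphism $\pi_* \colon C_c(Y \rtimes \R) \to C_c(G_{\mathcal{F}})$ which extends to a surjection between the maximal groupoid $C^*$-algebras, exhibiting $C^*(G_{\mathcal{F}}, \lambda)$ as a quotient of $C^*(Y \rtimes \R, \mu)$.

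To complete the identification, I would invoke Example~\ref{example:reduced-groupoid-algebra}(2) to recognize $C^*_r(Y \rtimes \R, \mu) \cong C_0(Y) \rtimes_r \R$, and observe that since $\R$ is amenable, the maximal and reduced versions of both the groupoid $C^*$-algebra and the crossed product coincide, yielding $C^*(Y \rtimes \R, \mu) \cong C_0(Y) \rtimes \R$. Composing with the quotient map $\pi_*$ gives the desired realization of $C^*(G_{\mathcal{F}}, \lambda)$ as a quotient of $C_0(Y) \rtimes \R$.

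Since every nontrivial step is already encapsulated in the general theorems of Section~\ref{section:groupoid-coverings} and in Proposition~\ref{prop:foliation-covering}, there is no substantive obstacle to overcome; the main content is simply to check that the hypotheses of those theorems line up, and to invoke amenability of $\R$ to pass between the maximal and reduced pictures. The only minor bookkeeping point worth spelling out is the verification (implicit in Example~\ref{example:groupoids}) that $\mu$, as defined from the Lebesgue measure, is indeed a left Haar system in the sense of Definition~\ref{def:locally-compact-groupoids}, which is immediate from the translation-invariance of Lebesgue measure.
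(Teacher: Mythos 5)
Your proposal is correct and follows essentially the same route as the paper, which simply cites Theorem~\ref{thm:fiberwise-groupoid-covering-locally-compact} and Theorem~\ref{thm:fiberwise-groupoid-covering-quotient}. The extra step you spell out --- using amenability of $\R$ to identify the maximal groupoid $C^*$-algebra $C^*(Y \rtimes \R, \mu)$ with the crossed product $C_0(Y) \rtimes \R$ --- is a worthwhile clarification that the paper leaves implicit.
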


\begin{proof}
	This follows from Theorem~\ref{thm:fiberwise-groupoid-covering-locally-compact} and~\ref{thm:fiberwise-groupoid-covering-quotient}. 
\end{proof}

Intuitively speaking, the Haar system $\lambda$ is obtained by identifying each 
leaf, i.e., each orbit, with the quotient of $\R$ by the stabilizer group of 
the orbit, and then taking Lebesgue measure on this quotient, which is either 
$\R$ or a circle.

We return to the discussion of nuclear dimension. By \cite{MRW87}, two 
different Haar systems on a locally compact locally Hausdorff groupoid will 
yield groupoid $C^*$-algebras that are Morita equivalent. Since the nuclear 
dimension is invariant under Morita equivalence 
(\cite[Corollary~2.8]{winter-zacharias}), there is no ambiguity in talking 
about the nuclear dimension of the $C^*$-algebra associated to a locally 
compact locally Hausdorff groupoid. 

\begin{Cor}\label{cor:foliation-algebra-dimnuc}
	If $Y$ is a locally compact second countable Hausdorff space with covering 
	dimension 
	$d$, and $\mathcal{F}$ is an orientable one-dimensional foliation of $Y$, 
	then the nuclear dimension of the $C^*$-algebra associated to the holonomy 
	groupoid is at most $5d^2+12d+6$.
\end{Cor}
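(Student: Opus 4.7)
My plan is to simply chain together the machinery already assembled in the paper. By Whitney's theorem (Theorem~\ref{thm:Whitney}), the orientable line foliation $\mathcal{F}$ arises from a topological flow $\calpha$ on $Y$ with no fixed points, whose orbits are exactly the leaves of $\mathcal{F}$. I can then invoke the main theorem (Theorem~\ref{thm-main} or more precisely its extension to possibly non-metrizable spaces in Corollary~\ref{cor:thm-main-nonmetrizable}) applied to the flow $(Y, \R, \calpha)$ to conclude that
\[
\dimnuc(C_0(Y) \rtimes_\calpha \R) \leq (\dim(Y)+1)(5\dim(Y)+7) - 1 = 5d^2 + 12d + 6.
\]

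Next, by Proposition~\ref{prop:foliation-covering} together with Corollary~\ref{cor:foliation-covering}, there is a Haar system $\lambda$ on the holonomy groupoid $G_{\mathcal{F}}$ such that the natural map $\pi \colon Y \rtimes \R \to G_{\mathcal{F}}$ is a fiberwise groupoid covering intertwining the Haar systems, which induces a surjective $*$-homomorphism $C_0(Y) \rtimes \R \twoheadrightarrow C^*(G_{\mathcal{F}}, \lambda)$. Since nuclear dimension does not increase under quotients (\cite[Proposition 2.3]{winter-zacharias}), we immediately obtain $\dimnuc(C^*(G_{\mathcal{F}}, \lambda)) \leq 5d^2 + 12d + 6$.

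Finally, to remove the dependence on the specific Haar system $\lambda$, I would invoke the result of Muhly–Renault–Williams (\cite{MRW87}), which ensures that any two choices of Haar systems on $G_{\mathcal{F}}$ yield Morita equivalent $C^*$-algebras; combined with the Morita invariance of nuclear dimension (\cite[Corollary 2.8]{winter-zacharias}), this justifies speaking of the nuclear dimension of $C^*(G_{\mathcal{F}})$ unambiguously and yields the stated bound. There is essentially no hard step here: the real work was done in Theorem~\ref{thm-main} and in Section~\ref{section:groupoid-coverings}; this corollary is a straightforward assembly. The only small item to be careful about is to note that $C^*(G_{\mathcal{F}})$ refers to the maximal (full) groupoid $C^*$-algebra, matching the quotient map produced by Theorem~\ref{thm:fiberwise-groupoid-covering-quotient}.
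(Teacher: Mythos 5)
Your proposal is correct and follows essentially the same route as the paper: Whitney's theorem to obtain a fixed-point-free flow, Theorem~\ref{thm-main} for the crossed product bound, Corollary~\ref{cor:foliation-covering} to realize $C^*(G_{\mathcal{F}},\lambda)$ as a quotient, the quotient permanence of nuclear dimension, and the Morita-invariance argument via \cite{MRW87} to make the statement independent of the Haar system. The only trivial discrepancy is the citation for quotient permanence (the paper uses \cite[Proposition 2.7]{winter-zacharias}).
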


\begin{proof}
	By Theorem~\ref{thm:Whitney}, $\mathcal{F}$ arises from a flow $\calpha$ on $Y$. Thus for the holonomy groupoid $G_{\mathcal{F}}$, we may use, without loss of generality, the Haar system $\lambda_{G_{\mathcal{F}}}$ produced in Proposition~\ref{prop:foliation-covering}. Corollary~\ref{cor:foliation-covering} guarantees $C^*(G_{\mathcal{F}}, \lambda_{G_{\mathcal{F}}})$ is a quotient of $C_0(Y) \rtimes \R$. By \cite[Proposition 2.7]{winter-zacharias}, we have $\dimnuc(C_0(Y)\rtimes \R ) \geq \dimnuc(C^*(G_{\mathcal{F}}, \lambda_{G_{\mathcal{F}}}))$. The result then follows Theorem~\ref{thm-main}. 
\end{proof}


\end{document}